\numberwithin{equation}{section}
\theoremstyle{plain}
\newtheorem{theorem}{Theorem}[section]
\newtheorem{proposition}[theorem]{Proposition}
\newtheorem{lemma}[theorem]{Lemma}
\newtheorem{corollary}[theorem]{Corollary}
\newtheorem{remark}{Remark}[section]
\newcommand{\E}{{\mathbb E}}
\newcommand{\R}{{\mathbb R}}
\renewcommand{\P}{{\mathbb P}}
\newcommand{\dos}{{\mathfrak{d}_S}}
\newcommand{\M}{{\mathcal{M}}}
\newcommand{\Rr}{{\mathfrak{R}}}
\newcommand{\Ps}{{\mathcal{P}}}
\newcommand{\Rs}{{\mathcal{R}}}
\newcommand{\G}{{\mathcal{G}}}
\newcommand{\los}{{\mathfrak{L}}}
\newcommand{\ob}{{\mathring{B}}}
\newcommand{\vol}{{\mathrm{Vol}}}
\newcommand{\shel}{{\mathfrak{H}}^2}
\newcommand{\hel}{{\mathfrak{H}}}
\newcommand{\bx}{{\mathbf{X}}}
\newcommand{\by}{{\mathbf{Y}}}
\newcommand\norm[1]{\left\lVert#1\right\rVert}
\newcommand{\RN}[1]{\textup{\uppercase\expandafter{\romannumeral#1}}}
\def\qt#1{\qquad\text{#1}}
\def\argmax{\mathop{\rm argmax}}
\begin{document}

\begin{frontmatter}
\title{On the nonparametric maximum likelihood estimator for
  Gaussian location mixture densities with application to
  Gaussian denoising}
\runtitle{NPMLE for Gaussian Location Mixtures}

\begin{aug}
\author{\fnms{Sujayam}
  \snm{Saha}\ead[label=e1]{sujayam@berkeley.edu}}
\and
\author{\fnms{Adityanand} \snm{Guntuboyina}\thanksref{t1}\ead[label=e2]{aditya@stat.berkeley.edu}}

\thankstext{t1}{Supported by NSF CAREER Grant DMS-16-54589}
\runauthor{Sujayam Saha and Adityanand Guntuboyina}

\affiliation{University of California at Berkeley}

\address{Department of Statistics\\
345 Evans Hall\\
University of California \\
Berkeley, CA - 94720 \\
\printead{e1}}

\address{Department of Statistics\\
423 Evans Hall\\
University of California\\
Berkeley, CA - 94720 \\
\printead{e2}}
\end{aug}

\begin{abstract}
We study the Nonparametric Maximum
Likelihood Estimator (NPMLE) for estimating Gaussian location mixture
densities in $d$-dimensions from independent observations. Unlike
usual likelihood-based methods for fitting mixtures, NPMLEs are  
based on convex optimization. We prove finite sample
results on the Hellinger accuracy of every NPMLE. Our results imply, in
particular, that every NPMLE achieves near parametric risk (up to
logarithmic multiplicative factors) when the true density is a
discrete Gaussian mixture without any prior information on the number
of mixture components. NPMLEs can naturally be used to yield empirical Bayes
estimates of the Oracle Bayes estimator in the Gaussian denoising
problem. We prove bounds for the accuracy of the empirical Bayes
estimate as an approximation to the Oracle Bayes 
estimator. Here our results imply that the empirical Bayes estimator
performs at nearly the optimal level (up to logarithmic factors) for
denoising in clustering situations without any prior 
knowledge of the number of clusters.  
\end{abstract}

\begin{keyword}[class=MSC]
\kwd{62G07}
\kwd{62C12}
\kwd{62C10}
\end{keyword}

\begin{keyword}
\kwd{density estimation}
\kwd{Gaussian mixture model}
\kwd{Convex optimization}
\kwd{Hellinger distance}
\kwd{metric entropy}
\kwd{rate of convergence}
\kwd{model selection}
\kwd{adaptive estimation}
\kwd{convex clustering}
\end{keyword}

\end{frontmatter}

\section{Introduction}\label{sec_intro}
In this paper, we study the performance of the Nonparametric Maximum
Likelihood Estimator (NPMLE) for estimating a Gaussian location
mixture density in multiple dimensions. We also study the performance
of the empirical Bayes estimator based on the NPMLE for
estimating the Oracle Bayes estimator in the problem of Gaussian
denoising. 

By a Gaussian location mixture density in $\R^d, d \geq 1$, we refer 
to a density of the form
\begin{equation}\label{glmd}
f_G(x) := \int \phi_d(x - \theta) dG(\theta)
\end{equation}
for some probability $G$ on $\R^d$ where  $\phi_d(z) := (2
\pi)^{-d/2} \exp \left(-\|z\|^2/2 \right)$ is the standard
$d$-dimensional normal density ($\|z\|$ is the usual 
Euclidean norm of $z$). Note that $f_G$ is the  density of the random
vector $X = \theta + Z$ where $\theta$ and $Z$ are independent
$d$-dimensional random vectors with $\theta$ having distribution $G$
(i.e., $\theta \sim G$)
and $Z$ having the Gaussian distribution with zero mean and identity
covariance matrix (i.e., $Z \sim N(0, I_d)$).  We let $\M$ to be the
class of all Gaussian location mixture densities i.e., densities of
the form $f_G$ as $G$ varies over all probability measures on $\R^d$.   

Given $n$ independent $d$-dimensional data vectors $X_1, \dots, X_n$  
(throughout the paper, we assume that $n \ge 2$) generated from an
unknown  Gaussian location mixture density $f^* \in \M$, we study the
problem of estimating $f^*$ from $X_1, \dots, X_n$. This problem is
fundamental to the area of estimation in mixture models to which a
number of books (see, for example, 
\citet{MR624267, MR838090, lindsay1995mixture, bohning2000computer,
  mclachlan2004finite, schlattmann2009medical}) and papers have 
been devoted. We focus on the situation where $d$ is small
or moderate,  $n$ is large and where no specific prior information is
available about the mixing measure corresponding to $f^*$. Consistent
estimation in the case where $d$ is comparable in size to $n$ needs
simplifying assumptions on $f^*$ (such as that the mixing measure is
discrete with a small number of atoms and that it is concentrated on a
set of sparse vectors in $\R^d$) which we do not make in this
paper. Let us also note here that we focus on the problem of
estimating $f^*$ and not on estimating the mixing measure
corresponding to $f^*$. 

There are two well-known likelihood-based approaches to estimating
Gaussian location mixtures: (a) the first approach 
involves fixing an integer $k$ and performing maximum likelihood
estimation over $\M_k$ which is the collection of all
densities $f_G \in \M$ where $G$ is discrete and has at most $k$
atoms, and (b) the second approach involves performing maximum
likelihood estimation over the entire class $\M$. This results in the
Nonparametric Maximum Likelihood Estimator (NPMLE) for $f^*$ and is
the focus of this paper.   

The first approach (maximum likelihood estimation over $\M_k$ for a
fixed $k$) is quite popular. However, it suffers from the
two well-known issues: choosing $k$ is non-trivial and, moreover,
maximizing likelihood over $\M_k$ results in a non-convex optimization
problem. This non-convex algorithm is usually approximately solved by
the EM algorithm (see, for example, \citet{DempsterLR77EM,
  mclachlan2007algorithm, watanabe2003algorithm}). Recent 
progress on obtaining a theoretical understanding of the behaviour of
the non-convex EM algorithm has been made by
\citet{balakrishnan2017statistical}. 
Analyzing these estimators for data-dependent choices of $k$ is
well-known to be difficult. \citet{maugis2011non} (see also 
\citet{maugis2013adaptive}) proposed a penalization likelihood
criterion to choose $k$ by suitably employing the general theory of
non-asymptotic model selection via penalization 
due to \citet{BirgeMassart94lecam}, \citet{BarronBirgeMassart} and
\citet{Massart03Flour} and, moreoever, \citet{maugis2011non}
established nonasymptotic risk properties of the resulting
estimator. The computational aspects of their estimator are quite
involved however (see \citet{maugis2011data}) as their estimators are
based on solving multiple non-convex optimization problems.   




The present paper studies the second likelihood-based approach
involving nonparametric maximum likelihood estimation of $f^*$. This
method is unaffected by non-convexity and the need for  choosing
$k$. Formally, by an NPMLE, 
we mean any maximizer $\hat{f}_n$ of $\sum_{i=1}^n \log f(X_i)$ as
$f$ varies over $\M$: 
\begin{equation}\label{kw}
\hat{f}_n  \in \argmax_{f \in \M} \frac{1}{n} \sum_{i=1}^n \log
f(X_i). 
\end{equation}
Note that because the maximization is over the entire class $\M$ of all
Gaussian location mixtures (and not on any non-convex subset
such as $\M_k$), the optimization in $\eqref{kw}$ is a convex
problem. Indeed, the objective function in \eqref{kw} is concave in
$f$ and the constraint set $\M$ is a convex class of densities.  

The idea of using NPMLEs for estimating mixture densities has a long
history (see, for example, the classical references
\citet{kiefer1956consistency, lindsay1983geometry1,
  lindsay1983geometry2, lindsay1995mixture,
  bohning2000computer}). The optimization problem \eqref{kw} and its
solutions have been studied by many authors. It is known that
maximizers of $f \mapsto 
\sum_{i=1}^n \log f(X_i)$ exist over $\M$ which implies that NPMLEs
exist. Maximizers are non-unique however so there exist multiple NPMLEs.
Nevertheless, for every NPMLE $\hat{f}_n$, the values $\hat{f}(X_i)$
for $i = 1, \dots, n$ are unique (this is essentially because 
the objective function in the optimization \eqref{kw} only depends on
$f$ through the values $f(X_1), \dots, f(X_n)$). Proofs of these basic
facts can be found, for example, in \citet[Chapter
2]{bohning2000computer}.  

There exist many algorithms in the literature for approximately
solving the optimization \eqref{kw} (note that though \eqref{kw} is a
convex optimization problem, it is infinite-dimensional which is
probably why exact algorithms seem to be unavailable). These
algorithms range from: (a) vertex direction methods and vertex
exchange methods (see the review papers: \citet{bohning1995review},
\citet{lindsay1995review} and the references therein), (b) EM 
algorithms (see \citet{laird1978nonparametric} and
\citet{jiang2009general}), and (c) modern large-scale interior point
methods (see \citet{koenker2014convex} and
\citet{feng2016nonparametric}). Most of these methods focus on the 
case $d = 1$ and involve maximizing the likelihood over 
mixture densities where the mixing measure is supported on a fixed
fine grid in the range of the data. The algorithm of
\citet{koenker2014convex} is highly scalable (relying on 
the commercial convex optimization library \citet{mosek2015mosek}) and
can obtain an approximate NPMLE 
efficiently even for large sample sizes ($n$ of the order
$100,000$). See Section \ref{imple} for more algorithmic and
implementation details as well as some simulation results. 

Let us now describe the main objectives and contributions of the
current paper. Our first goal is to investigate  the 
theoretical properties of NPMLEs. In particular, we study the accuracy
of  $\hat{f}_n$ as an estimator of the density $f^*$ from which the
data $X_1,\dots, X_n$ are generated. We shall use, as our loss 
function, the squared Hellinger distance:
\begin{equation}\label{sqh}
  \hel^2(f, g) := \int \left(\sqrt{f(x)} - \sqrt{g(x)} \right)^2  dx,
\end{equation}
which is one of the most commonly used loss functions for density
estimation problems. We present a detailed analysis of the risk, $\E
\hel^2(\hat{f}_n, f^*)$, of every NPMLE (the expectation here is taken
with respect to $X_1, \dots, X_n$ distributed independently according
to $f^*$). The other common loss function used in density estimation
is the total variation distance. The total variation distance is
bounded from above by a constant multiple of $\hel$ so that upper
bounds for risk under the squared Hellinger distance automatically
imply upper bounds for risk in squared total variation distance.  
 
Our results imply that, for a large class of true densities $f^* \in
\M$, the risk of every NPMLE $\hat{f}_n$ is parametric (i.e.,
$n^{-1}$) up to multiplicative factors that are logarithmic in
$n$. In particular, our results imply that when $f^* \in
\M_k$ for some $1 \leq k \leq n$, then every NPMLE
has risk $k/n$ up to a logarithmic multiplicative factor in $n$. It is 
not hard to see that the minimax risk over $\M_k$ is bounded from
below by $k/n$ which implies therefore that every NPMLE is nearly
minimax over $\M_k$ (ignoring logarithmic factors in $n$) for every $k
\geq 1$. This is interesting because NPMLEs do not use any a priori 
knowledge of $k$. The price in squared Hellinger risk that is paid for
not knowing $k$ in advance is only logarithmic in $n$. Our results are
non-asymptotic and the bounds for risk over $\M_k$ hold even when $k =
k(n)$ grows with $n$. Our results also imply that NPMLEs have
parametric risk (again up to multiplicative logarithmic factors) when
the mixing measure of $f^*$ is supported on a fixed compact subset of 
$\R^d$. Note that we have assumed that the covariance matrix of every
Gaussian component of mixture densities in the class $\M$ is the
identity matrix. Our results can be extended to the case of arbitrary
and unknown covariance matrices provided a lower bound on the
eigenvalues is available (see Theorem \ref{amix}) (on the other hand, when no
\textit{a priori} information on the covariance matrices is available,
it is well-known that likelihood-based approaches are
infeasible). These results are described in Section
\ref{hela}.   

Previous results on the Hellinger accuracy of NPMLEs were due to
\citet{zhang2009generalized} (see also
\citet{ghosal2001entropies} for related results) who dealt 
with the univariate ($d = 1$) case. Here the Hellinger
accuracy was analyzed under conditions on the moments of the mixing measure
corresponding to $f^*$. The accuracy of NPMLEs in the interesting case
when $f^* \in \M_k$ does not appear to have been studied
previously even in $d = 1$. We study the Hellinger risk of NPMLEs for
all $d \geq 1$ and also under a much broader set of assumptions on
$f^*$ compared to existing papers. 

We would like to mention here that numerous papers have appeared in
the theoretical computer science community establishing rigorous
theoretical results for estimating densities in $\M_k$. For example,
the papers \citet{daskalakis2014faster, suresh2014near,
  bhaskara2015sparse,   chan2013learning, chan2014efficient,
  acharya2017sample,   li2015nearly} have results on estimating
densities in $\M_k$ with rigorous bounds on the error in
estimation. The estimation error is mostly measured in terms of the
total variation distance which is smaller (up to constant
multiplicative factors) compared to the Hellinger distance used in the
present paper. Their sample complexity results imply rates of
estimation of $k/n$ up to logarithmic factors in $n$ for densities in $\M_k$ in terms of
the squared total variation distance and hence these results are
comparable to our results for the NPMLE. The estimation procedures
used in these papers range from (a) hypothesis selection  
over a set of candidate estimators via an improved version of the
Scheff{\'e} estimate (\cite{daskalakis2014faster, suresh2014near}; see
\citet[Chapter 6]{devroye2001combinatorial} for background on the 
Scheff{\'e} estimate), (b) reduction to finding sparse solutions to a
non-negative linear systems (\cite{bhaskara2015sparse}), and (c)
fitting piecewise polynomial densities (\cite{chan2013learning,
  chan2014efficient, acharya2017sample, li2015nearly}; these papers
have the sharpest results). These 
methods are very interesting and, remarkably, come with precise time
complexity guarantees. They are not based on likelihood
maximization however and, in our opinion, conceptually more involved
compared to the NPMLE. An additional minor
difference between our work and this literature is that $k$ is taken
to be a constant (and sometimes even known) in these papers while we
allow $k = k(n)$ to grow with $n$ and, moreover, the NPMLE does not
need any prior knowledge of $k$. 

Let us now describe briefly the proof techniques underlying our risk
results for the NPMLEs. Our technical arguments are based on standard
ideas from the literature on  
empirical processes for assessing the performance of maximum
likelihood estimators (see \citet{vaartwellner96book,
  wong1995probability,   zhang2009generalized}). These techniques
involve bounding the covering numbers of the space of Gaussian
location mixture 
densities. For each compact subset $S \subseteq \R^d$, we prove
covering number bounds for $\M$ under the supremum distance
($L_{\infty}$) on $S$. Our bounds can be seen as extensions of the
one-dimensional covering number results of
\citet{zhang2009generalized} (which are themselves enhancements of
corresponding results in \citet{ghosal2001entropies}). The covering
number results of \citet{zhang2009generalized} can be viewed as
special instances of our bounds for the case when $S = [-M, M]$. The 
extension to arbitrary 
compact sets $S$ is crucial for dealing with rates for densities in
$\M_k$. For proving the final Hellinger risk bounds of $\hat{f}_n$
from these $L_{\infty}$ covering numbers, we use appropriate
modifications of tail arguments from \citet{zhang2009generalized}. A
sketch of these ideas is given in Subsection \ref{denso}. 

The second goal of the present paper is to use NPMLEs to yield
empirical Bayes estimates in the Gaussian denoising problem. By
Gaussian denoising, we refer to the problem of estimating vectors
$\theta_1, \dots, \theta_n \in \R^d$ from independent $d$-dimensional
observations $X_1, \dots, X_n$ generated as   
\begin{equation}\label{dem}
  X_i \sim N(\theta_i, I_d) \qt{for $i = 1, \dots, n$}. 
\end{equation}
The naive estimator in this denoising problem simply estimates each
$\theta_i$  by $X_i$. It is well-known that, depending on the structure of the 
unknown $\theta_1, \dots, \theta_n$, it is possible to achieve 
significant improvement over the naive estimator by using information
from $X_j, j \neq i$ in addition to $X_i$ for estimating
$\theta_i$. An ideal 
prototype for such information sharing across observations is given by
the \textit{Oracle Bayes} estimator which will be denoted by
$\hat{\theta}_1^*, \dots, \hat{\theta}_n^*$ and is defined in the
following way: 
\begin{equation*}
\hat{\theta}_i^* := \E \left(\theta | X = X_i \right) \qt{where
	$\theta \sim \bar{G}_n$ and $X | \theta \sim N(\theta, I_d)$}
\end{equation*}
and $\bar{G}_n$ is the empirical measure corresponding to the true
set of parameters $\theta_1, \dots, \theta_n$. In other words,
$\hat{\theta}_i^*$ is the posterior mean of $\theta$ given $X = X_i$
under the model $X | \theta \sim N(\theta, I_d)$ and the prior 
$\theta \sim \bar{G}_n$. This is an Oracle estimator that is
infeasible in practice as it uses information on the
unknown parameters $\theta_1,
\dots, \theta_n$ via their empirical measure $\bar{G}_n$. It
has the important well-known property (see, for example,
\citet{robbins1951asymptotically}) that $\hat{\theta}_i^*$ can be 
written as $T^*(X_i)$ for each $i = 1, \dots, n$ where $T^*: \R^d
\rightarrow \R^d$ minimizes  
\begin{equation}\label{basri}
  \E \left[\frac{1}{n} \sum_{i=1}^n \|T(X_i) - \theta_i \|^2\right]
\end{equation}
over all possible functions $T: \R^d \rightarrow \R^d$. Estimators for
$\theta_1, \dots, \theta_n$ which are of the form $T(X_1), \dots,
T(X_n)$ for a single non-random function $T$ are known as separable
estimators and the best separable estimator is given by
$\hat{\theta}_1^*, \dots, \hat{\theta}_n^*$. We shall show that
$\hat{\theta}_1^*, \dots, \hat{\theta}_n^*$ can be estimated
accurately by a natural estimator constructed using any NPMLE
\eqref{kw} based on $X_1, \dots, X_n$.  

To motivate the estimator, observe first that it is 
well-known (see, for example, \citet{robbins1956, brown1971admissible,
Stein81, efron2011tweedie}) that $\hat{\theta}_i^*$ has the following
alternative expression as a consequence of Tweedie's formula:  
\begin{equation}\label{ob}
\hat{\theta}_i^* = X_i + \frac{\nabla
	f_{\bar{G}_n}(X_i)}{f_{\bar{G}_n}(X_i)}
\end{equation}
where $f_{\bar{G}_n}$ is the Gaussian location mixture density with
mixing measure $\bar{G}_n$ (defined as in \eqref{glmd}). From the
above expression, it is clear that the Oracle Bayes estimator can be
estimated from the data $X_1, \dots, X_n$ provided one can estimate
the Gaussian location mixture density, $f_{\bar{G}_n}$, from the data
$X_1, \dots, X_n$. For this purpose, as insightfully observed in
\citet{jiang2009general}, any NPMLE, $\hat{f}_n$, as in \eqref{kw} can
be used. Indeed, if $\hat{f}_n$ denotes any NPMLE  based on the data
$X_1, \dots, X_n$, then \citet{jiang2009general} argued that
$\hat{f}_n$ is a good estimator for $f_{\bar{G}_n}$ under \eqref{dem}
so that $\hat{\theta}_i^*$ is estimable by   
\begin{equation}\label{es.int}
\hat{\theta}_i := X_i + \frac{\nabla \hat{f}_n(X_i)}{\hat{f}_n(X_i)}.    
\end{equation}
This yields a completely tuning-free solution to the Gaussian
denoising problem (note however that the
noise distribution is assumed to be completely known as $N(0,
I_d)$). This is the General Maximum Likelihood empirical Bayes
estimator of  
\citet{jiang2009general} who proposed it and studied its theoretical
properties in detail for estimating sparse univariate normal
means. To the best of our knowledge, the properties of the estimator
\eqref{es.int} for multidimensional denoising problems have not been
previously explored. More generally, the empirical Bayes approach to
the Gaussian denoising problem goes back to
\citet{robbins1950generalization,   robbins1951asymptotically,
  robbins1964empirical}. The effectiveness of nonparametric empirical
Bayes estimators for estimating sparse normal means has been 
explored by many authors including \citet{johnstone2005needles,
  brown2009nonparametric,   jiang2009general, donoho2013achieving,
  koenker2014convex} but most work seems restricted to the univariate
setting. On the other hand, there exists prior work on parametric
empirical Bayes methods in the multivariate Gaussian denoising problem
(see, for example, \cite{efron1972empirical, efron1976multivariate}) 
but the role of nonparametric empirical Bayes methods in multivariate
Gaussian denoising does not seem to have been explored previously.  

We perform a detailed study of the accuracy of $\hat{\theta}_i$ in
\eqref{es.int} as an estimator of the Oracle Bayes estimator 
$\hat{\theta}_i^*$ for $i = 1, \dots, n$ in terms of the following
squared error  risk measure:   
\begin{equation}\label{sqba.int}
  \Rr_n(\hat{\theta}, \hat{\theta}^*) := \E \left[ \frac{1}{n}
  \sum_{i=1}^n \|\hat{\theta}_i - \hat{\theta}_i^*\|^2 \right]
\end{equation}
where the expectation is taken with respect to $X_1, \dots, X_n$
generated independently according to \eqref{dem}. The risk 
$\Rr_n(\hat{\theta}, \hat{\theta}^*)$ depends on the
configuration of the unknown parameters $\theta_1, \dots,
\theta_n$ and we perform a detailed study of the risk for natural
configurations of the points $\theta_1, \dots, \theta_n \in
\R^d$. Our results imply that, under natural assumptions on $\theta_1,
\dots, \theta_n$, the risk $\Rr_n(\hat{\theta}, \hat{\theta}^*)$ is
bounded by the parametric rate $1/n$ up to logarithmic multiplicative
factors. For example, when the number of distinct vectors among
$\theta_1, \dots, \theta_n$ equals $k = k(n)$ for some $k \leq n$ (an 
assumption which makes sense in clustering situations), we prove that
the risk $\Rr_n(\hat{\theta}, \hat{\theta}^*)$ is bounded from above
by  the parametric rate $k/n$ up to logarithmic multiplicative factors
in $n$. This result is especially remarkable because the estimator
\eqref{es.int} is tuning free and does not have knowledge of $k$. We
also prove that the analogous minimax risk over this class is bounded
from below by $k/n$ implying that the empirical Bayes estimate is
minimax up to logarithmic multiplicative factors. Our result also
implies that when    $\theta_1, \dots, \theta_n$ take values in a
bounded region on $\R^d$, then also the risk $\Rr_n(\hat{\theta},
\hat{\theta}^*)$ is nearly parametric. Summarizing, our results imply
that, under a wide range of assumptions on $\theta_1, \dots,
\theta_n$, the empirical Bayes estimator $\hat{\theta}_i$ performs
comparably to the Oracle Bayes estimator $\hat{\theta}_i^*$ for
denoising. We also prove some results about denoising in the
heteroscedastic setting where the data $X_1, \dots, X_n$ are
independently generated according to $X_i \sim N(\theta_i, \Sigma_i)$
for more general unknown covariance matrices $\Sigma_1, \dots,
\Sigma_n$. These results are in Section \ref{gadeno}. The results and 
the proof techniques are inspired by the arguments of
\citet{jiang2009general} who studied the univariate denoising problem
under sparsity assumptions. We generalize their arguments to
multidimensions; a sketch of our proof techniques is provided in
Subsection \ref{gendo}.

In addition to theoretical results, we also present simulation
evidence for the effectiveness of $\hat{\theta}_i$ in the Gaussian
denoising problem in Section \ref{imple} (where we also present some
implementation and algorithmic details for computing approximate
NPMLEs). Here, we illustrate the performance of \eqref{es.int} for
denoising when the true parameter vectors $\theta_1, \dots, \theta_n$
take values in certain natural regions in $\R^2$. We also numerically
analyze the performance of \eqref{es.int} in clustering situations
when $\theta_1, \dots, \theta_n$ take $k$ distinct values for some
small $k$ (these results are given in Section \ref{desim} of the
technical appendix at the end of the paper). Here we
compare the performance 
of \eqref{es.int} to other natural procedures such as $k$-means with
$k$ selected via the gap 
statistic (see \citet{tibshirani2001estimating}). We argue that
\eqref{es.int} performs efficiently in terms of the risk
$\Rr_n(\hat{\theta}, \hat{\theta}^*)$. In terms of a purely clustering
based comparison index (such as the Adjusted Rand Index), we argue
that the performance of \eqref{es.int} is still reasonable. 

The rest of the paper is organized in the following manner. In Section 
\ref{hela}, we state our results on the Hellinger accuracy of NPMLEs
for estimating Gaussian location mixture densities. Section
\ref{gadeno} has statements of our results on the risk
$\Rr_n(\hat{\theta}, \hat{\theta}^*)$  in the denoising
problem. An overview of the key ideas in the proofs of the main
results is given in Section \ref{proids}. Section \ref{imple} has
algorithmic details and simulation 
evidence for the effectiveness of \eqref{es.int} for
denoising. Complete proofs of all the results in the paper are 
given in the technical appendix at the end of the paper. Specifically, proofs
for results in Section \ref{hela} are given in Section 
\ref{hela.pf}  while proofs for Section \ref{gadeno} are in Section
\ref{gadeno.pf}. Some additional observations on the
heteroscedastic Gaussian denoising problem are in the
Section \ref{hete} of the technical appendix. Metric entropy results for 
multivariate Gaussian location mixture densities play a crucial rule
in the proofs of the main results; these results are proved
in Section \ref{mmps}. Section \ref{zhafo} contains the statement and proof for a crucial 
ingredient for the proof of the main denoising theorem. Finally,
additional technical results needed in the proofs of the main results
are collected in Section \ref{auxre} together with
their proofs while additional simulation results are in Section
\ref{desim}. 

\section{Hellinger Accuracy of NPMLE}\label{hela} \label{HELA}
For data $X_1, \dots, X_n$, let $\hat{f}_n$ be any NPMLE defined as
in \eqref{kw}. In this section, we study the accuracy of
$\hat{f}_n$ in terms of the squared Hellinger distance (defined in
\eqref{sqh}). All the results in this section are fully proved in
Section \ref{hela.pf} while
Subsection \ref{denso} contains a sketch of the key ideas in the
proof of Theorem \ref{dens} (which is the main result of this
section). 

For investigations into the performance of $\hat{f}_n$, it is most
natural to assume that the data $X_1, \dots, X_n$ are independent
observations having common density $f^* \in \M$ in which case we seek
bounds on $\hel^2(\hat{f}_n, f^*)$. However, following
\citet{zhang2009generalized}, we work under the more general
assumption that $X_1, \dots, X_n$ are  independent but not identically
distributed and that each $X_i$ has a density that belongs to the
class $\M$. This additional generality will be used in Section
\ref{gadeno} for proving results on the Empirical Bayes estimator
\eqref{es.int} for the Gaussian denoising problem.   

Specifically, we assume that $X_1, \dots, X_n$ are independent and
that each $X_i$ has density $f_{G_i}$ for some probability measures
$G_1, \dots, G_n$ on $\R^d$. This distributional assumption on the
data $X_1, \dots, X_n$ includes the following two important special
cases: (a) $G_1, \dots, G_n$ are all identically equal to $G$ (say): in
  this case, the observations $X_1, \dots, X_n$ are identically distributed with
  common density $f^* = f_{G} \in \M$, and (b) Each $G_i$ is
  degenerate at some $\theta_i \in \R^d$: here each data point $X_i$
  is normal with $X_i \sim N_d(\theta_i, I_d)$ and this has been
  referred to as the compound decision setting by Robbins. 

We let $\bar{G}_n := (G_1 + \dots + G_n)/n$ to be the average of the
probability measures $G_1, \dots, G_n$. In the case when $G_1, \dots,
G_n$ are all identically equal to $G$, then clearly $\bar{G}_n =
G$. On the other hand, when each $G_i$ is degenerate at some $\theta_i \in 
\R^d$ (i.e., the compound decision setting), then $\bar{G}_n$ equals
the empirical measure corresponding to $\theta_1, \dots, \theta_n$.    

Under the above \textit{independent but not identically distributed}
assumption on $X_1, \dots, X_n$, it has been insightfully pointed out
by \citet{zhang2009generalized} that every NPMLE $\hat{f}_n$ based on
$X_1, \dots, X_n$ (defined as in \eqref{kw}) is really estimating
$f_{\bar{G}_n}$. Note that $f_{\bar{G}_n}$ denotes the average of the
densities of $X_1, \dots, X_n$. 

In this section, we shall prove bounds for the accuracy of any NPMLE
$\hat{f}_n$ as an estimator for $f_{\bar{G}_n}$ under the Hellinger
distance i.e., for $\hel(\hat{f}_n, f_{\bar{G}_n})$. In order to state
our main theorem, we need to introduce the following 
notation. For nonempty sets $S \subseteq \R^d$, we define the function
$\dos: \R^d \rightarrow [0, \infty)$ by 
  \begin{equation} \label{ds}
\dos(x) := \inf_{u \in S} \norm{x - u} \qt{for $x \in \R^d$}
  \end{equation}
where $\norm{\cdot}$ is the usual Euclidean norm on $\R^d$. Also for
$S \subseteq \R^d$, we let  
\begin{equation}\label{s1}
  S^1 := \left\{x : \dos(x) \leq 1 \right\}. 
\end{equation}
Our bound on $\hel(\hat{f}_n, f_{\bar{G}_n})$ will be controlled by
the following quantity. For every probability measure $G$ on $\R^d$,
every non-empty compact set $S \subseteq \R^d$ and every $M \geq
\sqrt{10 \log n}$, let $\epsilon_n(M, S, G)$ be defined via   
\begin{equation} \label{si.ra}
\begin{split}
  \epsilon^2_n(M, S, G) &:= \mathrm{Vol}(S^1) \frac{M^d}{n} \left(\sqrt{\log n}
  \right)^{d+2} \\ &+  \left(\log n \right) \inf_{p \geq
    \frac{d+1}{2 \log n}} \left(\frac{2 \mu_p(\mathfrak{d}_S, G)}{M}
  \right)^{p}
\end{split}
\end{equation}
where $S^1$ is defined in \eqref{s1} and $\mu_p(\dos, G)$ is defined
as the moment   
\begin{equation*}
  \mu_p(\mathfrak{d}_S, G) := \left(\int_{\R^d}
    \left(\mathfrak{d}_S(\theta) \right)^p
    d {G}(\theta) \right)^{1/p} \qt{for $p > 0$}. 
\end{equation*}
Note that the moments $\mu_p(\dos, G)$ quantify how the probability
(under $G$) decays as one moves away from the set $S$. 

The next theorem proves that $\hel^2(\hat{f}_n, f_{\bar{G}_n})$ is bounded
(with high probability and in expectation) by a constant (depending on
$d$) multiple of $\epsilon_n^2(M, S, \bar{G}_n)$ for every estimator
$\hat{f}_n$ having the property that the likelihood of the data at $\hat{f}_n$ is
not too small compared to the likelihood at $f_{\bar{G}_n}$ (made
precise in inequality \eqref{apfn}). Every NPMLE trivially satisfies
this condition (as it maximizes likelihood) but the theorem also
applies to certain approximate likelihood maximizers.   

The bound given the following theorem holds for every compact
set $S \subseteq \R^d$  and $M \geq \sqrt{10 \log n}$. As will be seen
later in this section, under some simplifying assumptions on
$\bar{G}_n$, our bound for $\hel(\hat{f}_n, f_{\bar{G}_n})$ can be
optimized over $S$  and $M$ to produce an explicit bound.   

\begin{theorem}\label{dens}
	Let $X_1, \dots, X_n$ be independent random vectors with $X_i
	\sim f_{G_i}$ and let $\bar{G}_n := (G_1 + \dots +
        G_n)/n$. Fix $M \geq \sqrt{10 \log n}$ and a non-empty compact
        set $S \subseteq \R^d$ and let $\epsilon_n(M, S, \bar{G}_n)$
        be  defined via \eqref{si.ra}. Then there exists a positive 
        constant $C_d$ (depending only on $d$) such that for every
        estimator $\hat{f}_n$ based on the data $X_1, \dots, X_n$
        satisfying 
        \begin{equation}\label{apfn}
          \prod_{i=1}^n \frac{\hat{f}_n(X_i)}{f_{\bar{G}_n}(X_i)} \geq
          \exp \left[\frac{C_d (\beta - \alpha)}{\min(1-\alpha, \beta)} n
            \epsilon_n^2(M, S, \bar{G}_n) \right] 
        \end{equation}
for some $0 < \beta \leq \alpha < 1$, we have  
\begin{equation}\label{dens.eq}
    \P \left\{\hel(\hat{f}_n, f_{\bar{G}_n}) \geq  \frac{t
        \epsilon_n(M, S, \bar{G}_n) \sqrt{C_d}}{\sqrt{\min(1 - \alpha,
          \beta)}} \right\} \leq 2 n^{-t^2} 
\end{equation}
for every $t \geq 1$ and, moreover,
\begin{equation}
  \label{dens.1}
  \E \shel(\hat{f}_n, f_{\bar{G}_n}) \leq \frac{4 C_d}{\min(1 -
    \alpha, \beta)} \epsilon_n^2(M, S, \bar{G}_n). 
\end{equation}
\end{theorem}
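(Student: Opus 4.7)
The plan is to apply the now-standard empirical-process machinery for approximate maximum likelihood estimators going back to Wong--Shen, in the form used by \citet{zhang2009generalized}, with two main adaptations: a multivariate $L_\infty$ covering-number bound for $\M$ restricted to the compact set $S^1$ (which Section \ref{mmps} supplies), and a truncation argument that isolates the contribution from outside $S^1$ via the moments $\mu_p(\dos, \bar G_n)$.

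First I would translate the approximate-MLE condition \eqref{apfn} into a likelihood-ratio inequality of the form $\sum_i \log (\hat f_n(X_i)/f_{\bar G_n}(X_i)) \geq -C_d (\beta-\alpha)/\min(1-\alpha,\beta) \cdot n\epsilon_n^2$, and combine it with the event $\{\hel(\hat f_n, f_{\bar G_n}) \geq t \epsilon_n \cdot \mathrm{const}\}$. A peeling argument then reduces matters to bounding, for each shell $\{f \in \M : 2^{k-1} t \epsilon_n \leq \hel(f, f_{\bar G_n}) < 2^k t \epsilon_n\}$, the probability that some $f$ in that shell is an approximate likelihood maximizer. A Wong--Shen--type exponential inequality controls this in terms of the Hellinger bracketing entropy of the shell; summing the resulting bounds $\exp(-c n (2^k t \epsilon_n)^2)$ geometrically in $k$ yields the tail $n^{-t^2}$ in \eqref{dens.eq}, and integrating the tail over $t \geq 1$ yields \eqref{dens.1}.

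The second, and more novel, step is the required bracketing-entropy control. I would decompose $\hel^2(f, f_{\bar G_n}) = \int_{S^1} + \int_{(S^1)^c}$. On $S^1$ the $L_\infty$ covering numbers of $\M$ from Section \ref{mmps}, which scale polynomially in $1/\delta$ with exponent proportional to $\vol(S^1) M^d$ up to log factors, convert to Hellinger brackets via the crude estimate $\hel^2 \leq \vol(S^1) \|f-g\|_{\infty, S^1}$ on bounded densities. Outside $S^1$, the mass of $f_{\bar G_n}$ is controlled by pushing a Markov bound on $\dos(\theta)^p$ under $\bar G_n$ through the Gaussian convolution: since $Z \sim N(0, I_d)$ has sub-Gaussian tails, the truncation level $M \geq \sqrt{10 \log n}$ absorbs the noise contribution, leaving $\int_{(S^1)^c} f_{\bar G_n} \lesssim \min_p (\mu_p(\dos,\bar G_n)/M)^p$. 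Plugging this truncation into the Wong--Shen entropy integral and balancing the two sources of error reproduces the two summands in \eqref{si.ra}, with the $(\log n)^{d+2}$ factor coming from the Dudley-type integration of the $L_\infty$ entropy.

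The main obstacle is the treatment of the candidate density $\hat f_n$ outside $S^1$: unlike $f_{\bar G_n}$, an arbitrary mixture has no a priori tail control, so the decomposition above is not immediately valid for it. The fix is to argue that an approximate maximizer of the log-likelihood cannot place substantial mass far from where the data lie: since most $X_i$ fall in a ball of radius $\sim \sqrt{\log n}$ around $S$, any $f$ whose mass on $S^1$ is too small makes $\sum_i \log f(X_i)$ violate \eqref{apfn}. This, together with the choice $M \geq \sqrt{10 \log n}$, makes the Gaussian tail negligible relative to $1/n$ and lets the truncation argument apply uniformly to $\hat f_n$. The other, more mechanical, difficulty is tracking the constants $\alpha, \beta$ through the Wong--Shen inequality to produce the explicit prefactor $C_d/\min(1-\alpha,\beta)$; this is a routine but delicate bookkeeping exercise.
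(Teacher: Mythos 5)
Your high-level plan (reduce to likelihood-ratio deviations, cover $\M$ in $L_\infty$ on a fattened neighborhood of $S$ via the Section \ref{mmps} entropy bounds, truncate using moments of $\dos$, and integrate the tail to get \eqref{dens.1}) is sound in spirit, but it diverges from the paper's argument in a consequential way, and the step you flag yourself as the main obstacle does in fact have a gap.

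The paper does not run the Wong--Shen peeling argument with Hellinger bracketing. Instead it directly bounds $\P\{\hel(\hat f_n, f_{\bar G_n}) \geq t\gamma_n\}$ by splitting the likelihood ratio (not the squared Hellinger distance) into a contribution from $X_i \in S^M$ and one from $X_i \notin S^M$. The engine is an auxiliary function $v(x)$ that equals $\eta$ on $S^M$ and decays like $\eta (M/\dos(x))^{d+1}$ outside, combined with the universal bound $\hat f_n(x) \leq (2\pi)^{-d/2}$ valid for \emph{any} mixture $\hat f_n$; one then writes $\prod_i \hat f_n(X_i)/f_{\bar G_n}(X_i) \leq \max_j \prod_i (h_{0j}(X_i) + 2v(X_i))/f_{\bar G_n}(X_i) \cdot \prod_{X_i\notin S^M} (2\pi)^{-d/2}/(2v(X_i))$ and controls the two factors with, respectively, a direct Chernoff/Hellinger computation summed over the $L_\infty$ cover, and a fractional-moment bound (Lemma \ref{tailmom}) applied to the product $\prod_{X_i\notin S^M}$. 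No peeling and no Dudley integral appear; the $(\log n)^{d+2}$ factor comes from $\log N \lesssim \vol(S^1) M^d (\log n)^{d/2+1}$ directly, not from an entropy integral.

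The genuine gap is in your proposed fix for $\hat f_n$'s tails. You claim that because most $X_i$ land within $\sim\sqrt{\log n}$ of $S$, any $f_G$ with small $G(S^1)$ must have a low likelihood. This is false as stated: a mixing measure $G$ concentrated at distance, say, $1.5$ from $S$ has $G(S^1)=0$ but yields values of $f_G$ at the $X_i$ comparable to $f_{\bar G_n}(X_i)$, because the Gaussian kernel has unit bandwidth. More fundamentally, the claim that ``most $X_i$ fall within $\sqrt{\log n}$ of $S$'' is itself not an a priori fact --- it is precisely what the moment quantity $\mu_p(\dos,\bar G_n)$ controls, and controlling it is the second summand of $\epsilon_n^2(M,S,\bar G_n)$. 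Relying on this to restrict the class over which $\hat f_n$ ranges would make the argument circular, or at best would need a separate high-probability localization lemma that you have not supplied. The paper entirely sidesteps the need to restrict $\hat f_n$: the observation $\hat f_n \leq (2\pi)^{-d/2}$ is all that is used for the out-of-$S^M$ factor, and the $v$-function (whose integral is $\lesssim \eta\,\vol(S^M)$ by Lemma \ref{Lemma:integralV}) transfers the burden onto the $L_\infty$ covering on $S^M$ and the moment inequality. Absorbing this mechanism is the essential missing ingredient in your proposal. Your other gap is more minor: covering on $S^1$ rather than $S^M$ makes the entropy calculation come out with the wrong dependence on $M$; the paper covers on $S^M$ and then invokes Lemma \ref{volm} to express the result in terms of $\vol(S^1)$ times $M^d$.
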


Theorem \ref{dens} asserts that the risk $\E\hel^2(\hat{f}_n,
\bar{f}_{\bar{G}_n})$ is bounded from above by a constant (depending
on $d$, $\alpha$ and $\beta$) multiple of $\epsilon^2_n(M, S,
\bar{G}_n)$ for every 
$M \geq \sqrt{10 \log n}$ and compact subset $S \subseteq \R^d$. This
is true for every estimator $\hat{f}_n$ satisfying \eqref{apfn}. Every
NPMLE satistfies \eqref{apfn} with $\alpha = \beta = 0.5$ (note that
the right hand side of \eqref{apfn} is always less than or equal to
one because $\beta \leq \alpha$).  

Theorem \ref{dens} is novel to the best of our knowledge. When $d = 1$
and $S$ is taken to be $[-R, R]$ for some $R \geq 0$, then the conclusion
given by Theorem \ref{dens} appears implicitly in \citet[Proof of
  Theorem 1]{zhang2009generalized}. The presence of an arbitrary
  compact set $S$ allows the derivation of interesting adaptation
  results for discrete mixing distributions (as will be clear from the
  special cases of Theorem \ref{dens} that are given below). Such
  results cannot be derived if the arbitrary $S$ is replaced by only a
  box or a ball such as $[-R, R]$ as in the 
  univariate result of \citet{zhang2009generalized}. Indeed, suppose
  that $\bar{G}_n$ is a discrete measure gives equal probability to the two
  points $R$ and $-R$ for a large value of $R$. Then the bound of
  \citet{zhang2009generalized} gives a multiplicative factor involving
  $R$ in the risk bounds which make them quite suboptimal when $R$ is
  large. On the other hand, Theorem \ref{dens} applied with $S = \{-R,
  R\}$ gives a near-parametric risk bound (see Theorem \ref{rb.clus}
  below). One can further think of the support of $\bar{G}_n$ being a
  collection of discrete points, curves and regions (all the while
  being bounded) for general $d \geq 2$, where a direct extension of
  Zhang's result would produce an upper bound directly proportional to
  the volume of the  bounding box of the shapes mentioned above; while
  our result will depend on the total volume of the \textit{fattenings} of each of the
shapes described above. In cases where the total fattened volume is a
constant while the separation between the different shapes increases
as a function $n$, our result will yield a tighter upper bound (as a
negative power of $n$) than Zhang's result and its naive
multi-dimensional extension.

Our proof of Theorem \ref{dens} (given in Section \ref{hela.pf}) is
greatly inspired by 
\citet[Proof of Theorem 1]{zhang2009generalized}. An overview of this
proof is provided in Subsection \ref{denso} where we explain the main
ideas as well as points of departure between our proof and the
arguments in \citet[Proof of Theorem 1]{zhang2009generalized}. 


To get the best rate for $\hel(\hat{f}_n, f_{\bar{G}_n})$ from Theorem
\ref{dens}, we need to choose $M$ and $S$ so that $\epsilon_n(M, 
S, \bar{G}_n)$ is small. These choices obviously depend on $\bar{G}_n$
and in the next result, we describe how to choose $M$ and $S$ based on 
reasonable assumptions on $\bar{G}_n$. This leads to explicit
rates for $\hel(\hat{f}_n, f_{\bar{G}_n})$. Note that, more generally,
Theorem \ref{dens} implies that $\hat{f}_n$ is consistent (in the
Hellinger distance) for $f_{\bar{G}_n}$ provided $\bar{G}_n$ is such
that 
\begin{equation*}
  \inf_{S  \text{ compact}, M \geq \sqrt{10 \log n}} \epsilon_n(M, S,
  \bar{G}_n) \rightarrow 0 \qt{as $n \rightarrow \infty$}. 
\end{equation*}

For simplicity, we shall
assume, for the next result, that $\hat{f}_n$  
is an NPMLE so that \eqref{apfn} is satisfied with $\alpha =
\beta = 0.5$. We shall also only state the results on the risk
$\E \hel^2(\hat{f}_n, f_{\bar{G}_n})$. 

\begin{corollary}\label{dza}
  Let $X_1, \dots, X_n$ be independent random vectors with $X_i \sim
  f_{G_i}$ and let $\bar{G}_n := (G_1 + \dots + G_n)/n$. Let
  $\hat{f}_n$ be an NPMLE based on $X_1, \dots, X_n$ defined as in
  \eqref{kw}. Below $C_d$ denotes a positive constant depending on $d$
  alone. 
  \begin{enumerate}
  \item Suppose that $\bar{G}_n$ is supported on a
  compact subset $S$ of $\R^d$. Then 
\begin{equation}\label{dza1.eq}
  \E \shel(\hat{f}_n, f_{\bar{G}_n}) \leq C_d \frac{\vol(S^1)}{n}
  (\log n)^{d+1}. 
\end{equation}
\item Suppose there exist a compact subset $S \subseteq \R^d$ and real
  numbers $0 < \alpha \leq 2$ and $K \geq 1$ such that 
\begin{equation}\label{mmc}
  \mu_p(\dos, \bar{G}_n) \leq K p^{1/\alpha}  \qt{for all $p \geq 1$}. 
\end{equation}
Then 
\begin{equation}\label{dza2.eq}
  \E \shel(\hat{f}_n, f_{\bar{G}_n}) \leq C_d \frac{\vol(S^1) (K
    e^{1/\alpha})^d}{n} \left(\sqrt{\log n} \right)^{(2d/\alpha) + d +
    2}. 
\end{equation}
\item Suppose there exists a compact set $S \subseteq \R^d$ and real
  numbers $\mu > 0$ and $p > 0$ such that $\mu_p(\dos, \bar{G}_n) \leq
  \mu$. Then there exists a positive constant $C_{d, \mu, p}$
  (depending only on $d, \mu$ and $p$) such that  
  \begin{equation} \label{dza3.eq}
    \E \shel(\hat{f}_n, f_{\bar{G}_n}) \leq C_{d, \mu, p}
    \left(\frac{\vol(S^1)}{n} \right)^{\frac{p}{p+d}} \left(\sqrt{\log n} 
    \right)^{\frac{2 d + 2 p + dp}{p+d}} . 
  \end{equation}
  \end{enumerate}
\end{corollary}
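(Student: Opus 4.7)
The three parts all follow by specialising Theorem \ref{dens} with $\alpha=\beta=1/2$ (every NPMLE trivially satisfies \eqref{apfn} in this case, since the right-hand side reduces to $1$) and then choosing the free parameters $M$ and $p$ in the definition \eqref{si.ra} so as to exploit the hypothesis on $\bar{G}_n$. Writing
\[
A := \frac{\vol(S^1)}{n}(\sqrt{\log n})^{d+2},\qquad B(p) := (\log n)\bigl(2\mu_p(\dos,\bar{G}_n)\bigr)^p,
\]
the quantity to be controlled is $\epsilon_n^2(M,S,\bar{G}_n) = AM^d + \inf_{p\ge (d+1)/(2\log n)} B(p) M^{-p}$. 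The recurring strategy is to pick $M$ large enough to make the tail term $B(p)M^{-p}$ negligible yet small enough to keep $AM^d$ under control.

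For part (1), the support hypothesis forces $\mu_p(\dos,\bar{G}_n)=0$ for every $p$, so the infimum term vanishes identically. Taking the smallest admissible $M = \sqrt{10\log n}$ then yields $AM^d = C_d\vol(S^1)\,n^{-1}(\log n)^{d+1}$, which is exactly \eqref{dza1.eq}.

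For part (2), I would set $p = \log n$ (which is $\ge (d+1)/(2\log n)$ for $n\ge 2$) and $M = 2eK(\log n)^{1/\alpha}$. The assumptions $K\ge 1$ and $\alpha \le 2$ guarantee $M \ge \sqrt{10\log n}$. The moment hypothesis then gives $(2\mu_p/M)^p \le e^{-p} = 1/n$, so the tail term is at most $(\log n)/n$ and is dominated by $AM^d$. The latter evaluates to a constant multiple of $\vol(S^1)(eK)^d n^{-1}(\log n)^{d/\alpha + (d+2)/2}$, matching \eqref{dza2.eq} once one rewrites the exponent as $(2d/\alpha + d+2)/2$.

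For part (3), $p$ is now a fixed parameter of the hypothesis, so I would balance the two terms of $\epsilon_n^2$ in $M$: the function $M\mapsto AM^d + B(p)M^{-p}$ is minimised at $M_* \asymp (B(p)/A)^{1/(d+p)}$, yielding
\[
\epsilon_n^2(M_*,S,\bar{G}_n) \le C_{d,p}\, A^{p/(d+p)}\,B(p)^{d/(d+p)}.
\]
Substituting the expressions for $A$ and $B(p)$ and collecting the powers of $\log n$ via the identity
\[
\tfrac{(d+2)p}{2(d+p)} + \tfrac{d}{d+p} = \tfrac{dp+2p+2d}{2(d+p)}
\]
reproduces \eqref{dza3.eq} exactly. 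The one point requiring attention is the constraint $M_* \ge \sqrt{10\log n}$: since $M_* \asymp n^{1/(d+p)}$ the constraint holds for all sufficiently large $n$, and for the finitely many small $n$ the inequality \eqref{dza3.eq} can be enforced by inflating $C_{d,\mu,p}$. The main obstacle across all three parts is merely the careful bookkeeping of logarithmic exponents; no new probabilistic or analytic difficulty arises beyond Theorem \ref{dens}.
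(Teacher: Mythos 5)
Your proposal is correct and follows essentially the same route as the paper: specialise Theorem \ref{dens} with $\alpha=\beta=1/2$ and tune $M$ (and, for the tail term, $p$) to the hypothesis. Part (1) is identical. In Part (2) the paper picks $M = 4K(e\log n)^{1/\alpha}$ and minimises the tail term over $p$ (the minimiser is $p = (M/2K)^\alpha/e$), whereas you take the slightly different $M=2eK(\log n)^{1/\alpha}$ and simply evaluate at $p=\log n$; both give a tail of order $(\log n)/n$, and your extra factor $(2e/e^{1/\alpha})^d \le (2\sqrt{e})^d$ (since $1/\alpha\ge 1/2$) is absorbed into $C_d$, so the difference is cosmetic. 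Part (3) is the same balancing argument, and your choice $M_* \asymp (B(p)/A)^{1/(d+p)}$ coincides with the paper's. The one thing you should make explicit in Part (2), as you already do for Part (3), is that the constraint $p=\log n \ge (d+1)/(2\log n)$ (equivalently $n\ge e^{\sqrt{(d+1)/2}}$) may fail for small $n$; for those finitely many $n$ the bound is enforced by inflating $C_d$, which is exactly how the paper handles it.
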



Corollary \ref{dza} is a generalization of \citet[Theorem 
1]{zhang2009generalized} as the latter result can be seen as a special
case of Corollary \ref{dza} for $d = 1$ and $S = [-R, R]$ for some $R
\geq 0$. The fact that $S$ can be arbitrary in Corollary \ref{dza} allows
us to deduce the following important adaptation results of NPMLEs for
estimating Gaussian mixtures whose mixing measures are discrete. These
results are, to the best of our knowledge, novel.  

\begin{theorem}[Near parametric risk for discrete Gaussian
  mixtures] \label{rb.clus}
  Let $X_1, \dots, X_n$ be independent random vectors with $X_i \sim
  f_{G_i}$ and let $\bar{G}_n := (G_1 + \dots + G_n)/n$. Let
  $\hat{f}_n$ be an NPMLE based on $X_1, \dots, X_n$ defined as in
  \eqref{kw}. Then there exists a positive constant $C_d$ depending
  only on $d$ such that whenever $\bar{G}_n$ is a discrete probability
  measure that is supported on a set of cardinality $k$, we have 
\begin{equation}\label{rb.clus.eq}
  \E \shel(\hat{f}_n, f_{\bar{G}_n}) \leq C_d
  \left(\frac{k}{n} \right) (\log n)^{d+1}. 
\end{equation}
\end{theorem}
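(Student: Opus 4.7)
The plan is to deduce Theorem \ref{rb.clus} as an immediate consequence of part (1) of Corollary \ref{dza}, applied with $S$ taken to be the (finite) support of $\bar{G}_n$. Since $\bar{G}_n$ is by hypothesis a discrete probability measure supported on a set $S = \{\theta_1,\dots,\theta_k\} \subseteq \R^d$ of cardinality $k$, the hypothesis of part (1) of Corollary \ref{dza} is satisfied, so \eqref{dza1.eq} yields
\begin{equation*}
  \E \shel(\hat{f}_n, f_{\bar{G}_n}) \leq C_d \, \frac{\vol(S^1)}{n} (\log n)^{d+1}.
\end{equation*}
It remains only to bound $\vol(S^1)$ in terms of $k$.

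By the definition \eqref{s1}, $S^1 = \{x \in \R^d : \dos(x) \leq 1\}$, and for a finite set $S = \{\theta_1,\dots,\theta_k\}$ we have $\dos(x) = \min_{1 \leq j \leq k} \norm{x - \theta_j}$. Hence
\begin{equation*}
  S^1 = \bigcup_{j=1}^k \left\{ x \in \R^d : \norm{x - \theta_j} \leq 1\right\} = \bigcup_{j=1}^k B(\theta_j, 1),
\end{equation*}
so by subadditivity of Lebesgue measure,
\begin{equation*}
  \vol(S^1) \leq \sum_{j=1}^k \vol(B(\theta_j,1)) = k \cdot \omega_d,
\end{equation*}
where $\omega_d = \pi^{d/2}/\Gamma(d/2+1)$ is the volume of the Euclidean unit ball in $\R^d$.

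Substituting this bound into the display from Corollary \ref{dza} and absorbing the dimension-only factor $\omega_d$ into $C_d$ produces the claimed inequality \eqref{rb.clus.eq}. There is no real obstacle here: all of the work has been done in Corollary \ref{dza} (which is itself a consequence of the main Theorem \ref{dens} applied with $S$ equal to the support of $\bar{G}_n$ and an appropriate choice of $M$ of order $\sqrt{\log n}$), and the only new observation needed for the discrete case is the trivial volumetric fact that the unit-fattening of $k$ points has volume at most $k\omega_d$. The point of interest in the statement is conceptual rather than technical: because $S$ in Theorem \ref{dens} is allowed to be \emph{any} compact set (and not just a ball or box as in \citet{zhang2009generalized}), the $k$-dependence enters only through $\vol(S^1)$, which for a finite set scales linearly in $k$ regardless of the diameter of the support. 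This is precisely what yields the near-parametric rate $k/n$ without any prior knowledge of $k$ or of the locations of the atoms.
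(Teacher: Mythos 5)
Your proof is correct and follows exactly the same route as the paper: apply part (1) of Corollary \ref{dza} with $S$ equal to the $k$-point support of $\bar{G}_n$, and then observe that $\vol(S^1) \leq k\,\omega_d$ since $S^1$ is a union of $k$ unit balls. The paper presents precisely this argument in the short remark following the theorem statement, so there is nothing to add.
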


Note that \eqref{rb.clus.eq} directly follows from
\eqref{dza1.eq}. Indeed, when $\bar{G}_n$ is supported on a
finite set $S$ of cardinality $k$, we can apply inequality
\eqref{dza1.eq} to this $S$. It is easy to see then that $\vol(S^1)
\leq C_d k$ which proves \eqref{rb.clus.eq}.  

The significance of Theorem \ref{rb.clus} is the following. 
The right hand side of \eqref{rb.clus.eq} is the parametric
risk $k/n$ up to  an additional multiplicative factor that is
logarithmic in $n$. This inequality shows important adaptation
properties of NPMLEs. When the true unknown Gaussian mixture 
$f_{\bar{G}_n}$ is a discrete mixture having 
$k$ Gaussian components, then every NPMLE nearly (up to
logarithmic factors) achieves the parametric squared Hellinger risk
$k/n$. For a fixed $k$, it is well-known that fitting a $k$-component
Gaussian mixture via maximum likelihood is a non-convex problem that
is usually solved by the EM algorithm. On the other hand, NPMLE is 
given by a convex optimization algorithm, does not require any prior
specification of $k$ and still achieves the $k/n$ rate (up to
logarithmic factors) when the truth is a $k$-component Gaussian
mixture. We would also like to stress here that in Theorem
\ref{rb.clus} (and all other results in the paper), $k$ is allowed to
grow with $n$ (we can write $k(n)$ instead of $k$ but we are sticking
to $k$ for simplicity of notation).   

Note that Theorem \ref{rb.clus} applies to the case of independent
but not identically distributed $X_1, \dots, X_n$ which is more
general compared to the i.i.d assumption. This implies, in particular,
that \eqref{rb.clus.eq} also applies to the case when $X_1, \dots,
X_n$ are i.i.d having density $f^* \in \M$. In this case, we have 
\begin{equation}\label{kri}
 \sup_{f^* \in \M_k} \E \shel(\hat{f}_n, f^*) \leq C_d
  \left(\frac{k}{n} \right) (\log n)^{d+1}. 
\end{equation}
The interesting aspect of this inequality is that it holds for every
$k \geq 1$ and that the estimator $\hat{f}_n$ does not know or use any
information about $k$.  

It is straightforward to prove a minimax lower bound over $\M_k$ that 
complements Theorem \ref{rb.clus}. The following result proves that the
minimax risk over $\M_k$ is bounded from below by a constant multiple
of $k/n$. This implies that the NPMLE is minimax
optimal over $\M_k$ ignoring logarithmic factors of $n$. Moreover,
this optimality is adaptive since MLE does not require knowledge of
$k$. This minimax lower bound is stated for the i.i.d case which
implies that it holds for the more general independent but not
identically distributed case as well.   

\begin{lemma} \label{trilo1}
       For $k \geq 1$, let 
       \begin{equation*}
         \Rs(\M_k) := \inf_{\tilde{f}} \sup_{f \in \M_k} \E_f
         \hel^2(\tilde{f}, f)
       \end{equation*}
  where $\E_f$ denotes expectation when the data $X_1, \dots, X_n$
  are independent observations drawn from the density $f$. Then there
  exists a universal positive constant $C$ such that  
        \begin{equation}\label{trilo1.eq}
	\Rs(\M_k) \geq C \frac{k}{n} \qt{for every $1 \leq k \leq
          n$}. 
	\end{equation}
\end{lemma}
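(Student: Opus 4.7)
I would prove this by a Fano-type argument, using a packing of $\M_k$ obtained by pasting together $k$ essentially independent translation-estimation problems.

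Fix a unit vector $v\in\R^d$ and centers $\mu_1,\ldots,\mu_k\in\R^d$ with pairwise distances at least $R$, where $R$ will be chosen of order $\sqrt{\log k}$. For each $\omega=(\omega_1,\ldots,\omega_k)\in\{0,1\}^k$ and a small parameter $\delta>0$ to be chosen, set
\begin{equation*}
G_\omega:=\frac{1}{k}\sum_{j=1}^k\delta_{\mu_j+\omega_j\delta v},\qquad f_\omega:=f_{G_\omega}\in\M_k.
\end{equation*}
The first step is to show that, for $R$ large enough that the Gaussian components $\phi_d(\cdot-\mu_j)$ have negligible pairwise overlap, both the Hellinger affinity and the KL divergence between $f_\omega$ and $f_{\omega'}$ decouple across the $k$ coordinates. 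Using $\int\sqrt{\phi_d(x-a)\phi_d(x-b)}\,dx=\exp(-\|a-b\|^2/8)$, a direct calculation with a quantitative overlap control gives
\begin{equation*}
\shel(f_\omega,f_{\omega'})\;\geq\; c_1\,\frac{d_H(\omega,\omega')}{k}\bigl(1-e^{-\delta^2/8}\bigr)\;\asymp\;\frac{\delta^2 d_H(\omega,\omega')}{k}
\end{equation*}
for $\delta$ small, while the analogous KL bound $\mathrm{KL}(f_\omega,f_{\omega'})\leq c_2\delta^2 d_H(\omega,\omega')/k$ can be obtained either directly using $\mathrm{KL}(\phi_d,\phi_d(\cdot-\delta v))=\delta^2/2$ or by passing through a $\chi^2$ bound.

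Next I would invoke the Gilbert--Varshamov lemma to select $\Omega\subseteq\{0,1\}^k$ with $|\Omega|\geq 2^{k/8}$ and minimum pairwise Hamming distance at least $k/4$ (done for $k$ large enough; the remaining small values of $k$ can be handled by Le~Cam's two-point method applied to a pair of shifted Gaussians). Within $\{f_\omega:\omega\in\Omega\}$, the minimum squared Hellinger separation is then at least $c_3\delta^2$ while the maximum pairwise KL is at most $c_4\delta^2$. Applying Fano's inequality to the $n$-fold product measures, the minimax testing error stays bounded away from zero as long as $n c_4\delta^2\leq c_5\log|\Omega|\asymp k$. Choosing $\delta^2=c_6 k/n$ for a small enough constant $c_6$ and converting from testing to estimation via the usual reduction (estimator $\hat{f}\leadsto$ nearest $f_\omega$ in Hellinger distance) yields the desired bound $\Rs(\M_k)\geq C k/n$.

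The main obstacle will be making the decoupling across components rigorous: one has to bound the cross terms in $\int \sqrt{f_\omega f_{\omega'}}\,dx$ and in $\int f_\omega\log(f_\omega/f_{\omega'})\,dx$ coming from the nonzero (but exponentially small) overlap between $\phi_d(\cdot-\mu_j)$ and $\phi_d(\cdot-\mu_{j'})$ for $j\neq j'$. Taking $R$ of order $\sqrt{\log k}$ makes each such cross term at most polynomially small in $k$, so it is dominated by the intra-component contributions of size $1/k$ per differing coordinate of $\omega$, and the asymptotics above go through with effective universal constants.
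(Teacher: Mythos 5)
Your proposal is correct but takes a genuinely different route from the paper's. The paper proves Lemma \ref{trilo1} via Assouad's lemma applied to the same hypercube family $\{f_\tau : \tau\in\{0,1\}^k\}$: it lower-bounds $\hel^2(f_\tau, f_{\tau'})$ for all pairs via the same decoupled affinity computation (with the cross-overlap term $(k-1)e^{-M^2/8}$ sent to zero as $M\to\infty$), and upper-bounds $\|P_{f_\tau}-P_{f_{\tau'}}\|_{TV}$ for Hamming-distance-one pairs via Pinsker plus $\mathrm{KL}\leq\chi^2\leq\frac{1}{k}(e^{\delta^2}-1)$, then optimizes $\delta=\sqrt{k/(4n)}$. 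You instead use Fano with a Gilbert--Varshamov packing, which requires controlling the maximum KL over all pairs in the packing (not just Hamming-one pairs) and a separate Le~Cam two-point argument for small $k$ where Gilbert--Varshamov is vacuous. Both routes rest on the identical key estimate---that affinity and KL decouple across well-separated Gaussian components, giving $\shel\asymp\delta^2 d_H/k$ and $\mathrm{KL}\asymp\delta^2 d_H/k$---and give the same $k/n$ rate. Assouad buys you a slightly cleaner argument here (no packing lemma, no small-$k$ case split, and you only need cross-term control for single-bit flips rather than uniformly over the packing), while Fano is perhaps more mechanically familiar. One small remark: since the centers $\mu_1,\dots,\mu_k$ are unconstrained, you can take $R\to\infty$ directly (as the paper does with $M$) rather than tuning $R\asymp\sqrt{\log k}$; this avoids having to quantify exactly how the polynomially-small cross terms compare to $\delta^2\asymp k/n$, which could otherwise require a mild side condition relating $k$ and $n$.
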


Inequality \eqref{kri} and Lemma \ref{trilo1} together imply that
every NPMLE $\hat{f}_n$ is minimax optimal up to logarithmic factors
in $n$ over the class $\M_k$ for every $k \geq 1$. This optimality is
adaptive since the NPMLE requires no information on $k$. The
logarithmic terms in \eqref{kri} are likely suboptimal but we are
unable to determine the exact power of $\log n$ in \eqref{kri}. 


So far we have studied estimation of Gaussian location mixture
densities where the covariance matrix of each Gaussian component is
fixed to be the identity matrix. We next show that the same estimator
(NPMLE defined as in \eqref{kw}) can be modified to estimate arbitrary 
Gaussian mixtures (where the covariance matrices can be different from
identity) provided a  lower bound on the eigenvalues of the covariance
matrices is available. Suppose that $h^*$ is the Gaussian mixture density  
\begin{equation}\label{hst}
  h^*(x) := \sum_{j=1}^k w_j \phi_d(x; \mu_j, \Sigma_j) \qt{for $x \in
    \R^d$}
\end{equation}
where $k \geq 1$, $\mu_1, \dots, \mu_k \in \R^d$ and $\Sigma_1, \dots,
\Sigma_k$ are $d \times d$ positive definite matrices. Here
$\phi_d(\cdot; \mu, \Sigma)$ denotes the $d$-variate normal density
with mean $\mu$ and covariance matrix $\Sigma$. Suppose
$\sigma_{\min}^2$ and $\sigma_{\max}^2$  are two positive numbers
that are, respectively, smaller and larger than all the eigenvalues of
$\Sigma_1, \dots, \Sigma_k$ i.e., 
  \begin{equation}\label{spe}
    \sigma^2_{\min} \leq \min_{1 \leq j \leq k} \lambda_{\min}(\Sigma_j) \leq
    \max_{1 \leq j \leq k} \lambda_{\max}(\Sigma_j) \leq \sigma_{\max}^2
  \end{equation}
Consider the problem estimating $h^*$ from i.i.d observations $Y_1,
\dots, Y_n$. It turns out that for every NPMLE $\hat{f}_n$ computed as
in \eqref{kw} based on
the data $X_1 := Y_1/\sigma_{\min}, \dots, X_n := Y_n/\sigma_{\min}$
can be coverted to a very good estimator for $h^*$ via  
\begin{equation}\label{hnh}
  \hat{h}_n(x) := \sigma_{\min}^{-d} \hat{f}_n(\sigma_{\min}^{-1} x)
  \qt{for $x \in \R^d$}. 
\end{equation}
Our next result shows that the squared Hellinger risk of $\hat{h}_n$
is bounded from above by $(k/n)$ up to a logarithmic factor in $n$
provided that $\sigma_{\max}/\sigma_{\min}$ is bounded by a constant.
This result implies that applying the NPMLE to $Y_i/\sigma_{\min}$
leads to a very accurate estimator even for heteroscedastic normal
observations. 

\begin{theorem}\label{amix}
  Let $Y_1, \dots, Y_n$ be independent and identically distributed
  observations having density $h^*$ defined in \eqref{hst}. Consider
  the estimator $\hat{h}_n$ for $h^*$ defined in \eqref{hnh}. Then
  \begin{equation} \label{amix.eq} 
    \E \shel(\hat{h}_n, h^*) \leq C_d \left(\frac{k}{n} \right)
    \left(\max(1, \tau) \right)^d (\log n)^{d+1}
  \end{equation}
where $\tau := \sqrt{\frac{\sigma_{\max}^2}{\sigma_{\min}^2} - 1}$.  
\end{theorem}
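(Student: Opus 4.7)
The plan is to reduce the heteroscedastic problem for $h^*$ to the homoscedastic (identity-covariance) setting already handled by Corollary \ref{dza} via a linear rescaling.

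\textbf{Step 1 (Rescaling turns $h^*$ into a location mixture).} Set $X_i := Y_i / \sigma_{\min}$ and $\tilde{\mu}_j := \mu_j / \sigma_{\min}$, $\tilde{\Sigma}_j := \Sigma_j/\sigma_{\min}^2$. By \eqref{spe} each $\tilde{\Sigma}_j$ has eigenvalues in $[1, \sigma_{\max}^2/\sigma_{\min}^2]$, so $A_j := \tilde{\Sigma}_j - I_d$ is positive semidefinite with eigenvalues in $[0, \tau^2]$. Hence the convolution identity
\begin{equation*}
N(\tilde\mu_j, \tilde\Sigma_j) \;=\; N(\tilde\mu_j, A_j) \;*\; N(0, I_d)
\end{equation*}
shows that each $X_i$ has density $f_{G^*}$, where $G^*$ is the Gaussian mixture
\begin{equation*}
G^* \;:=\; \sum_{j=1}^{k} w_j \, N(\tilde\mu_j, A_j).
\end{equation*}
Thus the rescaled data $X_1, \dots, X_n$ are i.i.d.\ with common density $f_{G^*}\in \M$, and $\hat f_n$ is an NPMLE for $f_{G^*}$.

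\textbf{Step 2 (Hellinger is invariant under rescaling).} By change of variables $x = \sigma_{\min} u$ and the definition $\hat h_n(x) = \sigma_{\min}^{-d}\hat f_n(x/\sigma_{\min})$, together with the analogous identity $h^*(x) = \sigma_{\min}^{-d} f_{G^*}(x/\sigma_{\min})$ from Step 1,
\begin{equation*}
\shel(\hat h_n, h^*) \;=\; \shel(\hat f_n, f_{G^*}).
\end{equation*}
So it suffices to bound the right-hand side.

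\textbf{Step 3 (Moment bound for $G^*$).} Take the finite set $S := \{\tilde\mu_1, \dots, \tilde\mu_k\}$ so that $\vol(S^1) \leq C_d\, k$. A sample $\theta$ from $G^*$ can be written as $\tilde\mu_J + W$ with $J$ the component index and $W\mid J \sim N(0, A_J)$, so $\dos(\theta) \leq \|W\|$. Since $\|W\|^2$ is stochastically dominated by $\tau^2 \|Z\|^2$ with $Z\sim N(0,I_d)$, standard chi-moment bounds give
\begin{equation*}
\mu_p(\dos, G^*) \;\leq\; \tau \bigl(\E\|Z\|^p\bigr)^{1/p} \;\leq\; C \,\tau\, \sqrt{p+d} \;\leq\; C\,\max(1,\tau)\sqrt{d}\,\sqrt{p}
\end{equation*}
for all $p\geq 1$. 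This is precisely the moment condition \eqref{mmc} with $\alpha = 2$ and $K = C\max(1,\tau)\sqrt{d}$.

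\textbf{Step 4 (Apply Corollary \ref{dza}).} Invoking part (2) of Corollary \ref{dza} on $f_{G^*}$ with this choice of $S$, $\alpha$, $K$, the exponent of $\sqrt{\log n}$ becomes $(2d/2)+d+2 = 2d+2$, so $(\sqrt{\log n})^{2d+2} = (\log n)^{d+1}$, and $(K e^{1/\alpha})^d \leq C_d (\max(1,\tau))^d$. Combining with $\vol(S^1) \leq C_d k$ yields
\begin{equation*}
\E \shel(\hat f_n, f_{G^*}) \;\leq\; C_d \,\frac{k}{n} \,(\max(1,\tau))^d\, (\log n)^{d+1},
\end{equation*}
which by Step 2 is exactly \eqref{amix.eq}.

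The main obstacle is essentially bookkeeping in Step 3: confirming that the Gaussian moment bound $\mu_p(\dos,G^*) = O(\max(1,\tau)\sqrt{p})$ holds uniformly for $p \geq 1$ (including small $p$, where the $\sqrt{d}$ factor matters for tracking the $(\max(1,\tau))^d$ dependence), and verifying that $A_j$ is PSD so that the convolution decomposition in Step 1 is valid; both follow directly from the eigenvalue bound \eqref{spe}.
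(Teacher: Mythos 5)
Your proposal is correct and follows essentially the same route as the paper's own proof: rescale by $\sigma_{\min}$ to express the rescaled density as $f_{G^*}$ for the normal-mixture prior $G^*$, take $S$ to be the $k$ rescaled means, verify the sub-Gaussian moment condition \eqref{mmc} with $\alpha=2$ and $K \asymp \max(1,\tau)$, and invoke Corollary~\ref{dza}(2) together with scale invariance of the Hellinger distance. The only minor cosmetic difference is your slightly more explicit tracking of the $\sqrt{d}$ in the chi-moment bound, which the paper absorbs directly into $C_d$.
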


Theorem \ref{amix} shows that $\hat{h}_n$ achieves
near parametric risk $k/n$ (up to logarithmic factors in $n$) provided
$\tau$ is bounded from above by a constant.  Note that this estimator
$\hat{h}_n$ uses knowledge of $\sigma_{\min}^2$ but does not use
knowledge of any other feature of $h^*$ including the number of
components $k$. In particular, this is an estimation procedure which
(without knowing the value of $k$) achieves nearly the $k/n$ rate for
$k$-component well-conditioned Gaussian mixtures provided a lower
bound $\sigma_{\min}^2$ on eigenvalues is known \textit{a priori}.  

It is natural to compare Theorem \ref{amix} to the main results in
\citet{maugis2011non} where an adaptive procedure is developed for
estimating $k$-component Gaussian mixtures at the rate $k/n$ (up to a
logarithmic factor) without prior knowledge of $k$. The estimator of
\citet{maugis2011non} is very different from ours. They first fit
$m$-component Gaussian mixtures for different values of $m$ and then
select one of these estimators by optimizing a penalized
model-selection criterion. Thus, their procedure is based on solving
multiple non-convex optimization problems. Also, \citet{maugis2011non} impose 
upper and lower bounds on the means and the eigenvalues of the
covariance matrices of the components of the mixture densities. On the
contrary, our method is based on convex optimization and we only need
a lower bound on the eigenvalues of the covariance matrices (no bounds
on the means are necessary). On the flip side, the result of
\citet{maugis2011non}  has much better logarithmic factors compared to
Theorem \ref{amix} and it is also stated in the form of an Oracle
inequality.   

\section{Application to Gaussian
  Denoising}\label{gadeno}  \label{GADENO}
In this section, we explore the role of the NPMLE for estimating the
Oracle Bayes estimator in the Gaussian denoising problem. All the
results in this section are proved in the Section \ref{gadeno.pf} of
the technical appendix at the end of the paper. 

The goal is to estimate unknown vectors $\theta_1,\dots, \theta_n \in
\R^d$ in the compound decision setting where we observe 
independent random vectors $X_1, \dots, X_n$ such that $X_i \sim
N(\theta_i, I_d)$  for $i = 1, \dots, n$. The Oracle estimator is
$\hat{\theta}_i^*, i = 1, \dots, n$ which is given by \eqref{ob}
where $\bar{G}_n$ is the empirical measure corresponding to $\theta_1,
\dots, \theta_n$. 

It is natural to estimate the Oracle Bayes estimator by the Empirical
Bayes estimator $\hat{\theta}_i$ which is defined as
in \eqref{es.int} for $i = 1, \dots, n$. Here $\hat{f}_n$ is any NPMLE
based on $X_1, \dots, X_n$ (defined as in \eqref{kw}). We will gauge
the performance of $\hat{\theta}_i, i = 1, \dots, n$ as an estimator for
$\hat{\theta}_i^*, i = 1, \dots, n$ in terms of the squared error risk measure
$\Rr_n(\hat{\theta}, \hat{\theta}^*)$ defined in \eqref{sqba.int}.     

The main theorem of this section is given below. This is stated in a
form that is similar to the statement of Theorem \ref{dens}. It proves
that, for every compact set $S \subseteq \R^d$ and $M \geq \sqrt{10
  \log n}$, the risk $\Rr_n(\hat{\theta}, \hat{\theta}^*)$ is bounded
from above by $\epsilon^2_n(M, S, \bar{G}_n)$ (defined via
\eqref{si.ra}) up to the additional logarithmic multiplicative factor
$(\log n)^{\max(d, 3)}$. This additional logarithmic factor is a
consequence of our proof technique. 

\begin{theorem} \label{theorem:denoising_theorem}
    Let $X_1, \dots, X_n$ with independent random vectors with $X_i
    \sim N(\theta_i, I_d)$ for $i = 1, \dots, n$. Let $\bar{G}_n$
    denote the empirical measure corresponding to $\theta_1, \dots,
    \theta_n$. Let $\hat{f}_n$ denote an NPMLE based on $X_1, \dots,
    X_n$ defined as in \eqref{kw}. Let $\hat{\theta}_1, \dots,
    \hat{\theta}_n$ be as defined in \eqref{es.int} and let
    $\hat{\theta}_1^*, \dots, \hat{\theta}^*_n$ be as in
    \eqref{ob}. Also, let $\Rr_n(\hat \theta, \hat \theta^*)$ 
    be as in \eqref{sqba.int}. There exists a positive constant $C_d$
    (depending only on $d$) such that for every non-empty compact set
    $S \subseteq \R^d$ and $M \geq \sqrt{10 \log n}$, we have 
	\begin{equation*}
	\Rr_n(\hat \theta, \hat \theta^*) \leq C_d \epsilon_n^2(M,S,
        \bar{G}_n) 
        \left(\sqrt{\log n} \right)^{\max(d-2, 6)}. 
	\end{equation*}
\end{theorem}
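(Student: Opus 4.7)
The plan is to convert the Hellinger bound on $\hel(\hat f_n, f_{\bar G_n})$ supplied by Theorem \ref{dens} into a bound on the average squared score error at the observation points. By Tweedie's formula \eqref{ob} and the definition \eqref{es.int}, we have $\hat{\theta}_i - \hat{\theta}_i^* = \nabla\log\hat f_n(X_i) - \nabla\log f_{\bar G_n}(X_i)$, so $\Rr_n(\hat\theta,\hat\theta^*)$ is the expected empirical $L^2$ norm of the difference of score functions evaluated at the data. First I would perform the pointwise algebra
\begin{equation*}
\left\| \frac{\nabla f(x)}{f(x)} - \frac{\nabla g(x)}{g(x)} \right\|
\le \frac{\|\nabla f(x) - \nabla g(x)\|}{f(x)} + \left\|\frac{\nabla g(x)}{g(x)}\right\|\cdot \frac{|f(x) - g(x)|}{f(x)},
\end{equation*}
applied with $f = \hat f_n$ and $g = f_{\bar G_n}$. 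Using the Gaussian mixture representation $\nabla f_G(x) = -\int(x-\theta)\phi_d(x-\theta)\,dG(\theta)$, the score $\|\nabla f_G(x)/f_G(x)\|$ is bounded by $\|x\|$ plus the norm of the posterior mean, hence grows at most polynomially in $\|x\|$.

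The core technical step is to convert the $L^2$-type content of $\hel(\hat f_n, f_{\bar G_n})$ into pointwise control of both $|\hat f_n - f_{\bar G_n}|(x)$ and $\|\nabla\hat f_n - \nabla f_{\bar G_n}\|(x)$, with the right Gaussian tail behavior in $x$ to absorb division by $\hat f_n(X_i)$. Since Gaussian mixtures are analytic and well-behaved, a small Hellinger distance should force a small $L^\infty$ distance on compact sets of diameter $O(\sqrt{\log n})$ with only polylogarithmic penalty, and the same for the gradients. I would establish (or invoke, as the auxiliary result mentioned in Section \ref{zhafo}) a lemma of the form: for all Gaussian location mixtures $f$ and $g$ and all $x$ with $\|x\| \le C\sqrt{\log n}$,
\begin{equation*}
|f(x) - g(x)|^2 + \|\nabla f(x) - \nabla g(x)\|^2 \le C_d\, \phi_d(x)\cdot (\log n)^{r}\, \hel^2(f,g)
\end{equation*}
for some explicit exponent $r$ depending on $d$. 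The Gaussian weight on the right-hand side is essential because the denominator $\hat f_n(X_i)$ decays at most like a Gaussian in $\|X_i\|$, and without this weight the ratio would blow up on the tails.

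With the pointwise lemma in hand, the rest is a truncation argument. Let $E_t$ be the high-probability event $\{\hel^2(\hat f_n, f_{\bar G_n}) \le t^2 C_d\, \epsilon_n^2(M, S, \bar G_n)\}$, which by Theorem \ref{dens} has complement of probability at most $2n^{-t^2}$. On $E_t$, combining the pointwise lemma with the preceding algebraic bound yields
\begin{equation*}
\|\hat{\theta}_i - \hat{\theta}_i^*\|^2 \le C_d\, (1 + \|X_i\|^2)^{a}\, \frac{\phi_d(X_i)}{\hat f_n(X_i)^2}\, (\log n)^{r}\, \hel^2(\hat f_n, f_{\bar G_n})
\end{equation*}
for some constant $a$, and averaging over $i$ and integrating in $X_1,\dots,X_n$ against the true joint density (each $X_i \sim N(\theta_i, I_d)$, so its marginal density integrates nicely against $\phi_d$) produces a bound of order $\epsilon_n^2\cdot(\log n)^{r'}$ for a suitable $r'$. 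On the complementary low-probability event, I would use crude polynomial-in-$\|X_i\|$ upper bounds on $\|\hat\theta_i\|$ and $\|\hat\theta_i^*\|$ (exploiting that any posterior mean lies in the closed convex hull of the support of the mixing measure, and that NPMLE mixing measures can be taken supported in the convex hull of $\{X_1,\dots,X_n\}$) together with Gaussian moment bounds on $X_i$ to absorb this contribution into the stated rate.

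The main obstacle will be the pointwise Hellinger-to-$L^\infty$ conversion lemma with both the correct Gaussian weight and sharp polylog dependence, and its version for gradients. This is precisely what forces the extra $(\sqrt{\log n})^{\max(d-2,6)}$ factor in the theorem's conclusion relative to the pure density result of Theorem \ref{dens}: the exponent $\max(d-2,6)$ is the penalty one pays when moving from an integrated Hellinger bound to a sup-norm bound on gradients over the relevant Gaussian-tailed region in $\R^d$.
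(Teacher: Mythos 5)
Your proposal correctly identifies the overall shape of the argument (condition on the Hellinger concentration event from Theorem \ref{dens}, bound the empirical score discrepancy, handle the low-probability complement crudely), but the central technical device you propose---a \emph{pointwise} Hellinger-to-$L^\infty$ conversion lemma with a standard-Gaussian weight---is false, and its failure is not a matter of logarithmic slack; it breaks by polynomial factors in $n$. Take $f(\cdot)=\phi_d(\cdot-\mu)$ and $g(\cdot)=\phi_d(\cdot-\mu-he_1)$ with $h$ small. Then $\hel^2(f,g)=2\bigl(1-e^{-h^2/8}\bigr)\approx h^2/4$, while $\|\nabla f(\mu)-\nabla g(\mu)\|\approx h\phi_d(0)$, so the left side of your lemma at $x=\mu$ is of order $h^2\phi_d(0)^2$. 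Your bound would require
\begin{equation*}
h^2\phi_d(0)^2 \;\le\; C_d\,\phi_d(\mu)\,(\log n)^r\cdot\frac{h^2}{4},
\qquad\text{i.e.}\qquad
(2\pi)^{-d/2}e^{\|\mu\|^2/2}\le \tfrac{C_d}{4}(\log n)^r,
\end{equation*}
which is violated already for $\|\mu\|$ a small fraction of $\sqrt{\log n}$, well inside your allowed region $\|x\|\le C\sqrt{\log n}$, and catastrophically so near $\|\mu\|=\sqrt{10\log n}$ where the left side is of order $n^5$. The weight must track the local density $f_{\bar G_n}(x)$ rather than $\phi_d(x)$, because both mixtures (and the data) can be concentrated arbitrarily far from the origin---indeed that is the whole point of allowing a general compact set $S$.

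Even after repairing the weight, your scheme still fails at the next step. Dividing a pointwise estimate of $\|\nabla\hat f_n(X_i)-\nabla f_{\bar G_n}(X_i)\|$ by $\hat f_n(X_i)^2$ costs a factor of order $\rho_n^{-2}=n^2(2\pi)^d$ in the worst case, since the NPMLE guarantees only $\hat f_n(X_i)\ge\rho_n=(2\pi)^{-d/2}/n$. A density weight $f_{\bar G_n}(X_i)$ in the lemma is $O(1)$ in expectation, so the resulting risk bound carries an extraneous factor of $n$ that no polylogarithmic exponent can cancel. The paper avoids this precisely by never attempting a pointwise conversion. Theorem \ref{fcc} controls the $\rho$-truncated score discrepancy $\int\bigl\|\nabla f_G/(f_G\vee\rho)-\nabla f_{G_0}/(f_{G_0}\vee\rho)\bigr\|^2 f_{G_0}$ directly in terms of $\hel^2(f_G,f_{G_0})$ with only a $(\log(1/\rho))^3\vee|\log\hel|$ penalty; this is a weighted $L^2$ statement obtained by Fourier methods and integration by parts (Lemma \ref{kfo}), not a sup-norm estimate, and it is exactly the place where the cancellation you are trying to capture pointwise actually occurs in integrated form. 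To use this for the random $\hat f_n$, the paper covers the Hellinger ball by a finite net (Corollary \ref{rgnt}, which needs the gradient part \eqref{mm.eq_grad_f} of Theorem \ref{mm}), applies Theorem \ref{fcc} to each deterministic member of the net, and controls the fluctuation via Gaussian concentration (Lemma \ref{devs}); the additional $\rho$-truncation bias term $T_{f_{\bar G_n}}(\mathbf X)-T_{f_{\bar G_n}}(\mathbf X,\rho_n)$ is handled separately in Lemma \ref{dco}. Your blind proposal does not contain any of the truncation, $L^2$-weighting, or covering structure, and the step it puts in their place does not hold.
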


\begin{remark}
For the case of $d=1$, \citet[Theorem 5]{jiang2009general} established
a related result on the risk of $\hat \theta_i$ in comparison to $\hat
\theta_i^*$. The risk used therein is 
\begin{equation}\label{jidis}
\left[ \E \left( \frac{1}{n} \sum_{i=1}^n |\hat{\theta}_i -
    \theta_i|^2 \right) \right]^{1/2} - \left[  \E \left( \frac{1}{n}
    \sum_{i=1}^n |\hat{\theta}^*_i - \theta_i|^2 \right)
\right]^{1/2}. 
\end{equation}
\citet{jiang2009general} investigated the above risk in the case where
$d = 1$ and $S = [-R, R]$ for some $R \geq 0$. The statement of
Theorem \ref{theorem:denoising_theorem} and its proof as well as the
following corollary are inspired by \citet[Proof of Theorem
5]{jiang2009general}.   
\end{remark}

Under specific reasonable assumptions on $\bar{G}_n$, it is possible
to choose $M$ and $S$ explicitly which leads to the following result
that is analogous to Corollary \ref{dza}. 

\begin{corollary}\label{dna}
Consider the same setting and notation as in Theorem
\ref{theorem:denoising_theorem}. Below $C_d$ denotes a positive
constant depending on $d$ alone.   
  \begin{enumerate}
  \item For every compact set $S \subseteq \R^d$ containing all the
    points $\theta_1, \dots, \theta_n$, we have 
\begin{equation}\label{dna1.eq}
\Rr_n(\hat \theta, \hat \theta^*) \leq C_d \frac{\vol(S^1)}{n}
\left(\sqrt{\log n}
\right)^{\max(3d, 2d+8)}. 
\end{equation}
\item For every compact subset $S \subseteq \R^d$ and real
  numbers $0 < \alpha \leq 2$ and $K \geq 1$ satisfying \eqref{mmc},
  we have   
\begin{equation}\label{dna2.eq}
\Rr_n(\hat \theta, \hat \theta^*) \leq C_d \frac{\vol(S^1) (K
    e^{1/\alpha})^d}{n} \left(\sqrt{\log n} \right)^{\max
    \left(\frac{2d}{\alpha} + 2d, \frac{2d}{\alpha} + d + 8 \right)}. 
\end{equation}
\item Suppose $S \subseteq \R^d$ is compact and real numbers $\mu > 0$
  and $p > 0$ are such that $\mu_p(\dos, \bar{G}_n) \leq 
  \mu$. Then there exists a positive constant $C_{d, \mu, p}$
  (depending only on $d, 
  \mu$ and $p$) such that 
  \begin{equation} \label{dna3.eq}
\Rr_n(\hat \theta, \hat \theta^*) \leq C_{d, \mu, p}
    \left(\frac{\vol(S^1)}{n} \right)^{\frac{p}{p+d}} \left(\sqrt{\log n} 
    \right)^{\frac{2 d + 2p + dp}{p+d} + \max(d-2, 6) }. 
  \end{equation}
  \end{enumerate}
\end{corollary}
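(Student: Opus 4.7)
The plan is to apply Theorem \ref{theorem:denoising_theorem} directly and then bound $\epsilon_n^2(M, S, \bar{G}_n)$ under each hypothesis, choosing $M$ appropriately in each case. Crucially, the quantity $\epsilon_n^2(M, S, \bar{G}_n)$ appearing here is exactly the one bounded in the proof of Corollary \ref{dza}, so the work reduces to carrying those estimates through and then multiplying by the extra factor $(\sqrt{\log n})^{\max(d-2,6)}$ supplied by Theorem \ref{theorem:denoising_theorem}.

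For part (1), the support of $\bar{G}_n$ lies in $S$, so $\mu_p(\dos, \bar{G}_n) = 0$ for every $p > 0$ and the inf-over-$p$ term in \eqref{si.ra} vanishes. Taking $M = \sqrt{10\log n}$ (the smallest permissible value) gives
\[
\epsilon_n^2(M,S,\bar{G}_n) = \vol(S^1)\frac{(10\log n)^{d/2}}{n}(\sqrt{\log n})^{d+2} \leq C_d \frac{\vol(S^1)}{n}(\sqrt{\log n})^{2d+2}.
\]
Multiplying by $(\sqrt{\log n})^{\max(d-2,6)}$ consolidates the exponent of $\sqrt{\log n}$ into $\max(3d,2d+8)$, which yields \eqref{dna1.eq}.

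For parts (2) and (3), I would invoke exactly the choices of $M$ (and the $p$ at which the infimum in \eqref{si.ra} is evaluated) used in the proof of Corollary \ref{dza}(2) and (3) to bound $\epsilon_n^2(M,S,\bar{G}_n)$. These choices --- roughly $p \asymp \log n$ and $M \asymp K e^{1/\alpha}(\log n)^{1/\alpha}$ in part (2), and a balancing choice of $M$ between the two summands in part (3) --- produce
\[
\epsilon_n^2(M,S,\bar{G}_n) \leq C_d \frac{\vol(S^1)(Ke^{1/\alpha})^d}{n}(\sqrt{\log n})^{2d/\alpha+d+2}
\]
and
\[
\epsilon_n^2(M,S,\bar{G}_n) \leq C_{d,\mu,p}\left(\frac{\vol(S^1)}{n}\right)^{p/(p+d)}(\sqrt{\log n})^{(2d+2p+dp)/(p+d)}
\]
respectively. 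Multiplying each by $(\sqrt{\log n})^{\max(d-2,6)}$ gives the logarithmic exponents $\max(2d/\alpha+2d,\, 2d/\alpha+d+8)$ and $(2d+2p+dp)/(p+d)+\max(d-2,6)$, matching \eqref{dna2.eq} and \eqref{dna3.eq}.

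The only non-routine aspect is bookkeeping the exponents of $\sqrt{\log n}$ when combining the bound from Theorem \ref{theorem:denoising_theorem} with the $\epsilon_n^2$ estimates; there is no genuine analytic obstacle since the optimization over $M$ and $p$ in the definition of $\epsilon_n^2$ has already been performed in the proof of Corollary \ref{dza}. Accordingly, I would simply cite those computations rather than redo them, and verify in each of the three cases that the resulting exponent of $\sqrt{\log n}$ consolidates in the $\max$-form claimed.
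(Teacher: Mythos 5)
Your approach is exactly the one the paper takes: apply Theorem \ref{theorem:denoising_theorem} and recycle the bounds on $\epsilon_n^2(M,S,\bar{G}_n)$ already established in the proof of Corollary \ref{dza}, then absorb the extra $(\sqrt{\log n})^{\max(d-2,6)}$ factor into the exponent. Your exponent bookkeeping in all three cases is correct ($2d+2+\max(d-2,6)=\max(3d,2d+8)$, etc.), so this is sound.
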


Corollary \ref{dna} has interesting consequences. Inequality
\eqref{dna1.eq} states that when $\bar{G}_n$ is supported on a fixed
compact set $S$, then the risk $\Rr_n(\hat{\theta}, \hat{\theta}^*)$
is parametric upto logarithmic multiplicative factors in $n$. This is
especially interesting because $\hat{\theta}_1, \dots, \hat{\theta}_n$
do not use any knowledge of $S$. 

Corollary \ref{dna} also leads to the following result which gives an
upper bound for $\Rr_n(\hat{\theta}, \hat{\theta}^*)$ when $\theta_1,
\dots, \theta_n$ are clustered into $k$ groups. 
\begin{proposition}\label{pkde}
  Consider the same setting and notation as in Theorem
  \ref{theorem:denoising_theorem}. Suppose that $\theta_1, \dots,
  \theta_n$ satisfy
  \begin{equation}\label{kga}
    \max_{1 \leq i \leq n} \min_{1 \leq j \leq k} \norm{\theta_i -
      a_j} \leq R
  \end{equation}
  for some $a_1, \dots, a_k \in \R^d$ and $R \geq 0$. Then  
  \begin{equation}
    \label{pkde.eq}
    \Rr_n(\hat \theta, \hat \theta^*) \leq C_d \left(1 + R \right)^d
    \left(\frac{k}{n} \right) \left(\sqrt{\log n} \right)^{\max(3d, 2d+8)}.   
  \end{equation}
\end{proposition}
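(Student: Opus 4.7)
The plan is to apply part (1) of Corollary \ref{dna} with a carefully chosen compact set $S$ and then bound the volume $\vol(S^1)$ in terms of $k$ and $R$.

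First I would choose $S$ to be the union of the $k$ closed Euclidean balls of radius $R$ about the cluster centers, namely $S := \bigcup_{j=1}^k \bar{B}(a_j, R)$. By hypothesis \eqref{kga}, each $\theta_i$ lies within distance $R$ of some $a_j$, so every $\theta_i$ is contained in $S$, and hence $\bar{G}_n$ is supported on this compact set $S$. This makes \eqref{dna1.eq} applicable.

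Next I would estimate $\vol(S^1)$. Since $\dos(x) = \inf_{u \in S}\norm{x-u}$, a point $x$ lies in $S^1$ iff it is within distance $1$ of some $\bar{B}(a_j, R)$, which happens iff $\norm{x - a_j} \leq R+1$ for some $j$. Therefore
\[
S^1 \subseteq \bigcup_{j=1}^k \bar{B}(a_j, R+1),
\]
and by subadditivity of Lebesgue measure,
\[
\vol(S^1) \leq k\, \vol(\bar{B}(0, R+1)) = k\, \omega_d (R+1)^d \leq C_d \, k\, (1+R)^d,
\]
where $\omega_d$ is the volume of the unit ball in $\R^d$.

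Finally, plugging this bound on $\vol(S^1)$ into the conclusion \eqref{dna1.eq} of Corollary \ref{dna} gives
\[
\Rr_n(\hat\theta, \hat\theta^*) \leq C_d \frac{\vol(S^1)}{n} \bigl(\sqrt{\log n}\bigr)^{\max(3d, 2d+8)} \leq C_d (1+R)^d \left(\frac{k}{n}\right) \bigl(\sqrt{\log n}\bigr)^{\max(3d, 2d+8)},
\]
which is the desired inequality \eqref{pkde.eq} (absorbing the constant $\omega_d$ into $C_d$). There is no real obstacle here once the correct $S$ is chosen; the only mild subtlety is recognizing that the fattening $S^1$ of a union of balls of radius $R$ is itself contained in a union of $k$ balls of radius $R+1$, which is what gives the proportionality in $k$ rather than an exponential blow-up in the number of centers. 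This is precisely the situation highlighted in the discussion after Theorem \ref{dens}, where the ability to take $S$ to be a union of separated small sets (rather than a bounding box of all the $\theta_i$'s) is what yields the near-parametric $k/n$ rate.
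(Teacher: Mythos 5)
Your proof is correct and is essentially identical to the paper's own argument: both choose $S = \bigcup_{j=1}^k B(a_j, R)$, invoke inequality \eqref{dna1.eq} from Corollary \ref{dna}, and bound $\vol(S^1) \leq C_d k (1+R)^d$ by subadditivity over the $k$ balls of radius $R+1$.
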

The assumption \eqref{kga} means that $\theta_1, \dots, \theta_n$ can
be grouped into $k$ balls each of radius $R$ centered at the
points $a_1, \dots, a_k$. When $R$ is not large, this implies
$\theta_1, \dots, \theta_n$ 
can be clustered into $k$ groups. In particular, when $R = 0$, the
assumption \eqref{kga} implies that $\theta_1, \dots, \theta_n$ take
only $k$ distinct values. In words, Proposition \ref{pkde} states that
when $\theta_1, \dots, \theta_n$ are clustered into $k$ groups, then  
$\hat{\theta}_1, \dots, \hat{\theta}_n$ estimate $\hat{\theta}^*_1,
\dots, \hat{\theta}^*_n$ in squared error loss with accuracy $k/n$ up 
to logarithmic multiplicative factors in $n$. The notable aspect
about this result is that the estimator does not use any knowledge of
$k$ and is tuning-free. It is well-known in the clustering literature
that choosing the optimal number of clusters is challenging
(see, for example, \citet{tibshirani2001estimating}). It is therefore
helpful that $\hat{\theta}_1, \dots, \hat{\theta}_n$
achieves nearly the $k/n$ rate in \eqref{kga} without explicitly
getting into the pesky problem of estimating $k$. Moreover,
$\hat{\theta}_1, \dots, \hat{\theta}_n$ is given by convex
optimization (on the other hand, one usually needs to deal with
non-convex optimization problems for solving clustering-type problems
even if the number of clusters $k$ is known). 

There exist techniques for estimating the number of clusters and
subsequently employing algorithms for minimizing the $k$-means
objective (notably, the ``gap statistic'' of
\citet{tibshirani2001estimating}). However, we are not aware of any
result analogous to Proposition \ref{pkde} for such techniques. There
also exist other techniques for clustering based on convex
optimization such as the method of convex clustering (see, for
example, \citet{lindsten2011just, hocking2011clusterpath,
  chen2015convex}) which is based on a fused lasso-type penalized
optimization. This method requires specification of tuning
parameters. While interesting theoretical development exists for
convex clustering (see, for example, \citet{radchenko2014consistent,
  zhu2014convex, tan2015statistical, wu2016new, wang2016sparse}), to
the best of our knowledge, a result similar to Proposition \ref{pkde}
is unavailable.   

It is straightforward to see that it is impossible to devise
estimators that achieve a rate that is faster than $k/n$ for the risk
measure $\Rr_n$. We provide a proof of this via a minimax lower bound
in the following lemma. The logarithmic factors can probably be
improved in Proposition \ref{pkde} but we are unable to do so at the
present moment. For the lower bound, let $\Theta_{n, d, k}$ denote the
class of all $n$-tuples $(\theta_1, \dots, \theta_n)$ with each
$\theta_i \in \R^d$ and such that the number of distinct vectors among
$\theta_1, \dots, \theta_n$ is equal to $k$. Equivalently, $\Theta_{n,
  d, k}$ consists of all $n$-tuples $(\theta_1, \dots, \theta_n)$
whose empirical measure is supported on a set of cardinality $k$. The
minimax risk for estimating $\hat{\theta}_1^*, \dots,
\hat{\theta}_n^*$ with $(\theta_1, \dots, \theta_n) \in \Theta_{n, d,
  k}$ in squared error loss  from the observations $X_1, \dots, X_n$
can be defined as    
\begin{equation*}
  \Rs^*(\Theta_{n, d, k}) := \inf_{\tilde{\theta}_1, \dots,
    \tilde{\theta}_n} \sup_{(\theta_1, \dots, \theta_n) \in \Theta_{n,
    d, k}} \E \left[\frac{1}{n} \sum_{i=1}^n \norm{\tilde{\theta}_i -
    \hat{\theta}_i^*}^2 \right] 
\end{equation*}
The following result proves that $\Rs^*(\Theta_{n,d,k})$ is at least
$Ck/n$ for a universal positive constant $C$. 
\begin{lemma}\label{denlo}
  Let $\Theta_{n, d, k}$ and $\Rs^*(\Theta_{n, d, k})$ be defined as
  above. There exists a universal positive constant $C$ such that 
  \begin{equation}\label{denlo.eq}
    \Rs^*(\Theta_{n, d, k}) \geq C \frac{k}{n} \qt{for every $1 \leq k
      \leq n$}. 
  \end{equation}
\end{lemma}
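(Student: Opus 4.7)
My plan is to reduce the problem to the classical minimax lower bound for estimating $k$ Gaussian means, using the observation that when the distinct values of $\theta_1, \dots, \theta_n$ are well separated, the Oracle Bayes estimator $\hat\theta_i^*$ is superpolynomially close to $\theta_i$, so any estimator of $(\hat\theta_i^*)_{i=1}^n$ is, up to negligible error, an estimator of $(\theta_i)_{i=1}^n$.

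Fix $k$ points $a_1, \dots, a_k \in \R^d$ with pairwise separation at least $D := C_0 \sqrt{\log n}$ for a sufficiently large absolute constant $C_0$, e.g.\ $a_j := jDe_1$. For simplicity assume $k \mid n$ (the general case follows by rounding). Partition $\{1, \dots, n\}$ into equal-sized blocks $I_1, \dots, I_k$ of size $n/k$ and pick a unit vector $v$, taking $v := e_2$ if $d \geq 2$ and $v := e_1$ if $d = 1$. For each $\tau \in \{-1,+1\}^k$, define $\theta_i^\tau := a_j + \tau_j \delta v$ for $i \in I_j$, with $\delta := \tfrac{1}{4}\sqrt{k/n}$. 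Since $\delta \ll D$, the distinct values of $(\theta_i^\tau)_{i=1}^n$ are exactly $\mu_1^\tau, \dots, \mu_k^\tau$ where $\mu_j^\tau := a_j + \tau_j \delta v$, so each configuration belongs to $\Theta_{n,d,k}$.

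The crux of the reduction is the triangle inequality $\|\tilde\theta_i - \theta_i^\tau\|^2 \leq 2\|\tilde\theta_i - \hat\theta_i^{*,\tau}\|^2 + 2\|\hat\theta_i^{*,\tau} - \theta_i^\tau\|^2$, which after averaging over $i$, taking expectation, and taking supremum over $\tau$ yields $\inf_{\tilde\theta} \max_\tau \E_\tau[\tfrac{1}{n}\sum_i\|\tilde\theta_i - \theta_i^\tau\|^2] \leq 2\Rs^*(\Theta_{n,d,k}) + 2\eta_n^2$, where $\eta_n^2 := \sup_\tau \E_\tau[\tfrac{1}{n}\sum_i \|\hat\theta_i^{*,\tau} - \theta_i^\tau\|^2]$. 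For the left-hand side I apply Assouad's $\ell_2^2$ lemma to the hypothesis cube $\{\boldsymbol\theta^\tau : \tau \in \{-1,+1\}^k\}$: one-flip neighbors $\tau \sim \tau'$ satisfy $\|\boldsymbol\theta^\tau - \boldsymbol\theta^{\tau'}\|^2 = 4(n/k)\delta^2$ and $\mathrm{KL}(P_\tau \| P_{\tau'}) = 2n\delta^2/k$, which for my choice of $\delta$ equals $1/8$; Pinsker's inequality then gives total variation at most $1/4$, and Assouad delivers a lower bound of order $k/n$ on $\inf_{\tilde\theta} \max_\tau \E_\tau[\tfrac{1}{n}\sum_i\|\tilde\theta_i - \theta_i^\tau\|^2]$. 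Combining gives $\Rs^*(\Theta_{n,d,k}) \geq ck/n - \eta_n^2$ for a universal $c > 0$.

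The main obstacle is controlling $\eta_n^2$. Writing $Z_i := X_i - \mu_{j(i)}^\tau \sim N(0, I_d)$ and expanding $\hat\theta_i^{*,\tau}$ via Bayes' rule expresses $\hat\theta_i^{*,\tau} - \mu_{j(i)}^\tau$ as a weighted average of $\mu_{j'}^\tau - \mu_{j(i)}^\tau$ with weights proportional to $\exp\bigl(-\tfrac{1}{2}\|\mu_{j(i)}^\tau - \mu_{j'}^\tau\|^2 - \langle Z_i, \mu_{j(i)}^\tau - \mu_{j'}^\tau\rangle\bigr)$. A union bound on the Gaussian tails of these inner products shows that on an event of probability at least $1 - n^{-10}$ every off-cluster weight is at most $\exp(-D^2/16)$, giving $\|\hat\theta_i^{*,\tau} - \theta_i^\tau\| \leq kD\exp(-D^2/16)$; on the complementary event the convex-combination representation of $\hat\theta_i^{*,\tau}$ yields a crude deterministic bound of order $O(kD)$. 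Choosing $C_0$ large enough makes $\eta_n^2$ smaller than any prescribed negative power of $n$, in particular $o(k/n)$. The delicate point is matching the tail probability to the deterministic blow-up so that the exceptional event contributes negligibly to the expectation; this is the reason for taking $D$ at a logarithmic rather than constant scale.
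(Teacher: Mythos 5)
Your proof follows the same architecture as the paper's: a triangle-type reduction from estimating $\hat\theta_i^*$ to estimating $\theta_i$, an Assouad hypercube with perturbation scale $\delta\asymp\sqrt{k/n}$, and control of the oracle-bias term $\eta_n^2:=\sup_\tau\E_\tau\big[\tfrac{1}{n}\sum_i\|\hat\theta_i^*-\theta_i\|^2\big]$. The difference lies in how $\eta_n^2$ is killed. You fix the inter-cluster separation at $D\asymp\sqrt{\log n}$, union-bound the off-cluster posterior weights, and balance the exceptional-event probability against a crude worst-case deviation of order $kD$ to argue $\eta_n^2=o(k/n)$. This works, but as you flag it is delicate bookkeeping, and it also implicitly requires $n$ large: for small $n$ the $\sqrt{\log n}$ scale is too short, so $\eta_n^2$ need not be dominated by $k/n$ and the universal constant requires separate handling there. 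The paper sidesteps all of this by a cleaner move. It proves a one-shot bound (its Lemma \ref{obg}) showing the oracle bias decays like $M e^{-M^2/8}$ in the cluster separation $M$, observes that the resulting lower bound $\Rs^*(\Theta_{n,d,k})\ge ck/n - C(k)\,M e^{-M^2/8}$ holds for \emph{every} $M>2$, and then simply lets $M\to\infty$ to annihilate the bias term before tuning $\delta$. This produces a universal constant with no small-$n$ caveat, no coordination of $D$ against tail probabilities, and no union bound over $n\cdot k$ events. Both proofs rest on the same probabilistic fact---far-apart clusters make the posterior concentrate on the correct center---but the $M\to\infty$ limit decouples that analysis from the Assouad calculation, and it is a simplification worth having in your toolbox: when a nuisance term is monotone in a free geometric parameter and you only need a lower bound, send the parameter to its favorable extreme instead of balancing it quantitatively.
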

Lemma \ref{denlo}, together with Proposition \ref{pkde}, implies that
$\hat{\theta}_1, \dots, \hat{\theta}_n$ is nearly minimax optimal (up
to logarithmic multiplicative factors) for estimating
$\hat{\theta}_1^*, \dots, \hat{\theta}_n^*$ over the class $\Theta_{n,
  d, k}$. Moreover, this optimality is adaptive over $k$ because the
estimator does not use any knowledge of $k$.   

Before closing this section, let us remark that Theorem
\ref{theorem:denoising_theorem} can be generalized to work with
certain kinds of heteroscedasticity in the Gaussian
observations. Concretely, consider the problem of heteroscedastic
Gaussian denoising where the goal is to estimate $\theta_1, \dots,
\theta_n$ from independent observations $X_1, \dots, X_n$ generated
according to  
\begin{equation}\label{hetdi}
  X_i \sim N(\theta_i, \Sigma_i)
\end{equation}
for some \textit{unknown} covariance matrices $\Sigma_1, \dots,
\Sigma_n$. We work with the assumption that $\Sigma_i - I_d$ is positive
semi-definite (or equivalently, $\lambda_{\min}(\Sigma_i) \geq 1$) for
each $i = 1, \dots, n$. If $\Sigma_i -
\sigma^2_{\min} I_d$  is positive semi-definite for some other known
positive constant $\sigma^2_{\min}$, then one can reduce this to the
previous case by simply scaling the observations $X_1, \dots, X_n$ by
$\sigma^2_{\min}$. 

Note that we are considering the setting where $\Sigma_1, \dots,
\Sigma_n$ are unknown (satisfying $\Sigma_i - I_d$ is positive
semi-definite). This is different from the setting where $\Sigma_1,
\dots, \Sigma_n$ are exactly known and there has been previous work in
Empirical Bayes estimation under this latter assumption (see, for
example, \citet{xie2012sure} and \citet{weinstein2018group}). 

Under the assumption that $\Sigma_i - I_d$ is positive semi-definite,
it is clear that \eqref{hetdi} is equivalent to the statement that $X_i \sim
f_{G^0_i}$ where $G^0_i$ is the $N(\theta_i, \Sigma_i - I_d)$
distribution (here we take $N(\theta_i, \Sigma_i - I_d)$ to be the
Dirac probability measure centered at $\theta_i$ if $\Sigma_i =
I_d$). Therefore, as we have seen in Section \ref{hela}, the estimator
$\hat{f}_n$ based on $X_1, \dots, X_n$ (defined as in
\eqref{kw}) will be an accurate estimator of $f_{\bar{G}^0_n}$ where  
\begin{equation}\label{gengeb}
  \bar{G}^0_n := \frac{1}{n} \sum_{i=1}^n N(\theta_i, \Sigma_i - I_d)
\end{equation}
under reasonable assumptions on $\theta_1, \dots, \theta_n$ provided
$\sigma_{\max}$ is not too large (here $\sigma^2_{\max}$  is any upper
bound on $\max_{1 \leq i \leq n} \lambda_{\max} (\Sigma_i)$). As a
result, it is reasonable to believe that $\hat{\theta}_1, \dots,
\hat{\theta}_n$ (defined in \eqref{es.int})  will be close to
$\breve{\theta}_1^*, \dots, \breve{\theta}_n^*$ where 
\begin{equation}\label{es.bre}
  \breve{\theta}_i^* := X_i + \frac{\nabla
    f_{\bar{G}_n^0}(X_i)}{f_{\bar{G}_n^0}(X_i)} \qt{for $i = 1, \dots,
    n$}. 
\end{equation}
The next result rigorizes this intuition. Note that
$\breve{\theta}_i^*$ is also given by
\begin{equation}\label{hor}
  \breve{\theta}_i^* = \E (\theta | X = X_i) \qt{where $\theta \sim
    \bar{G}_n^0$ and $X|\theta \sim N(\theta, I_d)$}. 
\end{equation}
Intuitively, it makes sense that $\hat{\theta}_i$ estimates
$\breve{\theta}_i^*$ because an observation $X \sim N(\theta_0,
\Sigma)$ (with $\Sigma - I_d$ being positive semi-definite) can also be
thought of as being generated from $X | \theta \sim N(\theta, I_d)$
with $\theta \sim N(\theta_0, \Sigma - I_d)$. However, it should be
noted that $\breve{\theta}_1^*, \dots, \breve{\theta}_n^*$ is not the
best separable estimator for $\theta_1, \dots, \theta_n$ in the
heteroscedastic setting and this is explained later in this section
(after Proposition \ref{pkde.h}). 

\begin{theorem} \label{rgende}
    Let $X_1, \dots, X_n$ be independent random vectors with $X_i
    \sim N(\theta_i, \Sigma_i)$ for some covariance matrices
    $\Sigma_1, \dots, \Sigma_n$ with $\Sigma_i - I_d$ being positive
    semi-definite for every $i$. Suppose 
    $\sigma_{\max}^2$ is such that $\max_{1 \leq j \leq k}
    \lambda_{\max}(\Sigma_j) \leq \sigma^2_{\max}$ where
    $\lambda_{\max}(\Sigma_j)$ denotes the largest eigenvalue of
    $\Sigma_j$. Let $\hat{\theta}_1, \dots, \hat{\theta}_n$ be as
    defined in \eqref{es.int} and $\breve{\theta}_1^*, \dots,
    \breve{\theta}_n^*$ be as defined in \eqref{es.bre}. Then there
    exists a positive constant $C_d$ (depending only on $d$) such that
    for every non-empty compact set $S \subseteq \R^d$ and $M \geq
    \sqrt{10 \log n}$, we have 
	\begin{equation*}
	\Rr_n(\hat \theta, \breve \theta^*) := \E \left[\frac{1}{n}
          \sum_{i=1}^n \|\hat{\theta}_i - \breve{\theta}_i^* \|^2 \right] \leq C_d \sigma^2_{\max}
        \epsilon_n^2(M,S, \bar{G}_n^0) \left(\sqrt{\log n} \right)^{\max(d-2, 6)}
	\end{equation*}
    where $\epsilon_n(M, S, \bar{G}_n^0)$ is as defined in
    \eqref{si.ra}. 
\end{theorem}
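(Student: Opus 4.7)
The plan is to reduce Theorem \ref{rgende} to the machinery already developed for Theorem \ref{theorem:denoising_theorem}, exploiting the representation of $N(\theta_i, \Sigma_i)$ as a convolution of $N(\theta_i, \Sigma_i - I_d)$ with $N(0, I_d)$. First, since $\Sigma_i - I_d$ is positive semi-definite, we can write $X_i = V_i + W_i$ with $V_i \sim N(\theta_i, \Sigma_i - I_d)$ and $W_i \sim N(0, I_d)$ independent, so $X_i \sim f_{G_i^0}$ where $G_i^0 = N(\theta_i, \Sigma_i - I_d)$, and the average marginal density of $X_1, \dots, X_n$ is exactly $f_{\bar{G}_n^0}$. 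This places the setup squarely within the independent but not identically distributed framework of Section \ref{hela}, so Theorem \ref{dens} applies directly and gives $\E \shel(\hat{f}_n, f_{\bar{G}_n^0}) \leq C_d \, \epsilon_n^2(M, S, \bar{G}_n^0)$ for any NPMLE.

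The second step mirrors the passage from Theorem \ref{dens} to Theorem \ref{theorem:denoising_theorem}. Writing $\hat\theta_i - \breve\theta_i^* = \nabla \log \hat f_n(X_i) - \nabla \log f_{\bar G_n^0}(X_i)$ and taking expectation over the $X_i$, the average-density computation gives
\begin{equation*}
\Rr_n(\hat\theta, \breve\theta^*) = \E \int \bigl\| \nabla \log \hat f_n(x) - \nabla \log f_{\bar G_n^0}(x) \bigr\|^2 f_{\bar G_n^0}(x)\, dx.
\end{equation*}
This is structurally the same integral as in Theorem \ref{theorem:denoising_theorem}. I would apply the same score-function inequality (the multivariate extension of Jiang-Zhang's Theorem 5 that underlies Theorem \ref{theorem:denoising_theorem}), which converts a weighted $L^2$ bound on the score-function difference between two Gaussian location mixtures into a multiple of their Hellinger distance, modulo a factor polynomial in $\log n$. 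Both $\hat f_n$ and $f_{\bar G_n^0}$ lie in $\M$ (since $\bar G_n^0$ is a bona fide probability measure on $\R^d$), and by Tweedie's identity $\nabla \log f_G(x) = \E[\theta | X = x] - x$ each score function is controlled by $\|x\|$ and the spread of the mixing measure, as needed for the inequality to apply.

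The one place where the heteroscedastic setting changes the quantitative bound is in the tail behavior of $X_i$: under $X_i \sim N(\theta_i, \Sigma_i)$ the sub-Gaussian parameter is $\sigma_{\max}$ rather than $1$, and moments like $\E \|X_i - \theta_i\|^2 = \mathrm{tr}(\Sigma_i) \leq d \sigma_{\max}^2$ inflate accordingly. The score-function conversion relies on pointwise bounds for $\|\nabla \log \hat f_n(x)\|^2$ that grow with $\|x\|^2$ and on tail-integral estimates over the distribution of the $X_i$, so each such inflation contributes a factor $\sigma_{\max}^2$. Tracking this uniformly through the proof of Theorem \ref{theorem:denoising_theorem} produces exactly the additional $\sigma_{\max}^2$ appearing in the statement, and combining with the Hellinger bound from the first paragraph gives the claimed inequality.

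The main obstacle is the careful bookkeeping in the third step: the Jiang-Zhang-type bound, which is the engine behind Theorem \ref{theorem:denoising_theorem}, needs to be re-examined to verify that all its constants remain tight when (i) the weighting measure $f_{\bar G_n^0}$ is a smooth Gaussian mixture rather than a discrete empirical-measure mixture as in the homoscedastic case, and (ii) the observations $X_i$ have variance up to $\sigma_{\max}^2$ so that the Gaussian tail estimates inside the proof inflate in a controlled way. Once it is checked that this inflation multiplies the final bound by exactly $\sigma_{\max}^2$ and does not interact with the $(\sqrt{\log n})^{\max(d-2, 6)}$ factor, the result follows.
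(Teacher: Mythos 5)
Your first step is sound: with $\Sigma_i - I_d \succeq 0$ one indeed has $X_i \sim f_{G_i^0}$ for $G_i^0 := N(\theta_i, \Sigma_i - I_d)$, and Theorem \ref{dens} applies verbatim to this independent-but-not-i.i.d.\ setup to give an $\epsilon_n^2(M, S, \bar{G}_n^0)$ bound on the Hellinger risk. This is precisely how the paper begins the proof. The trouble lies in your second and third steps.

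The claimed equality
\begin{equation*}
\Rr_n(\hat\theta, \breve\theta^*) = \E \int \bigl\| \nabla \log \hat f_n(x) - \nabla \log f_{\bar G_n^0}(x) \bigr\|^2 f_{\bar G_n^0}(x)\, dx
\end{equation*}
is false. The score difference evaluated at $X_i$ is $h(\bx, X_i)$ with $h(\bx, x) := \nabla \hat f_n(x)/\hat f_n(x) - \nabla f_{\bar G_n^0}(x)/f_{\bar G_n^0}(x)$, and $\hat f_n$ is a function of the entire sample $\bx = (X_1, \dots, X_n)$. So $\E \|h(\bx, X_i)\|^2$ cannot be rewritten as an integral against $f_{G_i^0}$ alone, and summing over $i$ and dividing by $n$ does not produce the displayed expression. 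That averaging identity holds only for a \emph{non-random} density $f$ in place of $\hat f_n$, and this is exactly why the score-function inequality (Theorem \ref{fcc} in the paper, your ``multivariate Jiang--Zhang'' step) cannot be applied directly: it compares two fixed mixture densities pointwise, not a data-dependent one against the target. Handling the randomness of $\hat f_n$ is the central difficulty, and the paper resolves it with the full machinery: the NPMLE optimality condition forcing $\hat f_n(X_i) \geq \rho_n$, a covering of $\{f : \hel(f, f_{\bar{G}_n^0}) \leq \tilde C_d \epsilon_n\}$ under the truncated-score pseudonorm \eqref{psm} (Corollary \ref{rgnt}), and the five-term decomposition $\zeta_{1n}, \dots, \zeta_{5n}$ of the Frobenius-norm discrepancy, each term controlled separately. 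None of this appears in your proposal; you treat it as already done in Theorem \ref{theorem:denoising_theorem}, but that theorem is itself the special case $\Sigma_i = I_d$ of the present result, so there is no prior homoscedastic proof to piggyback on.

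Your heuristic for where the $\sigma^2_{\max}$ comes from is also off. It does not originate from the moments $\E\|X_i - \theta_i\|^2 = \mathrm{tr}(\Sigma_i)$, nor from tail-integral estimates inflating ``in a controlled way.'' In the paper it enters at a single, identifiable point: the Gaussian Lipschitz concentration bound (Lemma \ref{devs}) for $\|T_{f_{G_j}}(\bx, \rho_n) - T_{f_{\bar G_n^0}}(\bx, \rho_n)\|_F$ as a function of $\bx$, used to control the maximum over the covering net in the $\zeta_{3n}$ term. Writing $X_i = \theta_i + \Sigma_i^{1/2} Z_i$ with $Z_i \sim N(0, I_d)$, the map $(Z_1, \dots, Z_n) \mapsto \|T_{f_1}(\bx, \rho) - T_{f_2}(\bx, \rho)\|_F$ has Lipschitz constant $2 L^2(\rho)\,\sigma_{\max}$ rather than $2L^2(\rho)$; exponentiating gives the extra $\sigma^2_{\max}$. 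Without this lemma and the $\zeta_{3n}$ bookkeeping, you have no mechanism producing the factor. In short: the reduction in step one is correct and matches the paper, but the rest of the argument is a sketch that assumes away the main obstruction.
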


Note that Theorem \ref{rgende} generalizes Theorem
\ref{theorem:denoising_theorem}. Indeed, Theorem
\ref{theorem:denoising_theorem} is the special case of Theorem
\ref{rgende} when $\Sigma_i = I_d$  for each $i$ because, in this special
case, $\sigma^2_{\max} = 1$ and $\bar{G}^0_n$, as defined in
\eqref{gengeb}, precisely equals the empirical measure corresponding to
$\theta_1, \dots, \theta_n$. Theorem \ref{rgende} leads to corollaries
that are similar to those derived from Theorem
\ref{theorem:denoising_theorem} (see, for example, Proposition
\ref{pkde.h} in the technical appendix which is the
analogue of Proposition \ref{pkde} for the heteroscedastic setting.    

We would like to remark here that Theorem \ref{rgende} is of limited
interest unless the heteroscedasticity is mild (by mild, we mean that
$\sigma^2_{\max}$ can be chosen to be close to 1). This is because
the Oracle estimator $\breve{\theta}_i^*$
(defined in \eqref{es.bre}) is different from the best separable
estimator (recall the best separable estimator is given by $T^*(X_i),
i = 1, \dots, n$ where $T^*$ minimizes \eqref{basri} over all
functions $T : \R^d \rightarrow \R^d$). A description of the best
separable estimator along with some results on the discrepancy between
the best separable estimator and \eqref{es.bre} is given in the
Section \ref{hete}. 

\section{Proof Ideas}\label{proids}
In this section, we provide a broad overview of the proofs of our main
results, Theorem \ref{dens}  and Theorem \ref{rgende}. Full proofs of
these theorems, of the remaining results in the paper as well as
statements and proofs of the supporting results that are used in the
proofs are given in the technical appendix at the end of the paper. 

\subsection{Proof overview of Theorem \ref{dens}} \label{denso}
Every estimator satisfying \eqref{apfn} is an approximate
MLE. Therefore the general theory of the rates of convergence of
maximum likelihood estimators from, say, \citet{vaartwellner96book,
  wong1995probability} can be used to bound $\hel(\hat{f}_n,
f_{\bar{G}_n})$. This general theory requires bounds on the
covering numbers of the underlying class of densities (covering
numbers are formally defined at the beginning of Section \ref{hela.pf}. In our
particular context, we need to bound covering numbers of the class
$\M$ (which consists of all densities of the form $f_G$ as $G$ varies
over all probability measures on $\R^d$). Our main covering number
result for $\M$ is stated next. 

For compact $S \subseteq \R^d$, let $\norm{\cdot}_{S}$ and
$\norm{\cdot}_{S, \nabla}$ denote pseudonorms given by 
\begin{equation*}
\norm{f}_{S} := \sup_{x \in S} |f(x)| ~~~ \text { and } ~~~
\norm{f}_{S, \nabla} := \sup_{x \in S} \norm{\nabla f(x)} 
\end{equation*} 
for densities $f \in \M$. These naturally lead to two
pseudometrics on $\M$ and we shall denote the
$\eta$-covering numbers of $\M$ under these
pseudometrics by $N(\eta, \M, \norm{\cdot}_S)$ and $N(\eta, \M,
\norm{\cdot}_{S, \nabla})$ respectively. The
following theorem, which could be of independent interest, gives upper
bounds for $N(\eta, \M, \norm{\cdot}_S)$ 
and $N(\eta, \M, \norm{\cdot}_{S, \nabla})$. We let $S^a
:= \left\{x : \dos(x) \leq a \right\}$ for $S \subseteq \R^d$
and $a > 0$ and use $N(a, S^a)$  to denote the $a$-covering number (in
the usual Euclidean distance) of the set $S^a$.  

\begin{theorem}\label{mm}
	There exists a positive constant $C_d$ depending on $d$ alone such that for every
	compact set $S \subseteq \R^d$ and $0 < \eta \leq \frac{2
          \sqrt{2\pi}}{(2\pi)^{d/2} \sqrt{e}}$, we have 
	\begin{equation}\label{mm.eq_f}
	\log N(\eta, \M, \norm{\cdot}_S) \leq C_d N(a, S^a) |\log \eta|^{d+1}
	\end{equation}
	and
	\begin{equation}\label{mm.eq_grad_f}
	\log N(\eta, \M, \norm{\cdot}_{S, \nabla}) \leq C_d N(a, S^a) |\log \eta|^{d+1}
	\end{equation}
	where $a$ is defined as
	\begin{equation} \label{eq:define_a}
	a := \sqrt{2 \log \frac{2 \sqrt{2\pi}}{(2 \pi)^{d/2} \eta}}.
	\end{equation}
\end{theorem}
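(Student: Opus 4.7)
The plan is to combine a truncation of the mixing measure with a Taylor/Hermite expansion of the Gaussian kernel around an $a$-net of the truncation region, and then discretize the finitely many resulting coefficients. The constant $a$ in \eqref{eq:define_a} is chosen precisely so that $\phi_d(y) \leq \eta/(2\sqrt{2\pi})$ whenever $\norm{y} \geq a$; everything hinges on this pointwise decay.

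\textbf{Truncation.} For $x \in S$ and any $\theta$ with $\dos(\theta) > a$, we have $\norm{x - \theta} \geq \dos(\theta) > a$, hence $\phi_d(x-\theta) \leq \eta/(2\sqrt{2\pi})$. Setting $\tilde G := G|_{S^a}$, this yields $\norm{f_G - f_{\tilde G}}_S \leq \eta/(2\sqrt{2\pi})$. For the gradient pseudonorm, the bound $\norm{\nabla \phi_d(y)} \leq \norm{y}\phi_d(y)$ gives the analogous control on $\norm{\nabla f_G - \nabla f_{\tilde G}}_S$ at the cost of a polynomial-in-$a$ (hence polylog-in-$1/\eta$) factor, which is harmless.

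\textbf{Hermite expansion and discretization.} Fix an $a$-net $\{\theta_1, \ldots, \theta_N\}$ of $S^a$ with $N = N(a, S^a)$, assign each $\theta \in S^a$ to its nearest net point, and decompose $\tilde G = \sum_j \tilde G_j$ where $\tilde G_j$ is supported in $B(\theta_j, a)$. Taylor-expanding $\phi_d(x-\theta)$ in $\theta$ around $\theta_j$ to order $K$ and integrating against $d\tilde G_j$ gives
\begin{equation*}
f_{\tilde G}(x) = \sum_{j=1}^N \sum_{|\alpha| \leq K} c_{j,\alpha}\, H_\alpha(x - \theta_j)\,\phi_d(x - \theta_j) + (\text{remainder}),
\end{equation*}
where $H_\alpha$ are multivariate probabilists' Hermite polynomials and $c_{j,\alpha} := \int (\theta - \theta_j)^\alpha\, d\tilde G_j(\theta)/\alpha!$ satisfies $|c_{j,\alpha}| \leq a^{|\alpha|}/\alpha!$. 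Using the sharp uniform bound $|H_\alpha(y)\phi_d(y)| \leq C_d\sqrt{\alpha!}$ and the multinomial identity $\sum_{|\alpha|=m} 1/\alpha! = d^m/m!$, the Taylor remainder is bounded by $C_d (ad)^{K+1}/\sqrt{(K+1)!}$; choosing $K \asymp |\log \eta|/\log|\log \eta|$ renders this $\leq \eta$. The approximating function lies in a linear span of $N\binom{K+d}{d}$ fixed basis functions whose sup norms are at most $C_d\sqrt{\alpha!}$, so discretizing each coefficient to precision of order $\eta/(N\binom{K+d}{d}\sqrt{\alpha!})$ controls the total discretization error by $\eta$ at the cost of $O(|\log \eta|)$ bits per coefficient. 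The resulting $\eta$-cover of $\M$ under $\norm{\cdot}_S$ thus has log-cardinality at most
\begin{equation*}
C_d\, N(a, S^a)\, \binom{K+d}{d}\, |\log \eta| \;\leq\; C_d\, N(a, S^a)\, |\log \eta|^{d+1},
\end{equation*}
which is \eqref{mm.eq_f}.

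\textbf{Gradient case and main obstacle.} For \eqref{mm.eq_grad_f}, the identity $\partial_{x_i}\phi_d(x - \theta) = -\partial_{\theta_i}\phi_d(x-\theta)$ allows the same Taylor/Hermite machinery to be applied to each component of $\nabla_x \phi_d$; the Hermite bounds remain valid with the multi-index simply shifted by one, producing the same exponent. The delicate step is to balance the three error sources — truncation, Taylor remainder, and coefficient discretization — so that each stays $O(\eta)$ while the log-cover acquires only the factor $|\log \eta|^{d+1}$ and not an additional power of $\log \log(1/\eta)$ or $d$. This forces the use of the sharp Hermite bound $|H_\alpha \phi_d| \leq C_d \sqrt{\alpha!}$ together with careful multi-index combinatorics; any cruder polynomial bound on the Hermite functions would inflate the exponent beyond $d+1$.
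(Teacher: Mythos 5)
Your proposal follows a genuinely different route from the paper's. The paper's proof rests on a moment-matching lemma (Lemma \ref{lemma:moment_matching}): Taylor expansion of $t \mapsto e^{-t^2/2}$ shows that two mixing measures whose local polynomial moments up to order $2m+1$ agree on the pieces of an $a$-net of $S^a$ produce mixture densities and gradients that agree on $S$ to within $e^{-a^2/2}$. Carath\'eodory's theorem (Lemma \ref{lapp}) then produces a moment-matching \emph{probability} measure $G'$ supported on $S^a$ with $\ell \asymp a^{2d}N(a,S^a)$ atoms, and the cover is obtained by discretizing atom positions on a fine grid of $S^a$ and weight vectors on a simplex cover. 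You instead truncate $G$ to $S^a$, Hermite-expand $\phi_d(x-\theta)$ around net points, and discretize the $\asymp N(a,S^a)K^d$ Taylor coefficients directly. The two strategies are dual — the paper discretizes the measure, you discretize the coefficients — both rely on the same Taylor decay of the Gaussian kernel and give $\asymp N(a,S^a)|\log\eta|^d$ degrees of freedom. The paper's route has the advantage of producing a cover that is automatically inside $\M$, which the covering-number definition in the paper demands; your cover consists of Hermite-weighted Gaussian combinations, which are generally not in $\M$, so you additionally need the standard external-to-internal reduction (a factor $2$ in the radius, absorbed into $C_d$).

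Two quantitative gaps need repair. The more serious one concerns the coefficient discretization. With precision $\eta/(N\binom{K+d}{d}\sqrt{\alpha!})$ per coefficient, the log-count per coefficient is $\asymp |\log\eta| + \log N(a,S^a)$, so your total log-covering bound is $\asymp N(a,S^a)|\log\eta|^d\bigl(|\log\eta| + \log N(a,S^a)\bigr)$. The second term dominates whenever $\log N(a,S^a) \gg |\log\eta|$, which can certainly happen since the theorem asserts the bound $C_d N(a,S^a)|\log\eta|^{d+1}$ uniformly over \emph{all} compact $S$. The paper sidesteps this because in its final count only the ratio $D/\ell = N(\alpha, S^a)/N(a,S^a)$ appears, and that ratio is bounded by $N(\alpha, B(0,a))$, independent of $S$. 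To repair your argument you need a localization step: take the net points $a/2$-separated, strengthen the Cram\'er bound to $|H_\alpha(y)\phi_d(y)| \lesssim \sqrt{\alpha!}\,e^{-\|y\|^2/4}$, and note that for each fixed $x$ the sum $\sum_j e^{-\|x-\theta_j\|^2/4}$ is bounded by a constant depending only on $d$; then precision $\asymp \eta/(K^d\sqrt{\alpha!})$ suffices, the $\log N(a,S^a)$ term vanishes, and you recover the stated bound. The second gap is arithmetic: since $a \asymp \sqrt{|\log\eta|}$, the remainder $(ad)^{K+1}/\sqrt{(K+1)!}$ only drops below $\eta$ once $K+1 \gtrsim d^2 a^2 \asymp d^2|\log\eta|$, and for $K \asymp |\log\eta|/\log|\log\eta|$ the exponent actually becomes positive for small $\eta$. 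The correct choice is $K \asymp |\log\eta|$ (the paper's $m := \lfloor 13.5 a^2\rfloor$), and fortunately $\binom{K+d}{d}\,|\log\eta| \asymp |\log\eta|^{d+1}$ still, so the final exponent is unaffected.
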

To the best of our knowledge, Theorem \ref{mm} (proved in Section
\ref{mmps}) is novel
although certain special cases (such as when $d = 1$ and $S$ is a closed
interval) are known previously (see Remark \ref{premet}). The generalization for arbitrary 
compact sets $S$ is crucial for our results. Only the first assertion
(inequality \eqref{mm.eq_f}) is required for the proof of Theorem
\ref{dens}; the second assertion involving gradients is needed for the
proof of Theorem \ref{rgende}.  

Let us now sketch the proof of Theorem \ref{dens} assuming Theorem
\ref{mm}. The reader is welcome to read the full proof in the
technical appendix. As mentioned
previously, our proof is  inspired from \citet[Proof of Theorem
1]{zhang2009generalized} and differences between our proof and the
arguments of \cite{zhang2009generalized} are pointed out at the end of
this subsection. 

For simplicity, in this section, let us assume that
$\hat{f}_n$  is an NPMLE so that \eqref{apfn} holds for $\alpha =
\beta = 0.5$. The full proof (in the technical appendix) applies to
estimators satisfying \eqref{apfn} for 
arbitrary $0 < \beta \leq \alpha < 1$. Note first that trivially (for
every $t \geq 1$ and $\gamma_n > 0$)
\begin{equation*}
  \P \left\{\hel(\hat{f}_n, f_{\bar{G}_n}) \geq t \gamma_n \right\}
  = \P \left\{\hel(\hat{f}_n, f_{\bar{G}_n}) \geq t \gamma_n,
    \prod_{i=1}^n \frac{\hat{f}_n(X_i)}{f_{\bar{G}_n}(X_i)} \geq 1
  \right\}. 
\end{equation*}
The right hand side above can be easily controlled if $\hat{f}_n$ were
non-random. To deal with randomness, we cover $\M$ to within some
$\eta > 0$ in $L^{\infty}(S^M)$ (where $S^M 
:= \{x : \dos(x) \leq M\}$). From this cover, it is possible to deduce
the existence of a collection of non-random
densities $h_{0j}, j \in J$ in $\M$ for some finite set $J$ with
cardinality at most the right hand side of \eqref{mm.eq_f} such that
$\hel(h_{0j}, f_{\bar{G}_n}) \geq t \gamma_n$ and such that the inequality
\begin{equation*}
  \prod_{i=1}^n \hat{f}_n(X_i) \leq \max_{j \in J} \prod_{i : X_i \in
    S^M} \left\{h_{0j}(X_i) + 2 \eta \right\} \prod_{i : X_i \notin
    S^M} (2 \pi)^{-d/2}. 
\end{equation*}
holds whenever $\hel(\hat{f}_n, f_{\bar{G}_n}) \geq t \gamma_n$. From
here, it can be shown that for every function $v: \R^d 
\rightarrow (0, \infty)$ with $v(x) = \eta$ for $x \in S^M$, we have
  \begin{equation*}
    \prod_{i=1}^n \frac{\hat{f}_n(X_i)}{f_{\bar{G}_n}(X_i)} \leq
    \max_{j \in J} \prod_{i=1}^n \frac{h_{0j}(X_i) + 2
      v(X_i)}{f_{\bar{G}_n}(X_i)} \prod_{i : X_i \notin S^M}
    \frac{(2\pi)^{-d/2}}{2 v(X_i)}
  \end{equation*}
on the event $\hel(\hat{f}_n, f_{\bar{G}_n}) \geq t \gamma_n$. We take
\begin{equation} \label{vdef.ma}
v(x) := \begin{cases}
\eta &\text{ if } x \in S^M \\
\eta\left( \frac{M}{\mathfrak{d}_S(x)} \right)^{d+1} &\text{ otherwise }
\end{cases}
\end{equation}
The inequality above implies that 
\begin{align*}
    \P \left\{\hel(\hat{f}_n, f_{\bar{G}_n}) \geq t \gamma_n \right\}
    &\leq \sum_{j \in J} \P \left\{\prod_{i=1}^n \frac{h_{0j}(X_i) + 2
        v(X_i)}{f_{\bar{G}_n}(X_i)} \geq e^{-nt^2\gamma_n^2/2}
    \right\} \\ &+ \P \left\{\prod_{i : X_i \notin S^M} \frac{(2
        \pi)^{-d/2}}{2v(X_i)} \geq e^{nt^2\gamma_n^2/2} \right\} 
\end{align*}
The first term on the right hand side above is now controlled by
standard arguments for bounding likelihood ratio deviations in terms
of Hellinger distances (note that $\hel(h_{0j}, f_{\bar{G}_n}) \geq t
\gamma_n$). For the second term, we use Markov's inequality and the
following moment inequality (proved in Section \ref{auxre}) applied to the Lipschitz
function $g(x) := \dos(x)$. 
\begin{lemma}\label{tailmom}
Let $X_1, \dots, X_n$ be independent random variables with $X_i \sim
f_{G_i}$ and $\bar{G}_n := (G_1 + \dots + G_n)/n$. Let $g : \R^d
\rightarrow [0, \infty)$ be a $1$-Lipschitz function i.e., $g(x) -
g(y) \leq \|x - y\|$ for all $x, y \in \R^d$. Also let $\mu_p(g)$
denote the $p^{th}$ moment of $g$ under the 
measure $\bar{G}_n$ i.e.,  
\begin{equation*}
  \mu_p(g) := \left(\int_{\R^d} g(\theta)^p d\bar{G}_n(\theta)
  \right)^{1/p}.  
\end{equation*}
There then exists a positive constant $C_d$ depending only on $d$ such
that 
\begin{equation}\label{tailmom.eq}
\begin{split}
\E \left\{ \prod_{i=1}^n \left| a g(X_i) \right|^{I\{g(X_i)\geq
    M\}} \right\}^{\lambda} &\leq \exp\left\{C_d a^{\lambda} M^{\lambda
    + d - 2} \right. \\ & \left. + (aM)^{\lambda} n
  \left(\frac{2\mu_p(g)}{M}\right)^p\right\}    
\end{split}
\end{equation}
for every $a > 0, M \geq \sqrt{8 \log n}$ and $0 < \lambda \leq
\min(1, p)$. 

Further, there exists a positive constant $C_d$ depending only on $d$ such
	that
	\begin{equation} \label{tailprob.eq}
	\frac{1}{n} \sum_{i=1}^n \P\left[ g(X_i) \geq M \right] \leq
        C_d \frac{M^{d-2}}{n} + \inf_{p \geq \frac{d+1}{2 \log n}}
        \left( \frac{2 \mu_p(g)}{M} \right)^p 
	\end{equation}
	 for any $M \geq \sqrt{8 \log n}$.
\end{lemma}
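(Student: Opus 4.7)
The plan is to reduce both inequalities to separate tail bounds on $g(\theta_i)$ under $G_i$ and on $\|Z_i\|$ under the $d$-dimensional standard Gaussian, via the representation $X_i = \theta_i + Z_i$ with $\theta_i \sim G_i$ independent of $Z_i \sim N(0, I_d)$. The 1-Lipschitz hypothesis then gives $g(X_i) \leq g(\theta_i) + \|Z_i\|$ and hence, for every $u > 0$,
\begin{equation*}
\P\{g(X_i) \geq u\} \leq \P\{g(\theta_i) \geq u/2\} + \P\{\|Z_i\| \geq u/2\}.
\end{equation*}
Markov applied to the first piece gives $\P\{g(\theta_i) \geq u/2\} \leq (2/u)^p \int g^p\, dG_i$, whose average over $i$ is $(2\mu_p(g)/u)^p$; the standard chi-tail bound $\P\{\|Z\| \geq r\} \leq C_d r^{d-2}e^{-r^2/2}$ combined with $M \geq \sqrt{8\log n}$ makes the Gaussian piece at $u = M$ at most $C_d M^{d-2}/n$. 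Setting $u = M$ and taking the infimum over admissible $p$ gives \eqref{tailprob.eq}.

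For \eqref{tailmom.eq}, I first exploit independence of $X_1, \dots, X_n$ to factorize the expectation as $\prod_i \E Y_i$, where $Y_i := |a g(X_i)|^{\lambda I\{g(X_i) \geq M\}}$ equals $1$ on $\{g(X_i) < M\}$ and $(ag(X_i))^\lambda$ on $\{g(X_i) \geq M\}$. Since $\E Y_i \leq 1 + a^\lambda \E[g(X_i)^\lambda I\{g(X_i) \geq M\}]$, the inequality $1+x \leq e^x$ reduces matters to showing
\begin{equation*}
\sum_{i=1}^n \E\bigl[g(X_i)^\lambda I\{g(X_i) \geq M\}\bigr] \leq C_d M^{\lambda + d - 2} + M^\lambda n \bigl(2\mu_p(g)/M\bigr)^p.
\end{equation*}

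I plan to establish this by using on $\{g(X_i) \geq M\}$ the subadditive bound $g(X_i)^\lambda \leq g(\theta_i)^\lambda + \|Z_i\|^\lambda$ (valid since $\lambda \leq 1$) together with $I\{g(X_i) \geq M\} \leq I\{g(\theta_i) \geq M/2\} + I\{\|Z_i\| \geq M/2\}$, and expanding into four cross terms. The pure-$\theta$ term $\E[g(\theta_i)^\lambda I\{g(\theta_i) \geq M/2\}]$ is dominated by $(M/2)^{\lambda - p}\E g(\theta_i)^p$ (permissible since $\lambda \leq p$), summing to a multiple of $n M^{\lambda - p}\mu_p(g)^p$ and giving the second target term. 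The pure-$Z$ term $\E[\|Z_i\|^\lambda I\{\|Z_i\| \geq M/2\}]$ is $\leq C_d M^{\lambda + d - 2}e^{-M^2/8}$ by direct integration of the chi density; summing and using $e^{-M^2/8} \leq 1/n$ delivers the first target term. The two mixed terms factor by independence into a moment times a probability. The main obstacle is the cross $\E[g(\theta_i)^\lambda]\,\P\{\|Z_i\| \geq M/2\}$: I will bound $\E g(\theta_i)^\lambda \leq \mu_p(g)^\lambda$ by Jensen on $\bar G_n$ (since $\lambda \leq p$) and split on whether $\mu_p(g) \leq M$ or $\mu_p(g) > M$; in the first case $\mu_p(g)^\lambda \leq M^\lambda$ and the term is absorbed into $C_d M^{\lambda + d - 2}$, and in the second $\mu_p(g)^\lambda \leq \mu_p(g)^p M^{\lambda - p}$ with the global boundedness of $M \mapsto M^{d-2}e^{-M^2/8}$ absorbing the residue into a constant multiple of $n M^{\lambda - p}\mu_p(g)^p$. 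Assembling the four terms and exponentiating yields \eqref{tailmom.eq}.
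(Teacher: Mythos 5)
Your argument is correct, but it departs from the paper at the decomposition step and as a consequence has more bookkeeping. Both proofs factor the expectation over the independent $X_i$, bound each factor by $1 + a^\lambda \E[g(X_i)^\lambda I\{g(X_i)\geq M\}]$, exponentiate via $1+x\leq e^x$, and reduce to bounding $U := \E[g(\theta+Z)^\lambda I\{g(\theta+Z)\geq M\}]$ with $\theta\sim\bar G_n$ and $Z\sim N(0,I_d)$ independent. The difference is how $U$ is split. The paper uses the dichotomy implicit in $g(\theta)+\|Z\|\leq 2\max(g(\theta),\|Z\|)$: whichever of $g(\theta),\|Z\|$ is larger dominates $g(\theta+Z)/2$ and therefore must exceed $M/2$, so pointwise
\begin{equation*}
g(\theta+Z)^\lambda I\{g(\theta+Z)\geq M\} \leq (2g(\theta))^\lambda I\{2g(\theta)\geq M\} + (2\|Z\|)^\lambda I\{2\|Z\|\geq M\},
\end{equation*}
a two-term bound with no cross terms in which the $\theta$-piece and the $Z$-piece drop directly into the two summands of \eqref{tailmom.eq}. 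Your expansion multiplies the subadditive bound $g^\lambda\leq g(\theta)^\lambda+\|Z\|^\lambda$ against the indicator union $I\{g\geq M\}\leq I\{g(\theta)\geq M/2\}+I\{\|Z\|\geq M/2\}$ and so produces four terms; the two cross terms are exactly what forces the case split on $\mu_p(g)$ versus $M$ and the appeal to global boundedness of $M\mapsto M^{d-2}e^{-M^2/8}$. Those steps are sound (and the mixed term you leave implicit, $\E\|Z_i\|^\lambda\,\P\{g(\theta_i)\geq M/2\}$, is indeed the easy one: summing over $i$ gives at most $\sqrt d\,n(2\mu_p(g)/M)^p\leq C_d M^\lambda\,n(2\mu_p(g)/M)^p$ since $M\geq 1$), but they are extra work that the max-trick sidesteps entirely. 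One expository slip worth flagging: you write $\E g(\theta_i)^\lambda\leq\mu_p(g)^\lambda$ ``by Jensen on $\bar G_n$,'' but $\E g(\theta_i)^\lambda$ is under $G_i$, not $\bar G_n$; Jensen delivers this only after averaging over $i$, i.e.\ $\tfrac1n\sum_i\E g(\theta_i)^\lambda=\E_{\bar G_n}g^\lambda\leq\mu_p(g)^\lambda$. This does not damage your argument because $\P\{\|Z_i\|\geq M/2\}$ is the same for every $i$ and factors out of the sum, but the inequality as stated for a single $i$ is not what Jensen gives. Your treatment of \eqref{tailprob.eq} coincides with the paper's, which just runs the same split with $\lambda=0$.
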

The differences between our proof and that of \citet[Proof of Theorem
1]{zhang2009generalized} are the metric entropy
result (Theorem \ref{mm}), the breakup of the likelihood ratio into the
sets $S^M$ and $(S^M)^c$, the choice of $v(\cdot)$ function in
\eqref{vdef.ma} and the moment control in Lemma \ref{tailmom}.
\citet{zhang2009generalized} proved special cases of these ingredients
for $d = 1$ and $S = [-R, R]$  for some $R$ while our argument applies
to every $S$. As remarked previously, it is crucial to allow $S$ to be
arbitrary for obtaining adaptation results to discrete mixtures.  

\subsection{Proof overview of Theorem \ref{rgende}} \label{gendo}
A complete proof of Theorem \ref{rgende} is given in Section
\ref{pfsec.rgende}. This
subsection gives an overview of the main ideas. Let us now introduce
the following notation. Let $\bx$ denote the  
$d \times n$ matrix whose columns are the observed data vectors $X_1,
\dots, X_n$. For a density $f \in \M$, let $T_f(\bx)$ denote the $d
\times n$ matrix whose $i^{th}$ column is given by the $d \times 1$
vector: 
\begin{equation*}
  X_i + \frac{\nabla f(X_i)}{f(X_i)} \qt{for $i = 1, \dots, n$}. 
\end{equation*}
With this notation, we can clearly rewrite $\Rr_n(\hat{\theta},
\breve{\theta}^*)$ as 
\begin{equation*}
  \Rr_n(\hat{\theta}, \breve{\theta}^*) = \E \left(\frac{1}{n}
    \norm{T_{\hat{f}_n}(\bx) - T_{f_{\bar{G}^0_n}}(\bx)}_F^2 \right) 
\end{equation*}
where $\norm{\cdot}_F$ denotes the usual Frobenius norm for matrices. 

Now for $f \in \M$ and $\rho > 0$, let $T_f(\bx, \rho)$ be the $d
\times n$ matrix whose $i^{th}$ column is given by the $d \times 1$
vector: 
\begin{equation*}
  X_i + \frac{\nabla f(X_i)}{\max(f(X_i), \rho)} \qt{for $i = 1, \dots, n$}. 
\end{equation*}
The first important observation is that for $\rho_n := (2
\pi)^{-d/2}/n$, we have $T_{\hat{f}_n}(\bx, \rho_n) =
T_{\hat{f}_n}(\bx)$ and this follows from classical results about the
NPMLE. This allows us to write
\begin{align*}
  \Rr_n(\hat{\theta}, \breve{\theta}^*) &= \E \left(\frac{1}{n}
    \norm{T_{\hat{f}_n}(\bx, \rho_n) - T_{f_{\bar{G}^0_n}}(\bx)}_F^2
                                        \right)  \\
&\leq 2 \E \left(\frac{1}{n}
    \norm{T_{\hat{f}_n}(\bx, \rho_n) - T_{f_{\bar{G}^0_n}}(\bx, \rho)}_F^2
                                        \right) \\ &+ 2 \E \left(\frac{1}{n}
    \norm{T_{f_{\bar{G}^0_n}}(\bx, \rho_n) - T_{f_{\bar{G}^0_n}}(\bx)}_F^2
                                        \right). 
\end{align*}
Using the following lemma (proved in Section \ref{auxre}), the second term above is
bounded from above by $(\sqrt{\log n})^{\max(d-2, 0)} \epsilon_n^2(M,
S, \bar{G}_n^0)$.  
\begin{lemma}\label{dco}
  Fix a probability measure $G$ on $\R^d$ and let $0 < \rho \leq (2
  \pi)^{-d/2}/\sqrt{e}$. Let $L(\rho) := \sqrt{-\log((2\pi)^d
    \rho^2)}$. Then there exists a positive constant $C_d$
  such that for every compact set $S \subseteq \R^d$, we have 
  \begin{equation}\label{dco.eq}
    \begin{split}
    \Delta(G, \rho) &:= \int \left(1 - \frac{f_G}{\max(f_G, \rho)}
  \right)^2 \frac{\norm{\nabla f_G}^2}{f_{G}} \\ &\leq C_d N
  \left(\frac{4}{L(\rho)}, S \right)  L^d(\rho) \rho + d ~G(S^c).  
   \end{split}
  \end{equation}
\end{lemma}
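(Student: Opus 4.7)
The plan is to reduce $\Delta(G,\rho)$ to an integral over the ``low-density'' set $A := \{x : f_G(x) < \rho\}$ and then split by whether the mixing point $\theta$ lies in $S$ or not. The outer factor $(1 - f_G/\max(f_G,\rho))^2$ vanishes off $A$ and is bounded by $1$ on $A$, so $\Delta(G,\rho) \leq \int_A \|\nabla f_G\|^2/f_G\, dx$. Writing $\nabla f_G(x) = -\int(x-\theta)\phi_d(x-\theta)\,dG(\theta)$ and applying the vector Cauchy--Schwarz / Jensen inequality to the convex map $(u,v) \mapsto \|u\|^2/v$ yields the pointwise bound $\|\nabla f_G(x)\|^2/f_G(x) \leq \int \|x-\theta\|^2 \phi_d(x-\theta)\, dG(\theta)$. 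Fubini then gives
\[
\Delta(G,\rho) \;\leq\; \int_{\R^d} \int_A \|x-\theta\|^2\, \phi_d(x-\theta)\, dx\, dG(\theta).
\]

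I would split the outer integral along $\theta \in S^c$ vs. $\theta \in S$. The $\theta \in S^c$ piece, after dropping the indicator $\mathbf{1}_A$ and using $\int \|y\|^2 \phi_d(y)\, dy = d$, produces the second term $d\,G(S^c)$ of the claim. For $\theta \in S$, the inner $x$-integral is further split by whether $\|x-\theta\| > L(\rho)$. The ``far'' piece is handled by the tail estimate $\int_{\|y\| > L(\rho)} \|y\|^2 \phi_d(y)\, dy \leq C_d L^d(\rho) e^{-L^2(\rho)/2}$; by the very definition of $L(\rho)$ the exponential equals $(2\pi)^{d/2}\rho$, so this piece contributes at most $C_d L^d(\rho)\rho$ after integrating against $dG$ over $S$.

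The ``near'' piece $\{(x,\theta) : \theta \in S,\, \|x-\theta\|\leq L(\rho),\, x\in A\}$ is where the covering number enters. The plan is to fix a $(4/L(\rho))$-net $\theta_1, \ldots, \theta_N$ of $S$ with $N = N(4/L(\rho), S)$ and corresponding Voronoi cells $U_j \subseteq B(\theta_j, 4/L(\rho))$. Since $\|\nabla f_G\|_\infty \leq c_d := (2\pi)^{-d/2} e^{-1/2}$, the function $f_G$ is $c_d$-Lipschitz, so for $\theta \in U_j$ the set $\{y : f_G(y+\theta)<\rho,\, \|y\|\leq L(\rho)\}$ lies in a ``lifted'' sublevel set $\{y : f_G(y+\theta_j) < \rho + 4c_d/L(\rho),\, \|y\|\leq L(\rho)\}$ that depends only on $\theta_j$. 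After changing variables to center at $\theta_j$, using $\|y\|^2 \phi_d(y) \leq c_d$ on $\{\|y\|\leq L(\rho)\}$, and invoking the $f_G < \rho$ constraint together with the Gaussian weight on the ball of radius $L(\rho)$, each cell contributes at most $C_d L^d(\rho)\rho$; summing over the $N(4/L(\rho),S)$ cells yields the $C_d N(4/L(\rho),S) L^d(\rho) \rho$ term. Combining with the $d\,G(S^c)$ contribution produces the stated bound.

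The main technical obstacle is executing this translation-plus-$f_G<\rho$-plus-Gaussian-weight interplay in the ``near'' piece so that the $\rho$ factor survives per cell rather than being absorbed into constants; here one must balance the Lipschitz slack $c_d/L(\rho)$ (which is dominated by $\rho$ only because $L(\rho)\to\infty$) against the Gaussian volume $\int_{\{\|y\|\leq L(\rho)\}}\phi_d(y)\,dy$ while retaining the $f_G<\rho$ constraint. Everything else is routine Cauchy--Schwarz and tail-bounding.
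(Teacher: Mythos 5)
Your high-level decomposition---Jensen/Cauchy--Schwarz to reduce $\Delta(G,\rho)$ to $\int\!\int_{A}\|x-\theta\|^2\phi_d(x-\theta)\,dx\,dG(\theta)$ with $A=\{f_G<\rho\}$, then splitting $\theta\in S$ versus $\theta\in S^c$ and splitting the $S$-part at radius $L(\rho)$---is a genuinely different route from the paper's, which proves a subadditivity lemma $\Delta(G,\rho)\le\sum_j w_j\,\Delta(H_j,\rho/w_j)$ and then handles each $H_j$ supported in a small ball. Your $S^c$ and ``far'' pieces are fine. But the ``near'' piece as sketched does not close, and the parenthetical caveat at the end is backwards: $c_d/L(\rho)$ is \emph{not} dominated by $\rho$; on the contrary $\rho L(\rho)/c_d\to 0$ as $\rho\to 0$, so the inflated threshold $\rho+4c_d/L(\rho)$ is essentially $4c_d/L(\rho)\gg\rho$. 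Once you replace $f_G(y+\theta)<\rho$ by $f_G(y+\theta_j)<\rho+4c_d/L(\rho)$, the Gaussian-weighted mass of the sublevel set $\{y:\|y\|\le L(\rho),\ f_G(y+\theta_j)<4c_d/L(\rho)\}$ can be of order $1$ rather than $L^d(\rho)\rho$ (e.g., when $G(U_j)$ is tiny, $f_G$ is genuinely small on all of $B(\theta_j,L(\rho))$), so multiplying by $G(U_j)$ gives nothing; and replacing $\|y\|^2\phi_d(y)$ by a constant $c_d$ makes things worse by discarding exactly the Gaussian decay you need.

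The way to close the near piece---which would complete your route and arguably make it more elementary than the paper's---is not to Lipschitz-approximate $f_G$ but to play $f_G<\rho$ directly against the $G$-mass of the cell. With $U_j\subseteq B(\theta_j,4/L(\rho))$, $w_j:=G(U_j)$, $\theta\in U_j$ and $f_G(x)<\rho$,
\begin{equation*}
\rho > f_G(x) \ \ge\ \int_{U_j}\phi_d(x-\theta')\,dG(\theta') \ \ge\ w_j\,(2\pi)^{-d/2}\exp\!\left(-\tfrac{1}{2}\bigl(\|x-\theta_j\|+4/L(\rho)\bigr)^2\right),
\end{equation*}
forcing $\|y\|=\|x-\theta\|\ge\sqrt{L^2(\rho)+2\log w_j}\,-\,8/L(\rho)$. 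Feeding this lower bound into $\int_{\|y\|\ge R}\|y\|^2\phi_d(y)\,dy\le C_d R^d e^{-R^2/2}$ and using $R\le L(\rho)$ together with $R^2\ge L^2(\rho)+2\log w_j-16$ yields a bound of the form $C_d L^d(\rho)\,e^{-L^2(\rho)/2}/w_j=C_d (2\pi)^{d/2} L^d(\rho)\rho/w_j$, so after the $w_j$ prefactor the cell contributes $C_d L^d(\rho)\rho$; the degenerate cases where $L^2(\rho)+2\log w_j$ is small or negative give $w_j\lesssim\rho$ and are handled by the trivial bound $d\,w_j$. This is the same cancellation the paper obtains by boosting the threshold to $\rho/w_j$ inside Lemma~\ref{cvx}; your Fubini route realizes it more directly but has to supply this step explicitly.
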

We thus focus attention on the first term in the above bound
for $\Rr_n(\hat{\theta}, \breve{\theta}^*)$: 
\begin{equation*}
  A(\hat{f}_n) := \E \left(\frac{1}{n}
    \norm{T_{\hat{f}_n}(\bx, \rho_n) - T_{f_{\bar{G}^0_n}}(\bx, \rho)}_F^2
                                        \right). 
\end{equation*}
Now if $\hat{f}_n$ were non-random, the above term can be bounded from
above by a generalization (to $d \geq 1$) of \citet[Theorem
3]{jiang2009general} which bounds $A(f)$ in terms of $\hel^2(f,
f_{\bar{G}_n^0})$ for non-random $f \in \M$. We have stated this
general result as Theorem \ref{fcc} and proved it in Section
\ref{zhafo}. Of course,
this result cannot be directly used here because $\hat{f}_n$ is
random. However, Theorem \ref{dens} implies that $\hat{f}_n$ belongs
with high probability (specifically with probability at least $1 -
(2/n)$) to the set 
\begin{equation*}
  E_n := \left\{f \in \M : \hel(f, f_{\bar{G}_n^0}) \leq C_d
    \epsilon_n(M, S, \bar{G}_n^0)\right\}
\end{equation*}
where $C_d$ is the constant obtained from Theorem \ref{dens}. The idea
therefore is to cover the space $E_n$ to within $\eta$ by
deterministic densities $f_{G_1}, \dots, f_{G_N}$. For this covering,
we use the metric:  
\begin{equation*}
  \sup_{x : \dos(x) \leq M} \norm{\frac{\nabla f(x)}{\max(f(x),
      \rho_n)} - \frac{\nabla g(x)}{\max(g(x), \rho_n)}}. 
\end{equation*}
Covering numbers in this metric are given in Corollary \ref{rgnt} in
and this result is derived as
a corollary of our main covering number result in Theorem
\ref{mm}. With these deterministic densities, we bound $A(\hat{f}_n)$
via $A(\hat{f}_n) \leq 4 \sum_{i=1}^n (\E \zeta_{in}^2/n)$ where
\begin{align*}
\zeta_{1n} &:= \|T_{\hat{f}_n}(\mathbf{X}, \rho_n) -
             T_{f_{\bar{G}^0_n}}(\mathbf{X}, \rho_n) \|_F I\left\{\hat{f}_n
             \notin E_n\right\} \\
\zeta_{2n} &:= \left( \|T_{\hat{f}_n}(\mathbf{X}, \rho_n) -
             T_{f_{\bar{G}^0_n}}(\mathbf{X}, \rho_n) \|_F \right. \\ & \left. - \max_{1 \leq j
	\leq N}   \|T_{f_{G_j}}(\mathbf{X}, \rho_n) -
                                                                       T_{f_{\bar{G}^0_n}}(\mathbf{X},
                                                                       \rho_n)
                                                                       \|_F
                                                                       \right)_+
                                                                       I\left\{\hat{f}_n
                                                                       \in
                                                                       E_n
                                                                       \right\}
  \\
\zeta_{3n} &:= \max_{1 \leq j \leq N} \left(\|T_{f_{G_j}}(\mathbf{X},
             \rho_n) - 
T_{f_{\bar{G}_n^0}}(\mathbf{X}, \rho_n) \|_F \right. \\ & \left. - \E
                                                          \|T_{f_{G_j}}(\mathbf{X},
                                                          \rho_n) -
                                                          T_{f_{\bar{G}^0_n}}(\mathbf{X},
                                                          \rho_n) \|_F
                                                          \right)_+  \\
\zeta_{4n} &:= \max_{1 \leq j \leq N} \E \|T_{f_{G_j}}(\mathbf{X},
             \rho_n) - T_{f_{\bar{G}^0_n}}(\mathbf{X}, \rho_n) \|_F. 
\end{align*} 
Each of these terms is controlled to finish the proof of Theorem
\ref{rgende} in the following way: 
\begin{enumerate}
\item $\E \zeta_{1n}^2/n$ is bounded by $C_d \epsilon_n^2(M, S,
  \bar{G}_n^0)$ because 
  $\P\{\hat{f}_n \notin E_n\} \leq (2/n)$ and the fact that
  $T_{f}(\bx, \rho_n)$ can be bounded by a term involving $\rho_n$
  alone (this result is stated (and proved) as Lemma \ref{p1}).  
\item $\E \zeta_{2n}^2/n$ is bounded by $C_d \epsilon_n^2(M, S,
  \bar{G}_n^0)$ using the fact that $f_{G_1}, \dots, f_{G_N}$ form a
  covering of $E_n$. 
\item $\E \zeta_{3n}^2/n$ is bounded by $C_d \sigma^2_{\max}
  \epsilon_n^2(M, S, \bar{G}_n^0) (\log n)^2$ using measure
  concentration properties of Gaussian random variables and an upper
  bound  on $N$ which is given by the covering number result in Corollary
  \ref{rgnt}.  
\item $\E \zeta_{4n}^2/n$ is bounded by $C_d \epsilon_n^2(M, S,
  \bar{G}_n^0) (\log n)^3$ by Theorem \ref{fcc}
  which bounds $A(f)$ in terms of $\hel(f, f_{\bar{G}_n^0})$ 
  for every non-random $f$. 
\end{enumerate}
The structure of the proof and the main ideas are very similar to that
of \citet[Proof of Theorem 5]{jiang2009general}. Other than the fact
that our arguments hold for $d \geq 2$ and arbitrary
compact sets $S$, additional differences between our
proof and \cite[Proof of Theorem 5]{jiang2009general} are as
follows. The breakdown of the risk $\Rr_n(\hat{\theta},
\breve{\theta}^*)$ into various terms is different as
the authors of \cite{jiang2009general} work with the discrepancy measure
\eqref{jidis} while we work directly with the discrepancy between
$\hat{\theta}$ and $\breve{\theta}^*$. Our argument for
$T_{\hat{f}_n}(\bx) = T_{\hat{f}_n}(\bx, \rho_n)$ (given in inequality
\eqref{rob} near the beginning of the proof of Theorem \ref{rgende})
is more direct compared to the corresponding argument in
\cite[Proposition 2]{jiang2009general}. Our measure concentration
result (see Lemma \ref{devs}) involves $X_i \sim N(\theta_i,
\Sigma_i)$ and not Gaussian random vectors with 
identity covariance as in \cite[Proposition 4]{jiang2009general}.
Our control of $\E \zeta_{5n}^2/n$ (via Lemma \ref{dco}) is different
from and probably more direct compared to the corresponding argument
in \cite[Theorem 3(ii)]{jiang2009general}.

\section{Implementation Details and Some Simulation
  Results}\label{imple} 

In this section, we shall discuss some computational details
concerning the NPMLE and also provide numerical evidence for the
effectiveness of the estimator \eqref{es.int} based on the NPMLE for
denoising. 

For the optimization problem \eqref{kw}, it can be shown
that $\hat{f}_n$ exists and is non-unique. However $\hat{f}_n(X_1),
\dots, \hat{f}_n(X_n)$ are unique and they solve the finite
dimensional optimization problem:
\begin{align}
&\text{ argmax } \sum_{i=1}^n \log f_i \label{prob:conv_form} \\
&\text{ s.t. }  (f_1, \dots, f_n) \in \text{Conv}\left\{
  (\phi(X_1 - \theta), \dots, \phi(X_n - \theta)) : \theta \in \R^d
  \right\}. \nonumber 
\end{align}
where $\text{Conv}$ above stands for convex hull. The constraint set
in the above problem however involves every $\theta 
\in \R^d$. A natural way of computing an approximate solution is to fix a
finite data-driven set $F := \{a_1, \dots, a_m\} \subseteq \R^d$ and
restrict the infinite convex hull to the convex hull over $\theta$
belonging to this set. This leads to the problem: 
\begin{align}
&\text{ argmax } \sum_{i=1}^n \log f_i \label{prob:conv_approx} \\
&\text{ s.t. } (f_1, \dots, f_n) \in \text{Conv}\left\{
  (\phi_d(X_1 - \theta), \dots, \phi_d(X_n - \theta)) : \theta \in F 
  \right\}. \nonumber   
\end{align}
This can also be seen as an approximation to \eqref{kw} where the
densities $f \in \M$ are restricted to have atoms in $\{a_1, \dots,
a_m\} \subseteq \R^d$. \eqref{prob:conv_approx} is a convex
optimization problem over the probability simplex in $m$
dimensions and can be solved using many algorithms (for 
example, standard interior point methods as implemented in the
software, Mosek, can be used here). 

The effectiveness of \eqref{prob:conv_approx} as an approximation to
\eqref{kw} depends crucially on the choice of $\{a_1, \dots,
a_m\}$. For $d = 1$, \citet{koenker2014convex} propose the use of a
uniform grid within the range $[\min_{1 \leq i
  \leq n} X_i, \max_{1 \leq i   \le n} X_i]$ of the data. \citet{dicker2016high}
discuss this approach in more detail and recommend the choice $m :=
[\sqrt{n}]$. They also prove (see \cite[Theorem 2]{dicker2016high})
that the resulting approximate MLE, $\tilde{f}_n$, has a squared
Hellinger accuracy, $\shel(\tilde{f}_n, f_0)$, of $O_p((\log n)^2/n)$
when the mixing measure corresponding to $f_0$ has bounded
support. For $d \geq 1$, \citet{feng2016nonparametric} recommend
taking a regular grid in a compact region containing the data. They
also mention that empirical results seem ``fairly insensitive'' to the
choice of $m$. 

A proposal for selecting $\{a_1, \dots, a_m\}$ that is different from
gridding is the so called ``exemplar'' choice where one takes $m = n$
and $a_i = X_i$ for $i = 1, \dots, n$. This choice is proposed in
\citet{bohning2000computer} for $d = 1$ and in
\citet{lashkari2008convex} for $d \geq 1$. This avoids gridding which
can be problematic in multiple dimensions. Also, this method is
computationally feasible as long as $n$ is moderate (up to a few
thousands) but becomes expensive for larger $n$. In such instances, a
reasonable strategy is to take $a_1, \dots, a_m$ as a random subsample
of the data $X_1, \dots, X_n$. For fast implementations, one can also
extend the idea of \citet{koenker2014convex} by binning the
observations and weighting the likelihood terms in \eqref{kw} by
relative multinomial bin counts.

We shall now provide some graphical evidence of the effectiveness of the
NPMLE for denoising. For our plots, the NPMLE is approximately
computed via the algorithm \eqref{prob:conv_approx} where $a_1,
\dots, a_m$ are chosen to be the data points $X_1, \dots, X_n$ with $m
= n$ (i.e., we follow the exemplar recommendation of
\cite{bohning2000computer} and \cite{lashkari2008convex}). We use
the software, Mosek, to solve \eqref{prob:conv_approx}. The results of this paper do not apply
directly to these approximate NPMLEs and extending them is the subject
of future work. We argue however via simulations that these
approximate NPMLEs work well for denoising.  

In Figure \ref{fig:denoising_illustrations}, we illustrate the
performance of $\hat{\theta}_1, \dots, \hat{\theta}_n$ (defined as in
\eqref{es.int}) for denoising when the true vectors $\theta_1, \dots,
\theta_n$ take values in a bounded region of $\R^2$. The plots refer
to these estimates as the Empirical Bayes estimates and the quantities
\eqref{ob} as the Oracle Bayes estimates. 
\begin{figure} 
    \centering
    \begin{subfigure}[t]{0.45\textwidth} 
        \centering
        \includegraphics[width = 0.95\textwidth]{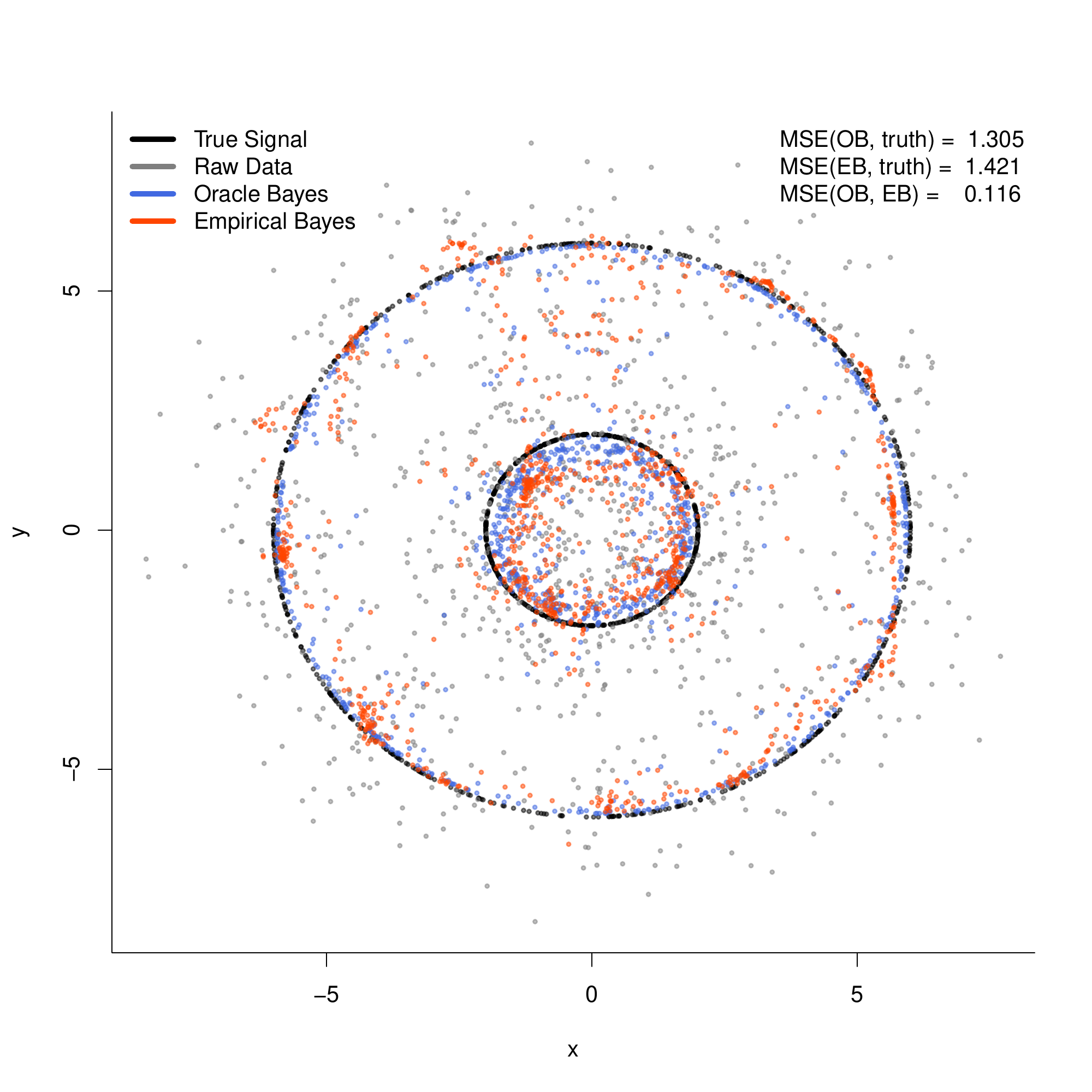}
        \caption{\textbf{Two circles:} $n = 1000$. Half of $\{\theta_i\}_{i=1}^n$ are drawn uniformly at random from each of the concentric circles of radii $2$ and $6$ respectively.}
    \end{subfigure}%
    ~ 
    \centering
    \begin{subfigure}[t]{0.45\textwidth}
        \centering
        \includegraphics[width = 0.95\textwidth]{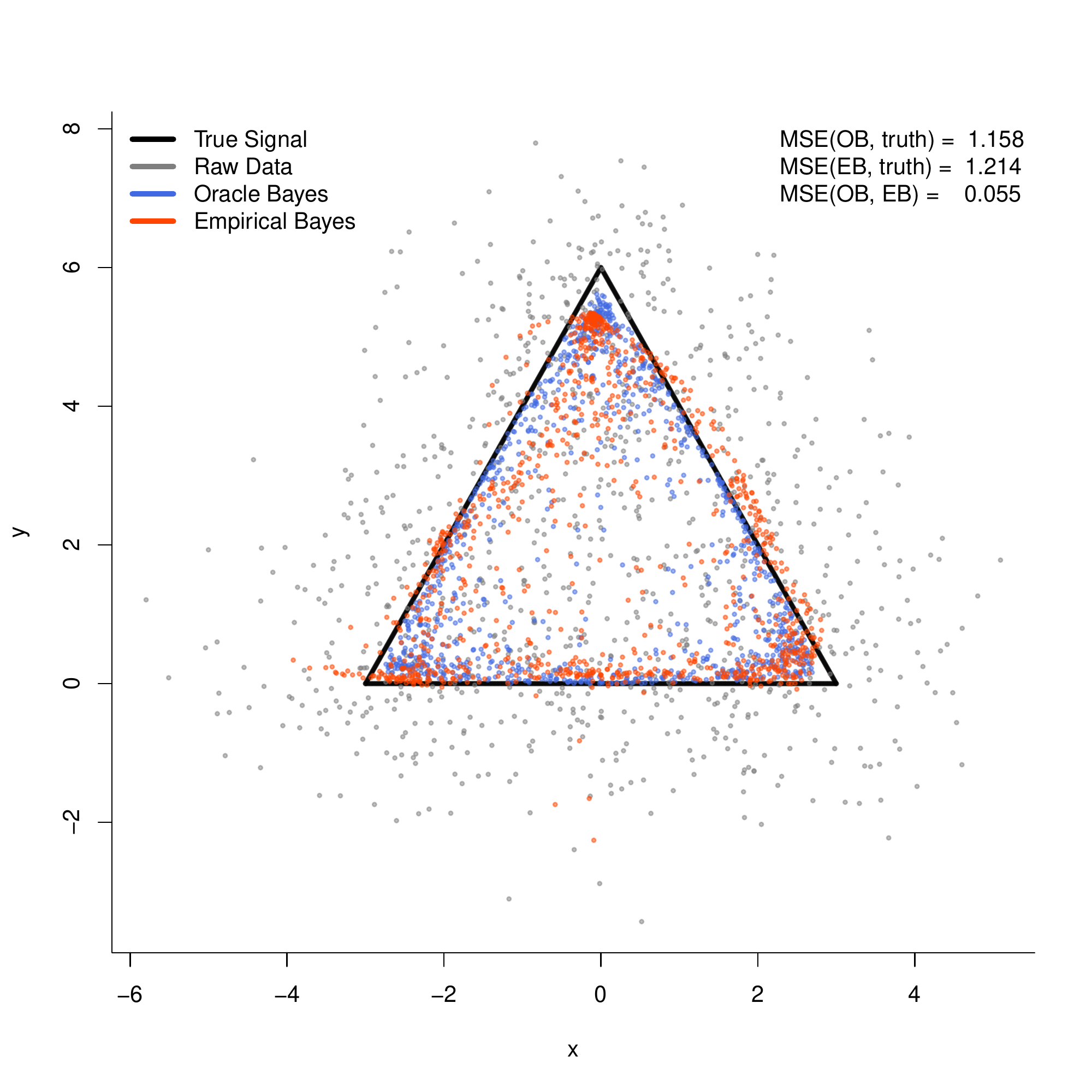}
        \caption{\textbf{Triangle:} $n = 999$. A third of $\{\theta_i\}_{i=1}^n$ are drawn uniformly at random from each edge of the triangle with vertices $(-3,0)$, $(0,6)$ and $(3,0)$}
    \end{subfigure}%
    ~ 
    \vspace{5mm}
    \\
    ~
    \centering
    \begin{subfigure}[t]{0.45\textwidth}
        \centering
        \includegraphics[width = 0.95\textwidth]{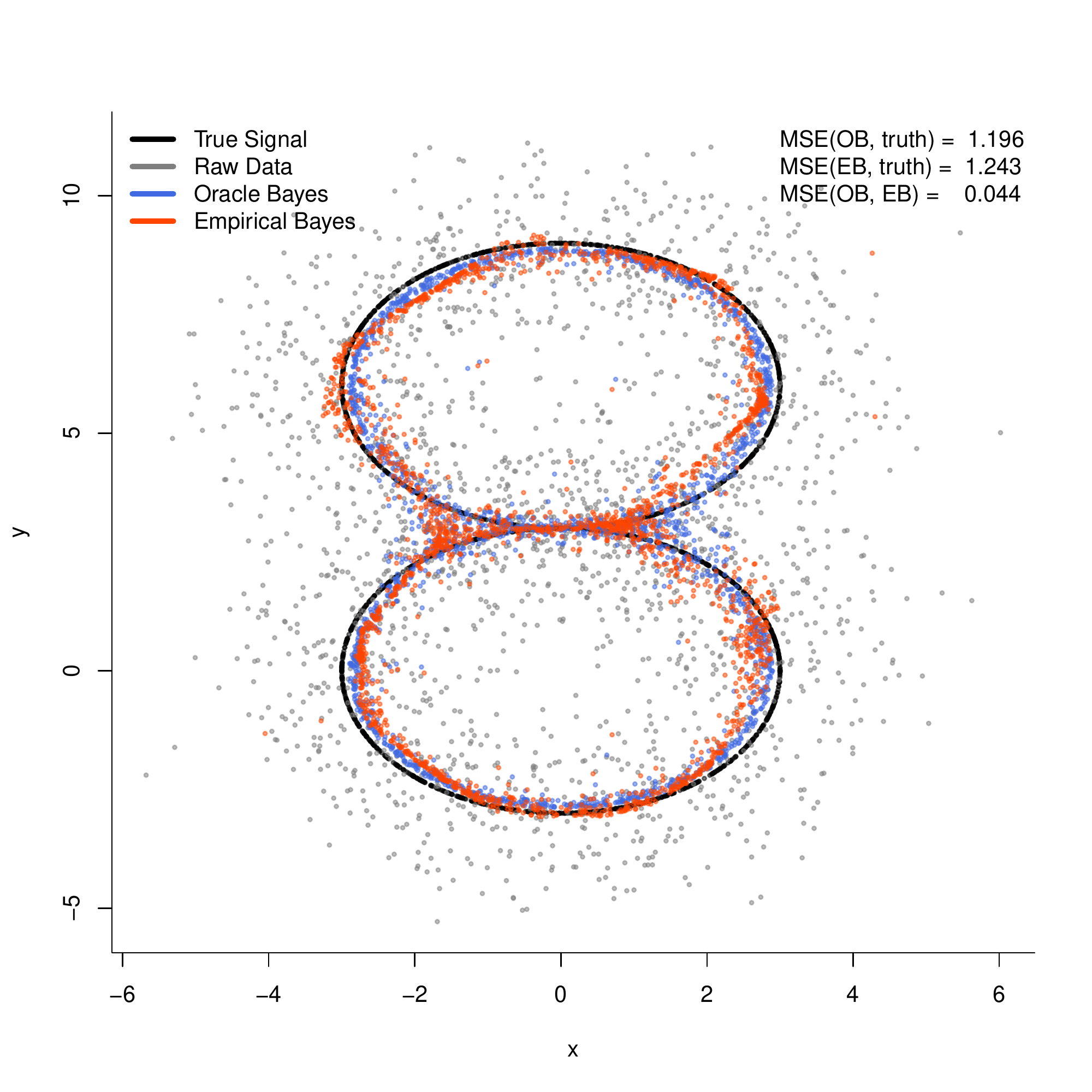}
        \caption{\textbf{Digit $8$:} $n = 1000$. Half of $\{\theta_i\}_{i=1}^n$ are drawn uniformly at random from each of the circles of radii $3$ cnetered at $(0,0)$ and $(0,6)$ respectively.}
    \end{subfigure}%
    ~ 
    \centering
    \begin{subfigure}[t]{0.45\textwidth}
        \centering
        \includegraphics[width = 0.95\textwidth]{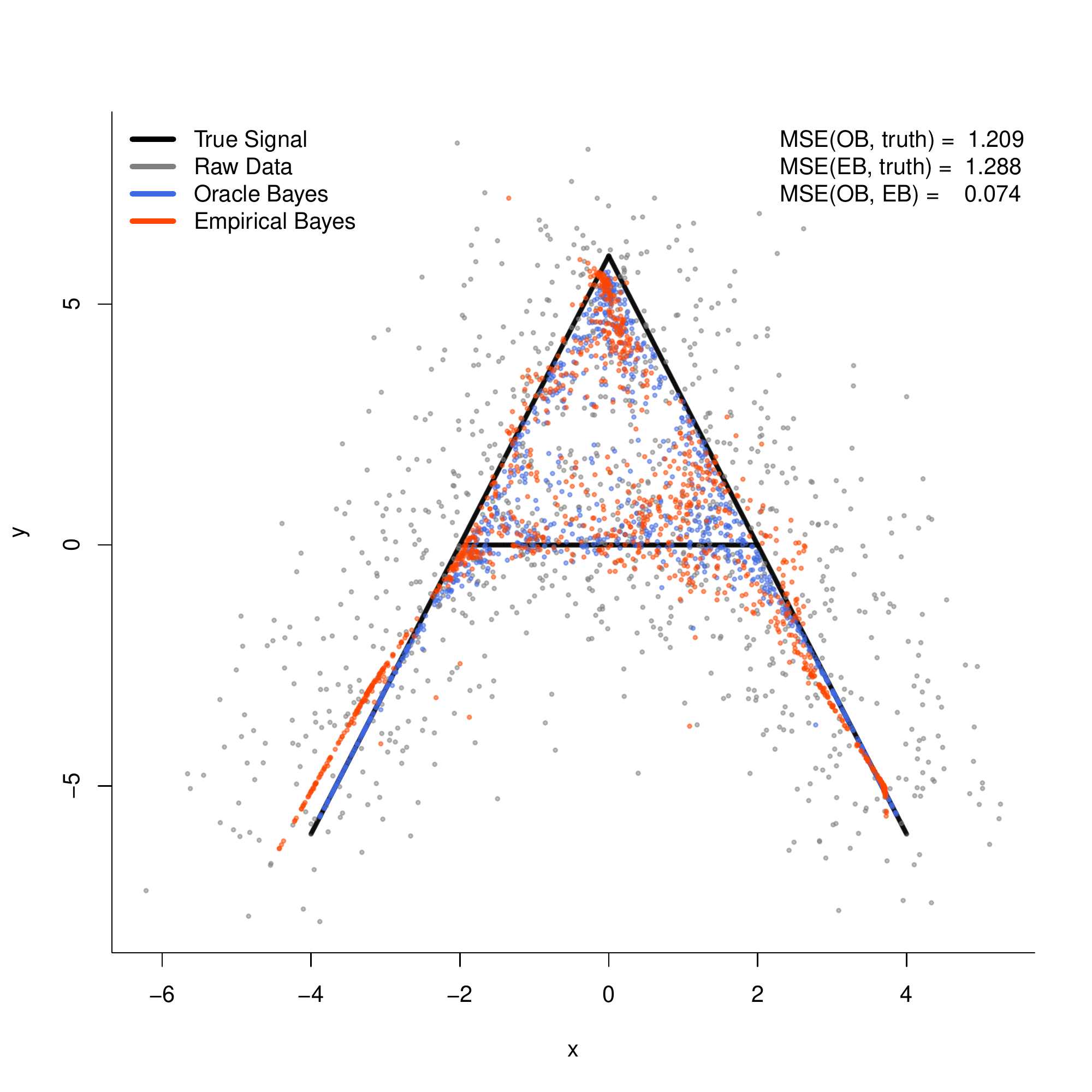}
        \caption{\textbf{Letter A:} $n = 1000$. A fifth of $\{\theta_i\}_{i=1}^n$ are drawn uniformly at random from each of the line segments joining the points $(-4,-6), (-2,0), (0,6), (2,0)$ and $(4,6)$ so as to form the letter A.}
    \end{subfigure}%
    ~ 
    \caption{Illustrations of denoising using the Empirical Bayes
      estimates \eqref{es.int}}
    \label{fig:denoising_illustrations}
\end{figure}
In each of the four subfigures in Figure
\ref{fig:denoising_illustrations}, we generate $n$ vectors $\theta_1,
\dots, \theta_n$ from a bounded region in $\R^d$  for $d = 2$: they
are generated from two concentric circles in the first subfigure,
a triangle in the second subfigure, the digit 8 in the third subfigure
and the uppercase letter A in the last subfigure. In each
of these cases, the empirical measure $\bar{G}_n$ is supported on a
bounded region so that Corollary \ref{dna} yields the near parametric
rate $1/n$ up to logarithmic multiplicative factors in $n$ for every
NPMLE. In each of 
the subfigures in Figure \ref{fig:denoising_illustrations}, we plot
the true parameter values $\theta_1, \dots, \theta_n$ in black, the
data $X_1, \dots, X_n$ (generated independently according to $X_i \sim
N(\theta_i, I_2)$) are plotted in gray, the Oracle Bayes estimates
$\hat{\theta}_1^*, \dots, \hat{\theta}_n^*$ are plotted in blue while
the estimates $\hat{\theta}_1, \dots, \hat{\theta}_n$ are plotted in
red. The mean squared discrepancies: 
\begin{equation*}
  \frac{1}{n} \sum_{i=1}^n \norm{\hat{\theta}_i^* - \theta_i}^2, ~~~
  \frac{1}{n} \sum_{i=1}^n \norm{\hat{\theta}_i - \theta_i}^2 \text{
    and } ~~~
  \frac{1}{n} \sum_{i=1}^n \norm{\hat{\theta}_i^* - \hat{\theta}_i}^2  
\end{equation*}
are given in each figure in the legend at the upper-right corner.
Note that the third MSE is much smaller than the other two in each
subfigure. 

As can be observed from Figure \ref{fig:denoising_illustrations}, the
Empirical Bayes estimates \eqref{es.int} approximate their targets
\eqref{ob} quite well. The most noteworthy fact is that the estimates
\eqref{es.int} do not require any knowlege of the underlying structure
in $\bar{G}_n$, for instance, concentric circles, or triangle or a
letter of the alphabet etc. We should also note here that the noise
distribution here is completely known to be $N(0, I_d)$ which implies,
in particular, that there is no unknown scale parameter representing
the noise variance.  

We have also done numerical simulations for illustrating the denoising
performance of $\hat{\theta}_1, \dots, \hat{\theta}_n$ in the case
when $\theta_1, \dots, \theta_n$ have a clustering structure. Due to
space constraints, these results have been moved to Section
\ref{desim} of the technical appendix. 

\newpage

\appendix

\title{Supplement to ``On the nonparametric maximum likelihood estimator for
  Gaussian location mixture densities with application to
  Gaussian denoising''}

\vspace{0.2in}

This technical appendix contains proofs of all results in the main
paper. Some observations on the heteroscedastic Gaussian denoising
problem are also given in this appendix. The proofs for results in
Section \ref{hela} are given in Section \ref{hela.pf} while the proofs
for Section \ref{gadeno} are in Section \ref{gadeno.pf}. Section
\ref{hete} contains some results and remarks on the heteroscedastic
problem. Metric entropy results for multivariate Gaussian  
location mixture densities play a crucial rule in the proofs of the
main results; these results are stated and proved in Section
\ref{mmps}. Section \ref{zhafo} contains the statement and proof for a
crucial ingredient for the proof of the main denoising
theorem. Finally, additional technical results needed in the proofs of
the main results are collected in Section \ref{auxre} together with
their proofs while additional simulation results are in Section
\ref{desim}.     . 
 
\section{Proofs of results in Section \ref{hela}} \label{hela.pf}
The following notation will be used in the proofs in the sequel. 

\noindent \textbf{1.} For $x \in \R^d$ and $a > 0$, let $B(x, a) :=
\{u \in \R^d: \|u - x\| \leq a\}$ denote the closed ball of radius $a$
centered at $x$.     

\noindent \textbf{2.} For a subset $S \subseteq \R^d$ and $a > 0$, we denote the set $S^a$
by 
\begin{equation}\label{saf}
  S^a := \cup_{x \in S} B(x, a) = \left\{y : \dos(y) \leq a \right\}
\end{equation}
where $\dos(\cdot)$ is defined as in \eqref{ds}. 

\noindent \textbf{3.} For a compact subset $S$ of $\R^d$ and $\epsilon > 0$, we
denote the $\epsilon$-covering number of $S$ in the usual Euclidean
distance by $N(\epsilon, S)$ i.e., $N(\epsilon, S)$ stands for the
smallest number of closed balls of radius $\epsilon$ whose union
contains $S$. 

\noindent \textbf{4.} Given a pseudometric $\varrho$ on $\M$, let $N(\epsilon, \M, \varrho)$
  denote the $\epsilon$-covering number of $\M$ under the pseudometric
  $\varrho$ by $N(\epsilon, \M, \varrho)$ i.e., $N(\epsilon, \M,
  \varrho)$ denotes the smallest positive integer $N$ for which there
  exist densities $f_1, \dots, f_N \in \M$ satisfying  
  \begin{equation*}
    \sup_{f \in \M} \inf_{1 \leq i \leq N} \varrho(f, f_i) \leq
    \epsilon. 
  \end{equation*}
  In the proof below, we will be concerned with $N(\epsilon, \M,
  \varrho)$ for the following choice of $\varrho$. For a compact set
  $S$, let $\norm{\cdot}_{\infty, S}$ denote the pseudonorm on $\M$
  defined by   
\begin{equation}\label{sups}
\|f\|_{\infty, S} := \sup_{x \in S} |f(x)|. 
\end{equation}
This pseudonorm naturally induces a pseudometric on $\M$ given by
$\varrho(f, g) := \norm{f - g}_{\infty, S}$. The covering number for
this pseudometric will be denoted by $N(\epsilon, \M,
\norm{\cdot}_{\infty, S})$. In the proofs for the results in Section
\ref{gadeno}, we will need to deal with covering numbers for other
pseudometrics $\varrho$ on $\M$ as well. These pseudometrics will be
introduced in Section \ref{gadeno.pf}.  

With the above notation in place, we are now ready to give the proof
of Theorem \ref{dens}. This proof uses additional ingredients which
are proved in later sections. Arguably the most important ingredient
for the proof of this theorem is a bound on the covering numbers
$N(\epsilon, \M, \norm{\cdot}_{\infty,S})$ which is stated as
inequality \eqref{mm.eq_f} in Theorem \ref{mm}. Other ingredients include inequality
  \eqref{Eq:tailExpect} (which is a consequence of Lemma
  \ref{tailmom}) and a standard fact (Lemma \ref{volm}) giving a 
  volumetric upper bound for Euclidean covering numbers.

\subsection{Proof of Theorem \ref{dens}} 
\begin{proof}[Proof of Theorem \ref{dens}]
We shall prove inequalities \eqref{dens.eq} and \eqref{dens.1} under
the assumption that the sample size $n$ satisfies  
\begin{equation}\label{ncon}
         n \geq \max \left(\exp\left(\frac{d+1}{2} \right), \frac{1}{2}(2
         \pi)^{(d-1)/2} \right). 
\end{equation}
If \eqref{ncon} is not satisfied, then $\epsilon_n(M, S, \bar{G}_n)$ (and 
 $\epsilon_n(M, S, \bar{G}_n)/\min(1 - \alpha, \beta)$ which is larger than
 $\epsilon_n(M, S, \bar{G}_n)$) will 
be bounded from below by a positive constant $\kappa_d$. We can then
therefore choose $C_d$ in \eqref{dens.eq} and \eqref{dens.1} large 
enough so that $\epsilon_n(M, S, \bar{G}_n) \sqrt{C_d} > \sqrt{2
  \min(1 - \alpha,   \beta)}$. Because the Hellinger distance
$\hel(\hat{f}_n, f_{\bar{G}_n})$ is always bounded from above by
$\sqrt{2}$, the probability on the left hand side of \eqref{dens.eq}
will then equal zero so that \eqref{dens.eq} holds
trivially. Inequality \eqref{dens.1}  will also be trivial because its
right hand side will then be larger than 2. 

Let us therefore fix $n$ satisfying \eqref{ncon}. Fix a positive
sequence $\{\gamma_n\}$ and assume that $\hat{f}_n$ satisfies  
\begin{equation}\label{bahu}
  \prod_{i=1}^n \frac{\hat{f}_n(X_i)}{f_{\bar{G}_n}(X_i)} \geq \exp
  \left((\beta - \alpha) n \gamma_n^2 \right) \qt{for some $0 < \beta
    \leq \alpha < 1$}. 
\end{equation}
We shall then bound the probability  
\begin{equation*}
  \P \{\hel(\hat{f}_n, f_{\bar{G}_n}) \geq t \gamma_n\}  \qt{for $t
    \geq 1$}. 
\end{equation*}
Fix a non-empty compact set $S \subseteq \R^d$ and $M \geq \sqrt{10
  \log n}$. We shall work with the set $S^M$ (defined as in
\eqref{saf}) and the pseudometric given by the pseudonorm
$\norm{\cdot}_{\infty, S^M}$ (defined as in \eqref{sups}). 

Let $\eta := 1/n^2$ and let $\{h_1, \dots, h_N\} \subseteq
\M$ denote an $\eta$-covering set of $\M$ in the pseudometric given by 
$\|\cdot\|_{\infty, S^M}$ where $N = N(\eta, \M, \|\cdot\|_{\infty,
  S^M})$ i.e.,   
\begin{equation*}
  \sup_{h \in \M} \inf_{1 \leq j \leq N} \|h - h_j\|_{\infty, S^M} \leq
  \eta.  
\end{equation*}
Inequality \eqref{mm.eq_f} in Theorem \ref{mm} gives an upper bound
for $N$ that will be crucially used in this proof. 

Let $J$ denote the set of all $j \in \{1, \dots, N\}$ for which there
exists a density $h_{0j} \in \M$ satisfying  
\begin{equation*}
	\|h_{0j} - h_j\|_{\infty, S^M} \leq \eta ~~ \text{ and } ~~
	\hel(h_{0j}, f_{\bar{G}_n}) \geq t \gamma_n. 
\end{equation*}
Because $h_1, \dots, h_N$ cover $\M$, there will exist $1 \leq j
\le N$ such that $\|h_j - \hat{f}_n \|_{\infty, S^M} \le \eta$. If
$\hel(\hat{f}_n, f_{\bar{G}_n}) \geq t \gamma_n$, then $j \in J$ 
and consequently
\begin{equation}\label{tpp1}
	\|\hat{f}_n - h_{0j} \|_{\infty, S^M} \leq 2 \eta. 
\end{equation}
We now define a function $v :=  v_{S, M} : \R^d \rightarrow (0,
\infty)$ via    
\begin{equation} \label{vdef}
v(x) := \begin{cases}
\eta &\text{ if } x \in S^M \\
\eta\left( \frac{M}{\mathfrak{d}_S(x)} \right)^{d+1} &\text{ otherwise }
\end{cases}
\end{equation}
where $\mathfrak{d}_S: \R^d \rightarrow [0, \infty)$ is defined as in
\eqref{ds}.  

Inequality \eqref{tpp1} clearly implies that $\hat{f}_n(X_i) \leq
h_{0j}(X_i) + 2 \eta = h_{0j}(X_i) + 2 v(X_i)$ whenever $X_i \in S^M$
which allows us to write   
   \begin{equation*}
     \prod_{i=1}^n \hat{f}_n(X_i) \leq \prod_{i: X_i \in S^M} 
                                    \left\{ h_{0j}(X_i)
                                    + 2 v(X_i)\right\} \prod_{i : X_i
                                    \notin S^M} (2\pi)^{-d/2} 
   \end{equation*}
   where we used the bound $\hat{f}_n(X_i) \leq \sup_{x} \hat{f}_n(x)
   \leq (2 \pi)^{-d/2}$ for $X_i \notin S^M$ (the bound $\sup_x f(x)
   \leq (2 \pi)^{-d/2}$ holds for every $f \in \M$ as can easily be
   seen). From here, we deduce 
   \begin{align*}
     \prod_{i=1}^n \hat{f}_n(X_i) &\leq \prod_{i=1}^n  
                                    \left\{ h_{0j}(X_i)
                                    + 2 v(X_i)\right\} \prod_{i : X_i
                                    \notin S^M}
                                    \frac{(2\pi)^{-d/2}}{h_{0j}(X_i) +
                                    2 v(X_i)} \\
&\leq \prod_{i=1}^n  
                                    \left\{ h_{0j}(X_i)
                                    + 2 v(X_i)\right\} \prod_{i : X_i
                                    \notin S^M}
                                    \frac{(2\pi)^{-d/2}}{2 v(X_i)}
   \end{align*}
We have therefore proved that the inequality
  \begin{equation*}
    \prod_{i=1}^n \frac{\hat{f}_n(X_i)}{f_{\bar{G}_n}(X_i)} \leq
    \max_{j \in J} \prod_{i=1}^n \frac{h_{0j}(X_i) + 2
      v(X_i)}{f_{\bar{G}_n}(X_i)} \prod_{i : X_i \notin S^M}
    \frac{(2\pi)^{-d/2}}{2 v(X_i)}
  \end{equation*}
holds on the event $\hel(\hat{f}_n, f_{\bar{G}_n}) \geq t
\gamma_n$. Because $\hat{f}_n$ satisfies \eqref{bahu}, we obtain  
\begin{align}
\P \left( \hel(\hat{f}_n, f_{\bar{G}_n}) \geq t \gamma_n \right)  &\leq
                                                                   \P
                                                                   \bigg\{\max_{j
                                                                   \in
                                                                   J}
                                                                   \prod_{i=1}^n
                                                                   \frac{h_{0j}(X_i)
                                                                   + 2
                                                                   v(X_i)}{f_{\bar{G}_n}(X_i)}
                                                                   \prod_{i
                                                                   :
                                                                   X_i
                                                                   \notin
                                                                   S^M}
                                                                   \frac{(2
                                                                   \pi)^{-d/2}}{2v(X_i)}
  \nonumber \\
                                                                   &\geq
                                                                   \exp
                                                                   \left((\beta
                                                                   -
                                                                   \alpha)
                                                                   n
                                                                   t^2 \gamma_n^2
                                                                    \right)
                                                                   \bigg\}
  \nonumber \\
	&\leq \P \bigg\{\max_{j \in J} \prod_{i=1}^n \frac{h_{0j}(X_i)
          + 2 v(X_i)}{f_{\bar{G}_n}(X_i)} \geq e^{-\alpha nt^2\gamma_n^2}
          \bigg\} \label{mur} \\
&+ \P \bigg\{ \prod_{i : X_i \notin S^M} \frac{(2
          \pi)^{-d/2}}{2v(X_i)} \geq e^{\beta nt^2\gamma_n^2}
      \bigg\}. \nonumber  
\end{align}
We shall bound the two probabilities above separately. For the first
probability:  
\begin{equation*}
  P_{\RN{1}} := \P \left\{\max_{j \in J} \prod_{i=1}^n \frac{h_{0j}(X_i) + 2
  v(X_i)}{f_{\bar{G}_n}(X_i)} \geq e^{-\alpha nt^2\gamma_n^2}
\right\}, 
\end{equation*}
we write
\begin{align*}
  P_{\RN{1}}
  &\leq \sum_{j \in J} \P \left\{\prod_{i=1}^n 
	\frac{h_{0j}(X_i) + 2 v(X_i)}{f_{\bar{G}_n}(X_i)} \geq
    e^{-\alpha nt^2\gamma_n^2} \right\}   \\ 
	&\leq e^{\alpha nt^2\gamma_n^2/2}\sum_{j \in J} \E \prod_{i=1}^n
  \sqrt{\frac{h_{0j}(X_i) + 2 v(X_i)}{f_{\bar{G}_n}(X_i)}} \\ &=
                                                                e^{\alpha
                                                                nt^2\gamma_n^2/2}
                                                                \sum_{j
                                                                \in J}
                                                                \prod_{i=1}^n 
        \E \sqrt{\frac{h_{0j}(X_i) + 2 v(X_i)}{f_{\bar{G}_n}(X_i)}}. 
\end{align*}
	Now for each fixed $j \in J$, we have 
	\begin{align*}
	\prod_{i=1}^n \E \sqrt{\frac{h_{0j}(X_i) + 2v(X_i)}{f_{\bar{G}_n}(X_i)}} 
	&= \exp \left(\sum_{i=1}^n \log \E \sqrt{\frac{h_{0j}(X_i) + 2v(X_i)}{f_{\bar{G}_n}(X_i)}} \right) \\
	&\leq \exp \left(\sum_{i=1}^n \E \sqrt{\frac{h_{0j}(X_i) + 2v(X_i)}{f_{\bar{G}_n}(X_i)}} - n \right) \\
	&\leq \exp \left(\sum_{i=1}^n \int \sqrt{\frac{h_{0j} +
              2v}{f_{\bar{G}_n}}} f_{G_i}  - n \right) \\
 &= \exp \left(n
          \int\sqrt{\left(h_{0j} + 2v \right) f_{\bar{G}_n}}  - n
        \right).   
	\end{align*}
	Because of $\sqrt{\alpha + \beta} \leq \sqrt{\alpha} +
        \sqrt{\beta}$ and the Cauchy-Schwarz inequality (along with
        $\int f_{\bar{G}_n} = 1$), we obtain
        \begin{align*}
          \int \sqrt{\left(h_{0j} + 2 v \right) f_{\bar{G}_n}} &\le
        \int \sqrt{h_{0j} f_{\bar{G}_n}} + \sqrt{2} \int \sqrt{v
          f_{\bar{G}_n}} \\  
    &\leq \int \sqrt{h_{0j} f_{\bar{G}_n}} + \sqrt{2} \sqrt{\int v}  \\ &=
    1 - \frac{1}{2} \shel(h_{0j}, f_{\bar{G}_n}) + \sqrt{2} \sqrt{\int
      v}. 
        \end{align*} 
  We now use Lemma \ref{Lemma:integralV} which gives an upper bound on
  $\int v$. This (along with the fact that $\hel(h_{0j}, f_{\bar{G}_n})
  \geq t \gamma_n$) allows us to deduce: 
  \begin{equation*}
          \int \sqrt{\left(h_{0j} + 2 v(X_i) \right) f_{\bar{G}_n}}
          \leq 1 - \frac{t^2}{2} \gamma_n^2 + C_d \sqrt{2 \eta
            \text{Vol} (S^M)}. 
  \end{equation*}
 We have therefore proved that 
 \begin{align}
P_{\RN{1}} &\leq \exp \left(\frac{\alpha}{2} n t^2 \gamma^2_n +
              \log |J| - \frac{1}{2} n t^2 \gamma_n^2 + n C_d
              \sqrt{\eta \text{Vol}(S^M)} \right) \nonumber \\
&\leq \exp \left(\frac{\alpha - 1}{2} n t^2 \gamma_n^2 + \log N +
  C_d \sqrt{\text{Vol}(S^M)} \right)    \label{Eq:Prob1}
 \end{align}
because $\eta := n^{-2}$ and $|J| \le N$ (as $J \subseteq \{1, \dots,
N\}$). 

We now use the upper bound on $N$ from inequality \eqref{mm.eq_f} in
Theorem \ref{mm}. Because $\eta  = 1/n^2$ and $n \geq 2$, the quantity
$a$ appearing in Theorem \ref{mm} satisfies 
\begin{equation*}
  a = \sqrt{2 \log \frac{2 \sqrt{2 \pi}}{(2 \pi)^{d/2} \eta}} =
  \sqrt{2 \log \frac{2 \sqrt{2 \pi}}{(2 \pi)^{d/2}} + 4 \log n} \leq
  \sqrt{6 \log n}. 
\end{equation*}
Also because of \eqref{ncon}, we have $2n \geq (2 \pi)^{(d-1)/2}$ so
that 
\begin{align*}
  a = \sqrt{2 \log \frac{2 \sqrt{2 \pi}}{(2 \pi)^{d/2} \eta}} &=
  \sqrt{2 \log \frac{2 \sqrt{2 \pi}}{(2 \pi)^{d/2}} + 4 \log n}  \\ &\geq
  \sqrt{2 \log (1/n) + 4 \log n} = \sqrt{2 \log n}. 
\end{align*}
Thus Theorem \ref{mm} gives  
\begin{equation*}
  \log N \leq C_d N(a, (S^M)^a) (\log n)^{d+1} \leq C_d N(\sqrt{2 \log n},
  S^{M + \sqrt{6 \log n}}) (\log n)^{d+1}. 
\end{equation*}
Using Lemma \ref{volm} to bound the Euclidean covering number
appearing in the right hand side above, we deduce that  
\begin{align*}
  N(\sqrt{2 \log n}, S^{M + \sqrt{6 \log n}}) &\leq C_d (\sqrt{2 \log
    n})^{-d} \text{Vol} (S^{M + \sqrt{6 \log n} + \sqrt{2 \log n}/2})
  \\
&\leq C_d (\log n)^{-d/2} \text{Vol}(S^{M + \sqrt{10 \log n}}) \\ &\leq
  C_d(\log n)^{-d/2} \text{Vol}(S^{2 M}) 
\end{align*}
as $M \geq \sqrt{10 \log n}$. This gives $\log N 
\leq C_d (\log n)^{(d/2) + 1} \text{Vol}(S^{2M})$. Using this bound
for $\log N$ in \eqref{Eq:Prob1}, we obtain
\begin{align}\label{feq1}
P_{\RN{1}} &\leq \exp \bigg(\frac{\alpha - 1}{2} n t^2 \gamma_n^2 
  + C_d (\log n)^{(d/2) + 1} \text{Vol}(S^{2 M}) +
  C_d \sqrt{\text{Vol}(S^M)} \bigg) \nonumber.
\end{align}
We shall now bound the second probability in \eqref{mur}: 
\begin{equation*}
  P_{\RN{2}} :=   \P \left\{\prod_{i : X_i \notin S^M} \frac{(2
      \pi)^{-d/2}}{2 v(X_i)} \geq e^{\beta n t^2 \gamma_n^2}\right\}. 
\end{equation*}
First observe, by Markov's inequality, that 
\begin{equation*}
  P_{\RN{2}} \leq \exp \left(-
  \frac{\beta n t^2 \gamma_n^2}{2 \log n} \right) \E \left(\prod_{i :
    X_i \notin S^M} \frac{(2 \pi)^{-d/2}}{2 v(X_i)} \right)^{1/(2 \log
  n)} 
\end{equation*}
The expectation above is bounded as (recall the formula for
$v(\cdot)$ from \eqref{vdef})
\begin{align*}
  \E \left(\prod_{i :
    X_i \notin S^M} \frac{(2 \pi)^{-d/2}}{2 v(X_i)} \right)^{\frac{1}{2 \log
  n}}  &\leq \E \left(\prod_{i :
    X_i \notin S^M} \frac{1}{v(X_i)} \right)^{\frac{1}{2 \log n}}  \\
&= \E \left(\prod_{i : X_i \notin S^M} \frac{\mathfrak{d}_S(X_i)}{M
  \eta^{\frac{1}{d+1}}} \right)^{\frac{d+1}{2 \log n}}  \\
&= \E \left[\prod_{i=1}^n \left(\frac{\mathfrak{d}_S(X_i)}{M \eta^{\frac{1}{d+1}}}
  \right)^{I\{\mathfrak{d}_S(X_i) \geq M\}}  \right]^{\frac{d+1}{2 \log n}}  
\end{align*}
The above term will be controlled below by using inequality
\eqref{Eq:tailExpect} (which is a consequence of Lemma \ref{tailmom})
with   
\begin{equation}\label{simch}
  a := \frac{1}{M \eta^{1/(d+1)}} ~~~~~ \text{ and } ~~~~~ \lambda := \frac{d+1}{2 \log n} 
\end{equation}
to obtain 
\begin{align}\label{sim1}
  \E \left[\prod_{i=1}^n \left(\frac{\mathfrak{d}_S(X_i)}{M \eta^{\frac{1}{d+1}}}
  \right)^{I\{\mathfrak{d}_S(X_i) \geq M\}}  \right]^{\frac{d+1}{2 \log n}}  &\leq
\exp \bigg\{C_d a^{\lambda} M^{\lambda
    + d - 2} \\
&+ (aM)^{\lambda} n
  \left(\frac{2\mu_p(\mathfrak{d}_S, \bar{G}_n)}{M}\right)^p \bigg\}. \nonumber
\end{align}
We need to assume here that 
\begin{equation*}
  \log n \geq \frac{d+1}{2 \min(1, p)} 
\end{equation*}
to ensure that $\lambda \leq \min(1, p)$ as required for inequality
\eqref{Eq:tailExpect}. This is satisfied as long as $p \geq (d+1)/(2
\log n)$ because under the assumption \eqref{ncon}, we have $\log n
\geq \frac{d+1}{2}$. Thus \eqref{sim1} holds for all $p \geq (d+1)/(2
\log n)$. 

For notational convenience, we write $\mu_p :=
\mu_p(\mathfrak{d}_S, \bar{G}_n)$ in the rest of the proof. With the choices
\eqref{simch} (and $\eta = 1/n^2$), the first term in the exponent of
the right hand side of \eqref{sim1} is calculated as   
\begin{equation*}
  a^{\lambda} M^{\lambda + d - 2} = M^{d-2} \eta^{-\lambda/(d+1)} =
  M^{d-2} n^{1/(\log n)} = e M^{d-2}. 
\end{equation*}
On the other hand, the second term in the exponent in \eqref{sim1}
becomes 
\begin{equation*}
  (aM)^{\lambda} n \left(\frac{2\mu_p(\mathfrak{d}_S, \bar{G}_n)}{M}\right)^p = e n
  \left(\frac{2 \mu_p}{M} \right)^{p}. 
\end{equation*}
Therefore the second probability in \eqref{mur} satisfies the
inequality: 
\begin{equation*}
   P_{\RN{2}} \leq \exp \left(-
  \frac{\beta n t^2 \gamma_n^2}{2 \log n} + C_d M^{d-2} + en
  \left(\frac{2 \mu_p}{M} \right)^{p} \right). 
\end{equation*}
This is true for all $p \geq (d+1)/(2 \log n)$ so we can also write
\begin{equation*}
   P_{\RN{2}} \leq \exp \left(-
  \frac{\beta n t^2 \gamma_n^2}{2 \log n} + C_d M^{d-2} + en
 \inf_{p \geq (d+1)/(2 \log n)} \left(\frac{2 \mu_p}{M} \right)^{p}
\right).  
\end{equation*}
We have proved therefore that for every $t > 0$ 
\begin{align*}
  \P \left\{\hel(\hat{f}_n, f_{\bar{G}_n}) \geq t \gamma_n \right\}
  &\leq \exp \left(\frac{\alpha - 1}{2} n t^2 \gamma_n^2 +
     C_d (\log n)^{(d/2) + 1} \text{Vol}(S^{2 M}) \right. \\ & \left. +
  C_d \sqrt{\text{Vol}(S^M)} \right) + \exp \left(-
  \frac{\beta n t^2 \gamma_n^2}{2 \log n} + C_d M^{d-2} \right. \\ &\left. + en
  \inf_{p \geq (d+1)/(2 \log n)} \left(\frac{2 \mu_p}{M} \right)^{p} \right). 
\end{align*}
We now note that  $\vol(S^{M}) \leq \vol(S^{2M}) \leq C_d M^d
\vol(S^1)$ which follows from inequality \eqref{sms1} in Lemma
\ref{volm}. This, along with the definition of $\epsilon^2_n(M, S,
\bar{G}_n)$ in \eqref{si.ra}, gives that
\begin{equation*}
  \max \left((\log n)^{\frac{d}{2} + 1} \vol(S^{2M}), \sqrt{\vol(S^M)},
    M^d, n (\log n)
  \inf_{p \geq \frac{d+1}{2 \log n}} \left(\frac{2 \mu_p}{M} \right)^{p}
\right) 
\end{equation*}
is bounded from above by $C_d n  \epsilon_n^2(M, S, \bar{G}_n)$. 
As a result, 
\begin{align*}
  \P \left\{\hel(\hat{f}_n, f_{\bar{G}_n}) \geq t \gamma_n \right\} 
  &\leq \exp \left(\frac{\alpha - 1}{2} n t^2 \gamma_n^2 +
     C_d n \epsilon_n^2(M, S, \bar{G}_n) \right) \\ &+ \exp \left(-
  \frac{\beta n t^2 \gamma_n^2}{2 \log n} + C_d \frac{n \epsilon_n^2(M, S,
                                                      \bar{G}_n)}{\log n}
                                                      \right).  
\end{align*}
Now suppose that 
\begin{equation}\label{gmde}
  \gamma_n^2 = C_d' \frac{\epsilon^2_n(M, S, \bar{G}_n)}{\min(1-\alpha,
    \beta)} 
\end{equation}
for some $C_d' \geq 4 C_d$. We deduce then that, for every $t \geq 1$, 
\begin{align}
  \P \left\{\hel(\hat{f}_n, f_{\bar{G}_n}) \geq t \gamma_n \right\}
  &\leq \exp \left(- \frac{1-\alpha}{2} n t^2 \gamma_n^2 +
    \frac{1-\alpha}{4} n \gamma_n^2 \right)  \nonumber \\ &+ \exp \left(-
    \frac{\beta}{2 \log n} n t^2 \gamma_n^2 + \frac{\beta}{4 \log n} n
    \gamma_n^2 \right) \nonumber  \\
&\leq 2 \exp \left(- \frac{\min((1-\alpha), \beta)}{4 \log n} n t^2
   \gamma_n^2 \right) \label{kpp}
\end{align}
Observe now that (because $M \geq \sqrt{10 \log n}$) 
\begin{equation*}
  \epsilon_n^2(M, S, \bar{G}_n) \geq \vol(S^1) \frac{M^d}{n} \left(\sqrt{\log n} \right)^{d+2} \geq \vol(B(0, 1)) \frac{(\log n)^2}{n}
\end{equation*}
so that we can choose the constant $C_d'$ such that 
\begin{equation*}
n \min(1 - \alpha, \beta) \gamma_n^2 \geq C_d' n \epsilon_n^2(M, S, \bar{G}_n)
\geq 4 (\log n)^2. 
\end{equation*}
This gives, via \eqref{kpp}, 
\begin{equation*}
  \P \left\{\hel(\hat{f}_n, f_{\bar{G}_n}) \geq t \gamma_n \right\} 
  \leq 2 n^{-t^2}. 
\end{equation*}
We have therefore proved the above inequality for $\gamma_n$ as chosen
in \eqref{gmde} (provided $C_d'$ is chosen sufficiently large) for
every estimator $\hat{f}_n$ satisfying \eqref{bahu}. This completes
the proof of \eqref{dens.eq}. 

For \eqref{dens.1}, we multiply both sides of \eqref{dens.eq} by $t$ 
and then integrate from $t = 1$ to $t = \infty$ to obtain
\begin{equation*}
  \E \bigg(\frac{\shel(\hat{f}_n, f_{\bar{G}_n}) \min(1 - \alpha,
      \beta)}{C_d \epsilon_n^2(M, S, \bar{G}_n)} \bigg) \leq 1 + 4
  \int_1^{\infty} t n^{-t^2} \leq 1 + \frac{2}{n \log n} \leq 4
\end{equation*}
which proves \eqref{dens.1} and completes the proof of Theorem
\ref{dens}.  
\end{proof}

\subsection{Proof of Corollary \ref{dza}} 
\begin{proof}[Proof of Corollary \ref{dza}]
  To prove \eqref{dza1.eq}, assume that $\bar{G}_n$ is supported on a
  compact set $S$. We then apply Theorem \ref{dens} to this $S$ and $M
  = \sqrt{10 \log n}$. Because $\bar{G}_n$ is supported on $S$, we
  have $\mu_p(\dos, \bar{G}_n) = 0$ for every $p > 0$ so that
  $\epsilon_n^2(M, S, \bar{G}_n)$ (defined in \eqref{si.ra}) becomes 
  \begin{equation*}
    \epsilon_n^2(M, S, \bar{G}_n) = \vol(S^1) \frac{M^d}{n} \left(\sqrt{\log n}
    \right)^{d+2} = (10)^{d/2}\frac{\vol(S^1)}{n}
  \left(\log n \right)^{d + 1}. 
  \end{equation*}
  Inequality \eqref{dza1.eq} then immediately follows from Theorem
  \ref{dens}. 

  We next prove \eqref{dza2.eq}  assuming the condition
  \eqref{mmc}. Let
  \begin{equation}\label{mex}
    M := 4 K (e \log n)^{1/\alpha}. 
  \end{equation}
   This quantity $M \geq \sqrt{10 \log n}$ because $K \geq 1$ and
   $\alpha \leq 2$. We shall apply \eqref{dens.1} with this $M$. Let  
\begin{equation*}
  T_2(M, S, \bar{G}_n) := (\log n) \inf_{p \geq \frac{d+1}{2 \log n}} \bigg(\frac{
   2 \mu_p(\dos, \bar{G}_n)}{M} \bigg)^p
\end{equation*}
and note that this is the second term on the right hand side of
\eqref{si.ra} in the definition of $\epsilon_n^2(M, S,
\bar{G}_n)$. The infimum over $p$ above is 
easily seen to be achieved at $p = (M/(2K))^{\alpha} (1/e)$. By the
expression \eqref{mex} for $M$, it is easy to see that $p \geq 
(d+1)/(2 \log n)$ provided 
\begin{equation}\label{nalp}
  n \geq \exp \left(\sqrt{(d+1)/2} \right). 
\end{equation}
We then deduce that 
\begin{equation*}
  T_2(M, S, \bar{G}_n) \leq (\log n) \exp \left(\frac{-1}{\alpha e}
    \left(\frac{M}{2 K} \right)^{\alpha} \right).  
\end{equation*}
It follows from here that $T_2(M, S, \bar{G}_n) \leq (\log n)/n$ because $M \geq
(4 K) (e \log n)^{1/\alpha} \geq (2 K ) (\alpha e \log
n)^{1/\alpha}$. Thus 
\begin{align}
  \epsilon^2_n(M, S, \bar{G}_n) &= \vol(S^1) \frac{M^d}{n} \left(\sqrt{\log n}
                       \right)^{d+2} + T_2(M, S, \bar{G}_n) \nonumber \\
&\leq \vol(S^1) \frac{(4 K e^{1/\alpha})^d}{n} (\log n)^{d/\alpha}
  \left(\sqrt{\log n} \right)^{d+2} + \frac{\log n}{n} \label{tobeusl}
\end{align}
and hence \eqref{dza2.eq} readily follows as a consequence of Theorem
\ref{dens}. When the assumption \eqref{nalp} does not hold, inequality
\eqref{dza2.eq} becomes trivially true when $C_d$ is chosen
sufficiently large. 

We now turn to \eqref{dza3.eq}. Assume that $S$ is such that
$\mu_p(\dos, \bar{G}_n) \leq \mu$ for fixed $\mu > 0$ and $p > 0$. Then Theorem
\ref{dens} gives
\begin{align*}
  \E \shel(\hat{f}_n, f_{\bar{G}_n}) &\leq C_d \inf_{M \geq \sqrt{10
  \log n}} \epsilon_n^2(M, S, \bar{G}_n) \\
&= C_d \inf_{M \geq \sqrt{10 \log n}} \left( \mathrm{Vol}(S^1) \frac{M^d}{n} \left(\sqrt{\log n}
  \right)^{d+2}  \right. \\ & \left.+  \left(\log n \right) \left(\frac{2 \mu}{M}
  \right)^{p}\right)
\end{align*}
where we assumed that $n$ is large enough so that $p \geq (d+1)/(2
\log n)$. Taking 
\begin{equation*}
  M = \left(\sqrt{\log n} \right)^{-d/(p+d)} \left(\frac{n
      \mu^p}{\vol(S^1)} \right)^{1/(p+d)} 
\end{equation*}
results in \eqref{dza3.eq}. When $n$ is large enough, $M$ chosen as
above exceeds $\sqrt{10 \log n}$. For smaller $n$, the inequality
\eqref{dza3.eq} trivially holds provided $C_{d, \mu, p}$ is chosen
large enough. 
\end{proof}

\subsection{Proof of Lemma \ref{trilo1}} 
The following uses standard ideas involving Assouad's lemma (see,
for example, \citet[Chapter 2]{Tsybakovbook}.   

\begin{proof}[Proof of Lemma
  \ref{trilo1}] 
Fix $\delta > 0$ and $M > 0$. Let $a_1, \dots, a_k$ and $b_1,\dots,
b_k$ be points in $\R^d$ such that 
\begin{equation}\label{cnd1}
  \min \bigg(\min_{i \neq j}\norm{a_i - a_j},  \min_{i \neq
      j}\norm{b_i - b_j}, \min_{i \neq j}\norm{a_i - b_j} \bigg) \geq
  M
\end{equation}
and such that
\begin{equation}\label{cnd2}
  \norm{a_i - b_i} = \delta \qt{for every $1 \leq i \leq k$}. 
\end{equation}
Now for every $\tau \in \{0, 1\}^k$, let 
\begin{equation*}
  f_{\tau}(x) = \frac{1}{k} \sum_{i=1}^k \phi_d(x - a_i (1 - \tau_i) 
  - b_i \tau_i)
\end{equation*}
where $\phi_d(\cdot)$ is the standard normal density on
$\R^d$. Clearly $f_{\tau} \in \M_k$ for every $\tau \in \{0,
1\}^k$. We shall now employ Assouad's lemma which gives
\begin{equation*}
  \Rs(\M_k) \geq \frac{k}{8} \min_{\tau \neq \tau'}
  \frac{\hel^2(f_{\tau}, f_{\tau'})}{\Upsilon(\tau, \tau')}
  \min_{\Upsilon(\tau, \tau') = 1} \bigg(1 - \norm{P_{f_{\tau}} -
      P_{f_{\tau'}}}_{TV} \bigg)
\end{equation*}
where $\Upsilon(\tau, \tau') := \sum_{i=1}^k I\{\tau_j \ne \tau_j'\}$
denotes Hamming distance and $P_{f}$ (for $f \in \M$) denotes the
joint distribution of $X_1, \dots, X_n$ which are independently
distributed according to $f$. 

We now fix $\tau \neq \tau' \in \{0, 1\}^k$ and bound
$\hel^2(f_{\tau}, f_{\tau'})$ from below. For simplicity, let $f =
f_{\tau}$ and $g = f_{\tau'}$. Also, for $i = 1, \dots, k$, let 
\begin{align*}
  f_i(x) := \phi_d(x - a_i (1 - \tau_i) - b_i \tau_i) ~~
  \text{ and } ~~  g_i(x) := \phi_d(x - a_i (1 - \tau_i') - b_i
\tau_i')
\end{align*}
so that $f = \sum_{i=1}^k f_i/k$ and $g = \sum_{i=1}^k g_i/k$. This
gives 
\begin{align*}
 \frac{1}{2} \hel^2(f, g) &=  1 - \int  \sqrt{f(x) g(x)} dx \\
&= 1 - \int
                            \sqrt{\frac{1}{k^2} \sum_{i, j} f_i(x)
                            g_j(x)} dx \geq 1 - \frac{1}{k} \sum_{i,
                            j} \int \sqrt{f_i(x) g_j(x)} dx .
\end{align*}
Because $f_i$ and $g_j$ are normal densities, by a straightforward
computation, we obtain
\begin{equation*}
  \int \sqrt{f_i(x) g_j(x)} dx = \exp \left(- \norm{a_i (1
      - \tau_i) + b_i \tau_i - a_j(1 - \tau_j')  - b_j \tau_j'}^2/8
  \right) 
\end{equation*}
so that by \eqref{cnd1} and \eqref{cnd2}, we obtain that 
\begin{equation*}
  \int \sqrt{f_i(x) g_j(x)} dx = I\{\tau_i = \tau_i'\} + I\{\tau_i
  \neq \tau_i'\} e^{-\delta^2/8} \qt{for $i = j$}
\end{equation*}
and
\begin{equation*}
  \int \sqrt{f_i(x) g_j(x)} dx \leq e^{-M^2/8} \qt{for $i \neq j$}. 
\end{equation*}
As a result, we obtain
\begin{align}
  \frac{1}{2} \hel^2(f_{\tau}, f_{\tau'}) &= 1 - \frac{1}{k}
                                            \sum_{i=1}^k \int
                                            \sqrt{f_i(x) g_i(x)} dx -
                                            \frac{1}{k} \sum_{i \neq
                                            j} \int \sqrt{f_i(x)
                                            g_j(x)} dx  \nonumber \\
&\geq 1 - \frac{1}{k} \sum_{i=1}^k I\{\tau_i = \tau_i'\} -
  \frac{e^{-\delta^2/8}}{k} \Upsilon(\tau, \tau') - \frac{k^2 - k}{k}
  e^{-M^2/8} \nonumber \\
&= \frac{1}{k} \Upsilon(\tau, \tau') \left( 1  - e^{-\delta^2/8}
  \right) - (k-1) e^{-M^2/8} \label{fp1}
\end{align}
for every $\tau \neq \tau' \in \{0, 1\}^k$. Now let us fix $\tau,
\tau'$ with $\Upsilon(\tau, \tau') = 1$ and bound from above the total
variation distance between $P_{f_{\tau}}$ and $P_{f_{\tau'}}$. Without
loss of generality, we can assume that $\tau_1 \neq \tau_1'$ and that
$\tau_i = \tau_i'$ for $i \geq 2$. Below $D(P_{f_{\tau}} ||
P_{f_{\tau'}})$ denotes the Kullback-Leibler divergence between
$P_{f_{\tau}}$ and $P_{f_{\tau'}}$. Also $D(f_{\tau} || f_{\tau'})$
and $\chi^2(f_{\tau}, f_{\tau'})$ denote the Kullback-Leibler
divergence and chi-squared divergence between the densities $f_{\tau}$
and $f_{\tau'}$ respectively. By Pinsker's inequality and the fact
that $D(f_{\tau} || f_{\tau'}) \leq \chi^2(f_{\tau}, f_{\tau'})$, we
obtain
\begin{align*}
  \norm{P_{f_{\tau}} - P_{f_{\tau'}}}_{TV} \leq \sqrt{\frac{1}{2}                                     D(P_{f_{\tau}} ||
                                             P_{f_{\tau'}})} = \sqrt{\frac{n}{2} D({f_{\tau}} ||
                                             {f_{\tau'}})} \leq \sqrt{\frac{n}{2} \chi^2({f_{\tau}} ||
                                             {f_{\tau'}})}. 
\end{align*}
Further
\begin{align*}
  \chi^2({f_{\tau}} || {f_{\tau'}}) &= \int \frac{(f_{\tau}(x) -
                                       f_{\tau'}(x))^2}{f_{\tau'}(x)}
                                       dx \\
&= \int
  \frac{\left(\phi_d(x - a_1(1 - \tau_1) - b_1 \tau_1)  - \phi_d(x -
  a_1(1 - \tau_1') - b_1 \tau_1')\right)^2}{k^2 f_{\tau'}(x)} dx \\
&\leq  \int
  \frac{\left(\phi_d(x - a_1(1 - \tau_1) - b_1 \tau_1)  - \phi_d(x -
  a_1(1 - \tau_1') - b_1 \tau_1')\right)^2}{k \phi_d(x - a_1(1 -
  \tau_1') - b_1 \tau_1')} dx. 
\end{align*}
By a routine calculation, it now follows that 
\begin{align*}
  \chi^2(f_{\tau} || f_{\tau'}) &\leq \frac{1}{k} \left\{ \exp \left(\norm{a_1
  (1 - \tau_1) + b_1 \tau_1 - a_1(1 - \tau_1') - b_1 \tau_1'}^2
  \right) - 1 \right\} \\
&= \frac{1}{k} \left\{ \exp \left(\norm{a_1 - b_1}^2 \right) - 1
  \right\} = \frac{1}{k}  \left(e^{\delta^2} - 1 \right). 
\end{align*}
We have therefore proved that 
\begin{align}\label{fp2}
  \norm{P_{f_{\tau}} - P_{f_{\tau'}}}_{TV} \leq \sqrt{\frac{n}{2k}
  \left(e^{\delta^2} - 1 \right)} 
\end{align}
for every $\tau, \tau' \in \{0, 1\}^k$ with $\Upsilon(\tau, \tau') =
1$. Combining \eqref{fp1} and \eqref{fp2}, we obtain 
\begin{align*}
  \Rs(\M_k) \geq \frac{k}{4} \left(\frac{1}{k} \left(1 -
  e^{-\delta^2/8} \right) - \frac{(k-1)}{\Upsilon(\tau, \tau')}
  e^{-M^2/8} \right) \left(1 - \sqrt{\frac{n}{2k}
  \left(e^{\delta^2} - 1 \right)} \right). 
\end{align*}
This inequality holds for every $\delta > 0$ and $M > 0$. So we can
let $M$ tend to $\infty$ to deduce 
\begin{align*}
  \Rs(\M_k) \geq \frac{1}{4} \left(1 - e^{-\delta^2/8} \right) \left(1 - \sqrt{\frac{n}{2k}
  \left(e^{\delta^2} - 1 \right)} \right)
\end{align*}
for every $\delta > 0$. The inequalities $1 - e^{-t} \geq t/2$ and
$e^t - 1 \leq 2t$ for $0 \leq t \leq 1$ imply that  
\begin{align*}
  \Rs(\M_k) \geq \frac{\delta^2}{64} \left(1 - \sqrt{\frac{n}{k}}
  \delta \right) \qt{for every $0 \leq \delta \le 1$}. 
\end{align*}
The choice $\delta = \sqrt{k/4n}$ now proves \eqref{trilo1.eq}. 
\end{proof}

\subsection{Proof of Theorem \ref{amix}} 

\begin{proof}[Proof of Theorem \ref{amix}]
Note that 
\begin{equation*}
  h^*(x) = \sum_{j=1}^k w_j \phi_d(x; \mu_j, \Sigma_j) = \sum_{j=1}^k
  w_j \det(\Sigma_j^{-1/2}) \phi_d \left(\Sigma_j^{-1/2} (x - \mu_j)
  \right) 
\end{equation*}
where $\phi_d(z) := (2 \pi)^{-d/2} \exp \left(-\norm{z}^2/2\right)$
denotes the standard $d$-dimensional normal density. It is then easy
to see that $X_1, \dots, X_n$ (where $X_i = Y_i/\sigma_{\min}$) are
independent observations having the density $f^*$ where 
\begin{align*}
  f^*(x) &= \sigma_{\min}^d h^*(\sigma_{\min} x) \\
&= \sum_{j=1}^k w_j 
  \left[\det \left(\sigma_{\min}^{-2}\Sigma_j \right)^{-1/2}
  \right] \phi_d \left(\left\{\sigma^{-2}_{\min}\Sigma_j
    \right\}^{-1} \left(x - \sigma^{-1}_{\min}\mu_j \right)
  \right). 
\end{align*}
This means that $f^*$ is the density of the normal mixture: 
\begin{equation*}
  \sum_{j=1}^k w_j N \left(\sigma_{\min}^{-1} \mu_j,
    \sigma_{\min}^{-2} \Sigma_j \right) 
\end{equation*}
where $N(\mu, \Sigma)$ denotes the multivariate normal distribution
with mean vector $\mu$ and covariance matrix $\Sigma$. It follows from
here that $f^*$ equals $f_{G^*}$ (in the notation \eqref{glmd}) where
$G^*$ is the distribution of the normal mixture
\begin{equation*}
  \sum_{j=1}^k w_j N \left(\sigma_{\min}^{-1} \mu_j,
    \sigma_{\min}^{-2} \Sigma_j  - I_d\right) 
\end{equation*}
where $I_d$ is the $d \times d$ identity matrix. 

We can now use Corollary \ref{dza} to bound $\shel(\hat{f}_n, f^*)$
(note that $\hat{f}_n$ is an NPMLE based on $X_1, \dots,
X_n$). Specifically we shall use inequality \eqref{dza2.eq} with 
\begin{equation*}
  S := \left\{\sigma_{\min}^{-1} \mu_1, \dots, \sigma_{\min}^{-1}
    \mu_k \right\}. 
\end{equation*}
In order to verify \eqref{mmc}, observe first that $\bar{G}_n$ in
Corollary \ref{dza} is $G^*$ since $X_1, \dots, X_n$ are i.i.d
$f_{G^*}$ and that 
\begin{equation*}
  \dos(\theta) = \min_{1 \leq i \leq k} \norm{\sigma_{\min}^{-1} \mu_i
  - \theta} 
\end{equation*}
As a result, for every $p \geq 1$ and $Z \sim N(0, I_d)$, we have
\begin{align*}
  \mu_p(\dos, \bar{G}_n) &\leq \left(\E \max_{1 \leq j \leq k}
    \norm{\left(\sigma_{\min}^{-2} \Sigma_j - I_d \right)^{1/2} Z }^p
  \right)^{1/p} \\
&\leq \sqrt{\frac{\sigma_{\max}^2}{\sigma_{\min}^2} - 
    1} \left( \E \norm{Z}^p \right)^{1/p} \leq C_d \tau \sqrt{p}. 
\end{align*}
Thus \eqref{mmc} holds with $K := C_d \max(1, \tau)$ and $\alpha = 2$
and inequality \eqref{dza2.eq} then gives
\begin{equation*}
  \E \shel(\hat{f}_n, f^*) \leq C_d \frac{\vol(S^1)}{n} \left(\max(1,
    \tau) \right)^d  (\log n)^{d+1}
\end{equation*}
As $S$ is a finite set of cardinality $k$, we have $\vol(S^1)
\leq k C_d$ so that 
\begin{equation*}
  \E \shel(\hat{f}_n, f^*) \leq C_d \left(\frac{k}{n} \right) \left(\max(1,
    \tau) \right)^d (\log n)^{d+1}. 
\end{equation*}
We now use the fact that the Hellinger distance is invariant under
scale transformations which implies that $\hel(\hat{f}_n, f^*) =
\hel(\hat{h}_n, h^*)$. This proves inequality \eqref{amix.eq}.
\end{proof}

\section{Proofs of Results in Section \ref{gadeno}}\label{gadeno.pf}

\subsection{Proof of Theorem \ref{theorem:denoising_theorem}}
Theorem \ref{theorem:denoising_theorem} is a special of Theorem
\ref{rgende} (indeed, taking $\Sigma_i = I_d$ for each $i$ in Theorem
\ref{rgende} leads to Theorem
\ref{theorem:denoising_theorem}). Therefore, the proof of Theorem
\ref{theorem:denoising_theorem}  follows from the proof of Theorem
\ref{rgende} which is given in Subsection \ref{pfsec.rgende}. 

\subsection{Proof of Corollary \ref{dna}} 
The idea is to choose $M$ and $S$ appropriately under each of the
assumptions on $\bar{G}_n$ and then to appropriately bound
$\epsilon_n(M, S)$. The necessary work for this is already done in
Corollary \ref{dza} from which Corollary \ref{dna} immediately
follows. 

\subsection{Proof of Proposition \ref{pkde}} 
The assumption \eqref{kga} implies that the empirical measure
$\bar{G}_n$ of $\theta_1, \dots, \theta_n$ is supported on 
\begin{equation*}
  S := \cup_{j=1}^k B(a_j, R) \qt{where $B(a_j, R) := \left\{x \in
      \R^d : \norm{x - a_j} \leq R \right\}$}. 
\end{equation*}
We can therefore apply inequality \eqref{dna1.eq} in Corollary
\ref{dna} to bound $\Rr_n(\hat{\theta}, \hat{\theta}^*)$. The
conclusion \eqref{pkde.eq} then immediately follows from
\eqref{dna1.eq} because
\begin{equation*}
  \vol(S^1) \leq \sum_{j=1}^k \vol(B(a_j, 1+R)) \leq C_d k (1 + R)^d. 
\end{equation*}

\subsection{Proof of Lemma \ref{denlo}}
The proof of Lemma \ref{denlo} uses Assouad's lemma (see,
for example, \citet[Chapter 2]{Tsybakovbook} as well as Lemma
\ref{obg} (stated and proved in Section \ref{auxre}).  
\begin{proof}[Proof of Lemma \ref{denlo}]
  Fix $k$ and $n$ with $1 \leq k \leq n$. Also fix $\delta > 0$ and $M
  \geq 2$. Let $a_1, \dots, a_k$ and 
  $b_1, \dots, b_k$ be points in $\R^d$ such that 
\begin{equation}\label{cd1}
  \min \left(\min_{i \neq j}\norm{a_i - a_j},  \min_{i \neq
      j}\norm{b_i - b_j}, \min_{i \neq j}\norm{a_i - b_j} \right) \geq
  M
\end{equation}
and such that
\begin{equation}\label{cd2}
  \norm{a_i - b_i} = \delta \qt{for every $1 \leq i \leq k$}. 
\end{equation}
We now define a partition $S_1, \dots, S_k, S_{k+1}$ of $\{1, \dots,
n\}$ via  
  \begin{equation*}
    S_i := \left\{(i-1)m + 1, \dots, i m \right\} \qt{for $i = 1,
      \dots, k$}
  \end{equation*}
  and  $S_{k+1} := \{km + 1, \dots, n\}$ where $m := [n/k]$ ( for $x >
  0$, we define $[x]$ as usual to be the largest integer that is
  smaller than or equal to $x$). Note that the cardinality of $S_j$
  equals $m$ for $i = 1, \dots, k$ and that $S_{k+1}$ will be empty if
  $n$ is a multiple of $k$.  

 Now for every $\tau \in \{0, 1\}^k$, we define $n$ vectors
 $\theta_1(\tau), \dots, \theta_n(\tau)$ in $\R^d$ via 
 \begin{equation*}
   \theta_i(\tau) := (1 - \tau_j) a_j + \tau_j b_j \qt{provided $i \in
     S_j$ for some $1 \leq j \leq k$}
 \end{equation*}
and for $i \in S_{k+1}$, we take $\theta_i(\tau) := a_1$. 

Let $\Theta(\tau)$ denote the collection of all $n$-tuples
$(\theta_1(\tau), \dots, \theta_n(\tau))$ as $\tau$ ranges over
$\{0,1\}^k$. It is easy to see that $\Theta(\tau) \subseteq \Theta_{n,
d, k}$ so that 
\begin{align*}
  \Rs^*(\Theta_{n, d, k}) \geq \Rs^*(\Theta(\tau)) := \inf_{\tilde{\theta}_1, \dots,
    \tilde{\theta}_n} \sup_{(\theta_1, \dots, \theta_n) \in
    \Theta(\tau)} \E \left[\frac{1}{n} \sum_{i=1}^n
      \norm{\tilde{\theta}_i - 
    \hat{\theta}_i^*}^2 \right]. 
\end{align*}
The elementary inequality $\norm{a - b}^2 \geq \norm{a}^2/2 -
\norm{b}^2$ for vectors $a, b \in \R^d$ gives 
\begin{equation*}
  \frac{1}{n} \sum_{i=1}^n \norm{\tilde{\theta}_i -
    \hat{\theta}_i^*}^2 \geq \frac{1}{2n} \sum_{i=1}^n
  \norm{\tilde{\theta}_i - \theta_i}^2 - \frac{1}{n} \sum_{i=1}^n
  \norm{\hat{\theta}_i^* - \theta_i}^2
\end{equation*}
for every $\theta_1, \dots, \theta_n$ and estimators $\tilde{\theta}_1
\dots, \tilde{\theta}_n$. As a result, we deduce that
\begin{equation}\label{ctr}
  \Rs^*(\Theta(\tau)) \geq \breve{\Rs}(\Theta(\tau)) -
  \sup_{(\theta_1, \dots, \theta_n) \in \Theta(\tau)} \E \left[ \frac{1}{n} \sum_{i=1}^n
  \norm{\hat{\theta}_i^* - \theta_i}^2 \right] 
\end{equation}
where 
\begin{equation*}
  \breve{\Rs}(\Theta(\tau)) := \inf_{\tilde{\theta}_1, \dots,
    \tilde{\theta}_n} \sup_{(\theta_1, \dots, \theta_n) \in
    \Theta(\tau)} \E \left[\frac{1}{n} \sum_{i=1}^n
      \norm{\tilde{\theta}_i - \theta_i}^2 \right]. 
\end{equation*}
We first bound $\breve{\Rs}(\Theta(\tau))$ from below via Assouad's
lemma. For $\tau, \tau' \in \{0, 1\}^k$, let 
\begin{equation*}
  \los(\tau, \tau') := \frac{1}{n} \sum_{i=1}^n \norm{\theta_i(\tau) -
  \theta_i(\tau')}^2. 
\end{equation*}
Also let $P_{\tau}$ denote the joint distribution of the independent
random variables $X_1, \dots, X_n$ with $X_i \sim N(\theta_i(\tau),
I_d)$ for $i = 1, \dots, n$. Assouad's lemma then gives
\begin{equation}\label{gass}
  \breve{\Rs}(\Theta(\tau)) \geq \frac{k}{8} \min_{\tau \neq \tau'}
  \frac{\los(\tau, \tau')}{\Upsilon(\tau, \tau')} \min_{\Upsilon(\tau,
    \tau') = 1} \left(1 - \norm{P_{\tau} - P_{\tau'}}_{TV} \right)
\end{equation}
where $\Upsilon(\tau, \tau') := \sum_{j=1}^k I\{\tau_j \neq
\tau_j'\}$ is the Hamming distance and $\norm{P_{\tau} -
  P_{\tau'}}_{TV}$ denotes the variation distance between $P_{\tau}$
and $P_{\tau'}$. We now bound the terms appearing in the right hand
side of \eqref{gass}.  For $\tau, \tau' \in \{0, 1\}^k$, observe that
\begin{align}
  \los(\tau, \tau') &= \frac{1}{n} \sum_{j=1}^k \sum_{i : i \in S_j}
  \norm{a_j - b_j}^2 I \{\tau_j \neq \tau'_j \} \nonumber \\
&= \frac{1}{n}
  \sum_{j=1}^k |S_j| \norm{a_j - b_j}^2 I \{\tau_j \neq \tau'_j \}  =
  \frac{m \delta^2}{n} \Upsilon(\tau, \tau') \label{chn1}
\end{align}
where $|S_j|$ denotes the cardinality of $S_j$. We have used above the
fact that $|S_j| = m$ for $1 \leq j \leq k$ and \eqref{cd2}. 

To bound the last term in \eqref{gass}, we use Pinsker's inequality
(below $D$ stands for Kullback-Leibler divergence) to obtain
\begin{equation*}
  \norm{P_{\tau} - P_{\tau'}}_{TV} \leq \sqrt{\frac{1}{2} D(P_{\tau} ||
  P_{\tau'})} =\frac{1}{2}\sqrt{\sum_{i=1}^n \norm{\theta_i(\tau) -
    \theta_i(\tau')}^2} = \frac{1}{2} \sqrt{n \los(\tau, \tau')}. 
\end{equation*}
Thus, from \eqref{chn1}, we deduce that for $\Upsilon(\tau, \tau') =
1$, 
\begin{equation*}
  \norm{P_{\tau} - P_{\tau'}}_{TV} \leq \frac{1}{2} \sqrt{m
    \delta^2}. 
\end{equation*}
Inequality \eqref{gass} thus gives
\begin{equation}\label{gsy}
  \breve{\Rs}(\Theta(\tau)) \geq \frac{k m \delta^2}{8 n} \left(1 -
    \frac{\sqrt{m \delta^2}}{2} \right).  
\end{equation}
To bound the second term in \eqref{ctr}, we use Lemma \ref{obg} which
gives that for every $\theta_1, \dots, \theta_n \in \Theta(\tau)$, we have
\begin{equation*}
 \E \left[ \frac{1}{n} \sum_{i=1}^n
  \norm{\hat{\theta}_i^* - \theta_i}^2 \right]  \leq \frac{k}{2 \sqrt{2 \pi}} \sum_{j, l
    : j \neq l} \left(p_j + p_l \right) \norm{c_j - c_l}  \exp
  \left(-\frac{1}{8} \norm{c_j - c_l}^2 \right)
\end{equation*}
where $c_1, \dots, c_{k+1}$ denote the distinct elements from $\theta_1,
\dots, \theta_n$ and $p_j, j = 1, \dots, k+1$ are nonnegative real
numbers summing to one. Now each $c_j$ equals either $a_j$ or $b_j$ and
hence, by \eqref{cd1}, we have $\norm{c_j - c_l} \geq M$ for every $j
\neq l$. As $x \mapsto x e^{-x^2/8}$ is decreasing for $x > 2$ and $M
> 2$, we deduce that
\begin{equation}\label{sdn}
 \E \left[ \frac{1}{n} \sum_{i=1}^n
  \norm{\hat{\theta}_i^* - \theta_i}^2 \right]  \leq \frac{k}{2
  \sqrt{2 \pi}}  M e^{-M^2/8} \sum_{j, l
    : j \neq l} \left(p_j + p_l \right) \leq \frac{k}{\sqrt{2 \pi}}
  M e^{-M^2/8}. 
\end{equation}
We obtain therefore from \eqref{ctr}, \eqref{gsy} and \eqref{sdn},
that
\begin{equation*}
  \Rs^*(\Theta_{n, d, k}) \geq \frac{k m \delta^2}{8 n} \left(1 -
    \frac{\sqrt{m \delta^2}}{2} \right) - \frac{k}{\sqrt{2 \pi}}
  M e^{-M^2/8}. 
\end{equation*}
The left hand side above does not depend on $M$ so we can let $M
\rightarrow \infty$ to obtain
\begin{equation*}
  \Rs^*(\Theta_{n, d, k}) \geq \frac{k m \delta^2}{8 n} \left(1 -
    \frac{\sqrt{m \delta^2}}{2} \right). 
\end{equation*}
We now make the choice $\delta := 1/\sqrt{m}$ to obtain
$\Rs^*(\Theta_{n, d, k}) \geq k/(16 n)$ which proves Lemma
\ref{denlo}.   
\end{proof}

\subsection{Proof of Theorem \ref{rgende}}\label{pfsec.rgende} 
The proof of Theorem \ref{rgende} is similar to
\citet[Proof of Theorem 5]{jiang2009general}. It uses ingredients that
are proved in Section \ref{mmps}, Section \ref{zhafo} and Section
\ref{auxre}. More precisely, crucial roles are played by the metric
entropy results of Section \ref{mmps} (specifically Corollary
\ref{rgnt}) and Theorem \ref{fcc} in Section \ref{zhafo} which relates
the denoising error to Hellinger distance (thereby allowing the
application of Theorem \ref{dens}). Additionally, Lemma \ref{tailmom}
and Lemma \ref{dco} from Section \ref{proids} as well as Lemma
\ref{p1}, Lemma \ref{devs} and Lemma \ref{volm} from Section
\ref{auxre} will also be used.   

Basically, the following proof bounds $\Rr_n(\hat{\theta},
\hat{\theta}^*)$ in terms of five quantities $\zeta_{1n}^2, \dots,
\zeta_{5n}^2$. The additional $(\log n)^{\max(d, 3)}$ factor in
Theorem \ref{rgende} (compared to Theorem \ref{dens}) comes from the
bounds used for the terms involving $\zeta_{4n}^2$ and
$\zeta_{5n}^2$. 

The notation described at the beginning of Section \ref{hela.pf} will
be followed in this section as well.  
\begin{proof}[Proof of Theorem \ref{rgende}]
  The goal is to bound 
\begin{align*}
  \Rr_n(\hat{\theta}, \breve{\theta}^*) &= \E \left(\frac{1}{n} \sum_{i=1}^n
  \|\hat{\theta}_i - \breve{\theta}_i^* \|^2 \right) \\
&=  \E \left(\frac{1}{n} \sum_{i=1}^n \norm{X_i + \frac{\nabla 
    \hat{f}_n(X_i)}{\hat{f}_n(X_i)} - X_i -  \frac{\nabla
   f_{\bar{G}^0_n}(X_i)}{f_{\bar{G}^0_n}(X_i)} }^2 \right)  
\end{align*} 
Let us now introduce the following notation. Let $\bx$ denote the 
$d \times n$ matrix whose columns are the observed data vectors $X_1,
\dots, X_n$. For a density $f \in \M$, let $T_f(\bx)$ denote the $d
\times n$ matrix whose $i^{th}$ column is given by the $d \times 1$
vector: 
\begin{equation*}
  X_i + \frac{\nabla f(X_i)}{f(X_i)} \qt{for $i = 1, \dots, n$}. 
\end{equation*}
With this notation, we can clearly rewrite $\Rr_n(\hat{\theta},
\hat{\theta}^*)$ as 
\begin{equation*}
  \Rr_n(\hat{\theta}, \breve{\theta}^*) = \E \left(\frac{1}{n}
    \norm{T_{\hat{f}_n}(\bx) - T_{f_{\bar{G}^0_n}}(\bx)}_F^2 \right) 
\end{equation*}
where $\norm{\cdot}_F$ denotes the usual Frobenius norm for matrices. 

To bound the above, we first observe that since $\hat{f}_n$ is an
NPMLE defined as in \eqref{kw}, it follows from the general maximum
likelihood theorem (see, for example, \citet[Theorem
2.1]{bohning2000computer}) that  
\begin{equation}\label{kk}
  \frac{1}{n} \sum_{i=1}^n \frac{\phi_d(X_i - \theta)}{\hat{f}_n(X_i)}
  \leq 1 
\end{equation}
for every $\theta \in \R^d$. Taking $\theta = X_i$ in the above
inequality, we deduce that 
\begin{equation*}
 1 \geq \frac{\phi_d(X_i - \theta)}{n \hat{f}_n(X_i)} =
 \frac{\phi_d(0)}{n \hat{f}_n(X_i)} 
\end{equation*}
so that $\hat{f}_n(X_i) \geq \phi_d(0)/n = (2
\pi)^{-d/2} n^{-1}$. Since this is true for each $i = 1, \dots, n$,
this means that 
\begin{equation}\label{rob}
  \min_{1 \le i \le n} \hat{f}_n(X_i) \geq \rho_n := \frac{(2\pi)^{-d/2}}{n}. 
\end{equation}
As a result, $\hat{f}_n(X_i) = \max(\hat{f}_n(X_i), \rho_n)$ for each
$i$ so that $T_{\hat{f}_n}(\bx) = T_{\hat{f}_n}(\bx, \rho_n)$ where
for $f \in \M$ and $\rho > 0$, we define $T_f(\bx, \rho)$ to be the $d
\times n$ matrix whose $i^{th}$ column is given by the $d \times 1$
vector: 
\begin{equation*}
  X_i + \frac{\nabla f(X_i)}{\max(f(X_i), \rho)} \qt{for $i = 1,
    \dots, n$}. 
\end{equation*}
This gives 
\begin{equation*}
  \Rr_n(\hat{\theta}, \hat{\theta}^*) = \E \left(\frac{1}{n}
    \norm{T_{\hat{f}_n}(\bx, \rho_n) - T_{f_{\bar{G}^0_n}}(\bx)}_F^2
  \right). 
\end{equation*}
A difficulty in dealing with the expectation on the right hand side 
above comes from the fact that $\hat{f}_n$ is random. This is handled
by covering the random $\hat{f}_n$ by an $\epsilon$-net for a 
specific $\epsilon$ in the following way. First fix a compact set $S
\subseteq \R^d$ and $M \geq \sqrt{10 \log n}$. Note that by Theorem
\ref{dens} (specifically inequality \eqref{dens.eq} applied to $\alpha
= \beta = 0.5$ and $t = 1$), we deduce that the following inequality
holds with probability at least $1 - (2/n)$: 
\begin{equation}\label{eve}
  \hel(\hat{f}_n, f_{\bar{G}_n^0}) \leq \tilde{C}_d \epsilon_n(M, S,
  \bar{G}_n^0).  
\end{equation}
Here $\tilde{C}_d$ is a positive constant depending on $d$ alone and
$\epsilon_n(M, S, \bar{G}^0_n)$ is defined as in \eqref{si.ra}.  Note
that Theorem \ref{dens} is indeed applicable here as $X_1, \dots, X_n$
are independent random vectors with  
\begin{equation*}
  X_i \sim N(\theta_i, \Sigma_i) = f_{G_i^0}  
\end{equation*}
where $G_i^0$ is the $N(\theta_i, \Sigma_i - I_d)$ and $\bar{G}^0_n$ is
the average of $G_i^0$ over $i = 1 \dots, n$.  

Let $E_n$ denote the event that \eqref{eve} holds.  We now obtain a
covering of  
\begin{equation}\label{spp}
\{f \in \M : \hel(f, f_{\bar{G}^0_n}) \leq \tilde{C}_d \epsilon_n(M,
S, \bar{G}_n^0)\}   
\end{equation}
under the pseudometric given by
\begin{equation}\label{psm}
  \norm{f - g}_{S^M, \nabla}^{\rho_n} := \sup_{x \in S^M}
  \norm{\frac{\nabla f(x)}{\max(f(x), \rho_n)} -
    \frac{\nabla g(x)}{\max(g(x), \rho_n)}} 
\end{equation}
where $S^M := \{x \in \R^d : \dos(x) \leq M\}$. We have proved
covering number bounds under this pseudometric in Corollary \ref{rgnt}
which will be used in this proof. Let $f_{G_1}, \dots, f_{G_N}$ denote a
maximal subset of \eqref{spp} such that for every $i \neq j$, we have 
\begin{equation}\label{pake}
  \norm{f_{G_i} - f_{G_j}}_{S^M, \nabla}^{\rho_n} \geq 2 \eta^*
\end{equation}
where $\eta^*$ is defined in terms of 
\begin{equation}\label{ets}
  \eta^* :=
  \left(\frac{1}{\rho_n} + \sqrt{\frac{1}{\rho_n^2} \log \frac{1}{(2
        \pi)^d \rho_n^2}} \right) \eta ~~~ \text{ and } ~~~ \eta :=
  \frac{\rho_n}{n}. 
\end{equation}
By the usual relation between packing and covering numbers, the
integer $N$ is then bounded from above by $N(\eta^*, \M, 
\norm{\cdot}_{S^MN, \nabla}^{\rho_n})$ which is bounded in Corollary
\ref{rgnt}. Specifically, Corollary \ref{rgnt} (applied to $S^M$)
gives   
\begin{equation*}
 \log N \leq C_d N(a, (S^M)^a) |\log \eta|^{d+1} \leq C_d N(a, S^{M+ a})
 (\log n)^{d+1} 
\end{equation*}
where
\begin{equation}\label{adef}
a := \sqrt{2 \log (2 \sqrt{2 \pi} n^2)}.   
\end{equation}
This further implies (via the use of inequality \eqref{volm.eq} in
Lemma \ref{volm} to bound  
$N(a, S^{M+a})$ as $N(a, S^{M+a}) \leq C_d a^{-d} \vol(S^{M +
  (3a/2)})$) that 
\begin{equation*}
 \log N \leq C_d (\log n)^{d+1} a^{-d} \vol(S^{M + (3a/2)}) \leq C_d (\log
 n)^{(d/2) + 1} \vol(S^{M + (3a/2)}). 
\end{equation*}
Using \eqref{sms1} in Lemma \ref{volm} to bound $\vol(S^{M + (3a/2)})$
in terms of $\vol(S^1)$ (and the fact that $a \leq C \sqrt{10 \log n}
\leq C M$), we obtain 
\begin{equation}\label{nbo}
  \log N \leq C_d \vol(S^1) M^d (\log n)^{(d/2) + 1}. 
\end{equation}
Also because $f_{G_1}, \dots, f_{G_N}$ is a maximal subset of
\eqref{spp} satisfying \eqref{pake}, we have 
\begin{equation}\label{hbb}
 \max_{1 \leq j \leq N} \hel(f_{G_j}, f_{\bar{G}^0_n}) \leq \tilde{C}_d
 \epsilon_n(M, S, \bar{G}_n^0) 
\end{equation}
and, on the event $E_n$, 
\begin{equation}\label{rco}
  \min_{1 \leq j \leq N} \norm{\hat{f}_n - f_{G_j}}_{S^M,
    \nabla}^{\rho_n}\leq 2 \eta^*.  
\end{equation}
We are now ready to bound the risk $\Rr_n(\hat{\theta},
\breve{\theta}^*)$. The strategy is to break down the risk into various
terms involving the densities $f_{G_1}, \dots, f_{G_N}$. 

\textbf{Breakdown of the risk}: The risk 
\begin{equation*}
  \Rr_n(\hat{\theta}, \breve{\theta}^*) = \E \left(\frac{1}{n}
    \norm{T_{\hat{f}_n}(\bx, \rho_n) - T_{f_{\bar{G}^0_n}}(\bx)}_F^2 \right) 
\end{equation*}
will be broken down via the inequality: 
\begin{align}
\|T_{\hat{f}_n}(\mathbf{X}, \rho_n) - T_{f_{\bar{G}_n^0}}(\mathbf{X}) \|_F &\leq
                                                                   \|T_{\hat{f}_n}(\mathbf{X},
                                                                           \rho_n)
                                                                   -
                                                                   T_{f_{\bar{G}^0_n}}(\mathbf{X},
                                                                   \rho_n)
                                                                   \|_F \\
                                                                   &+
                                                                   \|T_{f_{\bar{G}^0_n}}(\mathbf{X},
                                                                   \rho_n)
                                                                   -
                                                                   T_{f_{\bar{G}^0_n}}(\mathbf{X})
                                                                   \|_F \nonumber
  \\ 
&\leq (\zeta_{1n} + \zeta_{2n} + \zeta_{3n} + \zeta_{4n}) + \zeta_{5n} \label{bkd}
\end{align}
where
\begin{align*}
\zeta_{1n} &:= \|T_{\hat{f}_n}(\mathbf{X}, \rho_n) - T_{f_{\bar{G}^0_n}}(\mathbf{X}, \rho_n) \|_F I(E^c_n) \\
\zeta_{2n} &:= \left( \|T_{\hat{f}_n}(\mathbf{X}, \rho_n) -
             T_{f_{\bar{G}^0_n}}(\mathbf{X}, \rho_n) \|_F \right. \\ & \left. - \max_{1 \leq j
	\leq N}   \|T_{f_{G_j}}(\mathbf{X}, \rho_n) - T_{f_{\bar{G}^0_n}}(\mathbf{X}, \rho_n) \|_F  \right)_+ I(E_n) \\
\zeta_{3n} &:= \max_{1 \leq j \leq N} \left(\|T_{f_{G_j}}(\mathbf{X},
             \rho_n) - 
T_{f_{\bar{G}_n^0}}(\mathbf{X}, \rho_n) \|_F \right. \\ & \left. - \E \|T_{f_{G_j}}(\mathbf{X}, \rho_n) - T_{f_{\bar{G}^0_n}}(\mathbf{X}, \rho_n) \|_F \right)_+ \\
\zeta_{4n} &:= \max_{1 \leq j \leq N} \E \|T_{f_{G_j}}(\mathbf{X}, \rho_n) - T_{f_{\bar{G}^0_n}}(\mathbf{X}, \rho_n) \|_F \\
\zeta_{5n} &:= \|T_{f_{\bar{G}^0_n}}(\mathbf{X}) - T_{f_{\bar{G}^0_n}}(\mathbf{X}, \rho_n) \|_F 
\end{align*} 
With the elementary inequality $(a_1 + \dots + a_5)^2  
\leq 5(a_1^2 + \dots + a_5^2)$, inequality \eqref{bkd} gives
\begin{equation*}
  \Rr_n(\hat{\theta}, \hat{\theta}^*) \leq 5 \sum_{i=1}^5 \frac{\E
    \zeta_{in}^2}{n}. 
\end{equation*}
The proof of Theorem \ref{rgende} will 
be completed below by showing the existence of a positive constant
$C_d$ such that, for every $i = 1, \dots, 5$, 
\begin{equation}\label{mnts}
\begin{split}
  \E \zeta_{in}^2 &\leq C_d \sigma^2_{\max} n \epsilon_n^2(M, S, \bar{G}_n^0)
  \left(\sqrt{\log n} \right)^{\max(d-2, 6)}  \\
&= C_d  \sigma^2_{\max}\left(\mathrm{Vol}(S^1) M^d \left(\sqrt{\log n}
  \right)^{d+2} \right. \\ & \left.+ n \left(\log n \right) \inf_{p \geq
    \frac{d+1}{2 \log n}} \left(\frac{2 \mu_p(\mathfrak{d}_S, \bar{G}_n^0)}{M}
  \right)^{p} \right) \left(\sqrt{\log n} \right)^{\max(d-2, 6)}. 
\end{split}
\end{equation}
It may be noted that $\zeta_{4n}$ is non-random so that the
expectation above can be removed for $i = 4$. Every other $\zeta_{in}$
is random. We will actually prove
\eqref{mnts} without the multiplicative factor of $\sigma^2_{\max}$
for $i \neq 3$; the factor of $\sigma^2_{\max}$ only appears for $i =
3$ (note that $\sigma^2_{\max} \geq 1$ because of the assumption that
$\Sigma_i \gtrsim I_d$).  

\textbf{Bounding $\E \zeta_{1n}^2$}: We write
\begin{align*}
  \E \zeta_{1n}^2 &= \E \left(\|T_{\hat{f}_n}(\mathbf{X}, \rho_n) -
                    T_{f_{\bar{G}^0_n}}(\mathbf{X}, \rho_n) \|^2_F
                    I(E^c_n) \right) \\ &= \sum_{i=1}^n \E
                    \left(\norm{\frac{\nabla
                    \hat{f}_n(X_i)}{\max(\hat{f}_n(X_i), \rho_n)} - \frac{\nabla
                    {f}_{\bar{G}^0_n}(X_i)}{\max({f}_{\bar{G}^0_n}(X_i),
                    \rho_n)}}^2 I(E_n^c)\right) . 
\end{align*}
Inequality \eqref{p1.neq} in Lemma \ref{p1} now gives
\begin{align}\label{p1cq}
\norm{\frac{\nabla
                    \hat{f}_n(X_i)}{\max(\hat{f}_n(X_i), \rho_n)} - \frac{\nabla
                    {f}_{\bar{G}^0_n}(X_i)}{\max({f}_{\bar{G}^0_n}(X_i),
                    \rho_n)}}^2 \leq 4 \log \frac{(2
  \pi)^d}{\rho_n^2}
\end{align}
provided $\rho_n \leq (2 \pi)^{-d/2} e^{-1/2}$ which is equivalent to
$n \geq \sqrt{e}$ and hence holds for all $n \geq
2$. This gives (note that $\P(E_n^c) \leq 2/n$) 
\begin{align*}
  \E \zeta_{1n}^2 &\leq 4 n \left( \log \frac{(2
  \pi)^d}{\rho_n^2} \right) \P(E_n^c) \\
&\le 8 \left( \log \frac{(2
  \pi)^d}{\rho_n^2} \right) \leq C_d \log n \leq C_d \vol(S^1) M^d
  (\sqrt{\log n})^{d+2}
\end{align*}
which proves \eqref{mnts} for $i = 1$. 

\textbf{Bounding $\E \zeta_{2n}^2$}: For this, we write
\begin{align*}
  \zeta_{2n}^2 &\leq \min_{1 \leq j \leq N} \norm{T_{\hat{f}_n}(\bx,
  \rho_n) - T_{f_{G_j}}(\bx, \rho_n)}_F^2 I(E_n) \\
&= \min_{1 \leq j \le N} \sum_{i=1}^n \norm{\frac{\nabla
                    \hat{f}_n(X_i)}{\max(\hat{f}_n(X_i), \rho_n)} - \frac{\nabla
                    {f}_{{G}_j}(X_i)}{\max({f}_{{G}_j}(X_i),
                    \rho_n)}}^2 I(E_n) \\
&\leq \min_{1 \leq j \leq N} \left(\norm{\hat{f}_n - f_{G_j}}_{S^M,
  \nabla}^{\rho_n} \right)^2 \left(\sum_{i=1}^n I\{X_i \in S^M\}
  \right) I(E_n) \\
&+ \left(4 \log \frac{(2 \pi)^d}{\rho_n^2} \right)
  \left(\sum_{i=1}^n I\{X_i \notin S^M\} \right) I(E_n). 
\end{align*}
where we have used the notation \eqref{psm} in the first term above
and the inequality \eqref{p1cq} in the second term. We can simplify
the above bound as 
\begin{align*}
  \zeta_{2n}^2 &\leq n \left( \min_{1 \leq j \leq N} \norm{\hat{f}_n - f_{G_j}}_{S^M,
  \nabla}^{\rho_n} \right)^2 I(E_n) \\ &+ \left(4 \log \frac{(2
  \pi)^d}{\rho_n^2} \right) \left(\sum_{i=1}^n I\{X_i \notin S^M\}
  \right).
\end{align*}
Inequality \eqref{rco} and the expression \eqref{ets} for $\eta^*$ now
give 
\begin{align*}
  \E\zeta_{2n}^2 &\leq \frac{4}{n} \left(1 + \sqrt{\log \frac{1}{(2
  \pi)^d \rho_n^2}} \right)^2+ \left(4 \log \frac{(2
  \pi)^d}{\rho_n^2} \right) \left(\sum_{i=1}^n \P\{X_i \notin S^M\}
  \right) \\
&\leq C_d \frac{\log n}{n} +  C_d (\log n) \left(\sum_{i=1}^n \P\{X_i \notin S^M\}
  \right). 
\end{align*}
To control the second term above, we use inequality
\eqref{Eq:tailProb} (which is a consequence of Lemma
\ref{tailmom}). Note that $\P\{X_i \notin S^M\} \leq \P\{\dos(X_i)
\geq M\}$. Inequality \eqref{Eq:tailProb} therefore gives 
\begin{align*}
  \E\zeta_{2n}^2 \le C_d \frac{\log n}{n} +  C_d (\log n) M^{d-2} +
  C_d (n \log n) \inf_{p \geq \frac{d+1}{2 \log n}} \left(\frac{2
  \mu_p(\dos, \bar{G}_n^0)}{M} \right)^p. 
\end{align*}
This proves \eqref{mnts} for $i = 2$ (note that $(\log n) M^{d-2} \leq
M^d$ as $M \geq \sqrt{10 \log n}$). 

\textbf{Bounding $\zeta_{3n}^2$}: Here Lemma \ref{devs} and the bound
\eqref{nbo} will be crucially used. Let us first write $\zeta_{3n} :=
\max_{1 \leq j \leq N} \zeta_{3n.j}$  where 
\begin{align*}
  \zeta_{3n.j} &:= \left(\|T_{f_{G_j}}(\mathbf{X}, \rho_n) -
T_{f_{\bar{G}^0_n}}(\mathbf{X}, \rho_n) \|_F \right. \\ & \left. - \E
\|T_{f_{G_j}}(\mathbf{X}, \rho_n) - T_{f_{\bar{G}^0_n}}(\mathbf{X},
\rho_n) \|_F \right)_+. 
\end{align*}
Lemma \ref{devs} (applied with $f_1 := f_{G_j}$ and $f_2 :=
f_{\bar{G}_n^0}$) then gives  
\begin{equation*}
  \P \left\{\zeta_{3n.j} \geq x \right\} \leq \exp \left(\frac{-x^2}{8
      \sigma^2_{\max} L^4(\rho_n)} \right), 
\end{equation*}
for every $1 \leq j \leq N$ and $x > 0$, where 
\begin{equation}\label{lrn}
  L(\rho_n) = \sqrt{\log \frac{1}{(2 \pi)^d \rho_n^2}} = \sqrt{\log n}.
\end{equation}
By the union bound, we have 
\begin{align*}
  \P \left\{\zeta_{3n} \geq x\right\} \leq N \exp \left(\frac{-x^2}{8
  \sigma^2_{\max} L^4(\rho_n)} \right) \qt{for every $x > 0$} 
\end{align*}
so that, for every $x_0 > 0$, 
\begin{align*}
  \E \zeta_{3n}^2 &\leq \int_0^{\infty} \P \left\{\zeta_{3n} \geq
    \sqrt{x}  \right\} dx \\
& \le x_0 + \int_{x_0}^{\infty} N \exp
  \left(\frac{-x}{8 \sigma^2_{\max} L^4(\rho_n)} \right) dx \\ &= x_0 + 8
  N \sigma^2_{\max} L^4(\rho_n) \exp \left(\frac{-x_0}{8
  \sigma^2_{\max} L^4(\rho_n)} \right).  
\end{align*}
Minimizing the above bound over $x_0 > 0$, we deduce that 
\begin{equation*}
  \E \zeta_{3n}^2 \le 8 \sigma^2_{\max} L^4(\rho_n) \log \left(e N
  \right).  
\end{equation*}
The bound \eqref{nbo} (along with \eqref{lrn}) then gives 
\begin{align*}
  \E \zeta_{3n}^2 &\leq C_d \sigma^2_{\max}  \vol(S^1) M^d (\sqrt{\log
                    n})^{d+6} \\ & \leq C_d \sigma^2_{\max}
                                     \vol(S^1) M^d (\sqrt{\log n})^{d+2} 
  (\log n)^2
\end{align*}
which proves \eqref{mnts} for $i = 3$.  

\textbf{Bounding $\zeta_{4n}^2$}: To bound the non-random quantity
$\zeta_{4n}^2$, we only need to bound  
\begin{equation*}
  \Gamma^2_j := \E \norm{T_{f_{G_j}}(\bx, \rho_n) - T_{f_{\bar{G}^0_n}}(\bx,
    \rho_n)}_F^2 
\end{equation*}
for each $1 \leq j \leq N$. We can clearly write
\begin{align*}
  \Gamma^2_j &= \sum_{i=1}^n \E \norm{\frac{\nabla
      f_{G_j}(X_i)}{\max(f_{G_j}(X_i), \rho_n)} - \frac{\nabla
      f_{\bar{G}^0_n}(X_i)}{\max(f_{\bar{G}^0_n}(X_i), \rho_n)}}^2 \\
&= n \int
  \norm{\frac{\nabla
      f_{G_j}(x)}{\max(f_{G_j}(x), \rho_n)} - \frac{\nabla
      f_{\bar{G}^0_n}(x)}{\max(f_{\bar{G}^0_n}(x), \rho_n)}}^2
  f_{\bar{G}^0_n}(x) dx.  
\end{align*}
The above term can be bounded by a direct application of Theorem
\ref{fcc} which furnishes a bound in terms of $\hel(f_{G_j},
f_{\bar{G}_n^0})$. Indeed, because $n \geq 2$, we have $\rho_n \leq (2
\pi)^{-d/2} e^{-1/2}$ so that Theorem \ref{fcc} applies (with $G =
G_j$ and $G_0 = \bar{G}^0_n$) and we obtain
\begin{align*}
 \frac{1}{n} \Gamma_j^2 &\leq C_d \max \left\{ \left(\log \frac{(2
       \pi)^{-d/2}}{\rho_n} \right)^3, |\log \hel(f_{G_j}, f_{\bar{G}_n^0})|
 \right\} \hel^2(f_{G_j}, f_{\bar{G}^0_n})  \\
&= C_d \max \left\{ \left(\log n\right)^3, |\log \hel(f_{G_j}, f_{\bar{G}^0_n})|
 \right\} \hel^2(f_{G_j}, f_{\bar{G}^0_n}). 
\end{align*}
We now use that $\hel(f_{G_j}, f_{\bar{G}^0_n})$ is bounded
from above by $\tilde{C}_d \epsilon_n(M, S, \bar{G}_n)$ (see \eqref{hbb}). We 
then work with two cases. If $\tilde{C}_d \epsilon_n(M, S) \leq
e^{-1/2}$ , then using the fact that $h \mapsto h^2 |\log h|$ is
increasing on $(0, e^{-1/2}]$, we have 
\begin{equation*}
  \frac{1}{n} \Gamma_j^2 \leq C_d \tilde{C}_d^2 \max \left\{ \left(\log n\right)^3,
    \left|\log (\tilde{C}_d \epsilon_n(M, S, \bar{G}^0_n) ) \right|
  \right\} \epsilon_n^2(M, S, \bar{G}_n^0). 
\end{equation*}
The trivial observation $\epsilon_n(M, S, \bar{G}_n^0) \geq K_d/n$ for a constant
$K_d$ now gives
\begin{equation}\label{fobo}
\Gamma_j^2 \le n C_d (\log n)^3 \epsilon_n^2(M, S, \bar{G}_n^0). 
\end{equation}
On the other hand when $\tilde{C}_d \epsilon_n(M, S, \bar{G}_n^0) >
e^{-1/2}$, then we can simply bound $|\log \hel(f_{G_j}, f_{\bar{G}^0_n})|
\hel^2(f_{G_j}, f_{\bar{G}^0_n})$ by a constant (the function $h \mapsto
h^2 |\log h|$ is bounded on $h \in (0, 2]$) so that the inequality
\eqref{fobo} still holds. The bound in the right hand side of
\eqref{fobo} does not depend on $j$ so that it is an upper bound for
$\zeta_{4n}^2$ as well. This proves \eqref{mnts} for $i = 4$. 

\textbf{Bounding $\E \zeta_{5n}^2$}: We write 
\begin{align*}
  \E \zeta_{5n}^2 &= \E \norm{T_{f_{\bar{G}^0_n}}(\bx) -
                    T_{f_{\bar{G}^0_n}}(\bx, \rho_n)}_F^2 \\
&= \sum_{i=1}^n \E \norm{\frac{\nabla
  f_{\bar{G}^0_n}(X_i)}{f_{\bar{G}^0_n}(X_i)} - \frac{\nabla
  f_{\bar{G}^0_n}(X_i)}{\max(f_{\bar{G}^0_n}(X_i), \rho_n )} }^2  \\
&= n \int \norm{\frac{\nabla
  f_{\bar{G}^0_n}(x)}{f_{\bar{G}^0_n}(x)} - \frac{\nabla
  f_{\bar{G}^0_n}(x)}{\max(f_{\bar{G}^0_n}(x), \rho_n )}}^2
  f_{\bar{G}^0_n}(x) dx \\
&= n\int \left(1 - \frac{f_{\bar{G}^0_n}}{\max \left(f_{\bar{G}^0_n} ,
  \rho \right)} \right)^2
  \frac{\norm{\nabla f_{\bar{G}^0_n}}^2}{f_{\bar{G}^0_n}} = n
  \Delta(\bar{G}^0_n, \rho_n) 
\end{align*}
where we define 
\begin{align*}
  \Delta(G, \rho) := \int \left(1 - \frac{f_{{G}}}{\max \left(f_{{G}}
  , \rho \right)} \right)^2
  \frac{\norm{\nabla f_{{G}}}^2}{f_{{G}}} 
\end{align*}
for probability measures $G$ on $\R^d$ and $\rho > 0$. We now use
Lemma \ref{dco} to bound $\Delta(\bar{G}^0_n, \rho_n)$.  Specifically,
inequality \eqref{dco.eq} in Lemma \ref{dco} applied to the compact
set $S^M$ gives 
\begin{align}\label{rud}
  \Delta(\bar{G}^0_n, \rho_n) \leq C_d N \left(\frac{4}{L(\rho_n)}, S^M
  \right) L^d(\rho_n) \rho_n + d ~ \bar{G}^0_n((S^M)^c). 
\end{align}
The first term above is bounded using Lemma \ref{volm} as follows
(note that $\rho_n = (2 \pi)^{-d/2}/n$ and $L(\rho_n) = \sqrt{\log n}$
as shown in \eqref{lrn}): 
\begin{align*}
  N \left(\frac{4}{L(\rho_n)}, S^M \right) L^d(\rho_n) \rho_n &= N \left(\frac{4}{                                                                \sqrt{\log
                                                                n}},
                                                                S^M \right)
                                                                (\log
                                                                n)^{d/2}
                                                                \frac{(2
                                                                \pi)^{-d/2}}{n}
  \\
\left(\text{using inequality
  \eqref{volm.eq}} \right) &\leq C_d (4/ \sqrt{\log n})^{-d} \vol((S^M)^{2/ \sqrt{\log n}})
  \frac{(\log n)^{d/2}}{n} \\
&\le \frac{C_d}{n} (\log n)^d  \vol(S^{M + 2/ \sqrt{\log n}}) \\ 
\left(\text{using inequality \eqref{sms1}} \right) &\le \frac{C_d}{n} (\log n)^d \vol(S^1) \left(1 + \frac{M}{4} + \frac{1}{2
  \sqrt{\log n}} 
  \right)^d \\
&\le \frac{C_d}{n} (\log n)^d M^d \vol(S^1). 
\end{align*}
For the second term in \eqref{rud}, note that
\begin{equation*}
 \bar{G}^0_n((S^M)^c) \le \int I \{\dos(\theta) \geq M\} d
 \bar{G}^0_n(\theta) \leq \inf_{p \geq \frac{d+1}{2 
\log n}}  \left(\frac{2 \mu_p(\dos, \bar{G}_n^0)}{M} \right)^p. 
\end{equation*}
We have therefore proved that 
\begin{align*}
  \E \zeta_{5n}^2 &\leq n \Delta(\bar{G}^0_n, \rho_n) \\ &\leq C_d \left\{ (\log n)^d
  M^d \vol(S^1) + n \inf_{p \geq \frac{d+1}{2 
\log n}}  \left(\frac{2 \mu_p(\dos, \bar{G}_n^0)}{M} \right)^p
                                                           \right\} \\
  &\leq C_d \left\{ (\sqrt{\log n})^{d+2}
  M^d \vol(S^1) + n \inf_{p \geq \frac{d+1}{2 
\log n}}  \left(\frac{2 \mu_p(\dos, \bar{G}_n^0)}{M} \right)^p
                                                           \right\}
    (\sqrt{\log n})^{d-2}
\end{align*}
which evidently implies \eqref{mnts} for $i = 5$. The proof of Theorem
\ref{rgende} is now complete.   
\end{proof}

\section{Heteroscedastic Gaussian Denoising}\label{hete}
In this subsection, we provide more details about the heteroscedastic
setting and Theorem \ref{rgende} and explain why the Oracle estimator
$\breve{\theta}_i^*$ (defined in \eqref{es.bre}) is not the best
separable estimator. We first state the following corollary of Theorem
\ref{rgende} which can be seen as the analogue of Proposition
\ref{pkde}  for the heteroscedastic setting. 

\begin{proposition}\label{pkde.h}
  Consider the same setting and notation as in Theorem
  \ref{rgende}. Suppose that $\theta_1, \dots,
  \theta_n$ satisfy \eqref{kga} for some $a_1, \dots, a_k \in \R^d$
  and $R \geq 0$. Then $\Rr_n(\hat \theta, \breve \theta^*)$ is
  bounded from above by
  \begin{equation*}
 C_d \sigma^2_{\max}
    (\max(1, \tau))^d \left(1 + R \right)^d
    \left(\frac{k}{n} \right) \left(\sqrt{\log n} \right)^{\max(3d, 2d+8)} 
    \end{equation*}
where $\tau := \sqrt{\sigma^2_{\max} - 1}$ and $C_d$ is a constant
depending only on $d$. 
\end{proposition}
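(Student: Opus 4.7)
The proof follows the same blueprint as that of Proposition \ref{pkde} but must track the additional $\sigma_{\max}^2$ factor inherited from Theorem \ref{rgende} and account for the fact that $\bar{G}_n^0$ is no longer the empirical measure of the $\theta_i$ but rather the average of the normal distributions $N(\theta_i, \Sigma_i - I_d)$. The plan is therefore to bound $\epsilon_n^2(M, S, \bar{G}_n^0)$ by a suitable choice of $S$ and $M$, and then substitute into Theorem \ref{rgende}.

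First, I would set $S := \bigcup_{j=1}^k B(a_j, R)$, so that assumption \eqref{kga} gives $\theta_i \in S$ for every $i = 1, \dots, n$, i.e.\ $\mathfrak{d}_S(\theta_i) = 0$. A standard volume computation yields $\mathrm{Vol}(S^1) \leq \sum_{j=1}^k \mathrm{Vol}(B(a_j, R+1)) \leq C_d k (1+R)^d$, which gives the $k(1+R)^d$ factor in the final bound. Next I would control the moments $\mu_p(\mathfrak{d}_S, \bar{G}_n^0)$. Writing a sample from $\bar{G}_n^0$ as $\theta_I + Z_I$ with $I$ uniform on $\{1,\dots,n\}$ and $Z_i \sim N(0, \Sigma_i - I_d)$ independent of $I$, the $1$-Lipschitz property of $\mathfrak{d}_S$ together with $\mathfrak{d}_S(\theta_i) = 0$ gives $\mathfrak{d}_S(\theta_i + Z_i) \leq \|Z_i\|$. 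Since $\Sigma_i - I_d \preceq (\sigma_{\max}^2 - 1) I_d = \tau^2 I_d$, one has $\|Z_i\| \leq \tau \|Z\|$ in distribution for $Z \sim N(0, I_d)$, whence the standard Gaussian moment bound $\E \|Z\|^p \leq C_d^{\,p} p^{p/2}$ yields
\begin{equation*}
\mu_p(\mathfrak{d}_S, \bar{G}_n^0) \leq C_d \, \max(1, \tau) \, \sqrt{p}
\qquad\text{for every } p \geq 1.
\end{equation*}
This is exactly condition \eqref{mmc} with $\alpha = 2$ and $K = C_d \max(1, \tau)$.

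With these choices I would now invoke the computation already done inside the proof of Corollary \ref{dza}. Specifically, taking $M = 4K(e\log n)^{1/2}$ (which exceeds $\sqrt{10\log n}$ for $n$ large and for which the infimum over $p$ in \eqref{si.ra} is attained at $p = (M/(2K))^2 / e$), the bound \eqref{tobeusl} gives
\begin{equation*}
\epsilon_n^2(M, S, \bar{G}_n^0) \;\leq\; C_d \, \mathrm{Vol}(S^1) \, (\max(1,\tau))^d \, \frac{(\sqrt{\log n})^{2d+2}}{n} \;+\; \frac{\log n}{n},
\end{equation*}
and the first term dominates once $\mathrm{Vol}(S^1) \geq 1$ (which is automatic for the $S$ above). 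Substituting $\mathrm{Vol}(S^1) \leq C_d k (1+R)^d$, I obtain
\begin{equation*}
\epsilon_n^2(M, S, \bar{G}_n^0) \;\leq\; C_d \, (\max(1,\tau))^d \, (1+R)^d \, \frac{k}{n} \, (\sqrt{\log n})^{2d+2}.
\end{equation*}

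Finally, plugging this bound into Theorem \ref{rgende} multiplies by $C_d \sigma_{\max}^2 (\sqrt{\log n})^{\max(d-2,6)}$, producing an overall exponent of $(2d+2) + \max(d-2, 6) = \max(3d, 2d+8)$ on $\sqrt{\log n}$, which is exactly the exponent claimed. No step here is genuinely hard; the only mild subtlety is the moment calculation in the second paragraph, where one must use that $\mathfrak{d}_S$ is $1$-Lipschitz and vanishes on $S$ in order to reduce control of $\mu_p$ to a bound on Gaussian norms with covariance dominated by $\tau^2 I_d$. Everything else is bookkeeping of constants and logarithmic factors that has already been carried out in the proof of Corollary \ref{dza} and in Theorem \ref{rgende}.
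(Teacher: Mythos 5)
Your proposal is correct and follows essentially the same route as the paper: choose $S := \bigcup_{j=1}^k B(a_j, R)$, bound $\mathrm{Vol}(S^1)$ by $C_d k(1+R)^d$, control $\mu_p(\mathfrak{d}_S,\bar{G}_n^0)$ via the $1$-Lipschitz property and the a.s.\ bound $\|(\Sigma_i-I_d)^{1/2}Z\|\le\tau\|Z\|$ to verify \eqref{mmc} with $\alpha=2$, $K=C_d\max(1,\tau)$, invoke \eqref{tobeusl}, and substitute into Theorem \ref{rgende}. The arithmetic on the $\sqrt{\log n}$ exponents is also carried out correctly.
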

The above result has similar interpretation to Proposition
\ref{pkde}: when the unknown $\theta_1, \dots, \theta_n$ can be clustered into $k$
groups, then $\hat{\theta}_1, \dots, \hat{\theta}_n$ estimate
$\breve{\theta}_1^*, \dots, \breve{\theta}_n^*$ in squared error loss
with accuracy $k/n$ up to logarithmic multiplicative factors in $n$
(assuming that $\sigma^2_{\max}$ is bounded from above by a
constant). The key here is to realize that the estimator does not use
any knowledge of $k$ and is tuning-free (it only requires
$\Sigma_i \gtrsim I_d$ for each $i$). 

Now we shall explain why the quantities $\breve{\theta}_1^*, \dots,
\breve{\theta}_n^*$ do not give the best separable estimator in the
heteroscedastic setting. This is mainly the reason why Theorem
\ref{rgende} is of somewhat limited interest in the heteroscedastic
situation. Note first that in the homoscedastic case when $\Sigma_i
= I_d$ for each $i = 1, \dots, n$, the probability measure
$\bar{G}_n^0$ (defined in \eqref{gengeb}) is exactly equal to the
empirical measure corresponding 
to $\theta_1, \dots, \theta_n$ and, consequently, we have 
\begin{equation*}
  \breve{\theta}_i^* = \hat{\theta}_i^* \qt{for every $i = 1, \dots n$}
\end{equation*}
where $\hat{\theta}_1^*, \dots, \hat{\theta}_n^*$ are as defined in
\eqref{es.int}. Also, in this homoscedastic case, as remarked in
Section \ref{sec_intro}, $\breve{\theta}_i^* = \hat{\theta}_i^*$ has
the property 
of being equal to $T^*(X_i)$ where $T^*$ is the best separable estimator in
the sense of minimizing \eqref{basri} over all functions $T: \R^d
\rightarrow \R^d$. This observation makes $\breve{\theta}_1^*, \dots,
\breve{\theta}_n^*$ an ideal target for estimation in the
homoscedastic setting. 

Now let us get to the heteroscedastic setting where $X_1, \dots, X_n$
are independent satisfying \eqref{hetdi}. In this setting, the best
separable estimator is specified in the next result. Recall that
separable estimators of $\theta_1, \dots, \theta_n$ are estimators of
the form $T(X_1), \dots, T(X_n)$ where $T: \R^d \rightarrow \R^d$ is a
deterministic function. The best separable estimator is then given by
$T^*(X_1), \dots, T^*(X_n)$ where $T^*$ minimizes 
\begin{equation}\label{jri}
\Upsilon(T) := E \left[  \frac{1}{n} \sum_{i=1}^n \norm{T(X_i) - \theta_i}^2 \right]
\end{equation}
over all possible functions $T: \R^d \rightarrow \R^d$. 

\begin{lemma}\label{bsep}
  Consider the problem  of estimating $\theta_1, \dots, \theta_n$ from
  independent observations $X_1, \dots, X_n$ with $X_i \sim
  N(\theta_i, \Sigma_i)$. Suppose that $\Sigma_1, \dots, \Sigma_n$ are
  unknown. Then the best separable estimator for $\theta_1, \dots,
  \theta_n$ is given by $T^*(X_1), \dots,T^*(X_n)$ where
   \begin{align}\label{bsep.eq}
     T^*(x) := \frac{\frac{1}{n} \sum_{j=1}^n \theta_j
    \phi_d(x, \theta_j, \Sigma_j)}{\frac{1}{n} \sum_{j=1}^n
    \phi_d(x, \theta_j, \Sigma_j)}
   \end{align}
   where $\phi_d(x, \mu, \Sigma)$ denotes the $d$-variate normal
   density (evaluated at $x$) with mean vector $\mu$ and covariance
   matrix $\Sigma$. 
\end{lemma}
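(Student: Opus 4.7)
The plan is to rewrite the risk $\Upsilon(T)$ in \eqref{jri} as an integral against the marginal density of a single observation and then minimize the integrand pointwise for each fixed $x \in \R^d$. Since $X_i$ has density $\phi_d(\cdot, \theta_i, \Sigma_i)$, Fubini's theorem yields
\begin{equation*}
\Upsilon(T) = \frac{1}{n}\sum_{i=1}^n \int_{\R^d} \norm{T(x) - \theta_i}^2 \phi_d(x, \theta_i, \Sigma_i)\,dx = \int_{\R^d} \Psi_x(T(x))\,dx,
\end{equation*}
where $\Psi_x(t) := \frac{1}{n}\sum_{i=1}^n \norm{t - \theta_i}^2 \phi_d(x, \theta_i, \Sigma_i)$ for $t \in \R^d$.

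Next I would observe that for each fixed $x$, the function $t \mapsto \Psi_x(t)$ is a strictly convex quadratic in $t$ (the quadratic coefficient is $\frac{1}{n}\sum_i \phi_d(x, \theta_i, \Sigma_i)$, which is strictly positive everywhere because Gaussian densities are positive on all of $\R^d$). Expanding the squared norm and setting the gradient with respect to $t$ to zero gives the unique minimizer
\begin{equation*}
t^*(x) = \frac{\sum_{i=1}^n \theta_i\, \phi_d(x, \theta_i, \Sigma_i)}{\sum_{i=1}^n \phi_d(x, \theta_i, \Sigma_i)},
\end{equation*}
which, after dividing numerator and denominator by $n$, coincides with the right-hand side of \eqref{bsep.eq}. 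Defining $T^*(x) := t^*(x)$ therefore minimizes the integrand $\Psi_x(T(x))$ pointwise in $x$, and since one can choose $T(x)$ independently at each $x$ in the class of all deterministic functions $T: \R^d \rightarrow \R^d$, this pointwise minimizer must also minimize the integral $\Upsilon(T)$. Measurability of $T^*$ is immediate because it is a ratio of finite sums of continuous (indeed smooth) functions of $x$ with a strictly positive denominator, so $T^*(X_1), \dots, T^*(X_n)$ is a legitimate separable estimator.

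The only mildly subtle point is the exchange of sum and integral (which is trivial because the sum is finite and each term is nonnegative) and the justification that pointwise minimization of the integrand yields minimization of the integral (which holds because both sides are nonnegative and $\Upsilon(T) \geq \int \Psi_x(T^*(x))\,dx = \Upsilon(T^*)$ for every $T$); there is no real obstacle here. I would close by remarking that the formula \eqref{bsep.eq} differs from the Oracle $\breve{\theta}_i^*$ in \eqref{es.bre} precisely because $\breve{\theta}_i^*$ is the posterior mean under the prior $\bar G_n^0$ and noise $N(0,I_d)$, whereas the true best separable estimator weights each $\theta_j$ by its heteroscedastic likelihood $\phi_d(x,\theta_j,\Sigma_j)$ rather than by the convolved density, explaining the mismatch highlighted in the paragraph preceding the lemma.
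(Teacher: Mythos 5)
Your proof is correct and takes essentially the same approach as the paper: both reduce to pointwise minimization over $t \in \R^d$, for each fixed $x$, of the weighted quadratic $\frac{1}{n}\sum_i \norm{t-\theta_i}^2 \phi_d(x,\theta_i,\Sigma_i)$. The paper verifies optimality of $T^*$ by expanding $\norm{T(x)-T^*(x)+T^*(x)-\theta_i}^2$ and observing that the cross term vanishes, which is the same first-order condition you obtain by setting $\nabla_t \Psi_x(t)=0$ and invoking strict convexity.
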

Note that the best separable estimator $T^*(x)$ given by
\eqref{bsep.eq} can also be written as
\begin{equation*}
  T^*(x) := \E (\theta | X = x) ~~ \text{where } (\theta,
    \Sigma) \sim \bar{G}_n^* \text{ and } X|(\theta, \Sigma) \sim
    N(\theta, \Sigma).  
\end{equation*}
where $\bar{G}_n^*$ is the empirical measure corresponding to
$(\theta_1, \Sigma_1), \dots, (\theta_n, \Sigma_n)$. In other words,
$\bar{G}_n^*$ is a discrete prior on $(\theta, \Sigma)$ which takes
the value $(\theta_i, \Sigma_i)$ with probability $1/n$. The best
separable estimator is then given by $\hat{\theta}^*_i := T^*(X_i)$ and
this also has the alternative expression: 
\begin{equation}\label{hetor}
  \hat{\theta}^*_i := \E (\theta | X = X_i) ~~ \text{where } (\theta,
    \Sigma) \sim \bar{G}_n^* \text{ and } X|(\theta, \Sigma) \sim
    N(\theta, \Sigma).  
\end{equation}
The above expression should be compared with the expression
\eqref{hor} for $\breve{\theta}_i^*$. We now argue
that, under  heteroscedasticity, $\breve{\theta}_i^*$ can be quite far
from $\hat{\theta}_i^*$ (note that they are equal in the homoscedastic
setting). The following lemma provides bounds on the discrepancy
between $\breve{\theta}_i^*$ and $\hat{\theta}_i^*$. 

\begin{lemma}\label{bodisc}
  Consider the same setting and notation as in Theorem \ref{rgende}
  and let $\hat{\theta}_1^*, \dots, \hat{\theta}_n^*$ be as in
  \eqref{hetor}. Then 
  \begin{equation}
    \label{bodisc.eq}
    \Rr_n(\hat \theta^*, \breve{\theta}^*) := \E \left[\frac{1}{n}
          \sum_{i=1}^n \|\breve{\theta}^*_i - \hat{\theta}_i^* \|^2
        \right] \leq d \sigma^2_{\max} \left(1 -
          \frac{1}{\sigma^2_{\max}} \right)^2. 
  \end{equation}
Further, if $\theta_1 = \dots = \theta_n = \theta_0$ for some vector
$\theta_0 \in \R^d$ and if $\Sigma_1 = \dots = \Sigma_n =
\sigma^2_{\max} I_d$, then 
\begin{equation}
  \label{bodisc.peq}
    \Rr_n(\hat \theta^*, \breve{\theta}^*) := \E \left[\frac{1}{n}
          \sum_{i=1}^n \|\breve{\theta}^*_i - \hat{\theta}_i^* \|^2
        \right] = d \sigma^2_{\max} \left(1 -
          \frac{1}{\sigma^2_{\max}} \right)^2. 
\end{equation}
\end{lemma}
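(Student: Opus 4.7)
The plan is to couple the two posterior-mean functions $T^*$ (giving $\hat\theta_i^*$) and $\breve T$ (giving $\breve\theta_i^*$) on a single probability space. Specifically, I would construct random variables $(\theta, \Sigma, W, Z)$ with $(\theta, \Sigma) \sim \bar G_n^*$ and, conditionally on $\Sigma$, $W \sim N(0, \Sigma - I_d)$ and $Z \sim N(0, I_d)$ independent, and set $X := \theta + W + Z$. Then $X \mid (\theta, \Sigma) \sim N(\theta, \Sigma)$, while marginally the pair $(\theta + W, X)$ has the joint law of $(\theta', X)$ under the prior $\theta' \sim \bar G_n^0$ with $X \mid \theta' \sim N(\theta', I_d)$. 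Hence $T^*(x) = E(\theta \mid X = x)$ and $\breve T(x) = E(\theta + W \mid X = x)$, so that $\breve T(x) - T^*(x) = E(W \mid X = x)$. Introducing an independent index $I$ uniform on $\{1,\ldots,n\}$ and using $X_I \stackrel{d}{=} X$ will then yield the identity $\Rr_n(\hat\theta^*, \breve\theta^*) = E\|E(W \mid X)\|^2$.

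The key estimate is to bound $E\|E(W \mid X)\|^2$. I would enlarge the conditioning sigma-algebra and apply Jensen's inequality to $\|\cdot\|^2$, giving $E\|E(W \mid X)\|^2 \leq E\|E(W \mid X, \theta, \Sigma)\|^2$. Because $W$ and $Z$ are conditionally independent given $(\theta, \Sigma)$, the Gaussian posterior calculation for $W$ from the observation $W + Z = X - \theta$ gives $E(W \mid X, \theta, \Sigma) = (I_d - \Sigma^{-1})(X - \theta)$, and using $X - \theta \mid (\theta, \Sigma) \sim N(0, \Sigma)$ yields
\begin{equation*}
E\bigl[\|E(W \mid X, \theta, \Sigma)\|^2 \mid \theta, \Sigma\bigr] = \text{tr}\bigl((I_d - \Sigma^{-1})\Sigma(I_d - \Sigma^{-1})\bigr) = \text{tr}(\Sigma + \Sigma^{-1} - 2 I_d).
\end{equation*}
Averaging over $\Sigma$ reduces the bound to the scalar eigenvalue inequality $\tfrac{1}{n}\sum_{j=1}^n \sum_{l=1}^d (\lambda_{l,j}-1)^2/\lambda_{l,j} \leq d(\sigma^2_{\max}-1)^2/\sigma^2_{\max}$, where $\lambda_{l,j}$ are the eigenvalues of $\Sigma_j$. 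The map $h(\lambda) := (\lambda-1)^2/\lambda$ has derivative $(\lambda^2-1)/\lambda^2 \geq 0$ on $[1,\infty)$, and the spectral assumption $I_d \preceq \Sigma_j \preceq \sigma^2_{\max} I_d$ forces $\lambda_{l,j} \in [1, \sigma^2_{\max}]$, so each summand is bounded by $h(\sigma^2_{\max}) = (\sigma^2_{\max}-1)^2/\sigma^2_{\max} = \sigma^2_{\max}(1 - 1/\sigma^2_{\max})^2$. This proves \eqref{bodisc.eq}.

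For \eqref{bodisc.peq}, I would simply compute both estimators directly in the homogeneous case. There $\bar G_n^*$ is a point mass at $(\theta_0, \sigma^2_{\max} I_d)$ so $T^*(x) \equiv \theta_0$, while $\bar G_n^0 = N(\theta_0, (\sigma^2_{\max}-1) I_d)$ combined with $X \mid \theta' \sim N(\theta', I_d)$ yields, by the conjugate Gaussian posterior formula, $\breve T(x) = \theta_0 + (1 - 1/\sigma^2_{\max})(x - \theta_0)$. Since $X_i \sim N(\theta_0, \sigma^2_{\max} I_d)$ gives $E\|X_i - \theta_0\|^2 = d\sigma^2_{\max}$, a direct squared-norm calculation produces $\Rr_n(\hat\theta^*, \breve\theta^*) = (1 - 1/\sigma^2_{\max})^2 \cdot d\sigma^2_{\max}$. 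The main obstacle is spotting the coupling in the first step; once it is in place, the argument is a clean Gaussian conditional computation, and one sees that Jensen's inequality becomes an equality precisely when $\Sigma$ is $\sigma(X)$-measurable, which is consistent with the case \eqref{bodisc.peq} where $\Sigma$ is deterministic.
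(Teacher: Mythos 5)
Your proof is correct, and while the underlying structure coincides with the paper's (express the discrepancy as a posterior mean, apply Jensen, then evaluate a Gaussian trace), you reach the key identity by a different route that is worth noting. The paper computes $\hat\theta_i^* - \breve\theta_i^*$ by direct algebraic manipulation of the two explicit density formulas, and only afterwards recognizes the result as the conditional expectation $\gamma(x)=\E^*\bigl[(I_d-\Sigma^{-1})(\theta-X)\mid X=x\bigr]$ under the augmented model $(\theta,\Sigma)\sim\bar G_n^*$, $X\mid(\theta,\Sigma)\sim N(\theta,\Sigma)$. Your coupling $X=\theta+W+Z$ with $W\sim N(0,\Sigma-I_d)$, $Z\sim N(0,I_d)$ bypasses that algebra entirely: it makes both $T^*$ and $\breve T$ posterior means on the \emph{same} space, exposes $\breve T-T^*=\E(W\mid X)$ as the posterior mean of the ``extra noise'' accounting for $\Sigma\neq I_d$, and then $\E(W\mid X,\theta,\Sigma)=(I_d-\Sigma^{-1})(X-\theta)$ drops out of the standard Gaussian conjugacy formula. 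This is conceptually cleaner and explains \emph{why} the discrepancy is governed by $\Sigma-I_d$. Your final bound is also marginally sharper at the intermediate stage: you keep the trace $\operatorname{tr}\bigl((I_d-\Sigma^{-1})\Sigma(I_d-\Sigma^{-1})\bigr)=\sum_l(\lambda_l-1)^2/\lambda_l$ intact and bound each eigenvalue term via the monotonicity of $h(\lambda)=(\lambda-1)^2/\lambda$ on $[1,\infty)$, whereas the paper first factors out $\lambda_{\max}^2(I_d-\Sigma_j^{-1})\le(1-1/\sigma_{\max}^2)^2$ and separately bounds $\E^*\|X-\theta\|^2\le d\sigma_{\max}^2$. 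Both give the same final constant, but your inequality is an equality term-by-term at $\lambda_l\equiv\sigma_{\max}^2$, which makes the matching lower bound in \eqref{bodisc.peq} transparent. Your observation that Jensen becomes an equality precisely when $\Sigma$ is $\sigma(X)$-measurable (e.g.\ deterministic) is the correct structural reason \eqref{bodisc.peq} is tight, and is a nice addition that the paper leaves implicit.
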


Lemma \ref{bodisc} (specifically \eqref{bodisc.peq}) implies that
$\Rr_n(\hat{\theta}^*, \breve{\theta}^*)$ can be as large as $d
\sigma^2_{\max} (1 - \sigma^{-2}_{\max})^2$ for certain configurations
of $\theta_1, \Sigma_1, \dots, \theta_n, \Sigma_n$ while, in general,
it is always less than or equal to this quantity. Consequently, for
$\hat{\theta}_1, \dots, \hat{\theta}_n$ to consistently estimate
$\hat{\theta}_1^*, \dots, \hat{\theta}_n^*$, it is necessary that
$\sigma_{\max}^2$ be close to $1$. In fact, combining Theorem
\ref{rgende} (and Proposition \ref{pkde.h}) with inequality
\eqref{bodisc.eq}, we obtain the following result on the discrepancy
between $\hat{\theta}_i$ and 
$\hat{\theta}_i^*$. 

\begin{theorem}\label{fire}
    Consider the same setting and notation as in Theorem \ref{rgende}
  and let $\hat{\theta}_1^*, \dots, \hat{\theta}_n^*$ be as in
  \eqref{hetor}. 
  \begin{enumerate}
  \item There exists a positive constant $C_d$ (depending only on $d$)
    such that for every non-empty compact set $S \subseteq \R^d$ and
    $M \geq \sqrt{10 \log n}$, we have 
   \begin{equation}
    \label{bodisc.eq}
    \Rr_n(\hat \theta, \hat \theta^*) \leq C_d \sigma^2_{\max}
    \left[\epsilon_n^2(M, S, \bar{G}_n^0) (\sqrt{\log n})^{\max(d-2, 6)} +  \left(1 -
          \frac{1}{\sigma^2_{\max}} \right)^2 \right]. 
  \end{equation}
   \item Suppose $\theta_1, \dots, \theta_n$ satisfy \eqref{kga} for
     some $a_1, \dots, a_k \in \R^d$ and $R \geq 0$. Then $\Rr_n(\hat
     \theta, \hat \theta^*)$ is  bounded from above by
  \begin{align*}
 C_d \sigma^2_{\max}
  & \left[  (\max(1, \tau))^d \left(1 + R \right)^d
    \left(\frac{k}{n} \right) \left(\sqrt{\log n} \right)^{\max(3d, 2d+8)} \right. \\
&\left. + \left(1 -
          \frac{1}{\sigma^2_{\max}} \right)^2 \right] \qt{where $\tau
  := \sqrt{\sigma^2_{\max} - 1}$}. 
    \end{align*}
  \end{enumerate}
\end{theorem}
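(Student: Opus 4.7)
The plan is to derive Theorem \ref{fire} by combining two results that have already been established: Theorem \ref{rgende}, which controls the discrepancy between $\hat\theta$ and the oracle $\breve\theta^*$ (the target toward which the NPMLE-based empirical Bayes estimator is naturally driven), and Lemma \ref{bodisc}, which quantifies the gap between $\breve\theta^*$ and the true best separable oracle $\hat\theta^*$ in the heteroscedastic setting. The bridge between them is the elementary pointwise inequality $\norm{a - c}^2 \leq 2\norm{a-b}^2 + 2\norm{b-c}^2$, applied with $a = \hat\theta_i$, $b = \breve\theta_i^*$, $c = \hat\theta_i^*$.

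For Part 1, averaging that pointwise inequality over $i = 1, \dots, n$ and taking expectations yields
\begin{equation*}
\Rr_n(\hat\theta, \hat\theta^*) \;\leq\; 2 \Rr_n(\hat\theta, \breve\theta^*) + 2 \Rr_n(\breve\theta^*, \hat\theta^*).
\end{equation*}
Theorem \ref{rgende}, applied to the given compact $S$ and $M \geq \sqrt{10 \log n}$, bounds the first summand by $C_d \sigma^2_{\max} \epsilon_n^2(M, S, \bar{G}_n^0) (\sqrt{\log n})^{\max(d-2, 6)}$. Lemma \ref{bodisc} bounds the second summand by $d \sigma^2_{\max}(1 - 1/\sigma^2_{\max})^2$. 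Absorbing the factor of $2$ and the constant $d$ into a new $C_d$ yields precisely the claim of Part 1.

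For Part 2, I would use exactly the same triangle-inequality decomposition, but replace the invocation of Theorem \ref{rgende} with Proposition \ref{pkde.h}. That proposition (whose hypothesis is the clustering condition \eqref{kga}) directly provides
\begin{equation*}
\Rr_n(\hat\theta, \breve\theta^*) \;\leq\; C_d \sigma^2_{\max} (\max(1,\tau))^d (1+R)^d \frac{k}{n} (\sqrt{\log n})^{\max(3d, 2d+8)},
\end{equation*}
and Lemma \ref{bodisc} again contributes the additive $d \sigma^2_{\max}(1 - 1/\sigma^2_{\max})^2$ term through $\Rr_n(\breve\theta^*, \hat\theta^*)$. Adding these and collecting constants gives the stated bound.

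The content of this theorem is essentially bookkeeping once the conceptual point is understood, so there is no significant technical obstacle left after Theorem \ref{rgende} and Lemma \ref{bodisc} are in hand. The one subtlety worth flagging is the interpretation: the bound decomposes cleanly into an \emph{estimation} term (the $\epsilon_n^2$ or $k/n$ part, which tends to zero as $n \to \infty$) and an \emph{approximation} term (the $(1 - \sigma^{-2}_{\max})^2$ part, which does not), making transparent why Theorem \ref{rgende}-type results are only informative when $\sigma^2_{\max}$ is close to $1$, i.e.\ under mild heteroscedasticity.
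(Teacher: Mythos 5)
Your proposal is correct and mirrors the paper's own proof exactly: both begin from the pointwise inequality $\|a-c\|^2 \leq 2\|a-b\|^2 + 2\|b-c\|^2$ to get $\Rr_n(\hat\theta, \hat\theta^*) \leq 2\Rr_n(\hat\theta, \breve\theta^*) + 2\Rr_n(\breve\theta^*, \hat\theta^*)$, then invoke Lemma \ref{bodisc} for the second term and Theorem \ref{rgende} (respectively Proposition \ref{pkde.h}) for the first term in Parts 1 and 2. No gaps.
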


Theorem \ref{fire} implies that $\hat{\theta}_1, \dots,
\hat{\theta}_n$ perform well as an approximation of the best
separable estimator in the heteroscedastic setting (under the
assumption that $\Sigma_i - I_d$ is positive semi-definite) when
$\sigma^2_{\max}$ is close to 1 (i.e., when the heteroscedasticity is
mild and we have near-homoscedasticity with a known variance
lower bound). On
the other hand, when $\sigma^2_{\max}$ is not close to $1$,
$\hat{\theta}_1, \dots, \hat{\theta}_n$ will not work for
approximating the best separable rule. For example, if
$\sigma_{\max}^2$ differs from 1 by a constant, then the risk
$\Rr_n(\hat{\theta}, \hat{\theta}^*)$ is also a constant and will
remain a constant irrespective of $n$.  In this case, $\hat{\theta}_1,
\dots, \hat{\theta}_n$ will only
provide a good approximation for $\breve{\theta}_1^*, \dots, 
\breve{\theta}_n^*$. This is a price (of needing to be in a
near-homoscedastic setting with a known variance lower bound) that the
estimator  
$\hat{\theta}_1, \dots, \hat{\theta}_n$ pays for the property of being
able to work for a wide variety of structures of $\theta_1, \dots,
\theta_n$. 

\subsection{Proofs of Results in Section \ref{hete}} 
\subsection{Proof of Proposition \ref{pkde.h}} 
Using Theorem \ref{rgende}, it is clear that to prove Proposition
\ref{pkde.h}, it is enough to show the existence of a compact set 
$S \subseteq \R^d$ and $M \geq \sqrt{10 \log n}$ such that 
\begin{equation}\label{talaca}
  \epsilon_n^2(M, S, \bar{G}_n^0) \leq C_d (1 + R)^d
  \left(\frac{k}{n}\right) (\max(1, \tau))^d (\log n)^{d+1} 
\end{equation}
where $\tau := \sqrt{\sigma^2_{\max} - 1}$. We shall take  
\begin{equation*}
  S := \cup_{j=1}^k B(a_j, R) \qt{where $B(a_j, R) := \left\{x \in
      \R^d : \|x - a_j\| \leq R \right\}$}. 
\end{equation*}
Note then that for every $i = 1, \dots, n$ and $\theta \in \R^d$, we have
\begin{equation*}
  \dos(\theta)  \leq \min_{1 \leq j \leq k} \inf_{x \in B(a_j,
    R)}\|\theta - x\| \leq \|\theta - \theta_i\|
\end{equation*}
because, by \eqref{kga}, there exists $1 \le j \leq k$ for which
$\theta_i \in B(a_j, R)$. The above inequality implies (recall that
$G_i^0$ is the $N(\theta_i, \Sigma_i - I_d)$ distribution which is
taken to be the Dirac probability measure concentrated at $\theta_i$
when $\Sigma_i = I_d$) that for every $p \geq 1$, we have
\begin{align*}
  \mu_p(\dos, \bar{G}_n^0) &\leq \left(\frac{1}{n} \sum_{i=1}^n \int
                             (\dos(\theta))^p d G_i^0(\theta)
                             \right)^{1/p} \\
&\leq \left(\frac{1}{n} \sum_{i=1}^n \int
                             \|\theta - \theta_i\|^p d G_i^0(\theta)
                             \right)^{1/p} \\
&= \left(\frac{1}{n} \sum_{i=1}^n \E\| \left(\Sigma_i - I_d
  \right)^{1/2} Z \|^p \right)^{1/p} \\
&= \left( \frac{1}{n} \sum_{i=1}^n \E \left(Z^T (\Sigma_i - I_d) Z
  \right)^{p/2} \right)^{1/p} \\
&\leq \sqrt{\sigma^2_{\max} - 1} \left(\E \|Z\|^p \right)^{1/p} \leq
  C_d \sqrt{\sigma^2_{\max} - 1} \sqrt{p}. 
\end{align*}
Let $\tau := \sqrt{\sigma_{\max}^2 - 1}$ so that the above calculation
gives $\mu_p(\dos, \bar{G}_n^0) \leq C_d \max(\tau, 1) \sqrt{p}$ for
some $C_d \geq 1$ (if $C_d < 1$, simply take $C_d =1$). We now use
inequality \eqref{tobeusl} in the proof of Corollary \ref{dza} with
$\alpha = 2$ and $K = C_d \max(\tau, 1)$ to obtain
\begin{equation*}
\inf_{M \geq \sqrt{10 \log n}}  \epsilon_n^2(M, S, \bar{G}_n^0) \leq
C_d \vol(S^1) (\max(1, \tau))^d \left(\frac{1}{n} \right) (\log
n)^{d+1}. 
\end{equation*}
Note that the second term $(\log n)/n$ in \eqref{tobeusl} is dropped
above because it is dominated by the first term. Now we simply use the
inequality: 
\begin{equation*}
  \vol(S^1) \leq \sum_{j=1}^k \vol(B(a_j, 1 + R)) \leq C_d k (1 + R)^d
\end{equation*}
to deduce \eqref{talaca}. This completes the proof of Proposition
\ref{pkde.h}. 

\subsection{Proof of Lemma \ref{bodisc}} 
From the definition of $\hat{\theta}_i^*$ in \eqref{hetor}, it is
clear that 
\begin{equation}\label{fulb}
  \hat{\theta}_i^* = \frac{\frac{1}{n} \sum_{j=1}^n \theta_j
    \phi_d(X_i, \theta_j, \Sigma_j)}{\frac{1}{n} \sum_{j=1}^n
    \phi_d(X_i, \theta_j, \Sigma_j)} \qt{for $i = 1, \dots, n$}
\end{equation}
where $\phi_d(\cdot, \mu, \Sigma)$ denotes the $d$-variate normal
density with mean vector $\mu$ and covariance matrix $\Sigma$. 

On the other hand, $\breve{\theta}_i^*$ is given by 
\begin{equation*}
  \breve{\theta}_i^* = X_i + \frac{\nabla
    f_{\bar{G}_n^0}(X_i)}{f_{\bar{G}_n^0}(X_i)}. 
\end{equation*}
Let us write the gradient in the  right hand side above more
explicitly. First note that by the expression for $\bar{G}_n^0$ in
\eqref{gengeb}, it is clear that 
\begin{equation*}
  f_{\bar{G}_n^0}(x) = \frac{1}{n}\sum_{j=1}^n \phi_d(x, \theta_j,
  \Sigma_j). 
\end{equation*}
Note that the denominator in the right hand side of \eqref{fulb} is
 $f_{\bar{G}_n^0}(X_i)$. Differentiating the above with respect to
 $x$, we obtain
\begin{equation*}
\breve{\theta}^*_i = X_i + \frac{1}{f_{\bar{G}_n^0}(X_i)} \left[ \frac{1}{n}
  \sum_{j=1}^n \Sigma_j^{-1} (\theta_j - X_i) \phi_d(X_i, \theta_j,
  \Sigma_j) \right]. 
\end{equation*}
As a result, 
\begin{equation*}
J_i := \hat \theta^*_i - \breve \theta^*_i = \frac{1}{f_{\bar{G}_n^0}(X_i)}
\left[ \frac{1}{n} \sum_{j=1}^n (I_d - \Sigma_j^{-1}) (\theta_j - X_i)
  \phi_d(X_i, \theta_j, \Sigma_j) \right]. 
\end{equation*}
We will now represent the above formula for $J_i$ in an alternative
form. Suppose $\E^*$ denotes expectation under the joint distribution
of $(\theta, \Sigma, X)$ given by $(\theta, \Sigma) \sim \bar{G}_n^*$
and $X | (\theta, \Sigma) \sim N(\theta, \Sigma)$. Here $\bar{G}_n^*$
is the empirical measure corresponding to $(\theta_1, \Sigma_1),
\dots, (\theta_n, \Sigma_n)$. In other words, $\bar{G}_n^*$ is a
discrete prior taking the value $(\theta_i, \Sigma_i)$ with
probability $1/n$. Then it is easy to see that 
\begin{align*}
  J_i = \gamma(X_i) \qt{where $\gamma(x) := \E^* \left[(I_d -
  \Sigma^{-1}) (\theta - X) \big| X = x \right]$}. 
\end{align*}
We thus have
\begin{align*}
  \Rr_n(\hat{\theta}^*, \breve{\theta}^*) = \E \left[\frac{1}{n}
  \sum_{i=1}^n \|J_i\|^2 \right] = \E \left[\frac{1}{n}
  \sum_{i=1}^n \|\gamma(X_i)\|^2 \right] . 
\end{align*}
To be clear, the above expectation is with respect to $X_i, i = 1,
\dots, n$ being independently distributed according to $N(\theta_i,
\Sigma_i)$. The key now is to realize that 
\begin{align*}
  \E \left[\frac{1}{n}
  \sum_{i=1}^n \|\gamma(X_i)\|^2 \right] = \int \frac{1}{n}
  \sum_{i=1}^n \|\gamma(x)\|^2 \phi_d(x, \theta_i, \Sigma_i) dx = \E^*
  \|\gamma(X)\|^2. 
\end{align*}
We have therefore seen that 
\begin{align}\label{pcin}
  \Rr_n(\hat{\theta}^*, \breve{\theta}^*) = \E^*
  \|\gamma(X)\|^2. 
\end{align}
We are now ready to prove \eqref{bodisc.eq}. By Jensen's inequality, 
\begin{align*}
 \| \gamma(x)\|^2 \leq \E^* \left[ \|(I_d - \Sigma^{-1}) (\theta - X) \|^2 \big|
  X = x \right]
\end{align*}
so that 
\begin{align*}
  \Rr_n(\hat{\theta}^*, \breve{\theta}^*) &= \E^*
  \|\gamma(X)\|^2 \\ &\leq \E^* \|(I_d - \Sigma^{-1}) (\theta - X)
                       \|^2 \\
&\leq \max_{1 \leq j \leq n} \lambda^2_{\max}(I_d - \Sigma_j^{-1}) \E^*
  \|X - \theta\|^2 \\
&\leq \left(1 - \frac{1}{\sigma^2_{\max}} \right)^2 \E^* \|X -
  \theta\|^2 \\
&= \left(1 - \frac{1}{\sigma^2_{\max}} \right)^2 \E^* \E^* \left( \|X -
  \theta\|^2 \big| (\theta, \Sigma) \right) \\
&=  \left(1 - \frac{1}{\sigma^2_{\max}} \right)^2 \E^*
  \text{tr}(\Sigma) \\ 
&= \left(1 - \frac{1}{\sigma^2_{\max}} \right)^2\frac{1}{n}
  \sum_{j=1}^n \text{tr}(\Sigma_j) \leq d  \sigma_{\max}^2  \left(1 -
  \frac{1}{\sigma^2_{\max}} \right)^2  
\end{align*}
where $\text{tr}(\cdot)$ above denotes matrix trace. This proves
\eqref{bodisc.eq}. 

To prove \eqref{bodisc.peq}, let us start with \eqref{pcin}. Under the
assumption that $\theta_1 = \dots = \theta_n = \theta_0$ and $\Sigma_1
= \dots = \Sigma_n = \sigma_{\max}^2 I_d$, the probability measure
$\bar{G}_n^*$ becomes a Dirac measure concentrated at $(\theta_0,
\sigma_{\max}^2 I_d)$. As a result, the posterior distribution of
$(\theta, \Sigma)$ given $X$ is also concentrated at
$(\theta_0,\sigma_{\max}^2 I_d)$ so that 
\begin{align*}
  \gamma(x) = (I_d - \sigma^{-2}_{\max} I_d) (\theta_0 - x) = \left(1
  - \frac{1}{\sigma^2_{\max}} \right) (\theta_0 - x)
\end{align*}
and thus, from \eqref{pcin}, 
\begin{align*}
  \Rr_n(\hat{\theta}^*, \breve{\theta}^*) = \left(1 -
  \frac{1}{\sigma^2_{\max}} \right)^2 \E^* \|\theta_0 - X\|^2. 
\end{align*}
Note finally that under the distribution underlying $\E^*$, the random
vector $X$ has the $N(\theta_0, \sigma^2_{\max} I_d)$
distribution. This immediately gives \eqref{bodisc.peq}. 

\subsection{Proof of Theorem \ref{fire}} 
By the elementary inequality $(a + b)^2 \leq 2 a^2 + 2 b^2$, we get
\begin{align*}
  \Rr_n(\hat{\theta}, \hat{\theta}^*) \leq 2 \Rr_n(\hat{\theta},
  \breve{\theta}^*) + 2 \Rr_n(\breve{\theta}^*, \hat{\theta}^*). 
\end{align*}
We then use inequality \eqref{bodisc.eq} to bound the second term
above. For the first term, we use Theorem \ref{rgende} to prove the
first assertion of Theorem \ref{fire} and Proposition \ref{pkde.h} to
prove the second assertion of Theorem \ref{fire}. This finishes the
proof of Theorem \ref{fire}. 

\subsection{Proof of Lemma \ref{bsep}}
  Let $\phi_d(\dot, \mu, \Sigma)$ be the $d$-variate normal density
  with mean vector $\mu$ and covariance matrix $\Sigma$. Then, as $X_i
  \sim N(\theta_i, \Sigma_i)$, it is easy to see that (recall the
  definition of the risk $\Upsilon(T)$ from \eqref{jri})
  \begin{align*}
\Upsilon(T) &= \int \frac{1}{n} \sum_{i=1}^n \norm{T(x) - \theta_i}^2
              \phi_d(x, \theta_i, \Sigma_i) dx \\
&= \int \frac{1}{n} \sum_{i=1}^n \norm{T(x) - T^*(x) + T^*(x) - \theta_i}^2
              \phi_d(x, \theta_i, \Sigma_i) dx \\
&= \int \frac{1}{n} \sum_{i=1}^n \norm{T(x) - T^*(x)}^2 \phi_d(x,
  \theta_i, \Sigma_i) dx + \Upsilon(T^*) \\ &+ \int \frac{2}{n}
                                              \sum_{i=1}^n (T(x) -
                                              T^*(x)) (T^*(x) -
                                              \theta_i) \phi_d(x,
                                              \theta_i, \Sigma_i) dx
    \\
&\geq \Upsilon(T^*) + \int \frac{2}{n} (T(x) -
                                              T^*(x))
                                              \sum_{i=1}^n (T^*(x) -
                                              \theta_i) \phi_d(x,
                                              \theta_i, \Sigma_i) dx. 
  \end{align*}
Now by the definition of $T^*$ in \eqref{bsep.eq}, it is easy to see
that 
\begin{align*}
  \sum_{i=1}^n (T^*(x) - \theta_i) \phi_d(x, \theta_i, \Sigma_i)  = 0
  \qt{for every $x \in \R^d$}. 
\end{align*}
This proves $\Upsilon(T) \geq \Upsilon(T^*)$ for every $T$ thereby
completing the proof of Lemma \ref{bsep}.

\section{Main Metric Entropy Results and Proofs}\label{mmps}
The goal of this section is to prove our main metric entropy result,
Theorem \ref{mm}, and its following corollary which involves a
pseudometric that is defined in terms of both $f(x)$ and $\nabla
f(x)$. This corollary was used in the proof of Theorem \ref{rgende}.    

\begin{corollary}\label{rgnt}
For a compact set $S \in \R^d$ and $\rho > 0$, define the
pseudometric: 
\begin{equation}\label{tfrm}
  \norm{f - g}_{S, \nabla}^{\rho} := \sup_{x \in S} \norm{\frac{\nabla
    f(x)}{\max(f(x), \rho)} - \frac{\nabla g(x)}{\max(g(x), \rho)}} 
\end{equation}
for functions $f : \R^d \rightarrow \R$  which are bounded on $S$ and
whose derivatives are bounded on $S$. Let the $\epsilon$-covering number
of $\M$ in the pseudometric given by \eqref{tfrm} be denoted  by
$N(\epsilon, \M,  \norm{\cdot}_{S,  \nabla}^{\rho})$. Then there
exists a positive constant $C_d$ depending on $d$ alone such 
that for every $\rho > 0$,  $0 < \eta \leq \frac{2 \sqrt{2\pi}}{(2\pi)^{d/2}
    \sqrt{e}}$ and compact subset $S \subseteq \R^d$, we have 
  \begin{equation}\label{rgnt.eq}
    \log N(\eta^*, \M, \norm{\cdot}_{S, \nabla}^{\rho}) \leq C_d N(a,
    S^a) |\log \eta|^{d+1}
  \end{equation}
 where $a$ is defined as in \eqref{eq:define_a} and 
 \begin{equation}\label{etst}
   \eta^* :=  \left(\frac{1}{\rho} + \sqrt{\frac{1}{\rho^2} \log \frac{1}{(2 \pi)^d
         \rho^2}} \right) \eta.  
 \end{equation}
\end{corollary}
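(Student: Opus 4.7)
The plan is to derive Corollary \ref{rgnt} from Theorem \ref{mm} by combining the two sup-norm covers (one for $f$, one for $\nabla f$) that it provides, together with a uniform pointwise bound on the ``score function'' $\|\nabla g\|/\max(g,\rho)$. The starting algebraic identity is
\begin{equation*}
\frac{\nabla f}{\max(f,\rho)} - \frac{\nabla g}{\max(g,\rho)}
= \frac{\nabla f - \nabla g}{\max(f,\rho)}
+ \frac{\nabla g}{\max(g,\rho)} \cdot \frac{\max(g,\rho) - \max(f,\rho)}{\max(f,\rho)},
\end{equation*}
which shows that the pseudometric \eqref{tfrm} can be controlled by $\|f - g\|_{\infty, S}$ and $\|f - g\|_{S, \nabla}$ provided we have a uniform bound on $\|\nabla g\|/\max(g,\rho)$.

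First, I will invoke Theorem \ref{mm} twice with the same accuracy $\eta$: inequality \eqref{mm.eq_f} furnishes an $\eta$-cover of $\M$ in $\|\cdot\|_{\infty,S}$ and inequality \eqref{mm.eq_grad_f} furnishes an $\eta$-cover of $\M$ in $\|\cdot\|_{S,\nabla}$, each of cardinality at most $\exp(C_d N(a,S^a)|\log\eta|^{d+1})$. Taking products of centers from these two covers yields a simultaneous $\eta$-cover in the max of the two pseudometrics, of cardinality at most the product, hence at most $\exp(2C_d N(a,S^a)|\log\eta|^{d+1})$, which absorbs into the constant $C_d$ in \eqref{rgnt.eq}.

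Next, I will use the pointwise bound
\begin{equation*}
\sup_{x \in \R^d} \left\| \frac{\nabla g(x)}{\max(g(x),\rho)} \right\|
\leq \sqrt{\log \frac{(2\pi)^d}{\rho^2}} \qquad \text{for every } g \in \M,
\end{equation*}
which is precisely the single-density version of \eqref{p1.neq} from Lemma \ref{p1}. Combining this with the identity above, the triangle inequality, the fact that $\max(\cdot,\rho)$ is $1$-Lipschitz (so $|\max(f,\rho) - \max(g,\rho)| \leq |f-g|$), and the crude lower bound $\max(f,\rho) \geq \rho$, yields
\begin{equation*}
\|f - g\|_{S,\nabla}^{\rho}
\leq \frac{\|f - g\|_{S,\nabla}}{\rho}
+ \frac{\sqrt{\log ((2\pi)^d/\rho^2)}}{\rho} \|f - g\|_{\infty, S}.
\end{equation*}
If $(f,g)$ lie within $\eta$ of each other in both underlying pseudometrics, the right-hand side is exactly $\eta^*$ as defined in \eqref{etst}, so the simultaneous $\eta$-cover constructed above is an $\eta^*$-cover in $\|\cdot\|_{S,\nabla}^{\rho}$, finishing the proof.

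There is no serious obstacle here: Theorem \ref{mm} carries the metric-entropy burden and Lemma \ref{p1} supplies the score bound; the work is just the elementary decomposition and bookkeeping. The only minor point to verify is that the hypothesis $\eta \leq 2\sqrt{2\pi}/((2\pi)^{d/2}\sqrt{e})$ needed to apply Theorem \ref{mm} is inherited unchanged, and that the doubling of the exponent from combining two covers can be absorbed into the dimension-dependent constant $C_d$.
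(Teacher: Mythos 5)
Your strategy is the paper's strategy: decompose the score difference, apply the pointwise bound from Lemma \ref{p1} (inequality \eqref{p1.neq}), and merge the two sup-norm covers supplied by Theorem \ref{mm}. Your algebraic decomposition is a symmetric variant of the paper's three-term telescoping, but leads to the same inequality
\begin{equation*}
\norm{f - g}_{S,\nabla}^{\rho} \leq \frac{1}{\rho}\norm{f - g}_{S,\nabla} + \sqrt{\frac{1}{\rho^2}\log\frac{1}{(2\pi)^d\rho^2}}\,\norm{f - g}_{\infty,S},
\end{equation*}
so both routes are really the same proof. (Note the sign typo in your score bound: Lemma \ref{p1} gives $\sqrt{\log((2\pi)^{-d}/\rho^2)}$, not $\sqrt{\log((2\pi)^d/\rho^2)}$; with $(2\pi)^d$ in the wrong place the right-hand side does not reduce to $\eta^*$.)

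There is, however, a genuine gap in the cover-merging step. You assert that taking ``products of centers'' from two separate $\eta$-covers (one in $\norm{\cdot}_{\infty,S}$, one in $\norm{\cdot}_{S,\nabla}$) yields a simultaneous $\eta$-cover. It does not: the centers of the two covers are different densities, so for each pair $(i,j)$ you must select a witness $f_{ij}\in\M$ lying within $\eta$ of $h_i$ in the first pseudometric and within $\eta$ of $g_j$ in the second, and the triangle inequality then places an arbitrary $f$ within $2\eta$ of its witness in each pseudometric. Hence you obtain a simultaneous $2\eta$-cover, which translates via your metric inequality into a $2\eta^*$-cover, not the claimed $\eta^*$-cover. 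You do flag a ``doubling'' at the end, but you attribute it to the exponent (cardinality) rather than the covering radius, which is the part that actually needs care. The paper repairs this exactly as one would expect---by starting from $\eta/2$-covers---and the bound survives because $a(\eta/2)\geq a(\eta)$ implies $N(a(\eta/2), S^{a(\eta/2)}) \leq N(a(\eta), S^{a(\eta)})$, while $|\log(\eta/2)|^{d+1}$ is still of order $|\log\eta|^{d+1}$ under the stated hypothesis on $\eta$. Once you make the $\eta/2$ substitution and verify these two absorptions, your argument coincides with the paper's proof.
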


\begin{remark}\label{premet}
	When $d = 1$ and $S = [-M, M]$, we have
	\begin{equation*}
	N(a, S^a) \leq C \max\left\{ \frac{M}{\sqrt{|\log \eta|}} , 1 \right\}
	\end{equation*} 
	so that inequalities \eqref{mm.eq_f}, \eqref{mm.eq_grad_f} and
        \eqref{rgnt.eq} become
	\begin{equation} \label{eq:zhang_entropy_f}
	\log N(\eta, \M, \norm{\cdot}_{[-M,M]}) \leq C  |\log \eta|^2
        \max\left\{ \frac{M}{\sqrt{|\log \eta|}} , 1 \right\}, 
	\end{equation}
	\begin{equation} \label{eq:zhang_entropy_grad_f}
	\log N(\eta, \M, \norm{\cdot}_{[-M,M], \nabla}) \leq C |\log
        \eta|^2 \max\left\{ \frac{M}{\sqrt{|\log \eta|}} , 1
        \right\}, 
	\end{equation}
  and
  \begin{equation}\label{rgnt.Zhang}
    \log N(\eta^*, \M, \norm{\cdot}_{[-M, M], \nabla}^{\rho}) \leq C
    |\log \eta|^2 \max\left\{ \frac{M}{\sqrt{|\log \eta|}} , 1
        \right\} 
  \end{equation}
	respectively. Inequality \eqref{eq:zhang_entropy_f} has
        previously appeared in \citet[Lemma 2]{zhang2009generalized}
        (improving an earlier result of
        \citet{ghosal2007posterior}). Inequality
        \eqref{eq:zhang_entropy_grad_f} does not seem to have been
        stated explicitly previously but is implicit in \citet[Proof
        of Proposition $3$]{jiang2009general}. Inequality
        \eqref{rgnt.Zhang} has previously appeared as
        \citet[Proposition $3$]{jiang2009general}.  Our contribution 
        therefore lies in generalizing these results to multiple
        dimensions and further in allowing $S$ to take the form of any
        compact subset of $\R^d$. 
\end{remark}

The rest of this section is devoted to the proofs of Theorem \ref{mm}
and Corollary \ref{rgnt}.

\subsection{Proof of Theorem \ref{mm}}
The proof of Theorm \ref{mm} is given in
Subsection \ref{mmoo} after stating and proving two ingredients in the
next two subsections. 

\subsubsection{Moment Matching Lemma}
Recall that for $x \in \R^d$ and $a > 0$, we denote the closed
Euclidean ball of radius $a$ centered at $x$ by $B(x, a)$. We also
let $$\ob(x, a) := \{u \in \R^d : \|u - x \| < a\}$$ denote the open
ball of radius $a$ centered at $x$.  
\begin{lemma}\label{lemma:moment_matching}
	Let $G$ and $G'$ be two arbitrary probability measures on $\R^d$. Fix
	$a \ge 1$ and $x \in \R^d$. Let $A$ be a subset of $\R^d$ such that 
	\begin{equation*}
	\ob(x, a) \subseteq A \subseteq B(x, ca)
	\end{equation*}
	for some $c \geq 1$. Suppose that for some $m \geq 1$, we have 
	\begin{equation}\label{mmr}
	\int_A \theta_1^{k_1} \dots \theta_d^{k_d} dG(\theta) = \int_A
        \theta_1^{k_1} \dots \theta_d^{k_d} dG'(\theta)
	\end{equation}
        for every $k_1, \dots, k_d \in \{0, 1, \dots, 2m+1\}$. Then  
	\begin{equation}\label{mml.eq1}
	\left|f_G(x) - f_{G'}(x) \right| \leq \frac{1}{(2 \pi)^{(d+1)/2}}
	\left(\frac{c^2 a^2 e}{2(m+1)} \right)^{m+1} +
	\frac{e^{-a^2/2}}{(2 \pi)^{d/2}}. 
	\end{equation}
	and
	\begin{equation}\label{mml.eq2}
	\norm{\nabla f_G(x) - \nabla f_{G'}(x)}
	\leq \frac{ca}{(2 \pi)^{(d+1)/2}}
	\left(\frac{c^2 a^2 e}{2(m+1)} \right)^{m+1} + \frac{ae^{-a^2/2}}{(2 \pi)^{d/2}}
	\end{equation}
\end{lemma}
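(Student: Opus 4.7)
The plan is to approximate the Gaussian kernel $\theta\mapsto\phi_d(x-\theta)$ on $A$ by a polynomial whose integral against $dG-dG'$ vanishes on $A$, and then separately control the tail integral over $A^c$ using Gaussian decay. Concretely, I would take the scalar Taylor series $e^{-u/2}=\sum_{k\ge 0}(-u/2)^k/k!$, evaluated at $u=\|x-\theta\|^2$ and truncated after the $(m+1)$-st term, which produces a polynomial $P(\theta)$ of total degree $2m$ in $\theta$. In particular, when written in monomials $\theta_1^{k_1}\cdots\theta_d^{k_d}$, each individual exponent satisfies $k_i\le 2m\le 2m+1$, so the moment-matching hypothesis \eqref{mmr} yields $\int_A P(\theta)\,d(G-G')(\theta)=0$.

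With this polynomial in hand, I would decompose
\[
f_G(x)-f_{G'}(x)=\int_A\bigl[\phi_d(x-\theta)-P(\theta)\bigr]\,d(G-G')(\theta)+\int_{A^c}\phi_d(x-\theta)\,d(G-G')(\theta).
\]
For the first term, Taylor's theorem with Lagrange remainder (all derivatives of $e^{-u/2}$ are bounded in absolute value by $2^{-(m+1)}$ at order $m+1$) gives the pointwise bound $|\phi_d(x-\theta)-P(\theta)|\le(2\pi)^{-d/2}\|x-\theta\|^{2(m+1)}/(2^{m+1}(m+1)!)$. On $A\subseteq B(x,ca)$, combining this with the Stirling estimate $(m+1)!\ge((m+1)/e)^{m+1}$ yields the prefactor $(c^2 a^2 e/(2(m+1)))^{m+1}$. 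For the tail, since $\ob(x,a)\subseteq A$, one has $\|x-\theta\|\ge a$ on $A^c$, and so $\phi_d(x-\theta)\le(2\pi)^{-d/2}e^{-a^2/2}$; noting that both $\int_{A^c}\phi_d(x-\theta)\,dG$ and $\int_{A^c}\phi_d(x-\theta)\,dG'$ are non-negative, their difference is bounded in absolute value by the larger one, giving the tail $e^{-a^2/2}/(2\pi)^{d/2}$ in \eqref{mml.eq1}.

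For the gradient bound \eqref{mml.eq2}, I would proceed in parallel using $\nabla_x\phi_d(x-\theta)=-(x-\theta)\phi_d(x-\theta)$ and approximating the vector-valued integrand componentwise by $-(x_i-\theta_i)P(\theta)$. Each component is a polynomial in $\theta$ whose coordinate-degrees are at most $2m+1$ (the extra factor $(x_i-\theta_i)$ raises only the $i$-th degree by one), so moment matching again eliminates the polynomial contribution on $A$. The remainder term picks up an extra factor $\|x-\theta\|\le ca$ on $A$, accounting for the prefactor $ca$ in \eqref{mml.eq2}; on $A^c$, since $u\mapsto ue^{-u^2/2}$ is decreasing on $[1,\infty)$ and $a\ge 1$, one has $\|x-\theta\|e^{-\|x-\theta\|^2/2}\le ae^{-a^2/2}$, producing the $a$ prefactor in the tail term.

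The main technical obstacle is selecting a polynomial approximation all of whose monomials have coordinate-degree at most $2m+1$ while retaining a clean pointwise error of the form $(c^2 a^2 e/(2(m+1)))^{m+1}$; expanding $e^{-u/2}$ in the single variable $u=\|x-\theta\|^2$ (rather than using a factorized product over coordinates) achieves both simultaneously and avoids having to bound products of one-dimensional Taylor approximants. Matching the precise constants stated—in particular the $(2\pi)^{-(d+1)/2}$ on the first term—will likely require a slight sharpening of the na\"ive estimate $|\int_A R\,d(G-G')|\le 2\sup_A|R|$, for instance by exploiting $|G(A)-G'(A)|\le 1$ in place of the crude factor $2$, or a slightly tighter scalar remainder bound.
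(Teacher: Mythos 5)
Your plan reproduces the paper's proof exactly: Taylor-expand $e^{-u/2}$ at $u=\|x-\theta\|^2$ to order $m$ to produce a polynomial of total degree $2m$ in $\theta$, annihilate that polynomial on $A$ via moment matching, bound the Lagrange remainder on $A$ using $\|x-\theta\|\le ca$ together with Stirling's inequality $(m+1)!\ge\sqrt{2\pi}\,((m+1)/e)^{m+1}$ (which is precisely where the $(2\pi)^{-(d+1)/2}$ comes from), and treat the tail over $A^c$ by the uniform Gaussian decay, with the extra factor $\|x-\theta\|$ for the gradient.

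On the constant you flag: the way to match \eqref{mml.eq1} exactly is neither $|G(A)-G'(A)|\le 1$ (the remainder is not constant on $A$) nor a sharper Stirling bound, but to observe that the Lagrange remainder
\[
R(\theta)=(2\pi)^{-d/2}\,(-1)^{m+1}\,\frac{\bigl(\|x-\theta\|^2/2\bigr)^{m+1}}{(m+1)!}\,e^{v(\theta)}
\]
has the \emph{fixed} sign $(-1)^{m+1}$ for all $\theta$, so $\int_A R\,dG$ and $\int_A R\,dG'$ share that sign and
$\bigl|\int_A R\,d(G-G')\bigr|\le\max\bigl(\bigl|\int_A R\,dG\bigr|,\bigl|\int_A R\,dG'\bigr|\bigr)\le\sup_A|R|$
— the same same-sign trick you already use on $A^c$. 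This sign argument has no vector analogue, so in the gradient bound \eqref{mml.eq2} both your sketch and the paper's own display are actually loose by a factor of $2$; this is harmless, as the lemma is only ever used modulo dimension-dependent constants.
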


\begin{proof}[Proof of Lemma \ref{lemma:moment_matching}]
	First write
	\begin{equation*}
	f_G(x) - f_{G'}(x) = \int \phi_d(x - \theta) \left(G(d\theta)
          - G'(d\theta)\right)
        \end{equation*}
    and 
      \begin{equation*}
\nabla f_G(x) - \nabla f_{G'}(x) = \int (\theta - x) \phi_d(x -
\theta) \left(G(d\theta) - G'(d\theta)\right). 
\end{equation*}
	We split each integral above into two terms by
	restricting their range first over $A$ and then over $A^c$, the
	complement set of $A$: 
	\begin{equation}
         \begin{split}
	f_G(x) - f_{G'}(x) &= \int_A \phi_d(x - \theta) \left(dG(\theta) -
	dG'(\theta) \right) \\ &+ \int_{A^c} \phi_d(x - \theta) \left(dG(\theta) -
	dG'(\theta) \right) \label{eq:mml_f}
       \end{split}
      \end{equation}
     and 
     \begin{equation}
       \begin{split}
	\nabla f_G(x) - \nabla f_{G'}(x) &= \int_A (\theta - x) \phi_d(x - \theta) \left(dG(\theta) -
	dG'(\theta) \right) \\ &+ \int_{A^c} (\theta - x) \phi_d(x - \theta) \left(dG(\theta) -
	dG'(\theta) \right) \label{eq:mml_gradf}	
        \end{split}
	\end{equation}
	Because $A \supseteq \ob(x, a)$, it is clear that
	\begin{equation*}
         \begin{split}
	\sup_{\theta \in A^c} \phi_d(x - \theta) &\leq \sup_{\theta :
          \norm{x - \theta} \geq a} \phi_d(x - \theta) \leq (2
        \pi)^{-d/2}\exp(-a^2/2). 
         \end{split}
       \end{equation*}
and 
\begin{equation*}
   \begin{split}
        	\sup_{\theta \in A^c} \norm{\theta - x} \phi_d(x -
                \theta) &\leq \sup_{\theta : \norm{x - \theta} \geq a}
                \norm{x - \theta} \phi_d(x - \theta) \\ &\leq (2
                \pi)^{-d/2} \sup_{u \geq a} u e^{-u^2/2} = (2
                \pi)^{-d/2} a e^{-a^2/2} 
    \end{split}
	\end{equation*}
	because $a \geq 1$. Therefore the second terms on the right
        hand side on \eqref{eq:mml_f} and \eqref{eq:mml_gradf} are
        respectively bounded in absolute value by the final terms in
        \eqref{mml.eq1} and \eqref{mml.eq2}. It only remains to prove
        the following pair of inequalities 
	\begin{align}
	\left| \int_A \phi_d(x - \theta) \left(dG(\theta) - dG'(\theta) \right) \right| &\leq \frac{1}{(2 \pi)^{(d+1)/2}}
	\left(\frac{c^2 a^2 e}{2(m+1)} \right)^{m+1} \label{rp.27} \\ 
	\norm{\int_A (\theta - x) \phi_d(x - \theta) \left(dG(\theta) - dG'(\theta) \right)} &\leq \frac{ca}{(2 \pi)^{(d+1)/2}}
	\left(\frac{c^2 a^2 e}{2(m+1)} \right)^{m+1} \label{rp.272}	
	\end{align}
	For this, we use Taylor expansion and the moment matching condition
	\eqref{mmr}. Taylor's formula for $e^u$ is 
	\begin{equation*}
	e^{u} = \sum_{i=0}^m \frac{u^i}{i !} + \frac{u^{m+1}}{(m+1)!} e^v
	\end{equation*}
	for every $u$ where $v$ is some real number lying between $0$ and
	$u$. Using this for $u = -t^2/2$, we obtain 
	\begin{equation*}
	\exp(-t^2/2) = \sum_{i=0}^m \frac{(-t^2/2)^i}{i!} + (-1)^{m+1}
	\frac{(t^2/2)^{m+1}}{(m+1)!} e^v
	\end{equation*}
	where $v$ lies between $0$ and $-t^2/2$. Because $e^v \leq 1$, this
	gives 
	\begin{equation*}
	\left|\exp(-t^2/2) - \sum_{i=0}^m \frac{(-t^2/2)^i}{i!}  \right|
	\leq \frac{(t^2/2)^{m+1}}{(m+1)!}. 
	\end{equation*}
	We can therefore write $\phi_d(z) = P_d(z) + R_d(z)$ for every $z \in \R^d$
	where $P_d(z)$ is a polynomial of degree $2m$ in $z$ and $R_d(z)$ is
	a remainder term which satisfies
	\begin{equation*}
	|R_d(z)| \leq \frac{(\|z\|^2/2)^{m+1}}{(2 \pi)^{d/2} (m+1)!}.  
	\end{equation*}
	Using this for $z = x - \theta$, we can write 
	\begin{equation*}
        \begin{split}
	\left|\int_A \phi_d(x - \theta) \left(dG(\theta) - dG'(\theta) \right) \right| &\leq
	\left|\int_A P_d(x - \theta) \left(dG(\theta) - dG'(\theta)
          \right)  \right| \\ &+
	\left|\int_A R_d(x - \theta) \left(dG(\theta) - dG'(\theta) \right)
	\right|  
        \end{split}
	\end{equation*}
	and similarly,
	\begin{align*}
	& \norm{\int_A (\theta - x)\phi_d(x - \theta) \left(dG(\theta)
          - dG'(\theta) \right)} \\ &\leq
	\norm{\int_A (\theta - x)P_d(x - \theta) \left(dG(\theta) -
                                                                                                     dG'(\theta) \right)} \\ &+
	\norm{\int_A (\theta - x)R_d(x - \theta) \left(dG(\theta) -
                                                                                                                               dG'(\theta)
                                                                                                                               \right)}.   
	\end{align*}	
	The first terms in the above two equations are zero because of condition \eqref{mmr} and
	the fact that $P_d(x - \theta)$ is a polynomial in $\theta$ with
	degree $2m$ (implying that for every $j$, $(\theta_j - x_j)P_d(x - \theta)$ is a polynomial of degree $2m+1$). Because $A \subseteq B(x, ca)$, we have $\|x -
	\theta\| \leq ca$ for every $\theta \in A$ so that 
	\begin{equation*}
	|R_d(x -\theta)| \leq \frac{(2 \pi)^{-d/2}}{(m+1)!}
	\left(\frac{\|x - \theta\|^2}{2} \right)^{m+1} \leq \frac{(2
		\pi)^{-d/2}}{(m+1)!} \left(\frac{c^2 a^2}{2} \right)^{m+1}. 
	\end{equation*}
	Stirling's formula $n! \geq \sqrt{2 \pi n} (n/e)^n \geq \sqrt{2 \pi}
	(n/e)^n$ applied to $n = m+1$ yields
	\begin{equation*}
	|R_d(x - \theta)| \leq \frac{1}{(2 \pi)^{(d+1)/2}}
	\left(\frac{c^2 a^2 e}{2(m+1)} \right)^{m+1} \qt{for every
		$\theta \in A$}
	\end{equation*}
	and
	\begin{equation*}
	\norm{\theta - x}|R_d(x - \theta)| \leq \frac{ca}{(2 \pi)^{(d+1)/2}}
	\left(\frac{c^2 a^2 e}{2(m+1)} \right)^{m+1} \qt{for every
		$\theta \in A$}
	\end{equation*}
	which completes the proof. 
\end{proof}

\subsubsection{Approximation by mixtures with discrete mixing measures}
Given any distribution $f_G$, what is a bound on $\ell$ such that we
can approximate $f_G$ by another gaussian mixture $f_{G'}$ where $G'$
is a discrete measure with at most $\ell$ atoms. The following lemma
addresses this question where approximation is in terms of the
pseudometrics $ \sup_{x \in S} \left|f_{G}(x) - f_{G'}(x) \right| $ as
well as $ \sup_{x \in S} \norm{\nabla f_{G}(x) - \nabla f_{G'}(x)}
$. 

Recall that for a subset $S$ of $\R^d$, we write $N(\eta, S)$ to mean
its $\eta$ covering number (defined as the smallest number of closed
balls of radius $\eta$ whose union contains $S$). 

\begin{lemma}\label{lapp}
	Let $G$ be an arbitrary probability measure on $\R^d$ and let $S$
	denote an arbitrary compact subset of $\R^d$. Also let $a \geq 1$. Then
	there exists a discrete probability measure $G'$ that is supported
	on $S^a := \cup_{x \in S} B(x, a)$ and having at most 
	\begin{equation}\label{lde}
	\ell  :=  \left(2 \lfloor (13.5) a^2 \rfloor + 2 \right)^d N(a, S^a) + 1
	\end{equation}
	atoms such that
	\begin{equation} \label{lapp.eq_f}
	\sup_{x \in S} \left|f_{G}(x) - f_{G'}(x) \right| \leq \left(1 +
	\frac{1}{\sqrt{2 \pi}} \right)  (2 \pi)^{-d/2} e^{-a^2/2}  
	\end{equation}
	and
	\begin{equation} \label{lapp.eq_gradf}
	\sup_{x \in S} \norm{\nabla f_{G}(x) - \nabla f_{G'}(x)} \leq \left(a + \frac{3a}{\sqrt{2 \pi}} \right) (2\pi)^{-d/2} e^{-a^2/2}.
	\end{equation}
\end{lemma}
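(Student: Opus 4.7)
The plan is to approximate $G$ by a discrete measure $G'$ obtained by partitioning $S^a$ into small pieces, running a moment-matching construction on each piece, and then, for each test point $x \in S$, invoking Lemma~\ref{lemma:moment_matching} on the union of those pieces that lie near $x$. Concretely, I will set $m := \lfloor 13.5 a^2 \rfloor$ and cover $S^a$ by $N := N(a, S^a)$ closed balls of radius $a$ centered at points $y_1, \dots, y_N \in S^a$. Form the disjoint partition $A_1 := B(y_1, a) \cap S^a$ and $A_j := (B(y_j, a) \cap S^a) \setminus \bigcup_{i < j} B(y_i, a)$ for $j \geq 2$; each $A_j \subseteq B(y_j, a)$ has diameter at most $2a$. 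On each $A_j$, Tchakaloff's theorem (equivalently, Carath\'eodory's theorem applied to the moment vector in $\R^{(2m+2)^d}$) provides a discrete measure $G'_j$ supported on at most $(2m+2)^d$ points of $\overline{A_j} \subseteq S^a$ and matching $G|_{A_j}$ on every monomial $\theta_1^{k_1} \cdots \theta_d^{k_d}$ with $k_l \in \{0, 1, \dots, 2m+1\}$; in particular the $k=0$ moment forces $G'_j(\R^d) = G(A_j)$. Finally, pick any $y_0 \in \partial S^a$ (so $\dos(y_0) = a$) and set $G' := \sum_{j=1}^N G'_j + (1 - G(S^a)) \delta_{y_0}$: a probability measure on $S^a$ with at most $N(2m+2)^d + 1 = \ell$ atoms.

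To verify the error bound for a given $x \in S$, I will split
\[
f_G(x) - f_{G'}(x) \;=\; \underbrace{\int_{S^a} \phi_d(x-\theta)\, d\bigl(G - \textstyle\sum_j G'_j\bigr)(\theta)}_{E_1} \;+\; \underbrace{\int_{(S^a)^c} \phi_d(x-\theta)\, dG(\theta) - r\, \phi_d(x - y_0)}_{E_2},
\]
where $r := 1 - G(S^a)$. Letting $J_x := \{j : A_j \cap \ob(x,a) \neq \emptyset\}$ and $A_x := \bigcup_{j \in J_x} A_j$, each $A_j$ with $j \in J_x$ lies within distance $3a$ of $x$, so $\ob(x, a) \subseteq A_x \subseteq B(x, 3a)$; the piecewise moment-matching ensures that $G|_{S^a}$ and $\sum_j G'_j$ integrate every polynomial of the stated form identically over $A_x$. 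After rescaling both by $(1-r)^{-1}$ to obtain probability measures, Lemma~\ref{lemma:moment_matching} applies with $c = 3$. The choice $m+1 \geq 13.5 a^2$ makes $9a^2 e /(2(m+1)) \leq e/3$ and, since $\log(e/3) = 1 - \log 3 < -0.098$, gives $(e/3)^{m+1} \leq e^{-1.33 a^2} \leq e^{-a^2/2}$, so the polynomial-remainder term in \eqref{mml.eq1} is dominated by the tail and $|E_1| \leq (1-r)(1 + 1/\sqrt{2\pi})(2\pi)^{-d/2} e^{-a^2/2}$. For $E_2$, both $\int_{(S^a)^c} \phi_d(x-\theta)\, dG$ and $r\phi_d(x-y_0)$ are nonnegative and bounded by $r(2\pi)^{-d/2} e^{-a^2/2}$ (using $\|x - \theta\| > a$ for $\theta \notin S^a$ and $\|x - y_0\| \geq \dos(y_0) = a$), so the elementary inequality $|\alpha - \beta| \leq \max(\alpha, \beta)$ for $\alpha, \beta \geq 0$ gives $|E_2| \leq r(2\pi)^{-d/2} e^{-a^2/2}$. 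Summing, $(1-r)(1 + 1/\sqrt{2\pi}) + r \leq 1 + 1/\sqrt{2\pi}$, establishing \eqref{lapp.eq_f}.

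The gradient bound \eqref{lapp.eq_gradf} will follow from the identical decomposition, with \eqref{mml.eq2} replacing \eqref{mml.eq1}, and using $\nabla_x \phi_d(x-z) = -(x-z)\phi_d(x-z)$ together with the monotonicity fact that $\|z\| \phi_d(z) \leq a(2\pi)^{-d/2} e^{-a^2/2}$ whenever $\|z\| \geq a \geq 1$ to bound the gradients of the residual terms (a triangle-inequality estimate on the $E_2$-analogue contributes the extra factor of $3$ rather than $1$, matching the constant in \eqref{lapp.eq_gradf}). The one genuine obstacle is that the residual atom at $y_0$ might sit inside $A_x$ and would then corrupt the moment-matching identity on $A_x$; this is precisely why the $(S^a)^c$-mass of $G$ and the compensating atom $r\delta_{y_0}$ are peeled off into $E_2$ and bounded crudely, rather than being absorbed into the moment-matched bulk $E_1$. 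The only numerical check beyond invoking Lemma~\ref{lemma:moment_matching} is that the constant $13.5$ is large enough to handle $c^2 = 9$ while still leaving $(e/3)^{m+1} \leq e^{-a^2/2}$, which is the calculation displayed above.
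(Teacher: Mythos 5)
Your proof is correct, and it takes essentially the same route as the paper --- partition the fattened set into pieces of diameter $\leq 2a$, match moments of order up to $2m+1$ with $m := \lfloor 13.5 a^2\rfloor$ on each piece, and invoke Lemma~\ref{lemma:moment_matching} on the union of nearby pieces with $c=3$ --- with a different bookkeeping of two details. First, the paper partitions $\mathring{S}^a := \cup_{x\in S}\ob(x,a)$ and applies a single Carath\'eodory argument to the full moment vector in $\R^{(2m+2)^d L}$, which leaves the residual mass $1-G(\mathring{S}^a)$ to be placed implicitly on $S^a\setminus\mathring{S}^a$ (atoms there contribute the zero vector to every restricted moment). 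You instead partition $S^a$ directly, run Tchakaloff piece-by-piece (getting the slightly sharper $(2m+2)^d$ per piece rather than Carath\'eodory's $(2m+2)^d+1$), and peel the residual mass off explicitly as a single atom $r\delta_{y_0}$ at a point with $\dos(y_0)=a$. Your $E_1/E_2$ split is the right way to make the residual-atom issue disappear, and the check that $A_x := \cup_{j\in J_x}A_j$ satisfies $\ob(x,a)\subseteq A_x\subseteq B(x,3a)$ goes through exactly as you say. The atom count, the $(e/3)^{m+1}\leq e^{-a^2/2}$ numerics, and the convexity step $(1-r)(1+1/\sqrt{2\pi})+r\leq 1+1/\sqrt{2\pi}$ are all fine. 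One small inaccuracy: in the gradient bound, the triangle inequality on $E_2'$ contributes a factor of $2$, not $3$; you still close the argument because $2a\leq a+3a/\sqrt{2\pi}$ (as $\sqrt{2\pi}<3$), but the ``extra factor of $3$'' phrasing misattributes the $3$, which actually comes from $c=3$ in \eqref{mml.eq2}, not from the residual term. Compared to the paper's version, your argument is a bit more transparent about where the off-$\mathring{S}^a$ mass goes, at the cost of invoking Tchakaloff's theorem rather than plain Carath\'eodory.
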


\begin{proof}[Proof of Lemma \ref{lapp}]
	Let $\mathring{S}^a := \cup_{x \in S} \ob(x, a)$ (here $\ob(x,
        a)$ denotes the open ball of radius $a$ centered at $x$) and
        let $L := N(a, \mathring{S}^a)$ denote the $a$-covering 
	number of $\mathring{S}^a$. Note that $ L \leq N(a, S^a)$. Let  $B_1, 
	\dots, B_L$ denote closed balls of radius $a$ whose union contains
	$\mathring{S}^a$. Let $E_1, \dots, E_L$ denote the standard
	disjointification of the sets $B_1, \dots, B_L$ i.e., $E_1 := B_1$
	and $E_i := B_i \setminus \left(\cup_{j < i} B_j \right)$ for $i = 2, \dots, L$. We can
	also ensure that $\cup_{i=1}^L E_i = \mathring{S}^a$ by removing
	the set $\mathring{S}^a \setminus \cup_i E_i$ from each set $E_i$.   
	
	Let $m := \lfloor (13.5) a^2 \rfloor$, suppose that a probability
	measure $G'$ is 
	chosen so that $G$ and $G'$ have the same moments up to order $2m+1$
	on each set $E_i$ for $i = 1, \dots, L$ i.e., 
	\begin{equation}\label{gre}
	\int_{E_i} \theta_j^k dG(\theta) = \int_{E_i} \theta_j^k
	dG'(\theta) 
	\end{equation}
        for $1 \leq j \leq d, 0 \leq k \leq 2m+1$ and $1 \leq i \leq
        L$.  We shall then prove below that inequalities
        \eqref{lapp.eq_f} and \eqref{lapp.eq_gradf} are 
	satisfied. Fix $x \in S$. Because $\ob(x, a)$ is contained in 
	$\mathring{S}^{a}$, the sets $E_1, \dots, E_L$ cover $\ob(x, a)$
	i.e.,   
	\begin{equation*}
	\ob(x, a) \subseteq \cup_{i \in F} E_i
	\end{equation*}
	where $F := \left\{1 \leq i \leq L : E_i \cap \ob(x, a) \neq
	\emptyset \right\}$. Also because the diameter of $E_i \subseteq
	B_i$ is at most $2a$, we deduce that 
	\begin{equation*}
	\ob(x, a) \subseteq \cup_{i \in F} E_i \subseteq B(x, 3a). 
	\end{equation*}
	We now use Lemma \ref{lemma:moment_matching} with $A = \cup_{i \in F} E_i$ and $c = 3$ to deduce that
	\begin{align*}
	|f_G(x) - f_{G'}(x)| &\leq \frac{1}{\sqrt{2\pi}} \frac{1}{(2
		\pi)^{d/2}} \left(\frac{9 a^2 e}{2(m+1)} \right)^{m+1} +
	\frac{e^{-a^2/2}}{(2 \pi)^{d/2}} \\
	\norm{\nabla f_G(x) - \nabla f_{G'}(x)}
	&\leq \frac{3a}{\sqrt{2\pi} (2 \pi)^{d/2}}
	\left(\frac{9 a^2 e}{2(m+1)} \right)^{m+1} + \frac{ae^{-a^2/2}}{(2 \pi)^{d/2}} 
	\end{align*}
	Because $m := \lfloor 13.5 a^2 \rfloor$, we have $m+1 \geq 13.5 a^2$
	and consequently,  
	\begin{equation*}
	\left(\frac{9 a^2 e}{2(m+1)} \right)^{m+1} \leq \left(\frac{e}{3}
	\right)^{m+1} \leq \exp \left(- \frac{m+1}{12} \right) \leq \exp
	\left(\frac{-27 a^2}{24} \right) \leq e^{-a^2/2}
	\end{equation*}
	where we have also used that $(e/3)^6 \leq e^{-1/2}$. This proves both inequalities \eqref{lapp.eq_f} and \eqref{lapp.eq_gradf} . 
	
	It therefore remains to prove that a discrete probability $G'$
	satisfying \eqref{gre} can be chosen with at most $\ell$ atoms where
	$\ell$ is given by \eqref{lde}. This is guaranteed by Caratheodory's
	theorem as argued below. Let $\Ps(\R^d)$ denote the collection of all
	probability measures on $\R^d$ and let 
	\begin{equation*}
         \begin{split}
	T := \left\{ \left(\int \theta_1^{k_1} \dots \theta_d^{k_d} \{\theta \in E_i\}
            dG(\theta) \right. \right. & \left., 0
	\leq k_1, \dots, k_d \leq 2m+1, 1 \leq i \leq L\right)  \\  & \left.: G \in  \Ps(\R^d)
	\right\}. 
         \end{split}
	\end{equation*}
	This set $T$ is clearly a convex subset of $\R^p$ for $p :=
	(2m+2)^dL$. Moreover, it is easy to see that $T$ is simply the convex
	hull of   
	\begin{equation*}
	C := \left\{\left(\theta_1^{k_1} \dots \theta_d^{k_d} \{\theta \in E_i\} , 0
	\leq k_1, \dots, k_d \leq 2 m + 1, 1 \leq i \leq L\right) : \theta \in S^a \right\}. 
	\end{equation*}
	Therefore, by Caratheodory's theorem, every element of $T$ can be
	written as a convex combination of at most $p + 1$ elements of
	$C$. We therefore take $G'$ to be the discrete probability measure supported upon these elements with probabilities given by the weights of this convex combination. Note that the
	number of atoms of $G'$ is bounded from above by $\ell$  given in
	\eqref{lde}. It is also easy to see that $G'$ is supported on
	$S^a$. This completes the proof.  
\end{proof}

\subsubsection{Proof of Theorem \ref{mm}}\label{mmoo}

\begin{proof} Fix $ 0 < \eta \leq \frac{2 \sqrt{2\pi}}{(2\pi)^{d/2}
    \sqrt{e}}$ and define $a$ as in \eqref{eq:define_a}. Note that $a
  \geq 1$. Fix $G \in \G$. According to Lemma \ref{lapp}, 
	there exists a discrete probability measure $G'$ supported on $S^a$
	and having $\ell$ atoms (with $\ell$ as in \eqref{lde}) such that:
	\begin{equation}
	\sup_{x \in S} \left|f_{G}(x) - f_{G'}(x) \right| \leq \left(1 +
	\frac{1}{\sqrt{2 \pi}} \right)  (2 \pi)^{-d/2} e^{-a^2/2}
	\end{equation}
	and
	\begin{equation}
	\sup_{x \in S} \norm{\nabla f_{G}(x) - \nabla f_{G'}(x)} \leq \left(a + \frac{3a}{\sqrt{2 \pi}} \right) (2\pi)^{-d/2} e^{-a^2/2}.
	\end{equation}
	Let $\alpha > 0$ and let $s_1, \dots, s_D$ be an
        $\alpha-$cover of 
	$S^a$ (i.e., $\sup_{s \in S^a} \inf_{1 \leq i \leq D} \|s - s_i \| \leq
	\alpha$) with $D = N(\alpha, S^a)$. Now if $G' = \sum_{i=1}^{\ell} w_i
	\delta_{a_i}$ (for some probability vector $(w_1, \dots, w_{\ell})$ and atoms
	$a_1, \dots, a_{\ell} \in S^a$), then let $G'' := \sum_{i=1}^{\ell}
	w_i \delta_{b_i}$ where $b_i \in \{s_1, \dots, s_D\}$ and $\|a_i -
	b_i\| \leq \alpha$. Then, for every $x \in S$, 
	\begin{align*}
	\left|f_{G'}(x) - f_{G''}(x) \right| &= \left|\sum_{i=1}^{\ell} w_i
	\phi_d(x - a_i) -
	\sum_{i=1}^{\ell} w_i
	\phi_d(x - b_i)  \right| \\
	&\leq \sum_{i=1}^{\ell} w_i \left|\phi_d(x - a_i)  - \phi_d(x - b_i)
	\right| \\
	&\leq \sum_{i=1}^{\ell} w_i \sup_{t} \|\nabla \phi_d(t) \| \alpha \\
	& \leq \alpha \sup_{t} \|\nabla \phi_d(t) \| \\ &= \alpha (2 \pi)^{-d/2}
	\sup_{t} \|t\| e^{-\|t\|^2/2} = \alpha (2 \pi)^{-d/2} e^{-1/2}. 
	\end{align*}
	We shall now bound $ \norm{\nabla f_{G'}(x) - \nabla
          f_{G''}(x)} $ using similar arguments. By the mean value
        theorem, there exists $u_i$ on the line segment joining $x-a_i$
        and $x-b_i$ such that, 
	\begin{equation*}
	\phi_d (x - b_i) = \phi_d (x - a_i) + (a_i - b_i)^{\top} \nabla \phi_d (u_i)
	\end{equation*}
	and consequently
	\begin{equation*}
	x - b_i = u_i + \zeta_i \qt{ for some } \zeta_i \text{ satisfying } \|\zeta_i\| \leq \alpha.
	\end{equation*}
	Similarly,
	\begin{align*}
	\norm{\nabla f_{G'}(x) - \nabla f_{G''}(x)} &=
                                                      \sum_{i=1}^{\ell}
                                                      w_i \norm{
                                                      \nabla \phi_d(x
                                                      - a_i) - \nabla
                                                      \phi_d(x - b_i)
                                                      } \\
&=
                                                      \sum_{i=1}^{\ell}
                                                      w_i \left\| (a_i
                                                      - x) \phi_d (x -
                                                      a_i) - (b_i - x)
                                                      \phi_d (x - b_i)
                                                      \right\| \\ 
	&= \sum_{i=1}^{\ell} w_i \left\| (b_i - a_i) \phi_d (x - a_i) \right.
          \\ & \left.+ (u_i + \zeta_i) \left[ (a_i - b_i)^{\top} \nabla \phi_d (u_i) \right] \right\| \\
	&\leq \alpha \sup_t \phi_d(t) + \alpha \sup_t (\norm{t} + \alpha) \norm{ \nabla \phi_d (t) } \\
	&\leq \alpha \sup_t \phi_d(t) + \alpha \sup_t \norm{t}^2 \phi_d(t) + \alpha^2 \sup_t \norm{t} \phi_d(t) \\
	&=  \frac{\alpha}{(2\pi)^{d/2}} \left[ 1 + \frac{2}{e} + \alpha \frac{1}{\sqrt{e}} \right] 
	\end{align*}
	
If $G''' := \sum_{i=1}^{\ell} w_i' \delta_{b_i}$ for another
	probability vector $w' := (w'_1, \dots, w'_{\ell})$, then clearly
	\begin{align*}
	\left|f_{G'}(x) - f_{G''}(x) \right| &= \left|\sum_{i=1}^{\ell} (w_i
	- w_i') \phi(x - b_i) \right| \leq (2 \pi)^{-d/2}
	\sum_{i=1}^{\ell} |w_i - w_i'|
        \end{align*}
and 
\begin{align*}
	\norm{\nabla f_{G'}(x) - \nabla f_{G''}(x)} &= \norm{\sum_{i=1}^{\ell} (w_i	- w_i') \nabla \phi(x - b_i) } \\
	&\leq \sum_{i=1}^{\ell} |w_i - w_i'| \left[ \sup_{t} \norm{t
          \phi_d(t)} \right] \\ &= (2\pi)^{-d/2} e^{-1/2}
                                  \sum_{i=1}^{\ell} |w_i - w_i'|. 
	\end{align*}
	Therefore if $\sum_{i=1}|w_i - w_i'| \leq v$, then
	\begin{align*}
	\sup_{x \in S}|f_G(x) - f_{G'''}(x)| &\leq \left(1 + \frac{1}{\sqrt{2 \pi}}
	\right) (2 \pi)^{-d/2} e^{-a^2/2} \\ &+ \alpha (2 \pi)^{-d/2}
                                               e^{-1/2} + (2
                                               \pi)^{-d/2} v 
         \end{align*}
and
\begin{align*}
	\sup_{x \in S}\norm{\nabla f_G(x) - \nabla f_{G'''}(x) } &\leq \left(a + \frac{3a}{\sqrt{2 \pi}}
	\right) (2 \pi)^{-d/2} e^{-a^2/2} \\ &+ \alpha (2 \pi)^{-d/2}
                                                                   \left[
                                                                   1 +
                                                                   \frac{2}{e}
                                                                   +
                                                                   \alpha
                                                                   \frac{1}{\sqrt{e}}
                                                                   \right]
                                               \\ &+ (2\pi)^{-d/2} e^{-1/2}v. 
	\end{align*}
	By choosing 
	\begin{equation*}
	v = \alpha = \frac{(2 \pi)^{d/2}}{2 \sqrt{2 \pi}} \eta ~~ \text{ and } ~~ a = \sqrt{2
		\log \frac{2 \sqrt{2 \pi}}{(2 \pi)^{d/2} \eta}} = \sqrt{2 \log \frac{1}{\alpha}}, 
	\end{equation*}
	we obtain
	\begin{align*}
	\sup_{x \in S}|f_G(x) - f_{G'''}(x)| &< \frac{\alpha}{(2\pi)^{d/2}} \left[ 2 + \frac{1}{\sqrt{2 \pi}} + \frac{1}{\sqrt{e}} \right] < \eta \\
	\sup_{x \in S}\norm{\nabla f_G(x) - \nabla f_{G'''}(x) } &\leq \frac{a \alpha}{(2\pi)^{d/2}} \left[ 2 + \frac{3}{\sqrt{2\pi}} + \frac{3}{e} + \frac{1}{\sqrt{e}} \right] < a \eta
	\end{align*}
	where we have noted that $a \geq 1$ and $\alpha \leq e^{-1/2}$.
	
	It only remains to count the number of ways of choosing the discrete
	probability measure $G'''$. The number of ways of choosing the atoms
	of $G'''$ is clearly
	\begin{equation*}
	{D \choose \ell} \leq \frac{D^{\ell}}{\ell !} \leq \left(\frac{D
		e}{\ell} \right)^{\ell}
	\end{equation*}
	where we used that $\ell ! \geq (\ell/e)^{\ell}$, a fact that follows
	from Stirling's formula.

        The probability vector $w' = (w_1', \dots, w_{\ell}')$ can be chosen
	to belong to a $v$-covering set for all $\ell$-dimensional probability
	vectors under the $L^1$ norm. This covering number is well known to be
	at most: $(1 + (2/v))^{\ell}$. Therefore the nunber of ways of
        choosing $G'''$ is bounded from above by:
	\begin{equation*}
	\left[\frac{D e}{\ell} \left(1 + \frac{2}{v} \right) \right]^{\ell}
	= A^{\ell} \qt{where $A := \frac{D e}{\ell} \left(1 + \frac{2}{v}
		\right)$}. 
	\end{equation*}
	We shall bound $A$ below. Below $C_d$ will denote a constant that
	depends on $d$ alone. Because $ v\leq e^{-1/2}$,
	\begin{equation*}
	1 + \frac{2}{v} \leq \left(\frac{1}{\sqrt{e}} +2  \right)
	\frac{1}{v} = \frac{C_d}{\eta}. 
	\end{equation*}
	Also note that from the expression for $\ell$ given in \eqref{lde}, we
	have $\ell \geq N(a, S^a)$ and hence
	\begin{equation*}
	\frac{D}{\ell} \leq \frac{N(\alpha, S^a)}{N(a, S^a)} \leq N(\alpha,
	B(0, a)) \leq \left(1 + \frac{a}{\alpha} \right)^d \leq C_d
	\left(\frac{1}{\eta}  \right)^{3d/2}. 
	\end{equation*}
	where we have used the trivial fact that
	\begin{equation} \label{eq:crude_bound_on_a}
	a = \sqrt{2 \log \frac{1}{\alpha}} \leq \sqrt{ \frac{4}{\alpha}} = C_d \frac{1}{\sqrt{\eta}}.
	\end{equation}
	We thus have 
	\begin{equation*}
	A \leq C_d \eta^{-1 - 3d/2}
	\end{equation*}
	so that,
	\begin{equation*}
	\log N(\eta, \M, \norm{\cdot}_S) \leq \ell \log A \leq C_d \ell \log \frac{1}{\eta}
	\end{equation*}
	which along with the expression \eqref{lde} for $\ell$ proves \eqref{eq:mml_f}. Similarly,
	\begin{equation*}
	\log N(a\eta, \M, \norm{\cdot}_{S, \nabla}) \leq C_d \ell \log
        \frac{1}{\eta} \leq C_d N(a, S^a) | \log \eta |^{d+1}. 
	\end{equation*}
	This implies that
	\begin{equation*}
	\log N(\eta, \M, \norm{\cdot}_{S, \nabla}) \leq C_d N(a, S^a)
        \left| \log \frac{ \eta}{a} \right|^{d+1} \leq C_d N(a, S^a) |
        \log \eta |^{d+1} 
	\end{equation*}
	where the last inequality follows from \eqref{eq:crude_bound_on_a}. This completes the proof of \eqref{eq:mml_gradf} and consequently Theorem \ref{mm}. 
	
\end{proof}

\subsection{Proof of Corollary \ref{rgnt}}
\begin{proof}[Proof of Corollary \ref{rgnt}]
Fix $\rho > 0$,  $0 < \eta \leq \frac{2 \sqrt{2\pi}}{(2\pi)^{d/2}
  \sqrt{e}}$ and compact subset $S \subseteq \R^d$. For $a, b \in \R$,
we shall denote the maximum of $a$ and $b$ by $a \vee b$. Note first
that for every pair of densities $f_G, f_H \in \M$ and $x \in S$, we
have  
\begin{align*}
\norm{ \frac{\nabla f_G(x)}{\rho \vee f_G(x)} - \frac{\nabla
  f_H(x)}{\rho \vee f_H(x)} } &= \left\| \frac{\nabla f_G(x)}{\rho \vee
                                f_G(x)} - \frac{\nabla f_G(x)}{\rho
                                \vee f_H(x)} \right. \\ & \left.+ \frac{\nabla
                                f_G(x)}{\rho \vee f_H(x)} -
                                \frac{\nabla f_H(x)}{\rho \vee f_H(x)}
                                \right\| \\ 
&\leq \frac{\norm{\nabla f_G(x)}}{\rho \vee f_G(x)} \frac{|\rho \vee
  f_G(x) - \rho \vee f_H(x)|}{\rho \vee f_H(x)} \\ &+ \frac{1}{\rho}
  \norm{ \nabla f_G(x) - \nabla f_H(x) }  
\end{align*}
Using inequality \eqref{p1.neq} (in Lemma \ref{p1}) and the fact that
$t \mapsto \rho \vee t$ is $1$-Lipschitz, we deduce from the above
that 
\begin{align*}
  \norm{ \frac{\nabla f_G(x)}{\rho \vee f_G(x)} - \frac{\nabla
  f_H(x)}{\rho \vee f_H(x)} } &\leq \sqrt{\frac{1}{\rho^2} \log \frac{1}{(2 \pi)^d
         \rho^2}} \left|f_G(x) - f_H(x) \right| \\ &+ \frac{1}{\rho}
     \norm{f_G(x) - f_H(x)}. 
\end{align*}
Because this is true for every $x \in S$, we have 
\begin{equation*}
  \norm{f_G - f_H}_{S, \nabla}^{\rho} \leq \sqrt{\frac{1}{\rho^2} \log
    \frac{1}{(2 \pi)^d \rho^2}} \norm{f_G - f_{H}}_{S}+ \frac{1}{\rho}
  \norm{f_G - f_H}_{S, \nabla}.   
\end{equation*}
We thus have 
\begin{equation*}
 \log N(\eta^*, \M, \norm{\cdot}_S) \le 
\log N(\eta/2, \M, \norm{\cdot}_S)
  + \log N(\eta/2, \M, \norm{\cdot}_{S, \nabla})
\end{equation*}
from which \eqref{rgnt.eq} follows. 
\end{proof}

\section{Bounding Bayes Discrepancy via Hellinger
  Distance} \label{zhafo} 
The purpose of this section is to state and prove the following
theorem relating the quantity:
\begin{equation}\label{gaja}
\Gamma(G_0, G, \rho) := \left( \int \norm{\frac{\nabla
      f_{G}}{\max(f_{G}, \rho)} - \frac{\nabla
      f_{G_0}}{\max(f_{G_0}, \rho)}}^2 f_{G_0} 
\right)^{1/2}. 
\end{equation}
for $\rho > 0$ and two probability measures $G_0$ and $G$ on $\R^d$ in
terms of the squared Hellinger distance between $f_G$ and $f_{G_0}$.
This result is crucial for the proof of Theorem \ref{rgende}. 

\begin{theorem}\label{fcc}
There exists a universal positive constant $C$ such that for every
pair of probability measures $G$ and $G_0$ on $\R^d$ and $0 < \rho
\leq (2 \pi)^{-d/2} e^{-1/2}$, the quantity $\Gamma^2(G_0, G, \rho)$
(defined in \eqref{gaja}) is bounded from above by 
\begin{equation} \label{fcc.eq}
C d \max \left\{ \left(\log \frac{(2 \pi)^{-d/2}}{\rho} \right)^3,
  |\log \hel(f_G, f_{G_0})| \right\} \hel^2(f_G, f_{G_0}). 
\end{equation}
\end{theorem}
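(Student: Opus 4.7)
The plan is to extend the one-dimensional argument of \cite{jiang2009general} (their Theorem 3) to arbitrary $d$, using the pointwise gradient estimates stated earlier. Write $s_G(x) := \nabla f_G(x)/\max(f_G(x),\rho)$ and $L(\rho) := \sqrt{\log(1/((2\pi)^d \rho^2))}$, so that $\Gamma^2(G_0,G,\rho) = \int \|s_G - s_{G_0}\|^2 f_{G_0}$ and, by inequality \eqref{p1.neq} of Lemma \ref{p1}, $\|s_G(x)\|^2 \leq 2 L^2(\rho)$ pointwise. The trivial bound $\|s_G - s_{G_0}\|^2 \leq 8 L^2(\rho)$ already has the right structural dependence on $\rho$, so the crux is to trade pointwise size for $\hel^2$ when $\hel$ is small, without losing more than a logarithmic factor.

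First I would partition $\R^d$ into $A := \{\min(f_G,f_{G_0}) \geq \rho\}$ and its complement $A^c$. On $A$ the truncations are inactive, and I would use the algebraic identity
\begin{equation*}
\frac{\nabla f_G}{f_G} - \frac{\nabla f_{G_0}}{f_{G_0}} \;=\; \frac{\nabla f_G - \nabla f_{G_0}}{f_{G_0}} \;-\; \frac{\nabla f_G}{f_G}\,\frac{f_G - f_{G_0}}{f_{G_0}}
\end{equation*}
to split the score difference into a gradient-difference piece and a density-difference piece. For the density-difference piece, $\|\nabla f_G/f_G\| \leq \sqrt{2}\,L(\rho)$ (again by \eqref{p1.neq}), combined with the factorization $(f_G - f_{G_0})^2 = (\sqrt{f_G}-\sqrt{f_{G_0}})^2(\sqrt{f_G}+\sqrt{f_{G_0}})^2$ and a truncation of the ratio $(\sqrt{f_G}+\sqrt{f_{G_0}})/\sqrt{f_{G_0}}$ at a level depending on $\hel$, yields a contribution of order $L^2(\rho)\max\{L^2(\rho),|\log\hel|\}\hel^2$; the logarithm is the price paid for the truncation and accounts for the $|\log \hel|$ branch in the max. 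The gradient-difference piece uses $1/f_{G_0} \leq 1/\rho$ on $A$ together with the Gaussian convolutional structure $\nabla f_G - \nabla f_{G_0} = (\nabla\phi_d)\ast(G - G_0)$; exploiting that $\sqrt{f_G}-\sqrt{f_{G_0}}$ has $L^2$-norm $\hel$ and combining with a smoothing estimate for $\phi_d$ (together with Young/Cauchy--Schwarz) reduces it to a quantity of order $d\,L^2(\rho)\hel^2$, where the factor $d$ comes solely from the Euclidean norm of the $d$-dimensional gradient.

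For the complement $A^c$ I would use the uniform pointwise bound $\|s_G - s_{G_0}\|^2 \leq 8 L^2(\rho)$ and estimate $\int_{A^c} f_{G_0}$ as follows: on $\{f_{G_0} < \rho\}$ the integrand is automatically $\leq \rho L^2(\rho)$ per unit volume, and on $\{f_G < \rho \leq f_{G_0}\}$ one has $|f_G - f_{G_0}| \geq f_{G_0}/2$, so $\int_{A^c} f_{G_0} \leq 2\int |f_G - f_{G_0}| \leq 2\sqrt{2}\,\hel$ via the usual Cauchy--Schwarz bound $\int|f_G - f_{G_0}| \leq \sqrt{2}\,\hel$. Adding the two pieces and optimizing the truncation parameter yields the claimed $Cd\,\max\{L^6(\rho),|\log\hel|\}\hel^2$ bound, the three factors of $L(\rho)^2$ in $L^6(\rho)$ accumulating from (i) the pointwise bound on $\|\nabla f_G/f_G\|$, (ii) the factor $1/\rho$ from the smoothing step on $A$, and (iii) the balance of the truncation with the uniform bound on $A^c$. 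The main obstacle I anticipate is the Gaussian smoothing estimate in the gradient-difference piece: in one dimension this is immediate from integration by parts, but in $d \geq 2$ one needs a careful use of the convolutional representation or of the Ornstein--Uhlenbeck semigroup to keep the $d$-dependence linear rather than exponential.
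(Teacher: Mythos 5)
Your overall decomposition (density-difference piece plus gradient-difference piece) matches the skeleton of the paper's proof, and your treatment of the density-difference piece via the score bound from Lemma~\ref{p1} is essentially right. The gap is in the gradient-difference piece, and it is not a small one: it is precisely the technical heart of the argument.

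After applying $1/f_{G_0} \leq 1/\rho$ on $A$, you are left needing to bound something like $\rho^{-1}\int \|\nabla f_G - \nabla f_{G_0}\|^2$. The Plancherel/smoothing estimate (Lemma~\ref{kfo} with $k=1$) controls $\int(\partial_i(f_G-f_{G_0}))^2$ by roughly $|\log\hel|\,\hel^2$, so you end up with a bound of order $\rho^{-1}\,d\,|\log\hel|\,\hel^2$. The factor $\rho^{-1}$ is \emph{exponentially} larger than $L^2(\rho) = \log\bigl(1/((2\pi)^d\rho^2)\bigr)$, so it cannot be absorbed into any power of $L(\rho)$; your final accounting, which books ``the factor $1/\rho$ from the smoothing step on $A$'' as one of three contributions of order $L^2(\rho)$, is simply incorrect. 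In the regime $\rho \sim n^{-1}$ (which is exactly where the theorem is used), $\rho^{-1}$ is polynomial in $n$ while $L^2(\rho)$ is only $\log n$, so the conclusion would fail by a polynomial factor. Your convolution/Young/Ornstein--Uhlenbeck remark does not resolve this: the smoothing estimate already gives the optimal dependence on $\hel$, and the problem is the weight $1/\rho$, not the smoothing.

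What the paper does instead is to avoid ever introducing a bare $1/\rho$. Writing the second-piece integrals as $\Delta_{i,k}^2 := \int (\partial_i^k(f_G-f_{G_0}))^2/(f_G\vee\rho + f_{G_0}\vee\rho)$, one integrates by parts to derive the three-term recursion $\Delta_{i,k}^2 \leq \Upsilon\,\Delta_{i,k-1}\Delta_{i,k} + \Delta_{i,k-1}\Delta_{i,k+1}$, where $\Upsilon \asymp L(\rho)$ comes from the pointwise score bound of Lemma~\ref{p1} applied to $\partial_i(1/(f_G\vee\rho + f_{G_0}\vee\rho))$. Iterating this recursion roughly $k_0 \asymp L^2(\rho)$ times, taking a geometric mean, and only then using the crude bound $\Delta_{i,k_0+1}^2 \lesssim \rho^{-1}\int(\partial_i^{k_0+1}(f_G-f_{G_0}))^2$ (with Lemma~\ref{kfo}) lets the factor $\rho^{-1/(k_0+1)}$ be chosen to be $O(1)$, while the $k_0$ iterations cost a factor $k_0\Upsilon \asymp L^3(\rho)$ on $\Delta_{i,1}$. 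This is where the $(\log((2\pi)^{-d/2}/\rho))^3$ and the $|\log\hel|$ branch come from. Your proposal contains none of this machinery; without it, the $1/\rho$ factor has nowhere to go, and the claimed bound does not follow.
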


The above theorem is a generalization of \citet[Theorem
3]{jiang2009general} to the case when $d \geq 1$. Its proof given
below follows \citet[Proof of Theorem 3]{jiang2009general} with
appropriate changes to deal with the $d \geq 1$ case. Lemma \ref{p1}
and Lemma \ref{kfo} from Section \ref{auxre} will be used in this
proof. 

\begin{proof}[Proof of Theorem \ref{fcc}]
For real numbers $a$ and $b$, we denote $\max(a, b)$ by $a \vee
b$. For functions $u : \R^d \rightarrow \R^d$, we let   
\begin{equation*}
\|u \|_0 := \left(\int \norm{u(x)}^2 f_{G_0}(x) dx
\right)^{1/2} 
\end{equation*}
so that 
\begin{align*}
\Gamma(G_0, G, \rho) &= \norm{\frac{\nabla f_G}{f_G \vee \rho} -
                       \frac{\nabla 
f_{G_0}}{f_{G_0} \vee \rho}}_0 \\
&= \left\|\frac{\nabla f_G}{f_G \vee \rho} - \frac{2 \nabla f_G}{f_G
\vee \rho + f_{G_0} \vee \rho} + 2\frac{\nabla f_G - \nabla
f_{G_0}}{f_G \vee \rho + f_{G_0} \vee \rho} \right. \\ & \left. + \frac{2 \nabla
f_{G_0}}{f_G \vee \rho + f_{G_0} \vee \rho} - \frac{\nabla f_{G_0}}{f_{G_0}
\vee \rho} \right\|_0 \\
&\leq 2 \max_{H \in \{G, G_0\}} \norm{\frac{(\nabla f_H) |f_G \vee \rho
- f_{G_0} \vee \rho|}{(f_H \vee \rho) \left(f_G \vee \rho + f_{G_0}
\vee \rho \right)}}_0 \\ &+ 2 \norm{\frac{\nabla f_G - \nabla
f_{G_0}}{f_G \vee \rho + f_{G_0} \vee \rho}}_0,
\end{align*}
where we have used the triangle inequality for $\norm{}_0$ in the
last step. Let us represent the two terms on the right hand side above
by $T_1$ and $T_2$ respectively so that $\Gamma(G_0, G, \rho) \leq T_1
+ T_2$. We shall now bound $T_1$ and $T_2$ separately. For $T_1$, we
use inequality 
\eqref{p1.neq} in Lemma \ref{p1} (note that we have assumed $0 < \rho
\leq (2 \pi)^{-d/2} e^{-1/2}$). This inequality allows us to bound
$T_1$ as follows:   
\begin{align*}
\frac{1}{4}  T_1^2 &= \max_{H \in \{G, G_0\}}\int \frac{\norm{\nabla
                     f_H}^2 \left(f_G \vee \rho - 
f_{G_0} \vee \rho \right)^2}{\left(f_H \vee \rho
\right)^2 \left(f_G \vee \rho + f_{G_0} \vee \rho \right)^2}
f_{G_0} \\
&\leq \left[ \log \frac{(2 \pi)^{-d}}{\rho^2} \right] \int
\frac{\left(f_G \vee \rho - f_{G_0} \vee \rho\right)^2}{\left(f_G
\vee \rho + f_{G_0} \vee \rho \right)^2} f_{G_0}  \\
&\leq \left[ \log \frac{(2 \pi)^{-d}}{\rho^2} \right] \int
\frac{\left(f_G - f_{G_0} \right)^2}{\left(f_G 
\vee \rho + f_{G_0} \vee \rho \right)^2} f_{G_0}  \\
&= \left[ \log \frac{(2 \pi)^{-d}}{\rho^2} \right] \int
\left(\sqrt{f_G} - \sqrt{f_{G_0}} \right)^2  \frac{\left(\sqrt{f_G} + \sqrt{f_{G_0}} \right)^2}{\left(f_G 
\vee \rho + f_{G_0} \vee \rho \right)^2} f_{G_0} \\
&\leq 2 \left[ \log \frac{(2 \pi)^{-d}}{\rho^2} \right] \int
\left(\sqrt{f_G} - \sqrt{f_{G_0}} \right)^2  \frac{\left(f_G + f_{G_0}
  \right) f_{G_0}}{\left(f_G 
\vee \rho + f_{G_0} \vee \rho \right)^2}  \\
&\leq 2 \left[ \log \frac{(2 \pi)^{-d}}{\rho^2} \right] \int
\left(\sqrt{f_G} - \sqrt{f_{G_0}} \right)^2 = 2 \left[ \log \frac{(2
\pi)^{-d}}{\rho^2} \right] \hel^2(f_G, f_{G_0})
\end{align*}
which gives 
\begin{equation}\label{t1b}
T_1 \leq 2 \sqrt{2} \hel(f_G, f_{G_0}) \sqrt{\log \frac{(2
\pi)^{-d}}{\rho^2}}. 
\end{equation}
We now deal with $T_2$ which needs an elaborate
argument. Start by writing 
\begin{equation} \label{t31}
\begin{split}
\frac{1}{4} T_2^2 &= \int \frac{\norm{\nabla f_G - \nabla
  f_{G_0}}^2}{\left(f_G \vee \rho + f_{G_0} \vee
  \rho \right)^2} f_{G_0} \\
&= \int \frac{\norm{\nabla f_G - \nabla
  f_{G_0}}^2}{f_G \vee \rho + f_{G_0} \vee
  \rho}  \left( \frac{f_{G_0}}{f_G \vee \rho + f_{G_0}
\vee \rho} \right) \\ &\leq \int \frac{\norm{\nabla f_G - \nabla
  f_{G_0}}^2}{f_G \vee \rho + f_{G_0} \vee
  \rho} = \sum_{i=1}^d \Delta_{i,1}^2 
\end{split}
\end{equation}
where, for $1 \leq i \leq d$ and $k \geq 0$, 
\begin{equation*}
\Delta^2_{i,k} := \int \frac{\left( \partial_i^k (f_G - f_{G_0})
\right)^2}{f_{G} \vee \rho + f_{G_0} \vee \rho} \qt{with
$\partial_i^k f := \frac{\partial^k}{\partial x_i^k} f$}. 
\end{equation*}
The next task therefore is to bound $\Delta_{i, 1}^2$ from
above. Before dealing with $\Delta_{i, 1}^2$, let us first note that
it is easy to bound $\Delta_{i, 0}$ by the Hellinger distance between
$f_G$ and $f_{G_0}$. Indeed, we can write 
\begin{equation}\label{dio}
\begin{split}
\Delta_{i, 0}^2 &= \int \frac{(f_G - f_{G_0})^2}{f_G \vee \rho +
f_{G_0} \vee \rho} \\ &= \int \left(\sqrt{f_G} - \sqrt{f_{G_0}}
\right)^2 \frac{(\sqrt{f_G} + \sqrt{f_{G_0}})^2}{f_G \vee \rho +  
f_{G_0} \vee \rho}  \\
&\leq 2 \int \left(\sqrt{f_G} - \sqrt{f_{G_0}}
\right)^2 \frac{(f_G + f_{G_0})}{f_G \vee \rho +  
f_{G_0} \vee \rho} \leq 2 \hel^2(f_G, f_{G_0}). 
\end{split}
\end{equation}
A simple upper bound for $\Delta_{i, k}^2$ for general $k \geq 1$ can
be obtained via Lemma \ref{kfo}. Indeed, noting that (as $f_G \vee
\rho + f_{G_0} \vee \rho \geq 2 \rho$) 
\begin{equation*}
\Delta_{i, k}^2 \leq \frac{1}{2 \rho} \int \left(\partial_i^k
\left(f_G - f_{G_0} \right)\right)^2
\end{equation*}
we can apply Lemma \ref{kfo} to deduce that 
\begin{equation}\label{dik}
\Delta_{i, k}^2 \leq \frac{2 (2 \pi)^{-d/2}}{\rho} \left\{a^{2k}
\hel^2(f_G, f_{G_0}) + \sqrt{\frac{2}{\pi}} a^{2k-1} e^{-a^2}
\right\} 
\end{equation}
for every $a \geq \sqrt{2k -1}$. The problem with this bound is the
presence of $\rho$ in the 
denominator. This $\rho$ will be, in applications of Theorem
\ref{fcc}, of the order $n^{-1}$ 
which makes the above bound quite large. The more refined argument
below will get rid of the $\rho$ factor in the denominator. This
argument involves integration by parts for bounding $\Delta_{i,
1}^2$. It will be clear that the use of integration by parts will
result in expressions involving $\Delta_{i, k}^2$ for $k \geq 2$. It
will then become necessary to deal with $\Delta_{i, k}^2$ for $k \geq
2$ even though we are only interested in $\Delta_{i, 1}^2$. Indeed,
integration by parts gives, for $k \geq 1$,  
\begin{align}
\Delta_{i, k}^2 &= - \int \left[ \partial_i^{k-1} \left(f_G - f_{G_0}
\right) \right]  \left[\partial_i^k \left(f_G - f_{G_0} \right)
\right] \partial_i \left( \frac{1}{f_G \vee \rho + f_{G_0} \vee
\rho} \right) \nonumber \\
& - \int \frac{ \left[\partial_i^{k-1} \left(f_G - f_{G_0}
\right) \right] \left[\partial_i^{k+1} \left(f_G - f_{G_0} \right)
\right]}{f_G \vee \rho + f_{G_0} \vee \rho}. \label{ffr}
\end{align}
Note now that, almost surely 
\begin{align*}
\left|\partial_i \left(\frac{1}{f_G \vee \rho + f_{G_0} \vee \rho}
\right) \right| &\leq \frac{|\partial_i f_G| + |\partial_i
f_{G_0}|}{\left(f_G \vee \rho + f_{G_0} \vee \rho \right)^2} \\ 
&\leq \frac{|\partial_i f_G|/(f_G \vee \rho) + |\partial_i
f_{G_0}|/(f_{G_0} \vee \rho)}{f_G \vee \rho + f_{G_0} \vee \rho} \\
&\leq \frac{\norm{\nabla f_
G}/(f_G \vee \rho) + \norm{\nabla f_{G_0}}/(f_{G_0} \vee \rho)}{f_G
\vee \rho + f_{G_0} \vee \rho} \\
&\leq \frac{2}{f_G \vee \rho + f_{G_0} \vee \rho} \sqrt{\log \frac{(2
\pi)^{-d}}{\rho^2}}
\end{align*}
where, in the last inequality, we used \eqref{p1.neq} in Lemma
\ref{p1}. Imputing the above inequality into \eqref{ffr}, we obtain 
\begin{align*}
\Delta_{i, k}^2 &\leq 2 \sqrt{\log \frac{(2 \pi)^{-d}}{\rho^2}}\int
\frac{\left| \partial_i^{k-1} \left(f_G - f_{G_0}
\right) \right|  \left|\partial_i^k \left(f_G - f_{G_0} \right)
\right|}{f_G \vee \rho + f_{G_0} \vee \rho}  \\ &+ \int \frac{ \left|\partial_i^{k-1} \left(f_G - f_{G_0}
\right) \right| \left|\partial_i^{k+1} \left(f_G - f_{G_0} \right)
\right|}{f_G \vee \rho + f_{G_0} \vee \rho}. 
\end{align*}
Applying the Cauchy-Schwarz inequality to each of the two terms on the
right hand side above, we obtain 
\begin{align*}
\Delta_{i, k}^2 &\leq 2 \sqrt{\log \frac{(2 \pi)^{-d}}{\rho^2}}
\sqrt{\int \frac{\left( \partial_i^{k-1} (f_G - f_{G_0})
\right)^2}{f_{G} \vee \rho + f_{G_0} \vee \rho}} \sqrt{\int
\frac{\left( \partial_i^{k} (f_G - f_{G_0})  
\right)^2}{f_{G} \vee \rho + f_{G_0} \vee \rho}} \\
&+ \sqrt{\int \frac{\left( \partial_i^{k-1} (f_G - f_{G_0})
\right)^2}{f_{G} \vee \rho + f_{G_0} \vee \rho}} \sqrt{\int
\frac{\left( \partial_i^{k+1} (f_G - f_{G_0})  
\right)^2}{f_{G} \vee \rho + f_{G_0} \vee \rho}}
\end{align*}
which can be rewritten as 
\begin{equation}\label{kke}
\Delta_{i, k}^2 \leq \Upsilon 
\Delta_{i, k-1} \Delta_{i, k} + \Delta_{i, k-1} \Delta_{i, k+1}
\qt{where $\Upsilon := 2 \sqrt{\log \frac{(2
\pi)^{-d}}{\rho^2}} $}. 
\end{equation}
The strategy to bound $\Delta_{i, 1}$ is now as follows. Divide both
sides of \eqref{kke} by $\Delta_{i, k-1} \Delta_{i, k}$ to get 
\begin{equation}\label{rr}
\frac{\Delta_{i, k}}{\Delta_{i, k-1}} \leq \Upsilon +
\frac{\Delta_{i, k+1}}{\Delta_{i, k}} \qt{for every $k \geq 1$}. 
\end{equation}
Fix an integer $k_0 \ge 1$ and a real number $\beta > 0$. Our bound on
$\Delta_{i, 1}$ will depend on $k_0$ and $\beta$ and the bound will be
optimized for $k_0$ and $\beta$ at the end. 

Suppose first that there exists an integer $1 \leq k \leq k_0$ such
that $\Delta_{i, k+1} \leq \beta \Delta_{i, k}$. Then applying
\eqref{rr} recursively for $1, \dots, k$, we obtain 
\begin{equation*}
\frac{\Delta_{i, 1}}{\Delta_{i, 0}} \leq k \Upsilon + \beta
\end{equation*}
so that, by \eqref{dio}, 
\begin{equation}\label{bb1}
\begin{split}
\Delta_{i, 1} &\leq \left(k \Upsilon + \beta \right) \Delta_{i, 0} \\
&\leq \sqrt{2} \left(k \Upsilon + \beta \right) \hel(f_G, f_{G_0}) \leq
\sqrt{2} \left(k_0 \Upsilon + \beta \right) \hel(f_G, f_{G_0}). 
\end{split}
\end{equation}
Now suppose that $\Delta_{i, k+1} > \beta \Delta_{i, k}$ for every
integer $1 \le k \leq k_0$. In this case, we deduce from \eqref{rr}
that
\begin{equation*}
\frac{\Delta_{i, k}}{\Delta_{i, k-1}} \leq \Upsilon +
\frac{\Delta_{i, k+1}}{\Delta_{i, k}} \leq \left(1 +
\frac{\Upsilon}{\beta} \right) \frac{\Delta_{i, k+1}}{\Delta_{i,
k}} \qt{for every $k = 0, \dots, k_0$}. 
\end{equation*}
A recursive application of this inequality implies that 
\begin{equation*}
\frac{\Delta_{i,1}}{\Delta_{i, 0}} \leq \left(1 +
\frac{\Upsilon}{\beta} \right)^k \frac{\Delta_{i, k+1}}{\Delta_{i,
k}} \qt{for every $k = 0, \dots, k_0$}. 
\end{equation*}
To obtain a bound for $\Delta_{i, 1}/\Delta_{i, 0}$ that depends only
on $\Delta_{i, k_0 + 1}$ and $\Delta_{i, 0}$, one can take the
geometric mean of the above inequality for $k = 0, 1, \dots,
k_0$. This gives 
\begin{align*}
\frac{\Delta_{i, 1}}{\Delta_{i, 0}} &\leq \left(\prod_{k=0}^{k_0}
\left(1 + \frac{\Upsilon}{\beta} \right)^{k}  \frac{\Delta_{i,
k+1}}{\Delta_{i, k}}\right)^{1/(k_0+1)} \\ &= \left(1 +
\frac{\Upsilon}{\beta} \right)^{k_0/2} \Delta_{i,
k_0+1}^{1/(k_0+1)} \Delta_{i, 0}^{-1/(k_0 + 1)}
\end{align*}
which is same as 
\begin{equation*}
\Delta_{i, 1} \leq \left(1 +
\frac{\Upsilon}{\beta} \right)^{k_0/2} \Delta_{i,
k_0+1}^{1/(k_0+1)} \Delta_{i, 0}^{k_0/(k_0 + 1)}
\end{equation*}
Now using \eqref{dio} and the bound \eqref{dik} (with $k = k_0 + 1$),
we obtain 
\begin{equation}\label{bb2}
\begin{split}
\Delta_{i, 1} &\leq \left(1 +
\frac{\Upsilon}{\beta} \right)^{\frac{k_0}{2}} \left(\frac{2 (2
\pi)^{-d/2}}{\rho}   \left[a^{2k_0+2}
\hel^2(f_G, f_{G_0}) \right. \right. \\ 
& \left. \left.+ \sqrt{\frac{2}{\pi}} a^{2k_0+1} e^{-a^2} \right]
\right)^{\frac{1}{2k_0 + 2}} \left(2 \hel^2(f_G, f_{G_0}) \right)^{\frac{k_0}{2k_0
+ 2}}
\end{split}
\end{equation}
for every $a \geq \sqrt{2k_0 + 1}$. The final bound obtained for
$\Delta_{i, 1}$ is the maximum of the right hand side above and the
right hand side of \eqref{bb1}. This bound will need to be optimized
by choosing $k_0$, $\beta$ and $a \geq \sqrt{2k_0 + 1}$
appropriately. $\beta$ will be chosen as $\beta = k_0 \Upsilon$ so
that the bound \eqref{bb1} becomes $2 \sqrt{2} k_0 \Upsilon \hel(f_G,
f_{G_0})$ and the term $(1 + \Upsilon/\beta)^{k_0/2}$ appearing in \eqref{bb2} is
bounded by $\sqrt{e}$. To select $k_0$, the key is to focus on the
term involving $\rho$ in \eqref{bb2} which is
\begin{equation*}
\left( \frac{(2 \pi)^{-d/2}}{\rho} \right)^{1/(2k_0 + 2)} = \exp
\left(\frac{\Upsilon^2}{16(k_0 + 1)} \right). 
\end{equation*}
This suggests taking $k_0$ to be the smallest integer $\geq 1$ such
that $k_0 + 1 \geq \Upsilon^2/8$ so  that the above term is at most
$\sqrt{e}$. Finally $a$ will be taken 
to be 
\begin{equation*}
a := \max \left(\sqrt{2k_0 + 1}, \sqrt{2 \left| \log \hel(f_G, f_{G_0})
\right| } \right) 
\end{equation*}
which will ensure that $e^{-a^2} \leq \hel^2(f_G, f_{G_0})$ and the term
involving $a$ in \eqref{bb2} can then be bounded by
\begin{align*}
&\left(a^{2k_0+2} \hel^2(f_G, f_{G_0}) + \sqrt{\frac{2}{\pi}} a^{2k_0+1}
e^{-a^2} \right)^{\frac{1}{2k_0 + 2}} \\ &\leq a \left(1 + \sqrt{\frac{2}{\pi}}
\right)^{\frac{1}{2k_0 + 2}} \left(\hel(f_G, f_{G_0}) \right)^{\frac{1}{k_0 + 1}} \\
&\leq \left(1 + \sqrt{\frac{2}{\pi}} \right)a \left(\hel(f_G, f_{G_0})
  \right)^{\frac{1}{k_0 + 1}}  \\
&\leq 2a \left(\hel(f_G, f_{G_0})
  \right)^{\frac{1}{k_0 + 1}}.   
\end{align*}
We have therefore proved that the right hand side in \eqref{bb2} is
bounded from above by $2 \sqrt{2} e a \hel(f_G, f_{G_0})$. Because
$\Delta_{i,1}$ is bounded by the maximum of the bounds given by
\eqref{bb1} and \eqref{bb2}, we obtain: 
\begin{align*}
\Delta_{i, 1} &\leq 2 \sqrt{2} \max \left\{ k_0 \Upsilon, e a
\right\} \hel(f_G, f_{G_0})  \\ &\leq 2 \sqrt{2} \max \left\{k_0 \Upsilon, e
\sqrt{2k_0 + 1}, e \sqrt{2 |\log \hel(f_G, f_{G_0})|}
\right\} \hel(f_G, f_{G_0}) . 
\end{align*}
Now because $k_0$ is chosen to be the smallest integer $\geq 1$ such
that $k_0 + 1 \geq \Upsilon^2/8$, we have 
\begin{equation*}
  k_0 \leq 1 + \frac{\Upsilon^2}{8} = \log \frac{e (2
    \pi)^{-d/2}}{\rho} \le \frac{3}{2} \log \frac{(2 \pi)^{-d/2}}{\rho}
\end{equation*}
because $\rho \leq (2 \pi)^{-d/2} e^{-1/2}$. This, along with the
expression for $\Upsilon$, gives 
\begin{equation*}
  \Delta_{i, 1} \le C \max \left\{\left(\log \frac{(2 \pi)^{-d/2}}{\rho}
\right)^{3/2}, \sqrt{|\log \hel(f_G, f_{G_0})|} \right\} \hel(f_G,
f_{G_0})
\end{equation*}
where $C$ is a universal positive constant. Combining with
\eqref{t31}, we deduce that 
\begin{equation*}
T_2^2 \leq C d \max \left\{\left(\log \frac{(2 \pi)^{-d/2}}{\rho}
\right)^{3}, |\log \hel(f_G, f_{G_0})| \right\} \hel^2(f_G, f_{G_0}). 
\end{equation*}
The proof of Theorem \ref{fcc} is now completed by combining the above
inequality with the bound \eqref{t1b} and the fact that $\Gamma(G_0,
G, \rho) \leq T_1 + T_2$ (which implies that $\Gamma^2(G_0, G, \rho)
\leq 2 T_1^2 + 2 T_2^2$).   
\end{proof}

\section{Auxiliary results} \label{auxre}
This section collects various results which were used in the proofs of
the main results of the paper. 

The following lemma generalizes \citet[Lemma A.1]{jiang2009general} to
the case $d \geq 1$. 

\begin{lemma}\label{p1}
Fix a probability measure $G$ on $\R^d$. For every $x \in \R^d$, we
have ($\norm{\cdot}$ denotes the usual Euclidean norm on $\R^d$)  
\begin{equation}\label{p1.eq}
\left( \frac{\norm{\nabla f_G(x)}}{f_G(x)} \right)^2 \leq   tr \left(I_d + \frac{H
f_G(x)}{f_G(x)} \right) \leq \log \frac{(2 \pi)^{-d}}{f_G^2(x)}
\end{equation}
where $\nabla$ and $H$ stand for gradient and Hessian respectively
and $tr$ denotes trace. 

Also for every $x \in \R^d$, we have 
\begin{equation}\label{p1.neq}
\frac{\norm{\nabla f_G(x)}}{\max \left(f_G(x), \rho \right)} \leq
\sqrt{ \log \frac{(2 \pi)^{-d}}{\rho^2}} 
\end{equation}
for $0 < \rho \leq (2 \pi)^{-d/2} e^{-1/2}$ and 
\begin{equation} \label{p1.neq2}
\left( \frac{\norm{\nabla f_G(x)}}{f_G(x)} \right)^2
\frac{f_G(x)}{f_G(x) \vee \rho} \leq \log \frac{(2
  \pi)^{-d}}{\rho^2}  
\end{equation}
for $0 < \rho \leq (2 \pi)^{-d/2} e^{-1}$. 
\end{lemma}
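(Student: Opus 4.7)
The plan is to reduce everything to a statement about the \emph{posterior} probability measure $\pi_x(d\theta) := \phi_d(x-\theta)\,dG(\theta)/f_G(x)$ on $\R^d$, which is well-defined for every $x$ since $f_G(x) > 0$. A direct calculation (differentiating under the integral sign, which is justified by the Gaussian tails of $\phi_d$) gives $\nabla f_G(x) = \int(\theta-x)\phi_d(x-\theta)\,dG(\theta)$ and $H f_G(x) = \int\bigl[(\theta-x)(\theta-x)^{\top} - I_d\bigr]\phi_d(x-\theta)\,dG(\theta)$. Dividing by $f_G(x)$ turns these into posterior expectations: $\nabla f_G(x)/f_G(x) = \E_{\pi_x}[\theta - x]$ and $\mathrm{tr}\bigl(I_d + Hf_G(x)/f_G(x)\bigr) = \E_{\pi_x}\|\theta - x\|^2$. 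The first inequality in \eqref{p1.eq} is then immediate from $\|\E Y\|^2 \le \E\|Y\|^2$ applied to $Y = \theta - x$ under $\pi_x$.

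For the second inequality in \eqref{p1.eq}, the key identity is $\|x-\theta\|^2 = -2\log\phi_d(x-\theta) - d\log(2\pi)$, so $\E_{\pi_x}\|\theta-x\|^2 = -2\E_{\pi_x}\log\phi_d(x-\theta) - d\log(2\pi)$. By Jensen's inequality applied to the concave function $\log$ on the positive random variable $1/\phi_d(x-\theta)$, one gets $-\E_{\pi_x}\log\phi_d(x-\theta) = \E_{\pi_x}\log\bigl(1/\phi_d(x-\theta)\bigr) \le \log\E_{\pi_x}\bigl(1/\phi_d(x-\theta)\bigr)$, and the inner expectation telescopes because $\pi_x(d\theta) = \phi_d(x-\theta)\,dG(\theta)/f_G(x)$ gives $\E_{\pi_x}[1/\phi_d(x-\theta)] = \int dG(\theta)/f_G(x) = 1/f_G(x)$. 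This yields $\E_{\pi_x}\|\theta-x\|^2 \le 2\log(1/f_G(x)) - d\log(2\pi) = \log((2\pi)^{-d}/f_G^2(x))$, as required.

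For \eqref{p1.neq} and \eqref{p1.neq2} I will do a case split on whether $f_G(x) > \rho$ or $f_G(x) \le \rho$. In the first case both inequalities follow directly from \eqref{p1.eq} by noting $1/f_G^2(x) \le 1/\rho^2$. In the second case both reduce, via \eqref{p1.eq}, to showing that a scalar function $\phi(t)$ is increasing on $[0,\rho]$ so that $\phi(f_G(x)) \le \phi(\rho)$: for \eqref{p1.neq} the function is $g(t) := t^2\log((2\pi)^{-d}/t^2)$, whose derivative $g'(t) = -2t\log(e t^2 (2\pi)^d)$ is nonnegative exactly when $t \le (2\pi)^{-d/2} e^{-1/2}$; for \eqref{p1.neq2} it is $h(t) := t\log((2\pi)^{-d}/t^2)$, whose derivative $h'(t) = -2\log(e t (2\pi)^{d/2})$ is nonnegative exactly when $t \le (2\pi)^{-d/2} e^{-1}$. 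These monotonicity thresholds are precisely the hypotheses on $\rho$ stated in the lemma, so the two case splits close cleanly. The only real subtlety, and hence the ``hardest'' step, is identifying these sharp thresholds and checking that the universal bound $\sup_x f_G(x) \le (2\pi)^{-d/2}$ keeps every logarithm nonnegative; the rest is bookkeeping.
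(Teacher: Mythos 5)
Your proof is correct and follows essentially the same approach as the paper's. The only cosmetic differences are that the paper phrases the Jensen step via convexity of $A \mapsto \exp(\mathrm{tr}(A)/2)$ rather than concavity of $\log$ (these are the same inequality), and it first establishes the stronger PSD matrix inequality $I_d + Hf_G/f_G \succeq (\nabla f_G/f_G)(\nabla f_G/f_G)^{\top}$ before taking traces; your case splits and monotonicity thresholds for \eqref{p1.neq} and \eqref{p1.neq2} match the paper's exactly (the paper just writes the first monotone function in the variable $v = f_G^2(x)$).
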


\begin{proof}[Proof of Lemma \ref{p1}]
If $\theta \sim G$ and $X | \theta \sim N(\theta, I_d)$, then it is
easy to verify that, for every $x \in \R^d$, 
\begin{equation*}
\frac{\nabla f_G(x)}{f_G(x)} = \E \left(\theta - X | X = x \right)
\end{equation*}
and 
\begin{equation}\label{hh}
\frac{H f_G(x)}{f_G(x)} = -I_d + \E \left( (\theta - X) (\theta -
X)^T | X = x \right).  
\end{equation}
From here, we can deduce that 
\begin{align*}
I_d + \frac{H f_G(x)}{f_G(x)} &= \E \left( (\theta - X) (\theta -
              X)^T | X = x \right)  \\
&= \left( \E (\theta - X | X = x) \right) \left(\E (\theta - X | X = x)
\right)^T \\ &+ \E \left( (\theta - \E(\theta | X = x)) (\theta - \E (\theta
| X = x))^T  | X = x\right) \\
&= \frac{\nabla f_G(x)}{f_G(x)} \frac{(\nabla f_G(x))^T}{f_G(x)} \\ &+ \E
\left( (\theta - \E(\theta | X = x)) (\theta - \E (\theta
| X = x))^T  | X = x\right)
\end{align*}
and hence
\begin{equation}\label{p11}
I_d + \frac{H f_G(x)}{f_G(x)} \succeq \frac{\nabla f_G(x)}{f_G(x)}
\frac{(\nabla f_G(x))^T}{f_G(x)} 
\end{equation}
where $A \succeq B$ means that $A - B$ is non-negative definite. 

Also from \eqref{hh} and the convexity of $A \mapsto \exp(tr(A)/2)$
($tr(A)$ denotes the trace of the $d \times d$ matrix $A$), we have
\begin{align*}
\exp \left(\frac{1}{2} tr \left(I_d + \frac{H f_G(x)}{f_G(x)}
\right) \right) &= \exp \left(\frac{1}{2} tr \left(\E \left((\theta
- X) (\theta - X)^T | X = x \right) \right) \right) \\
&\leq \E \left( \exp \left(\frac{1}{2} tr (\theta - X) (\theta - X)^T
\right) | X = x \right) \\
&= \E \left( \exp \left(\frac{1}{2} \norm{X - \theta}^2 \right)| X = x
\right) \\ &= \frac{(2 \pi)^{-d/2}}{f_G(x)}
\end{align*}
so that we have 
\begin{equation*}
tr \left(I_d + \frac{H f_G(x)}{f_G(x)} \right) \leq \log \frac{(2
\pi)^{-d}}{f_G^2(x)}. 
\end{equation*}
Combining with \eqref{p11}, we obtain \eqref{p1.eq}. 

To prove \eqref{p1.neq}, note first from \eqref{p1.eq} that 
\begin{align*}
\frac{\norm{\nabla f_G(x)}}{\max(f_G(x), \rho)} &\leq \sqrt{\log
\frac{(2 \pi)^{-d}}{f^2_G(x)} }
\frac{f_G(x)}{\max(f_G(x), \rho)} \\ &= 
\begin{cases} 
\sqrt{\log \frac{(2 \pi)^{-d}}{f^2_G(x)}} \leq \sqrt{\log \frac{(2
\pi)^{-d}}{\rho^2}}     & \text{if } f_G(x) > \rho \\ 
\sqrt{\log \frac{(2 \pi)^{-d}}{f^2_G(x)} } \frac{f_G(x)}{\rho}          & \text{if } f_G(x) \leq \rho
\end{cases}
\end{align*}
The function $v \mapsto v \log \left( (2 \pi)^{-d}/v \right)$ is
non-decreasing on $(0, (2 \pi)^{-d}/e]$ and hence when $f^2_G(x) \leq
\rho^2 \leq (2 \pi)^{-d}/e$, the inequality 
\begin{equation*}
\sqrt{\log \frac{(2 \pi)^{-d}}{f^2_G(x)} } \frac{f_G(x)}{\rho} \leq
\sqrt{\log \frac{(2 \pi)^{-d}}{\rho^2}}
\end{equation*}
holds and this proves \eqref{p1.neq}. 

We now turn to \eqref{p1.neq2}. Whenever $f_G(x) \geq \rho$, note that
\eqref{p1.neq2} follows directly from \eqref{p1.neq}. Thus,
\eqref{p1.neq2} only needs to be established when $f_G(x) < \rho$. In
this case using \eqref{p1.eq},  
\begin{align*}
\left( \frac{\norm{\nabla f_G(x)}}{f_G(x)} \right)^2 \frac{f_G(x)}{
  \max\{f_G(x), \rho \}} &\leq \left( \frac{f_G(x)}{\rho} \right) \log
                           \frac{(2 \pi)^{-d}}{f_G^2(x)} \\
&= 2 \log \frac{(2 \pi)^{-d/2}}{f_G(x)}
\frac{f_G(x)}{\rho} 
\end{align*}
Note that $v \mapsto v \log \left( (2 \pi)^{-d}/v^2
\right)$ is non-decreasing on $(0, (2 \pi)^{-d/2}/e]$. This,
along with $f_G(x) < \rho$, immediately implies \eqref{p1.neq2}. 
\end{proof}

For an infinitely differentiable function $u : \R^d \rightarrow \R$,
$1 \leq i \leq d$ and $k \geq 1$, let $\partial_i^k u : \R^d \rightarrow \R$
denote the function
\begin{equation*}
(\partial_i^k u)(x) := \frac{\partial^k}{\partial x_i^k} u(x). 
\end{equation*}
\begin{lemma}\label{kfo}
For every pair of probability measures $G$ and $G_0$ on $\R^d$, $1
\leq i \leq d$ and $k \geq 1$, we have
\begin{equation}\label{kfo.eq}
\begin{split}
\int \left\{\partial_i^k (f_G(x) - f_{G_0}(x)) \right\}^2 dx &\leq
4(2 \pi)^{-d/2} \inf_{a
\geq \sqrt{2k - 1}} \left\{a^{2k} \hel^2(f_G, f_{G_0}) \right. \\ &\left. +
\sqrt{\frac{2}{\pi}} a^{2k-1} e^{-a^2}  \right\}.  
\end{split}
\end{equation}
\end{lemma}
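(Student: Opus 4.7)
The plan is to pass to the Fourier side via Plancherel's theorem, where the Gaussian convolution structure makes the integral on the left very transparent. With $\hat{h}(\xi) := \int h(x) e^{-i x \cdot \xi}\, dx$, standard properties give $\widehat{\partial_i^k h}(\xi) = (i\xi_i)^k \hat{h}(\xi)$ and Plancherel yields
\begin{equation*}
\int \bigl\{\partial_i^k(f_G - f_{G_0})\bigr\}^2\, dx = (2\pi)^{-d} \int \xi_i^{2k}\, \bigl|\hat{f}_G(\xi) - \hat{f}_{G_0}(\xi)\bigr|^2\, d\xi.
\end{equation*}
The key property I would exploit is that for any probability measure $G$ on $\R^d$, the convolution structure $f_G = \phi_d \ast G$ yields $\hat{f}_G(\xi) = e^{-\|\xi\|^2/2}\hat{G}(\xi)$ with $|\hat{G}(\xi)| \le 1$, so $|\hat{f}_G(\xi) - \hat{f}_{G_0}(\xi)|^2 \leq 4 e^{-\|\xi\|^2}$.

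Next I would split the Fourier integral at $|\xi_i| = a$ for an arbitrary $a \geq \sqrt{2k-1}$. For the low-frequency part $|\xi_i| \leq a$ I bound $\xi_i^{2k} \leq a^{2k}$ and use Plancherel in the reverse direction to reduce matters to bounding $\int (f_G - f_{G_0})^2\, dx$ by a constant times $\hel^2(f_G, f_{G_0})$. The factorisation $f_G - f_{G_0} = (\sqrt{f_G} - \sqrt{f_{G_0}})(\sqrt{f_G} + \sqrt{f_{G_0}})$ combined with the uniform bound $\max(f_G, f_{G_0}) \leq (2\pi)^{-d/2}$ delivers
\begin{equation*}
\int (f_G - f_{G_0})^2\, dx \leq 4(2\pi)^{-d/2}\hel^2(f_G, f_{G_0}),
\end{equation*}
producing the first term in the claimed bound with the correct constant.

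For the high-frequency part $|\xi_i| > a$, the bound $|\hat{f}_G - \hat{f}_{G_0}|^2 \leq 4 e^{-\|\xi\|^2}$ separates variables and reduces everything to the one-dimensional tail integral $\int_a^\infty t^{2k} e^{-t^2}\, dt$, with the remaining $d-1$ coordinates contributing a factor of $\pi^{(d-1)/2}$. I expect the main technical step to be the tail estimate $\int_a^\infty t^{2k} e^{-t^2}\, dt \leq a^{2k-1} e^{-a^2}$ for $a \geq \sqrt{2k-1}$. My plan is to integrate by parts with $u = t^{2k-1}$ and $dv = t e^{-t^2}\, dt$ to obtain
\begin{equation*}
2\int_a^\infty t^{2k} e^{-t^2}\, dt = (2k-1)\int_a^\infty t^{2k-2} e^{-t^2}\, dt + a^{2k-1} e^{-a^2},
\end{equation*}
and then observe that on $[a,\infty)$ one has $t^{2k-2} \leq t^{2k}/a^2$ and, by the choice of $a$, $(2k-1)/a^2 \leq 1$, so the first term on the right is absorbed into the left. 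Combining the two pieces, a short bookkeeping check confirms that the Fourier-normalisation constant $8(2\pi)^{-d}\pi^{(d-1)/2}$ is dominated by $4(2\pi)^{-d/2}\sqrt{2/\pi}$ for every $d \geq 1$, and taking the infimum over admissible $a$ completes the proof.
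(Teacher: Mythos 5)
Your proposal is correct and in fact yields a slightly tighter constant than the paper requires. It follows the same broad Fourier-analytic strategy as the paper (Plancherel, split at $|\xi_i|=a$, reduce the low-frequency part to $\int(f_G-f_{G_0})^2 \le 4(2\pi)^{-d/2}\hel^2$, bound the high-frequency part by a Gaussian tail integral), but it streamlines the execution. The paper applies the one-dimensional Fourier transform in the single variable $x_i$, then must bound the modulus of the partial transform by $e^{-u^2/2}$ times a residual Gaussian in the remaining $d-1$ coordinates integrated against $G$ or $G_0$, and finally integrate those coordinates out. You instead take the full $d$-dimensional Fourier transform, invoke $\hat f_G(\xi)=e^{-\|\xi\|^2/2}\hat G(\xi)$ with $|\hat G|\le 1$, so the $d$-dimensional Gaussian weight appears immediately and the high-frequency region factors exactly into $\pi^{(d-1)/2}$ times the one-dimensional tail $\int_a^\infty t^{2k}e^{-t^2}\,dt$. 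Your integration-by-parts argument for the tail bound with $a\ge\sqrt{2k-1}$ is also correct and reproduces the estimate the paper cites from Jiang and Zhang. On the constant: $8(2\pi)^{-d}\pi^{(d-1)/2} = 4(2\pi)^{-d/2}\sqrt{2/\pi}\cdot 2^{(1-d)/2}$, which equals the paper's constant at $d=1$ and is strictly smaller for $d\ge 2$, so the stated inequality follows.
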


\begin{proof}[Proof of Lemma \ref{kfo}]
Fix $a \geq \sqrt{2k-1}$ and assume, without loss of generality,
that $i = 1$. Let 
\begin{equation*}
f^*_{G, 1}(u, x_{2}, \dots, x_d)  := \int e^{i u x_1} f_G(x) dx_1  
\end{equation*}
denote the Fourier transform of $f_G$ treated as a function of
$x_1$. The function $f^*_{G_0, 1}$ is defined analogously. For ease
of notation, we shall suppress the dependence of $f_{G, 1}^*(u,
x_2, \dots, x_d)$ (resp. $f_{G_0}^*(u, x_2, \dots, x_d)$) on $x_2,
\dots, x_d$ below and write it simply as $f_{G_1}^*(u)$
(resp. $f_{G_0}^*(u)$). 

For every $x_2, \dots, x_d$, we then have (by Plancherel's identity)
\begin{equation}\label{wi1}
\begin{split}
&2 \pi \int \left\{\partial_1^k (f_G(x) - f_{G_0}(x)) \right\}^2 dx_1 \\ &=
\int u^{2k} \left|f_{G, 1}^*(u)  - f_{G_0,
1}^*(u) \right|^2 du  \\
&\leq a^{2k} \int \left|f_{G, 1}^*(u)  - f_{G_0,
1}^*(u) \right|^2 du \\ &+ \int_{|u| > a} u^{2k} \left|f_{G,
1}^*(u)  - f_{G_0, 1}^*(u) \right|^2 du  \\
&= (2 \pi) a^{2k} \int \left( f_G(x) - f_{G_0}(x) \right)^2 dx_1 \\ &+ \int_{|u| > a} u^{2k} \left|f_{G,
1}^*(u)  - f_{G_0, 1}^*(u) \right|^2 du 
\end{split}
\end{equation}
for every $a > 0$. Also note that for every $u, x_2, \dots, x_d \in
\R$, 
\begin{align*}
f_{G, 1}^*(u) &= \int e^{i u x_1} \left(\int \phi_d(x - \theta)
dG(\theta) \right) dx_1 \\ 
&= \int \left( \int e^{i u x_1} \phi_d(x - \theta)
dx_1 \right) dG(\theta) \\
&= \int (2 \pi)^{-d/2} \left[ \int e^{i u x_1}
e^{-(x_1 -  \theta_1)^2/2} dx_1 \right] \exp \left(-\sum_{j \neq 1} (x_j -
\theta_j)^2/2 \right) dG(\theta) \\
&= (2 \pi)^{-(d-1)/2} \int e^{i u x_1} e^{-u^2/2} \exp \left(-\sum_{j \neq 1} (x_j -
\theta_j)^2/2 \right) dG(\theta)
\end{align*}
so that 
\begin{equation*}
\left|f_{G, 1}^*(u) \right| \leq (2 \pi)^{-(d-1)/2} e^{-u^2/2} \int \exp \left(-\sum_{j \neq 1} (x_j -
\theta_j)^2/2 \right) dG(\theta). 
\end{equation*}
An analogous bound also holds for $|f_{G_0, 1}^*(u)|$. Using these
bounds for $f_{G, 1}^*(u)$ and $f_{G_0, 1}^*(u)$, the second term in
\eqref{wi1} can be bounded from above as 
\begin{equation*}
\begin{split}
& \int_{|u| > a} u^{2k} \left|f_{G,
1}^*(u)  - f_{G_0, 1}^*(u) \right|^2 du \\ &\leq 2 (2 \pi)^{-(d-1)} \int \exp \left(-\sum_{j \neq 1} (x_j -
\theta_j)^2 \right) \left\{ dG(\theta) + dG_0(\theta) \right\}
\int_{|u| > a} u^{2k} e^{-u^2} du
\end{split}
\end{equation*}
Thus integrating both sides of \eqref{wi1} with respect to $x_2,
\dots, x_d$, we deduce that 
\begin{equation*}
\begin{split}
2 \pi \int \left\{\partial_1^k (f_G(x) - f_{G_0}(x)) \right\}^2 dx &\leq (2
\pi) a^{2k} \int (f_G - f_{G_0})^2 \\ &+ 4 (2 \pi)^{-(d-1)/2}
\int_{|u| > a} u^{2k} e^{-u^2} du. 
\end{split}
\end{equation*}
which implies that 
\begin{equation*}
\begin{split}
\int \left\{\partial_1^k (f_G(x) - f_{G_0}(x)) \right\}^2 dx &\leq a^{2k}
\int (f_G - f_{G_0})^2 \\ &+ 8 (2 \pi)^{-(d+1)/2} 
\int_{u > a} u^{2k} e^{-u^2} du. 
\end{split}
\end{equation*}
We now use the integration by parts argument in   \citet[Page
1675]{jiang2009general} which gives 
\begin{equation*}
\int_{u > a} u^{2k} e^{-u^2} du \leq a^{2k-1} e^{-a^2} \qt{provided
$a \geq \sqrt{2k-1}$}. 
\end{equation*}
The proof of Lemma \ref{kfo} is now completed by noting that 
\begin{equation*}
\begin{split}
\int (f_G - f_{G_0})^2 &\leq \int \left(\sqrt{f_G} - \sqrt{f_{G_0}}
\right)^2 \left(\sqrt{f_G} + \sqrt{f_{G_0}} \right)^2 \\ &\leq 4 (2
\pi)^{-d/2} \hel^2(f_G, f_{G_0})
\end{split}
\end{equation*}
where we have used that every Gaussian mixture density $f_G$ is
bounded from above by $(2 \pi)^{-d/2}$. 
\end{proof}

\subsection{Proof of Lemma \ref{tailmom}} 
We now prove Lemma \ref{tailmom} (this lemma was stated in Subsection
\ref{denso}). 

\begin{proof}[Proof of Lemma \ref{tailmom}]
  We write
  \begin{align*}
& \E \left\{ \prod_{i=1}^n \left| a g(X_i) \right|^{I\{g(X_i)\geq
    M\}} \right\}^{\lambda} \\ &= \prod_{i=1}^n \E \left|a g(X_i)
\right|^{\lambda I\{g(X_i) \geq M\}} \\
&\leq \prod_{i=1}^n \left\{1 + a^{\lambda} \E \left[(g(X_i))^{\lambda}
  I\{g(X_i) \geq M\}\right] \right\} \\
&\leq \prod_{i=1}^n \exp \left(a^{\lambda} \E (g(X_i))^{\lambda}
  I\{g(X_i) \geq M\} \right) \\
&= \exp \left(a^{\lambda} \sum_{i=1}^n \E \left[ (g(X_i))^{\lambda} I\{g(X_i)
  \geq M\} \right] \right) \\
&= \exp \left(n a^{\lambda} \int (g(x))^{\lambda} I\{g(x) \geq M\}
  f_{\bar{G}_n}(x) dx \right) = \exp \left(n a^{\lambda} U \right) 
  \end{align*}
where 
\begin{equation*}
  U := \int (g(x))^{\lambda} I\{g(x) \geq M\} f_{\bar{G}_n}(x) dx = \E \left[
  (g(\theta + Z))^{\lambda} I\{g(\theta + Z) \geq M\} \right]
\end{equation*}
with independent random variables $Z \sim N(0, I_d)$ and $\theta \sim
\bar{G}_n$. Because of the $1$-Lipschitz property of $g$, we have
$g(\theta + z) \leq g(\theta) + \|z\|$ so that 
\begin{equation}\label{dng}
  U \leq \E (2 \|Z\|)^{\lambda} I \left\{2 \|Z\| \geq M \right\} + \E
  (2 g(\theta))^{\lambda} I \left\{2 g(\theta) \geq M \right\}. 
\end{equation}
The first term above will be bounded as
\begin{align*}
&\E \left[ (2\|Z\|)^{\lambda} I\{ 2\|Z\| \geq M\}\right] \\ &= M^{\lambda} \E \left[ \left(\frac{\|Z\|}{M/2} \right)^{\lambda} I\{ \|Z\| \geq M/2\}\right] \\
 &\leq M^{\lambda} \E \left[ \left(\frac{\|Z\|}{M/2} \right) I\{ \|Z\| \geq M/2\}\right] \qt{ since } \lambda \leq 1 \\
 &= 2 M^{\lambda - 1} \frac{1}{(2\pi)^{d/2}} \int_{\|x\| \geq M/2} \|x\| e^{-\|x\|^2/2} dx \\
 &\leq C_d M^{\lambda - 1} \int_{r \geq M/2} r e^{-r^2/2} r^{d-1} dr
 \leq C_d M^{\lambda + d - 2} e^{-M^2/8} 
\end{align*}
where the last inequality follows from Lemma \ref{Lemma:GammaTail}.
Because $M \ge \sqrt{8 \log n}$, we have $e^{-M^2/8} \leq 1/n$ and
this gives 
\begin{equation}\label{dng1} 
\E \left[ (2\|Z\|)^{\lambda} I\{ 2\|Z\| \geq M\}\right] \leq
\frac{C_d}{n} M^{\lambda + d - 2}. 
\end{equation}

For the second term in \eqref{dng}, note that (because $\lambda \leq
p$) 
\begin{align}
\E \left[ (2g(\theta))^{\lambda} I\{ 2g(\theta) \geq M\}\right] &=
M^{\lambda} \int_{g(\theta) \geq M/2}
\left(\frac{g(\theta)}{M/2}\right)^{\lambda} G_n(d\theta) \nonumber \\  
&\leq M^{\lambda} \int \left(\frac{g(\theta)}{M/2}\right)^p
G_n(d\theta) = M^{\lambda} \left(\frac{2
    \mu_p(g)}{M}\right)^p \label{dng2}. 
\end{align}
The proof of \eqref{tailmom.eq} is now completed by putting together
inequalities \eqref{dng}, \eqref{dng1} and \eqref{dng2}.  

For \eqref{tailprob.eq}, we first use an argument similar to the above
to write 
	\begin{equation*}
	\frac{1}{n} \sum_{i=1}^n \P\left[ g(X_i) \geq M \right] = \P\left[ g(\theta + Z) \geq M \right]
	\end{equation*}
	where $\theta \sim \bar G_n$ and $Z \sim N(0, I_d)$ are
        independent. Since $g$ is $1$-Lipschitz, $g(\theta + z) \leq
        g(\theta) + \|z\|$. Consequently, 
	\begin{equation*}
	\P\left[ g(\theta + Z) \geq M \right] \leq \P\left[ 2g(\theta)
          \geq M \right] + \P\left[ 2\|Z\| \geq M \right] 
	\end{equation*}
	Applying \eqref{dng1} and \eqref{dng2} with $\lambda = 0$ then
        concludes the proof of \eqref{tailprob.eq}. 
\end{proof}

\begin{remark}
  We shall apply Lemma \ref{tailmom} to the function 
  \begin{equation*}
    \mathfrak{d}_S(x) := \inf_{u \in S} \|x - u\|
  \end{equation*}
  for a fixed subset $S$ of $\R^d$. This function is clearly
  nonnegative and $1$-Lipschitz. Inequality \eqref{tailmom.eq} in
  Lemma \ref{tailmom} then gives the inequality  
\begin{equation} \label{Eq:tailExpect}
\begin{split}
&\E \left\{ \prod_{i=1}^n \left| a \mathfrak{d}_S(X_i) \right|^{I\{\mathfrak{d}_S(X_i)\geq
    M\}} \right\}^{\lambda} \\ &\leq \exp\left\{C_d a^{\lambda} M^{\lambda
    + d - 2} + (aM)^{\lambda} n
  \left(\frac{2\mu_p(\mathfrak{d}_S)}{M}\right)^p\right\}
\end{split}
\end{equation}
for all $a > 0, M \geq \sqrt{8 \log n}$ and $0 < \lambda \leq \min(1,
p)$. 

Further, inequality \eqref{tailprob.eq} for $g = \dos$ gives
	\begin{equation} \label{Eq:tailProb}
	\frac{1}{n} \sum_{i=1}^n \P\left[ \dos(X_i) \geq M \right] \leq
        C_d \frac{M^{d-2}}{n} + \inf_{p \geq \frac{d+1}{2 \log n}}
        \left( \frac{2 \mu_p(\dos)}{M} \right)^p 
	\end{equation}
	for all $M \geq \sqrt{8 \log n}$.

These two inequalities \eqref{Eq:tailExpect} and \eqref{Eq:tailProb}
hold under the same assumptions on $X_1, \dots, X_n$ as in Lemma
\ref{tailmom}. 
\end{remark}

\begin{lemma}\label{devs}
  Fix $\theta_1, \dots, \theta_n \in \R^d$. Suppose $X_1, \dots, X_n$
  are independent random vectors with $X_i \sim N(\theta_i, \Sigma_i)$
  for some covariance matrices $\Sigma_1, \dots, \Sigma_n$ with
  $\Sigma_i \gtrsim I_d$. Suppose $\sigma^2_{\max}$ is such that
  $\max_{1 \leq j \leq k} \lambda_{\max}(\Sigma_j) \leq
  \sigma^2_{\max}$ where $\lambda_{\max}(\Sigma_j)$ denotes the
  largest eigenvalue of $\Sigma_j$. For $f \in \M$ and $\rho > 0$, let
  $T_f(\bx, \rho)$ be defined as in the proof of Theorem
  \ref{rgende} as the $d \times n$ matrix whose
  $i^{th}$ column is given by the $d \times 1$ vector:
  \begin{equation*}
    X_i + \frac{\nabla f(X_i)}{\max(f(X_i), \rho)} \qt{for $i = 1,
      \dots, n$}. 
  \end{equation*}
Then for every $f_1, f_2 \in \M$,  $0 < \rho \leq (2 \pi)^{-d/2} 
e^{-3/2}$ and $x > 0$, we have 
\begin{equation}\label{devs.eq}
\begin{split}
  \P & \left\{\norm{T_{f_1}(\bx, \rho) - T_{f_2}(\bx, \rho)}_F
    \geq \E \norm{T_{f_1}(\bx, \rho) - T_{f_2}(\bx, \rho)}_F +
    x \right\} \\ &\leq \exp \left(\frac{-x^2}{8 \sigma^2_{\max}
      L^4(\rho)} \right) \qt{ where $L(\rho) := \sqrt{\log \frac{1}{(2
        \pi)^d \rho^2}}$}. 
\end{split}
\end{equation}
\end{lemma}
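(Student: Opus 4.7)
The plan is to apply the Gaussian concentration of measure inequality for Lipschitz functions (Borell--Tsirelson--Ibragimov--Sudakov) to the function $F : \R^{nd} \to \R$ defined by $F(\mathbf{x}) := \|T_{f_1}(\mathbf{x}, \rho) - T_{f_2}(\mathbf{x}, \rho)\|_F$, viewing $\mathbf{X} = (X_1, \dots, X_n)$ as a single Gaussian vector in $\R^{nd}$ with block-diagonal covariance $\mathrm{diag}(\Sigma_1, \dots, \Sigma_n)$ of operator norm at most $\sigma_{\max}^2$. The concentration inequality then yields
\begin{equation*}
  \P\bigl\{F(\mathbf{X}) - \E F(\mathbf{X}) \geq x\bigr\} \leq \exp\!\left(-\frac{x^2}{2 \sigma_{\max}^2 \mathrm{Lip}(F)^2}\right),
\end{equation*}
so the whole task reduces to establishing $\mathrm{Lip}(F) \leq 2L^2(\rho)$, which produces the advertised denominator $8\sigma_{\max}^2 L^4(\rho)$.

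Writing $\psi_f(y) := \nabla f(y)/\max(f(y), \rho)$ and $g := \psi_{f_1} - \psi_{f_2}$, the identity $F(\mathbf{x})^2 = \sum_{i=1}^n \|g(x_i)\|^2$ combined with the elementary inequality $|\|u\| - \|v\|| \leq \|u - v\|$ applied to the stacked vectors $(g(x_i))_i, (g(y_i))_i \in \R^{nd}$ gives $|F(\mathbf{x}) - F(\mathbf{y})| \leq \mathrm{Lip}(g)\,\|\mathbf{x} - \mathbf{y}\|$. By the triangle inequality $\mathrm{Lip}(g) \leq \mathrm{Lip}(\psi_{f_1}) + \mathrm{Lip}(\psi_{f_2})$, so it suffices to show the pointwise estimate $\mathrm{Lip}(\psi_f) \leq L^2(\rho)$ for every $f \in \M$.

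I would prove this estimate by bounding the operator norm of the Jacobian $D\psi_f(x)$ separately on the two regions $\{f > \rho\}$ and $\{f < \rho\}$. On $\{f > \rho\}$, $\psi_f = \nabla \log f$, so $D\psi_f = Hf/f - (\nabla f/f)(\nabla f/f)^\top$; the Bayesian identities $\nabla f/f = \E(\theta - X \mid X = x)$ and $Hf/f = -I_d + \E((\theta - X)(\theta - X)^\top \mid X = x)$ derived in the proof of Lemma \ref{p1} reduce this to $D\psi_f = -I_d + \mathrm{Cov}(\theta \mid X = x)$. The posterior covariance is PSD with trace at most $\mathrm{tr}(I_d + Hf/f) \leq L^2(f(x)) \leq L^2(\rho)$ by \eqref{p1.eq}, so the eigenvalues of $D\psi_f$ lie in $[-1, L^2(\rho) - 1]$ and $\|D\psi_f\|_{\mathrm{op}} \leq L^2(\rho)$ (using $L^2(\rho) \geq 1$, which follows from the hypothesis on $\rho$). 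On $\{f < \rho\}$, $\psi_f = \nabla f/\rho$ and $D\psi_f = Hf/\rho$; the same trace bound gives $\|Hf(x)\|_{\mathrm{op}} \leq f(x)L^2(f(x))$, and the monotonicity of $u \mapsto uL^2(u) = 2u\log((2\pi)^{-d/2}/u)$ on $(0, (2\pi)^{-d/2}/e]$ (verified by differentiation) together with $f(x) < \rho \leq (2\pi)^{-d/2}e^{-3/2} < (2\pi)^{-d/2}/e$ yields $f(x)L^2(f(x)) \leq \rho L^2(\rho)$, so $\|D\psi_f\|_{\mathrm{op}} \leq L^2(\rho)$ on this region as well.

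The main technical obstacle will be that $\psi_f$ is not classically differentiable across the level set $\{f = \rho\}$, so the supremum of $\|D\psi_f\|_{\mathrm{op}}$ on the two open regions does not automatically translate into a global Lipschitz constant. I would handle this by noting that $f$ is real-analytic (a Gaussian convolution of a probability measure), hence $\{f = \rho\}$ has Lebesgue measure zero and, by Fubini, meets almost every line segment in a one-dimensional null set; continuity of $\psi_f$ together with the fundamental theorem of calculus applied piecewise on the complement then produces the Lipschitz estimate $\|\psi_f(x) - \psi_f(y)\| \leq L^2(\rho)\|x - y\|$. A cleaner alternative is to mollify $t \mapsto 1/\max(t, \rho)$ into a $C^1$ function whose derivative is uniformly bounded (independently of the smoothing parameter), push the Jacobian estimate through the smoothed functions, and pass to the limit. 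Either route, combined with Gaussian concentration, yields the desired deviation inequality.
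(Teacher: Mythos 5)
Your proposal is correct and takes essentially the same route as the paper: Gaussian concentration for Lipschitz functions, reduced to a Jacobian bound via the Bayesian identities for $\nabla f/f$ and $Hf/f$ together with the trace inequality of Lemma~\ref{p1}, treated separately on $\{f > \rho\}$ and $\{f < \rho\}$ with a monotonicity argument in the latter region. The only cosmetic difference is that the paper bounds the Jacobian of $t_f(x) := x + \nabla f(x)/\max(f(x), \rho)$ through the two-sided PSD inequality $0 \preceq Jt_f \preceq L^2(\rho) I_d$, whereas you observe that the identity column cancels in $T_{f_1}(\mathbf{X},\rho) - T_{f_2}(\mathbf{X},\rho)$ and bound the Jacobian of $\psi_f := \nabla f/\max(f, \rho)$ directly; both yield the Lipschitz constant $2L^2(\rho)$ and hence the same exponent.
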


\begin{proof}[Proof of Lemma \ref{devs}]
Fix $f_1, f_2 \in \M$ and let  
  \begin{equation*}
    F(\bx) := \norm{T_{f_1}(\bx, \rho) - T_{f_2}(\bx,
      \rho)}_F. 
  \end{equation*}
  We shall prove that $F(\bx)$, as a function of $\bx$, is Lipschitz
  with constant $2 L^2(\rho)$ under the Frobenius matrix norm on $\bx$ 
  i.e.,  
  \begin{equation}\label{sudm}
   | F(\bx) - F(\by) | \leq 2 L^2(\rho) \norm{\bx - \by}_F. 
  \end{equation}
  Inequality \eqref{devs.eq} would then directly follow from the
  standard concentration inequality for Lipschitz functions of
  Gaussian random vectors (see, for example, \citet[Theorem 
  5.6]{boucheron2013concentration}). Indeed, to see that \eqref{sudm}
  implies \eqref{devs.eq} by Gaussian concentration, observe that if
  $Z_i := \Sigma_i^{-1/2}(X_i - \theta_i) \sim N(0, I_d)$ for $i = 1,
  \dots, n$, then $F(\bx)$ equals the function $G(Z_1, \dots, Z_n)$ of
  $Z_1, \dots, Z_n$ where
  \begin{equation*}
    G(z_1, \dots, z_n) := F(\theta_1 + \Sigma_1^{1/2} z_1, \dots, 
    \theta_n + \Sigma_n^{1/2}z_n). 
  \end{equation*}
  Now \eqref{sudm} implies that 
  \begin{align*}
  &  \left|G(z_1, \dots, z_n) - G(w_1, \dots, w_n) \right| \\ &\leq 2 L^2(\rho)
                                              \sqrt{\sum_{i=1}^n (z_i
                                              - w_i)^T \Sigma_i (z_i -
                                              w_i)} \\
&\leq 2 L^2(\rho) \sqrt{\max_{1 \leq j \leq n}
  \lambda_{\max}(\Sigma_j)} \sqrt{\sum_{i=1}^n \|z_i - w_i\|^2} \\
&\leq 2 L^2(\rho) \sigma_{\max} \sqrt{\sum_{i=1}^n \|z_i - w_i\|^2}
  \end{align*}
  for every $z_1, \dots, z_n, w_1, \dots, w_n$. This implies that
  $F(\bx)$ is a $\left(2L^2(\rho) \sigma_{\max}\right)$-Lipschitz function of
  independent standard random vectors $Z_1, \dots, Z_n$ so that
  \eqref{devs.eq} follows by the
  standard concentration inequality for Lipschitz functions of
  Gaussian random vectors (see, for example, \citet[Theorem 
  5.6]{boucheron2013concentration}).

   It is enough therefore to prove \eqref{sudm}. For this, note first
   that  
  \begin{align*}
    |F(\bx) - F(\by)| &= \left| \norm{T_{f_1}(\bx, \rho) - T_{f_2}(\bx,
                        \rho)}_{F} - \norm{T_{f_1}(\by, \rho) - T_{f_2}(\by,
                        \rho)}_{F} \right| \\
&\leq \norm{T_{f_1}(\bx, \rho) - T_{f_1}(\by, \rho)}_F +
  \norm{T_{f_2}(\bx, \rho) - T_{f_2}(\by,
  \rho)}_F. 
  \end{align*}
Note now that, for every $f \in \M$, 
\begin{equation}\label{kk12}
  \norm{T_f(\bx, \rho) - T_{f}(\by, \rho)}^2_F = \sum_{i=1}^n
  \norm{t_f(X_i, \rho) - t_f(Y_i, \rho)}^2
\end{equation}
where 
\begin{equation*}
  t_f(x, \rho) := x + \frac{\nabla f(x)}{\max(f(x), \rho)}. 
\end{equation*}
To bound $\norm{t_f(X_i, \rho) - t_f(Y_i, \rho)}$, we compute the Jacobian of
the map $x \mapsto t_f(x, \rho)$ as 
\begin{equation*}
  J t_f(x, \rho) = \begin{cases}
I_d + \frac{H f(x)}{\rho} &\text{ if } f(x) < \rho \\ 
I_d + \frac{H f(x)}{f(x)} - \left(\frac{\nabla f(x)}{f(x)} \right)
\left(\frac{\nabla f(x)}{f(x)} \right)^T &\text{ if } f(x) > \rho 
\end{cases}
\end{equation*}
where $\nabla$  and $H$ denote gradient and Hessian respectively. We
shall now argue that 
\begin{equation}\label{pcqi}
0 \preceq J t_f(x, \rho) \preceq L^2(\rho) I_d  
\end{equation}
where $A \preceq B$ means that $B - A$ is a nonnegative definite
matrix. Before proving \eqref{pcqi}, let us first note that
\eqref{pcqi} implies 
\begin{equation*}
  \norm{t_f(x, \rho) - t_f(y, \rho)} \leq L^2(\rho) \norm{x - y} 
\end{equation*}
which further implies, via \eqref{kk12}, that 
\begin{equation*}
  \norm{T_f(\bx, \rho) - T_{f}(\by, \rho)}^2_F \leq L^2(\rho)
  \norm{\bx - \by}_F^2. 
\end{equation*}
Since this inequality holds for every $f \in \M$, it also holds for
both $f_1$ and $f_2$ which gives \eqref{sudm}  and completes the proof
of Lemma \ref{devs}. 

It remains to prove \eqref{pcqi}. For this, we shall use the above
expression for $J t_f(x, \rho)$ as well as inequality \eqref{p1.eq}
from Lemma \ref{p1} and inequality \eqref{p11} from the proof of Lemma
\ref{p1}. First when $f(x) > \rho$, note that 
\begin{equation*}
  J t_f(x, \rho) = I_d + \frac{H f(x)}{f(x)} - \left(\frac{\nabla f(x)}{f(x)} \right)
\left(\frac{\nabla f(x)}{f(x)} \right)^T  
\end{equation*}
which is $\succeq 0$ from \eqref{p11} and, by \eqref{p1.eq}, we get
\begin{equation*}
0 \preceq  J t_f(x, \rho) \preceq I_d + \frac{H f(x)}{f(x)} \preceq tr
\left( I + \frac{H f(x)}{f(x)} \right) I_d  \preceq L^2(f(x)) I_d
\preceq L^2(\rho) I_d
\end{equation*}
where, in the last inequality, we have used that $L(\cdot)$ is a
decreasing function. Here $tr$ denotes trace. This proves \eqref{pcqi}
when $f(x) > \rho$. Now let $f(x) < \rho$. Then  
\begin{equation*}
  J t_f(x, \rho) = I_d + \frac{H f(x)}{\rho} = \left(1 -
    \frac{f(x)}{\rho} \right) I_d + \frac{f(x)}{\rho} \left(I_d +
    \frac{Hf}{f} \right)
\end{equation*}
which is $\succeq 0$ because $f(x) < \rho$ and because of
\eqref{p11}. Also, by \eqref{p1.eq}, 
\begin{align*}
  J t_f(x, \rho) &= \left(1 -
    \frac{f(x)}{\rho} \right) I_d + \frac{f(x)}{\rho} \left(I_d +
    \frac{Hf}{f} \right) \\
&\preceq \left(1 -
    \frac{f(x)}{\rho} \right) I_d + \frac{f(x)}{\rho} I_d tr\left(I_d +
    \frac{Hf}{f} \right) \\
&\preceq \left(1 + \frac{f(x)}{\rho} \left(\log \frac{(2
  \pi)^{-d}}{f^2(x)} -1 \right) \right) I_d \\ &= \left(1 +
  \frac{f(x)}{\rho} \left(L^2(f(x)) -1 \right) \right) I_d  
\end{align*}
The right hand side above is $\preceq L^2(\rho) I_d$  because $t
\mapsto t(L^2(t) - 1)$ is non-decreasing on $t \in (0, (2 \pi)^{-d/2}
e^{-3/2}]$ so that when $f(x) < \rho$, we have 
\begin{align*}
1 + \frac{f(x)}{\rho} \left(L^2(f(x)) -1 \right) \leq L^2(\rho). 
\end{align*}
This proves \eqref{pcqi} which completes the proof of Lemma
\ref{devs}. 
\end{proof}

\begin{lemma} \label{Lemma:GammaTail}
There exists a positive constant $A_d$ depending only on $d$ such that
for every $M \geq 1$ and $d \in \{0, 1, 2, \dots \}$, we have  
\begin{equation} \label{Eq:induction_tail_bound}
I(d): = \int_{r \geq M} r^d e^{-r^2/2} dr \leq A_d M^{d-1}
e^{-M^2/2}. 
\end{equation}
\end{lemma}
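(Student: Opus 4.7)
The plan is to proceed by induction on $d$, using integration by parts to reduce $I(d)$ to $I(d-2)$.

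For the base cases I would handle $d=0$ and $d=1$ separately. The case $d=1$ is exact: since $\frac{d}{dr}(-e^{-r^2/2}) = r e^{-r^2/2}$, we get $I(1) = e^{-M^2/2}$, so $A_1 = 1$ works. The case $d=0$ is the standard Gaussian tail bound obtained by noting that $r/M \geq 1$ on the region of integration, giving
\[
I(0) = \int_M^\infty e^{-r^2/2}\,dr \le \int_M^\infty \frac{r}{M} e^{-r^2/2}\,dr = \frac{1}{M} e^{-M^2/2},
\]
so $A_0 = 1$ suffices.

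For the inductive step with $d \geq 2$, I would write $r^d e^{-r^2/2} = r^{d-1} \cdot r e^{-r^2/2}$ and integrate by parts with $u = r^{d-1}$ and $dv = r e^{-r^2/2}\,dr$, yielding
\[
I(d) \;=\; M^{d-1} e^{-M^2/2} + (d-1)\, I(d-2).
\]
Using the inductive hypothesis $I(d-2) \le A_{d-2} M^{d-3} e^{-M^2/2}$ together with $M \geq 1$ (so $M^{d-3} \le M^{d-1}$), I obtain
\[
I(d) \;\le\; \bigl(1 + (d-1) A_{d-2}\bigr) M^{d-1} e^{-M^2/2},
\]
so the recursion $A_d := 1 + (d-1) A_{d-2}$ closes the induction.

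There is no real obstacle here — the argument is entirely routine once the integration by parts identity is written down. The only small point to be careful about is that the boundary term at infinity vanishes (since $r^{d-1} e^{-r^2/2} \to 0$), which is immediate, and that the use of $M \geq 1$ is essential to absorb the power $M^{d-3}$ into $M^{d-1}$ in the inductive step.
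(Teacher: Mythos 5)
Your proof is correct and follows essentially the same route as the paper: identical base cases $I(0)\le M^{-1}e^{-M^2/2}$ (via $r/M\ge 1$) and $I(1)=e^{-M^2/2}$, the same integration-by-parts identity $I(d)=M^{d-1}e^{-M^2/2}+(d-1)I(d-2)$, and the same recursion $A_d=1+(d-1)A_{d-2}$. You make explicit the use of $M\ge 1$ to absorb $M^{d-3}$ into $M^{d-1}$, which the paper leaves implicit but which is indeed the step where that hypothesis enters.
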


\begin{proof}[Proof of Lemma \ref{Lemma:GammaTail}] 
Let $A_0 := 1$, $A_1 := 1$ and define $A_d$ for $d \geq 2$ via the
recursion $A_d := 1 + (d-1) A_{d-2}$. Clearly
\begin{equation*}
  I(0) = \int_{r \geq M} e^{-r^2/2} dr \leq \int_{r \geq M}
  \frac{r}{M} e^{-r^2/2} = M^{-1} e^{-M^2/2}
\end{equation*}
and
\begin{equation*}
  I(1) = \int_{r \geq M} r e^{-r^2/2} dr = e^{-M^2/2} 
\end{equation*}
and thus inequality \eqref{Eq:induction_tail_bound} holds for $d = 0$
and $d = 1$. For $d \geq 2$, integration by parts gives
\begin{equation*}
I(d) = M^{d-1}e^{-M^2/2} + (d-1)I(d-2). 
\end{equation*}
Inequality \eqref{Eq:induction_tail_bound} for $d \geq 2$ now easily
follows by induction on $d$. 
\end{proof}

\begin{lemma} \label{Lemma:integralV}
Let $S$ be a compact subset of $\R^d$. For $\eta, M > 0$, define
\begin{equation} \label{Eq:vFunc}
v(x) := \begin{cases}
\eta &\text{ if } x \in S^M \\
\eta\left( \frac{M}{\mathfrak{d}_S(x)} \right)^{d+1} &\text{ otherwise }
\end{cases}
\end{equation}
Then, for some constant $C_d$ depending only on $d$,
\begin{equation} \label{Eq:integralV}
\int v(x) dx \leq C_d \eta \text{Vol}\left(S^M\right)
\end{equation}
\end{lemma}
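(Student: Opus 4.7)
The plan is to split the integral according to the definition of $v$: on $S^M$ we have $v \equiv \eta$ contributing exactly $\eta\,\vol(S^M)$, so it suffices to bound
\begin{equation*}
\eta\, M^{d+1} \int_{\dos(x) > M} \dos(x)^{-(d+1)} \, dx \leq C_d\, \eta\, \vol(S^M).
\end{equation*}

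To bound the second integral I would use a dyadic shell decomposition. Define $E_k := \{x : 2^k M < \dos(x) \leq 2^{k+1} M\}$ for $k = 0, 1, 2, \dots$, so $\{\dos > M\} = \bigsqcup_{k \geq 0} E_k$. On $E_k$ the integrand is bounded by $(2^k M)^{-(d+1)}$ and $\vol(E_k) \leq \vol(S^{2^{k+1}M})$, giving
\begin{equation*}
\int_{\dos > M} \frac{dx}{\dos(x)^{d+1}} \leq \sum_{k=0}^\infty \frac{\vol(S^{2^{k+1}M})}{(2^k M)^{d+1}}.
\end{equation*}

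The key technical step is the relative volume bound $\vol(S^r) \leq C_d (r/M)^d \vol(S^M)$ for all $r \geq M$. I would prove this by a standard packing argument: pick a maximal set of points $x_1, \dots, x_P \in S$ such that the balls $B(x_i, M/2)$ are pairwise disjoint. These balls lie inside $S^M$, so $\vol(S^M) \geq P\, c_d (M/2)^d$ where $c_d = \vol(B(0,1))$. By maximality of the packing, $S \subseteq \bigcup_i B(x_i, M)$, hence $S^r \subseteq \bigcup_i B(x_i, M+r)$ and $\vol(S^r) \leq P\, c_d (M+r)^d \leq P\, c_d (2r)^d$ for $r \geq M$. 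Dividing yields $\vol(S^r)/\vol(S^M) \leq 4^d (r/M)^d$. (This is essentially the doubling-type bound that inequality \eqref{sms1} in Lemma \ref{volm} should supply, possibly in a slightly different packaging.)

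Plugging this into the dyadic sum gives $\vol(S^{2^{k+1}M}) \leq 4^d \cdot 2^{(k+1)d} \vol(S^M)$, so
\begin{equation*}
M^{d+1} \sum_{k \geq 0} \frac{\vol(S^{2^{k+1}M})}{(2^k M)^{d+1}} \leq 4^d \cdot 2^d \vol(S^M) \sum_{k \geq 0} 2^{-k} = 2 \cdot 8^d\, \vol(S^M),
\end{equation*}
which completes the proof with $C_d := 1 + 2 \cdot 8^d$. The only real obstacle is justifying the relative volume bound, which is purely geometric and follows from the maximal-packing/covering duality sketched above; everything else is routine dyadic bookkeeping.
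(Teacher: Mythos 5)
Your proof is correct. It rests on the same geometric ingredient the paper uses—a maximal $M/2$-packing of $S$, which simultaneously supplies a lower bound on $\vol(S^M)$ (disjoint balls inside it) and an upper bound on volumes of thickenings (covering by slightly larger balls)—but the two arguments deploy this differently. The paper's proof bounds the integrand pointwise: it uses the net $u_1,\dots,u_N$ to write $\dos(x) \geq \min_i \|x-u_i\| - M/2$, replaces $\dos(x)^{-(d+1)}$ by the sum $\sum_i (\|x-u_i\| - M/2)^{-(d+1)}$, integrates each term in polar coordinates around $u_i$ (obtaining $O(N \cdot 2^d/M)$), and then bounds $N \leq C_d \vol(S^M)/M^d$. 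Your proof instead decomposes the \emph{domain} $\{\dos > M\}$ into dyadic shells $E_k$, bounds the integrand by $(2^kM)^{-(d+1)}$ on each, controls $\vol(E_k) \leq \vol(S^{2^{k+1}M})$ via the doubling bound $\vol(S^r) \leq 4^d (r/M)^d \vol(S^M)$ (which is indeed a slight repackaging of the paper's inequality~\eqref{sms1}), and sums the resulting geometric series. The paper's route avoids the doubling lemma by going directly from the net cardinality to $\vol(S^M)$, at the cost of a mildly less transparent polar-coordinates computation; your dyadic route isolates the doubling property as the one geometric fact needed and makes the $1/r^{d+1}$ vs.\ $r^d$ trade-off that drives the finiteness of the integral explicit. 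Both are complete and correct.
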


\begin{proof}[Proof of Lemma \ref{Lemma:integralV}]
We first write
\begin{equation}
  \label{in1}
\int v(x) dx = \eta \text{Vol}\left(S^M\right) + \eta M^{d+1} \int_{x
  \notin S^M} \frac{1}{\mathfrak{d}_S(x)^{d+1}} dx   
\end{equation}
Let $N$ be the maximal integer such that there exist $u_1, \dots, u_N
\in S$ with 
\begin{equation}\label{ps}
  \min_{i \neq j} \|u_i - u_j\| \geq M/2. 
\end{equation}
The maximality of $N$ implies that $\sup_{u \in S} \min_{1 \leq i \le
  N} \|u - u_i\| \leq M/2$.  As a result, for every $x \in \R^d$, by
triangle inequality, we have
\begin{equation*}
 \mathfrak{d}_S(x) = \min_{u \in S} \|x - u\| \geq \min_{1 \leq i \leq N} \|x -
 u_i\| - \frac{M}{2} 
\end{equation*}
so that
\begin{align}
  \int_{x \notin S^M} \frac{dx}{(\mathfrak{d}_S(x))^{d+1}} &\leq \int_{x \notin
    S^M} \left(\frac{1}{\min_{1 \leq i \leq N} \|x - u_i\| - M/2}
  \right)^{d+1} dx \nonumber \\ 
&\leq \sum_{i=1}^N \int_{x \notin S^M} \left( \frac{1}{\|x - u_i\| -
    M/2} \right)^{d+1} dx \nonumber \\
&\leq \sum_{i=1}^N \int_{\|x - u_i\| \geq M} \left( \frac{1}{\|x - u_i\| -
    M/2} \right)^{d+1} dx \nonumber \\
&= N \int_{\|x\| \geq M} \left(\frac{1}{\|x\| - M/2} \right)^{d+1} dx \nonumber
\\
&= N C_d \int_{M}^{\infty} \left(\frac{1}{r - M/2} \right)^{d+1}
r^{d-1} dr \nonumber \\
&= N C_d \int_{M/2}^{\infty} t^{-d-1} \left(\frac{M}{2} + t
\right)^{d-1} dt \nonumber \\ &\leq N C_d \int_{M/2}^{\infty} t^{-d-1} (2t)^{d-1} dt
= \frac{N C_d 2^d }{M}. \label{in2}
\end{align}
Note now that because of \eqref{ps}, the balls $B(u_i, M/4), i = 1,
\dots, N$ have disjoint interiors and are all contained in
$S^{M/4}$. As a result
\begin{equation}\label{in3}
  N \leq \frac{\text{Vol}(S^{M/4})}{\text{Vol}(B(0, M/4))} \leq C_d
  \frac{\text{Vol}(S^M)}{M^d}. 
\end{equation}
The proof of Lemma \ref{Lemma:integralV} is completed by putting
together inequalities \eqref{in1}, \eqref{in2} and \eqref{in3}. 
\end{proof}

\begin{lemma}\label{volm}
There exists a positive constant $C_d$ such that for every compact set
$K \subseteq \R^d$ and real numbers $\epsilon > 0$ and $M > 0$, we
have 
  \begin{equation}\label{volm.eq}
    N(\epsilon, K) \leq C_d \epsilon^{-d}\text{Vol}(K^{\epsilon/2}) 
  \end{equation}
and 
\begin{equation}\label{sms1}
  \vol(K^{2M}) \leq C_d \vol(K^{\epsilon/2}) \left(1 +
    \frac{M}{\epsilon} \right)^d 
\end{equation}
\end{lemma}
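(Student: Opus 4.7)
\textbf{Proof proposal for Lemma \ref{volm}.}

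The plan is to prove both inequalities by a standard packing/covering argument, where the first inequality is a direct volumetric bound and the second follows from the first by covering $K$ with $\epsilon$-balls and then inflating them by $2M$.

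For \eqref{volm.eq}, I would let $x_1,\dots,x_N \in K$ be a maximal $\epsilon$-packing, i.e., points with pairwise distances at least $\epsilon$ and with $N$ as large as possible. Maximality implies that every point of $K$ lies within distance $\epsilon$ of some $x_i$, so $N(\epsilon, K) \leq N$. On the other hand, the open balls $B(x_i, \epsilon/2)$ have pairwise disjoint interiors and, since each $x_i \in K$, satisfy $B(x_i,\epsilon/2) \subseteq K^{\epsilon/2}$. Adding volumes gives
\begin{equation*}
N \cdot \mathrm{Vol}(B(0,\epsilon/2)) \;\leq\; \mathrm{Vol}(K^{\epsilon/2}),
\end{equation*}
and since $\mathrm{Vol}(B(0,\epsilon/2)) = c_d \epsilon^d$ for some dimensional constant $c_d > 0$, rearranging gives \eqref{volm.eq}.

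For \eqref{sms1}, I would apply \eqref{volm.eq} to obtain points $x_1,\dots,x_N \in K$ (WLOG in $K$; otherwise shift each center by at most $\epsilon$) with $N \leq C_d \epsilon^{-d}\mathrm{Vol}(K^{\epsilon/2})$ and $K \subseteq \bigcup_i B(x_i, \epsilon)$. Then by the triangle inequality
\begin{equation*}
K^{2M} \;\subseteq\; \bigcup_{i=1}^N B(x_i, 2M + \epsilon),
\end{equation*}
so that
\begin{equation*}
\mathrm{Vol}(K^{2M}) \;\leq\; N \cdot C_d (2M+\epsilon)^d \;\leq\; C_d \mathrm{Vol}(K^{\epsilon/2}) \left(\frac{2M+\epsilon}{\epsilon}\right)^d,
\end{equation*}
and since $(2M+\epsilon)/\epsilon \leq 2(1 + M/\epsilon)$, absorbing the factor $2^d$ into $C_d$ yields \eqref{sms1}.

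There is no real obstacle here: the result is standard Euclidean geometry, the only thing to be careful about is ensuring the packing centers lie in $K$ itself (so that the balls $B(x_i,\epsilon/2)$ sit inside $K^{\epsilon/2}$), which is automatic from the definition of a packing of $K$. Everything else is volume comparison and the triangle inequality.
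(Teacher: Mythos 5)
Your proof is correct and follows essentially the same path as the paper's: a maximal $\epsilon$-separated set in $K$ yields both the covering bound via disjoint balls of radius $\epsilon/2$ inside $K^{\epsilon/2}$, and the volume comparison for $K^{2M}$ by fattening the covering balls to radius $\epsilon + 2M$. The parenthetical about shifting centers is unnecessary (the packing points are in $K$ by construction, as you yourself note at the end), but otherwise there is nothing to fix.
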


\begin{proof}[Proof of Lemma \ref{volm}]
 Let us first prove \eqref{volm.eq}. Let $a_1, \dots, a_N \in K$ be a
 maximal set of points such that 
  $\min_{i \neq j} \| a_i - a_j\| \geq \epsilon$. Then clearly
  $N(\epsilon, K) \leq N$. The balls $B(a_i, \epsilon/2)$ for $i = 1,
  \dots, N$ have disjoint interiors and are all contained in
  $K^{\epsilon/2}$. As a result
  \begin{equation}\label{khy1}
    N(\epsilon, K) \leq N \leq
    \frac{\text{Vol}(K^{\epsilon/2})}{\text{Vol}(B(0, \epsilon/2))}
  \end{equation}
  from which \eqref{volm.eq} follows. 

To prove \eqref{sms1}, note that the $K$ is contained in the union of
the balls $B(a_i, \epsilon)$ for $i = 1, \dots, N$. This implies that 
\begin{equation*}
  K^{2M} \subseteq \cup_{i=1}^N B(a_i, \epsilon + 2 M) 
\end{equation*}
so that 
\begin{equation*}
  \vol(K^{2M}) \leq N \vol(B(0, \epsilon + 2M)). 
\end{equation*}
Inequality \eqref{khy1} then gives
\begin{equation*}
  \vol(K^{2M}) \leq \frac{\vol(K^{\epsilon/2})}{\vol(B(0,
    \epsilon/2))} \vol(B(0, \epsilon + 2M)) \leq C_d
  \vol(K^{\epsilon/2}) \left(1 + \frac{M}{\epsilon} \right)^d.  
\end{equation*}

\end{proof}

\subsection{Proof of Lemma \ref{dco}} 
In this subsection, we prove Lemma \ref{dco} which was stated in
Subsection \ref{gendo} and used in the proof of Theorem \ref{rgende}. 

\begin{proof}[Proof of Lemma \ref{dco}]
  The proof uses Lemma \ref{cvx}. 
 
  Fix a compact set $S$. Suppose first that $G$ is supported on $S$ so
  that the second term in \eqref{dco.eq} equals 0. 

  We consider two   further special cases. First assume that $S$ is
  contained in a ball of radius $a := 4/L(\rho)$. Without loss of
  generality, we may assume that the ball is centered at the
  origin. Because $G$ is assumed to be supported on $S$, we have
  $\norm{\theta} \leq a$ almost surely under $G$. 

   For $\theta \sim G$ and $X | \theta \sim N(\theta, I_d)$, we can
   write 
   \begin{equation*}
     \frac{\nabla f_G(x)}{f_G(x)} = \E \left( \theta - X | X = x \right)
   \end{equation*}
   so that 
\begin{equation}\label{des1}
  \frac{\norm{\nabla f_G(x)}}{f_G(x)} = \norm{\E \left(\theta - X | X =
     x \right)} \leq \E \left(\norm{\theta - X} | X = x \right) \leq
 \|x\| + a. 
\end{equation}
Note also that
\begin{equation}\label{des2}
(2 \pi)^{-d/2} \exp \left(-\frac{1}{2} \left(\norm{x} + a \right)^2
\right) \leq  f_G(x) \leq (2 \pi)^{-d/2} \exp \left(-\frac{1}{2}
  \left(\norm{x} - a \right)_+^2 \right)  
\end{equation}
because $(\norm{x}  - a)_+ \leq \norm{x - \theta} \leq \norm{x} + a$
whenever $\norm{\theta} \leq a$. This also implies that whenever
$f_G(x) \leq \rho$, we have
\begin{equation*}
  \rho \geq (2 \pi)^{-d/2} \exp \left(-\frac{1}{2} \left(\norm{x} + a \right)^2
\right)
\end{equation*}
which gives 
\begin{equation}\label{des3}
  \norm{x} + a \geq L(\rho) := \sqrt{\log \frac{1}{(2
      \pi)^d \rho^2}}. 
\end{equation}
Putting together \eqref{des1}, \eqref{des2} and \eqref{des3}, we
deduce that 
\begin{align*}
  \Delta(G, \rho) &\leq \int \{f_G \leq \rho\} \left(\frac{\norm{\nabla
      f_G}}{f_G} \right)^2 f_G \\
&\leq \int_{ \{\norm{x} + a \geq
  L(\rho)\}} \left(\norm{x} + a \right)^2 (2 \pi)^{-d/2} \exp
\left(-\frac{1}{2} 
  \left(\norm{x} - a \right)_+^2 \right) dx. 
\end{align*}
Moving to polar coordinates, we deduce 
\begin{equation*}
  \Delta(G, \rho) \leq C_d \int_{(L(\rho) - a)_+}^{\infty} (r + a)^2 
  \exp \left(-(r - a)_+^2/2 \right) r^{d-1} dr. 
\end{equation*}
Note that with $a := 4/L(\rho)$ and $\rho \leq (2
\pi)^{-d/2}/\sqrt{e}$, we have $4 a \leq L(\rho)$ so that 
\begin{equation*}
  \Delta(G, \rho) \leq C_d \int_{L(\rho) - a}^{\infty} (r + a)^2 
  \exp \left(-(r - a)^2/2 \right) r^{d-1} dr. 
\end{equation*}
By a change of variable $r-a \mapsto r$, we obtain
\begin{equation*}
  \Delta(G, \rho) \leq C_d \int_{L(\rho) - 2a}^{\infty} (s + 2a)^2 
  \exp \left(-s^2/2 \right) (s + a)^{d-1} ds.  
\end{equation*}
Because $4a \leq L(\rho)$, we have 
\begin{equation*}
  s+a \leq s + 2a \leq s + L(\rho) - 2a \leq 2s
\end{equation*}
whenever $s \geq L(\rho) - 2a$. Thus
\begin{equation*}
  \Delta(G, \rho) \leq C_d \int_{L(\rho) - 2a}^{\infty} s^{d+1}
  e^{-s^2/2} ds. 
\end{equation*}
By Lemma \ref{Lemma:GammaTail}, we deduce that 
\begin{equation*}
\begin{split}
  \Delta(G, \rho) &\le C_d (L(\rho))^d \exp \left(-\frac{1}{2}
    \left(L(\rho) - 2a \right)^2 \right) \\ &\leq C_d (L(\rho))^d e^{2 a
    L(\rho)} e^{-L^2(\rho)/2} = C_d \rho (L(\rho))^d e^{2 a L(\rho)}. 
\end{split}
\end{equation*}
We now take
\begin{equation*}
  a := \frac{4}{L(\rho)}
\end{equation*}
which gives 
\begin{equation}\label{bc1}
  \Delta(G, \rho) \leq C_d \rho (L(\rho))^d
\end{equation}
whenever $G$ is supported on a set that is contained in a ball of
radius $a = 4/L(\rho)$. 

For the rest of the proof, we shall use Lemma \ref{cvx}. Now suppose
that $G$ is supported on a general compact set $S$. Then, 
for $N := N(a, S)$ (where $a := 4/L(\rho)$), let $E_1, \dots, E_N$
denote a disjoint covering of $S$ such that each $E_i$ is contained in
a ball of radius $a$. We can then write 
\begin{equation*}
  G := \sum_{j=1}^N w_j H_j
\end{equation*}
where $w_j := G(E_j)$ and $H_j$ is the probability measure $G$
conditioned on $H_j$. The bound \eqref{cvx.eq} in Lemma \ref{cvx} then
gives 
\begin{equation*}
  \Delta(G, \rho) \leq \sum_{j=1}^N w_j \Delta(H_j, \rho/w_j). 
\end{equation*}
Because $H_j$ is supported on a ball of radius at most $a$, we can use
\eqref{bc1} on each $H_j$ to deduce that 
\begin{equation}\label{bc2}
  \Delta(G, \rho) \leq C_d \sum_{j=1}^N w_j \frac{\rho}{w_j}
  L^d(\rho/w_j) \leq C_d \rho N(a, S) L^d(\rho). 
\end{equation}

To bound $\Delta(G, \rho)$ for an arbitrary probability measure $G$,
we write
\begin{equation*}
  G = w_1 H_1 + w_2 H_2
\end{equation*}
where $w_1 = G(S) = 1 - w_2$ and $H_1$ and $H_2$ are the 
probability measures obtained by conditioning $G$ on $S$ and $S^c$
respectively. Then clearly $H_1$ is supported on a compact set $S$ so
that the bound \eqref{bc2} can be used for $\Delta(H_2,
\rho/w_2)$. For $\Delta(H_1, \rho/w_1)$, we use the trivial bound $d$
(see the first part of Lemma \ref{cvx}). This gives (via
\eqref{cvx.eq}) 
\begin{equation*}
  \Delta(G, \rho) \leq C_d G(S) N(a, S) L^d(\rho) \rho + d~G(S^c) \leq
  C_d N(a, S) L^d(\rho) \rho + d~G(S^c)
\end{equation*}
which completes the proof of Lemma \ref{dco}. 
\end{proof}

\begin{lemma}\label{cvx}
For a probability measure $G$ on $\R^d$ and $\rho > 0$, let 
\begin{equation*}
  \Delta(G, \rho) := \int \left(1 - \frac{f_G}{\max \left(f_G , \rho \right)}
  \right)^2 \frac{\norm{\nabla f_G}^2}{f_{G}} 
\end{equation*}
The following pair of statements are then true. 
  \begin{enumerate}
  \item For every $G$ and $\rho > 0$, we have $\Delta(G, \rho) \leq
    d$. 
  \item   Suppose $G = \sum_{j=1}^m w_j H_j$ for some probability measures
  $H_1, \dots, H_m$ and weights $w_1, \dots, w_m$. Then 
  \begin{equation}\label{cvx.eq}
    \Delta(G, \rho) \leq \sum_{j=1}^m w_j \Delta \left(H_j, \rho/w_j
    \right). 
  \end{equation}
  \end{enumerate}
\end{lemma}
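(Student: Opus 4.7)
The plan for the first assertion is to drop the weight $(1 - f_G/\max(f_G,\rho))^2$ (which lies in $[0,1]$) and reduce $\Delta(G,\rho)$ to $\int \|\nabla f_G\|^2/f_G$. Writing $f_G$ as the density of $X = \theta + Z$ with $\theta \sim G$ and $Z \sim N(0,I_d)$ independent, I would use the identity $\nabla f_G(x)/f_G(x) = \E(\theta - X\mid X=x)$ already recorded in the proof of Lemma \ref{p1}. Then Jensen's inequality gives $\|\nabla f_G(x)/f_G(x)\|^2 \leq \E(\|\theta - X\|^2\mid X=x)$, and integrating against $f_G$ yields $\int \|\nabla f_G\|^2/f_G \leq \E\|\theta - X\|^2 = \E\|Z\|^2 = d$, as desired.

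For the second assertion, write $f_G = \sum_j w_j f_{H_j}$ and $\nabla f_G = \sum_j w_j \nabla f_{H_j}$. The first key step is a Cauchy--Schwarz bound: decomposing $w_j \nabla f_{H_j} = \sqrt{w_j f_{H_j}} \cdot (\sqrt{w_j}\,\nabla f_{H_j}/\sqrt{f_{H_j}})$ and applying the discrete Cauchy--Schwarz inequality on the index $j$ gives
\begin{equation*}
\|\nabla f_G\|^2 \leq \Bigl(\sum_j w_j f_{H_j}\Bigr) \sum_j \frac{w_j \|\nabla f_{H_j}\|^2}{f_{H_j}} = f_G \sum_j \frac{w_j \|\nabla f_{H_j}\|^2}{f_{H_j}},
\end{equation*}
so that $\|\nabla f_G\|^2/f_G \leq \sum_j w_j\, \|\nabla f_{H_j}\|^2/f_{H_j}$ pointwise.

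The second key step concerns the weight $(1 - f_G/\max(f_G,\rho))^2$, which vanishes on $\{f_G \geq \rho\}$ and equals $(1 - f_G/\rho)^2$ on $\{f_G < \rho\}$. The elementary inequality $w_j f_{H_j} \leq f_G$ implies that on $\{f_G < \rho\}$ one has $f_{H_j} < \rho/w_j$ and $1 - f_G/\rho \leq 1 - w_j f_{H_j}/\rho$, so
\begin{equation*}
\Bigl(1 - \frac{f_G}{\max(f_G,\rho)}\Bigr)^2 \leq \Bigl(1 - \frac{f_{H_j}}{\max(f_{H_j},\rho/w_j)}\Bigr)^2
\end{equation*}
pointwise (and termwise in $j$). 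Multiplying this pointwise bound by the Cauchy--Schwarz bound and integrating produces exactly $\sum_j w_j \Delta(H_j, \rho/w_j)$.

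The only real obstacle is matching the threshold $\rho$ on the left with $\rho/w_j$ on the right in the second step; this is precisely what forces the scaling of the second argument of $\Delta$ in the statement and is resolved by the observation $w_j f_{H_j} \leq f_G$. Both steps are short once this scaling is identified, and the two assertions follow.
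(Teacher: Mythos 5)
Your proof is correct and mirrors the paper's argument: part one uses the identity $\nabla f_G(x)/f_G(x) = \E(\theta - X \mid X = x)$ with Jensen's inequality, and part two combines the convexity/Cauchy--Schwarz bound $\|\nabla f_G\|^2/f_G \leq \sum_j w_j \|\nabla f_{H_j}\|^2 / f_{H_j}$ with the termwise weight comparison. The only difference is that the paper labels the weight inequality $\bigl(1 - f_G/(f_G \vee \rho)\bigr)^2 \leq \bigl(1 - f_{H_j}/(f_{H_j} \vee (\rho/w_j))\bigr)^2$ as ``trivial,'' whereas you supply the short justification via $w_j f_{H_j} \leq f_G$, which makes the threshold scaling $\rho \mapsto \rho/w_j$ transparent.
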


\begin{proof}[Proof of Lemma \ref{cvx}] 
  To prove that $\Delta(G, \rho) \leq d$, note that if $\theta \sim G$ and
  $X | \theta \sim N(\theta, I_d)$, then
  \begin{equation*}
    \frac{\nabla f_G(x)}{f_G(x)}  = \E \left(\theta - X | X = x
    \right). 
  \end{equation*}
As a result 
  \begin{equation*}
\Delta(G, \rho) \leq \int \frac{\norm{\nabla f_G}^2}{f_G} = \E
\norm{\E(\theta - X | X)}^2 \leq \E \norm{\theta - X}^2 = d.  
  \end{equation*}
For proving \eqref{cvx.eq}, note first that by the convexity of $x
\mapsto \|x\|^2$, we have  
  \begin{align*}
    \frac{\norm{\nabla f_G}^2}{f_G} &= \frac{\norm{\sum_j w_j \nabla
                                      f_{H_j}}^2}{\sum_{j} w_j
                                      f_{H_j}} \\
&= \norm{\sum_j \left(\frac{w_j f_{H_j}}{\sum_j w_j f_{H_j}} \right)
  \frac{\nabla f_{H_j}}{f_{H_j}}}^2 \left(\sum_{j} w_j f_{H_j} \right)
    \\
&\leq \left\{\sum_j \left(\frac{w_j f_{H_j}}{\sum_j w_j f_{H_j}}
  \right) \frac{\norm{\nabla f_{H_j}}^2}{f_{H_j}^2}
  \right\}\left(\sum_{j} w_j f_{H_j} \right) \\ &= \sum_{j} w_j
  \frac{\norm{\nabla f_{H_j}}^2}{f_{H_j}}. 
  \end{align*}
This, along with the trivial inequality (here $a \vee b$ stands for
$\max(a, b)$) 
\begin{equation*}
  \left(1 - \frac{f_G}{f_G \vee \rho} \right)^2 \leq \left(1 -
    \frac{f_{H_j}}{f_{H_j} \vee (\rho/w_j)} 
  \right)^2 \qt{for every $1 \leq j \leq m$}
\end{equation*}
yields \eqref{cvx.eq}.  
\end{proof}

\begin{lemma}\label{obg}
  Suppose $X_1, \dots, X_n$ are independent observations with $X_i
  \sim N(\theta_i, I_d)$ for some $\theta_1, \dots, \theta_n \in
  \R^d$. Let the Oracle Bayes estimators $\hat{\theta}_1^*, \dots,
  \hat{\theta}_n^*$  be defined as in \eqref{ob} where $\bar{G}_n$ is
  the empirical measure of $\theta_1, \dots, \theta_n$. Suppose that
  $\bar{G}_n$ is supported on a set $\{a_1, \dots, a_k\}$ of
  cardinality $k$ with $\bar{G}_n\{a_i\} = p_i$ for $i = 1, \dots, k$
  with $p_i \geq 0$ and $\sum_{i=1}^k p_i = 1$. Then 
  \begin{equation}\label{obg.eq} 
\begin{split}
 &   \E \left[\frac{1}{n} \sum_{i=1}^n \norm{\hat{\theta}^*_i -
        \theta_i}^2 \right] \\ &\leq \frac{k-1}{2 \sqrt{2 \pi}} \sum_{j, l
    : j \neq l} \left(p_j + p_l \right) \norm{a_j - a_l}  \exp
  \left(-\frac{1}{8} \norm{a_j - a_l}^2 \right).  
\end{split}
  \end{equation}
\end{lemma}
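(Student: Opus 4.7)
The plan is to reduce $\E\bigl[\frac{1}{n}\sum \|\hat{\theta}_i^* - \theta_i\|^2\bigr]$ to a Bayes risk calculation, decompose the per-$\theta_i$ error by posterior atom via Cauchy--Schwarz, and close with a Mills ratio estimate for the resulting integral.

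First, because $\bar{G}_n$ is the empirical measure of $\theta_1, \dots, \theta_n$ and $X_i \sim N(\theta_i, I_d)$, the averaged squared error equals the Bayes risk under $\theta \sim \bar{G}_n$, $X \mid \theta \sim N(\theta, I_d)$:
\begin{equation*}
\E\biggl[\frac{1}{n}\sum_{i=1}^n \|\hat{\theta}_i^* - \theta_i\|^2\biggr] = \sum_{l=1}^k p_l \int \|\hat{\theta}^*(x) - a_l\|^2\, \phi_d(x - a_l)\,dx,
\end{equation*}
where $\hat{\theta}^*(x) = \sum_{j=1}^k \pi_j(x)\, a_j$ with posterior weights $\pi_j(x) := p_j \phi_d(x - a_j)/f_{\bar{G}_n}(x)$. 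Using $\sum_j \pi_j = 1$, I would write $\hat{\theta}^*(x) - a_l = \sum_{j \neq l} \pi_j(x)(a_j - a_l)$ and apply Cauchy--Schwarz to the $k-1$-term sum together with $\pi_j^2 \leq \pi_j$ to obtain $\|\hat{\theta}^*(x) - a_l\|^2 \leq (k-1) \sum_{j \neq l} \pi_j(x)\,\|a_j - a_l\|^2$. This reduces the problem to bounding $A(j,l) := p_l \int \pi_j(x)\, \phi_d(x - a_l)\, dx = \int p_j p_l \phi_d(x-a_j)\phi_d(x-a_l)/f_{\bar{G}_n}(x)\, dx$, which is manifestly symmetric in $(j,l)$.

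To bound $A(j,l)$ I would use $f_{\bar{G}_n}(x) \geq p_j \phi_d(x - a_j) + p_l \phi_d(x - a_l)$ together with the harmonic-mean inequality $ab/(a+b) \leq \min(a, b)$, yielding $A(j,l) \leq \int \min\bigl(p_j \phi_d(x - a_j),\, p_l \phi_d(x - a_l)\bigr)\, dx$. The domain splits at the half-space where the two arguments are equal, and projecting onto the direction $v := a_j - a_l$ reduces the two pieces to one-dimensional Gaussian tail probabilities $p_j\, Q(\|v\|/2 - t) + p_l\, Q(\|v\|/2 + t)$, where $t := \ln(p_l/p_j)/\|v\|$ and $Q$ is the upper tail of $N(0,1)$. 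Applying Mills' ratio $Q(x) \leq \phi(x)/x$ and tracking exponents carefully---the cross terms produce $p_j\, \phi(\|v\|/2 - t) = (\sqrt{p_j p_l}/\sqrt{2\pi})\, e^{-\|v\|^2/8}\, e^{-t^2/2}$ and the same with $j,l$ swapped---gives $A(j,l) \lesssim \sqrt{p_j p_l}/(\|v\|\sqrt{2\pi})\, e^{-\|v\|^2/8}$. Using $2\sqrt{p_j p_l} \leq p_j + p_l$ and substituting back into the Cauchy--Schwarz estimate yields
\begin{equation*}
\E\biggl[\frac{1}{n}\sum_{i=1}^n \|\hat{\theta}_i^* - \theta_i\|^2\biggr] \leq \frac{k-1}{2\sqrt{2\pi}} \sum_{j \neq l}(p_j + p_l)\,\|a_j - a_l\|\, e^{-\|a_j - a_l\|^2/8}.
\end{equation*}

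The main obstacle is extracting the first-power dependence $\|a_j - a_l\|$ in the final bound rather than the quadratic dependence produced by more naive estimates (for instance, the identity $\E\|\hat{\theta}^* - \theta\|^2 = \frac{1}{2}\sum_{j,l} \|a_j - a_l\|^2\, \E[\pi_j \pi_l]$ combined with the Bhattacharyya-type bound $\E[\pi_j \pi_l] \leq \sqrt{p_j p_l}\, e^{-\|a_j - a_l\|^2/8}/2$ obtained from $(ab)/(a+b) \leq \sqrt{ab}/2$ gives only a $\|a_j - a_l\|^2$ factor). Mills' ratio is essential here: the extra $1/\|v\|$ from the Gaussian tail is exactly what converts $\|v\|^2 e^{-\|v\|^2/8}$ into $\|v\|\, e^{-\|v\|^2/8}$. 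A secondary technical point is controlling $\|v\|/2 \pm t$ when the weight ratio $p_j/p_l$ is extreme (so that $|t|$ approaches $\|v\|/2$); this is handled by a case analysis using the trivial bound $Q(x) \leq 1/2$ together with the arithmetic--geometric mean estimate $A(j,l) \leq \sqrt{p_j p_l}\, e^{-\|v\|^2/8}/2$, which already suffices in that regime.
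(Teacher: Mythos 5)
Your proposal is correct in its essentials and follows the same route as the paper: pass to the Bayes-risk form, decompose $\hat{\theta}^*(x) - a_l = \sum_{j \neq l}\pi_j(x)(a_j - a_l)$, pull out $(k-1)$ by Cauchy--Schwarz, reduce the remaining posterior-weight integral to a Gaussian tail, and close with Mills' ratio. The one substantive variation is where you cut the integral: you symmetrize early via $\pi_j^2 \leq \pi_j$ and $ab/(a+b)\leq\min(a,b)$, producing the testing affinity $\int\min\bigl(p_j\phi_d(\cdot-a_j),\,p_l\phi_d(\cdot-a_l)\bigr)$, and then split at the \emph{weighted} equal-density hyperplane. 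That introduces the offset $t=\log(p_l/p_j)/\|v\|$ and forces a separate analysis when $|t|\geq\|v\|/2$. The paper sidesteps this entirely: it splits on the \emph{unweighted} half-space $U=\{z:\|a_j-a_l\|^2\geq 2\langle z,a_j-a_l\rangle\}$, bounds $w_{jl}^2\leq 1$ on $U^c$ and $w_{jl}^2\leq w_{jl}\leq p_j\phi_d(a_l+z-a_j)/(p_l\phi_d(z))$ on $U$, and after a change of variables lands directly on $(1+p_j/p_l)\bigl(1-\Phi(\|a_j-a_l\|/2)\bigr)$ with no $t$ to manage; multiplying by $p_l$ restores the $(p_j+p_l)$ symmetry at the end.

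Two small cautions. First, your fallback for the $|t|\geq\|v\|/2$ regime does not actually suffice as stated: the Bhattacharyya bound $A(j,l)\leq\tfrac12\sqrt{p_jp_l}\,e^{-\|v\|^2/8}$ lacks the crucial $1/\|v\|$ factor, so it only closes the argument when $\|v\|$ is bounded. The correct estimate there is $A(j,l)\leq\min(p_j,p_l)=\sqrt{p_jp_l}\sqrt{\min(p_j,p_l)/\max(p_j,p_l)}\leq\sqrt{p_jp_l}\,e^{-\|v\|^2/4}$, since in that regime $|\log(p_l/p_j)|\geq\|v\|^2/2$; the extra $e^{-\|v\|^2/8}$ absorbs the missing $1/\|v\|$ because $\|v\|\,e^{-\|v\|^2/8}$ is bounded. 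Second, the constant: Mills' ratio gives $1-\Phi(\|v\|/2)\leq\phi(\|v\|/2)/(\|v\|/2)=\tfrac{2}{\|v\|\sqrt{2\pi}}e^{-\|v\|^2/8}$, so a careful trace of either your argument or the paper's yields $\tfrac{2(k-1)}{\sqrt{2\pi}}$, not the displayed $\tfrac{k-1}{2\sqrt{2\pi}}$; the paper's Mills step appears to carry a factor-of-four slip. Your $\lesssim$ on $A(j,l)$ therefore cannot justify the precise constant you then write, but since the lemma is used only to establish the lower bound in Lemma~\ref{denlo} up to an unspecified universal constant, this is immaterial.
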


\begin{proof}[Proof of Lemma \ref{obg}]
  Note first that $\hat{\theta}_i^*$ has the following expression
  \begin{equation*}
    \hat{\theta}_i^* = \frac{\sum_{j=1}^k a_j p_j \phi_d(X_i - a_j)
    }{\sum_{j=1}^k p_j \phi_d(X_i - a_j)} \qt{for $i = 1, \dots, n$}. 
  \end{equation*}
The above expression and the fact that $X_i - \theta_i \sim N(0, I_d)$
lets us write 
\begin{align*}
     R &:= \E \left[\frac{1}{n} \sum_{i=1}^n \norm{\hat{\theta}^*_i -
        \theta_i}^2 \right] \\ &= \sum_{l=1}^k p_l \E
    \norm{\frac{\sum_{j=1}^k a_j p_j \phi_d(a_l + Z - a_j) 
    }{\sum_{j=1}^k p_j \phi_d(a_l + Z - a_j)} - a_l}^2 \\
&= \sum_{l=1}^k p_l \E \norm{\frac{\sum_{j=1}^k \left(a_j - a_{l} \right) p_j \phi_d(a_l + Z - a_j) 
    }{\sum_{j=1}^k p_j \phi_d(a_l + Z - a_j)}}^2 \\
&= \sum_{l=1}^k p_l \E \norm{\sum_{j: j \neq l} (a_j - a_l) w_{j l}(Z)}^2
\end{align*}
where $Z \sim N(0, I_d)$ and 
\begin{equation*}
  w_{j l}(Z)
  := \frac{p_j \phi_d(a_l + Z - a_j)}{\sum_{u=1}^k p_u \phi_d(a_l + Z
    - a_u)} \qt{for $1 \leq j, l \leq k$}. 
\end{equation*}
The inequality $\norm{\sum_{i=1}^m \alpha_i}^2 \leq m
\sum_{i=1}^m \norm{\alpha_i}^2$ for vectors $\alpha_1, \dots, \alpha_m
\in \R^d$ now lets us write
\begin{equation}\label{talac}
  R \le (k-1) \sum_{l = 1}^k p_l \sum_{j : j \neq l} \norm{a_j -
    a_l}^2 \E w_{jl}^2(Z). 
\end{equation}
We now bound $\E w_{jl}^2(Z)$ in the following way. Let 
\begin{equation*}
  U := \left\{z \in \R^d : \norm{a_j - a_l}^2 \geq 2 \left<Z, a_j -
      a_l \right> \right\}. 
\end{equation*}
When $Z \notin U$, we shall use the trivial upper bound $w_{jl}^2(Z)
\leq 1$. When $Z \in U$, we shall use the bound
\begin{equation*}
  w_{jl}^2(Z) \leq w_{jl}(Z) \leq \frac{p_j \phi_d(a_l + Z - a_j)}{p_l
  \phi_d(a_l + Z - a_l)} = \frac{p_j \phi_d(a_l + Z - a_j)}{p_l
  \phi_d(Z)}. 
\end{equation*}
This gives
\begin{align*}
  \E w_{jl}^2(Z) &\leq \P \left\{Z \notin U \right\} \\ &+ \int \frac{p_j \phi_d(a_l + z - a_j)}{p_l
  \phi_d(z)} I \{\norm{a_j - a_l}^2 \geq 2 \left<z, a_j - a_l
                   \right>\} \phi_d(z) dz 
\end{align*}
The change of variable $x = a_l + z - a_j$ in the integral above
allows us to write
\begin{align*}
  \E w_{jl}^2(Z)  &\leq \P \left\{\left<Z, a_j - a_l \right> > \frac{1}{2}
  \norm{a_j - a_l}^2\right\} \\ &+ \frac{p_j}{p_l} \P \left\{\left<Z, a_j - a_l \right> \leq -\frac{1}{2}
  \norm{a_j - a_l}^2 \right\} \\
&\leq \left(1 + \frac{p_j}{p_l} \right) \left(1 -  \Phi \left(\frac{1}{2}
  \norm{a_j - a_l} \right) \right) 
\end{align*}
where $\Phi$ is the standard univariate Gaussian cumulative
distribution function. The bound $1 - \Phi(t) \leq \phi(t)/t$ for $t >
0$ now gives
\begin{align*}
  \E w_{jl}^2(Z) \leq \frac{1}{2 \sqrt{2 \pi}} \left(1 +
  \frac{p_j}{p_l} \right) \frac{1}{\norm{a_j - a_l}} \exp
  \left(-\frac{1}{8} \norm{a_j - a_l}^2 \right). 
\end{align*}
This bound, when combined with \eqref{talac}, yields \eqref{obg.eq}
and hence completes the proof of Lemma \ref{obg}.  
\end{proof}

\section{Simulations for Clustering Settings}\label{desim}
Here, we shall numerically illustrate the denoising performance of
$\hat{\theta}_1, \dots, \hat{\theta}_n$ (defined as in \eqref{es.int}) when the true
vectors $\theta_1, \dots, \theta_n$ have a clustering structure. We
take $d = 2$ and consider the following four simulation settings:  
\begin{enumerate}
\item Setting One: We generate $\theta_1, \dots, \theta_n$ as i.i.d
  from the distribution which puts equal probability (0.5) at $(0, 0)$ and
  $(2, 2)$. 
\item Setting Two: We generate $\theta_1, \dots, \theta_n$ as i.i.d 
  from the distribution which puts $1/4$ probability at $(0, 0)$ and
  $3/4$ probability at $(2,2)$. 
\item Setting Three: We generate $\theta_1, \dots, \theta_n$ as i.i.d
  from the distribution which puts $1/4$ probability each at $(0, 0)$
  and $(0, 2)$ and $1/2$ probability at $(2, -2)$. 
\item Setting Four: We generate a random probability vector
  $(\alpha_1, \alpha_2,. \alpha_3, \alpha_4)$ from the Dirichlet
  distribution with parameters $(1, 1, 1, 1)$ and then generate
  $\theta_1, \dots, \theta_n$ as i.i.d from the probability
  distribution with puts probabilities $\alpha_1, \alpha_2, \alpha_3$
  and $\alpha_4$ at the four points $(0, 0)$, $(0, 3)$, $(3, 0)$ and
  $(3, 3)$. 
\end{enumerate}
The observed data $X_1, \dots, X_n$ are, as usual, generated
independently as $X_i \sim N(\theta_i, I_d)$. We allow the sample size
$n$ to take the values 300, 600, 900, 1200, 1500, 1800,
2100. For each $n$, we perform 1000 replicates to get accurate
estimates of mean squared error. For each dataset, we 
compute the Empirical Bayes estimates \eqref{es.int}. For comparison,
we also computed $k$-means estimates based on the true (Oracle) value
of $k$ and those based on the gap statistic (from
\citet{tibshirani2001estimating}). These estimates will be referred
to, in the sequel, as \texttt{kmeans-Oracle} and \texttt{kmeans-gap}
respectively. For $k$-means, we used the standard Lloyd's algorithm 
based on 10 random starts and the best solution is considered of the
random starts.  Note that because of non-convexity, no implementation
of $k$-means can provably reach global optimum.   

For each of the these three estimates, we plotted the mean squared
errors in Figure \ref{fig:denoising_sim} (see the first plot in each
pair of plots for the different settings).  
\begin{figure} 
    \centering
    \begin{subfigure}[t]{0.49\textwidth}
        \centering
        \includegraphics[width = 0.99\textwidth]{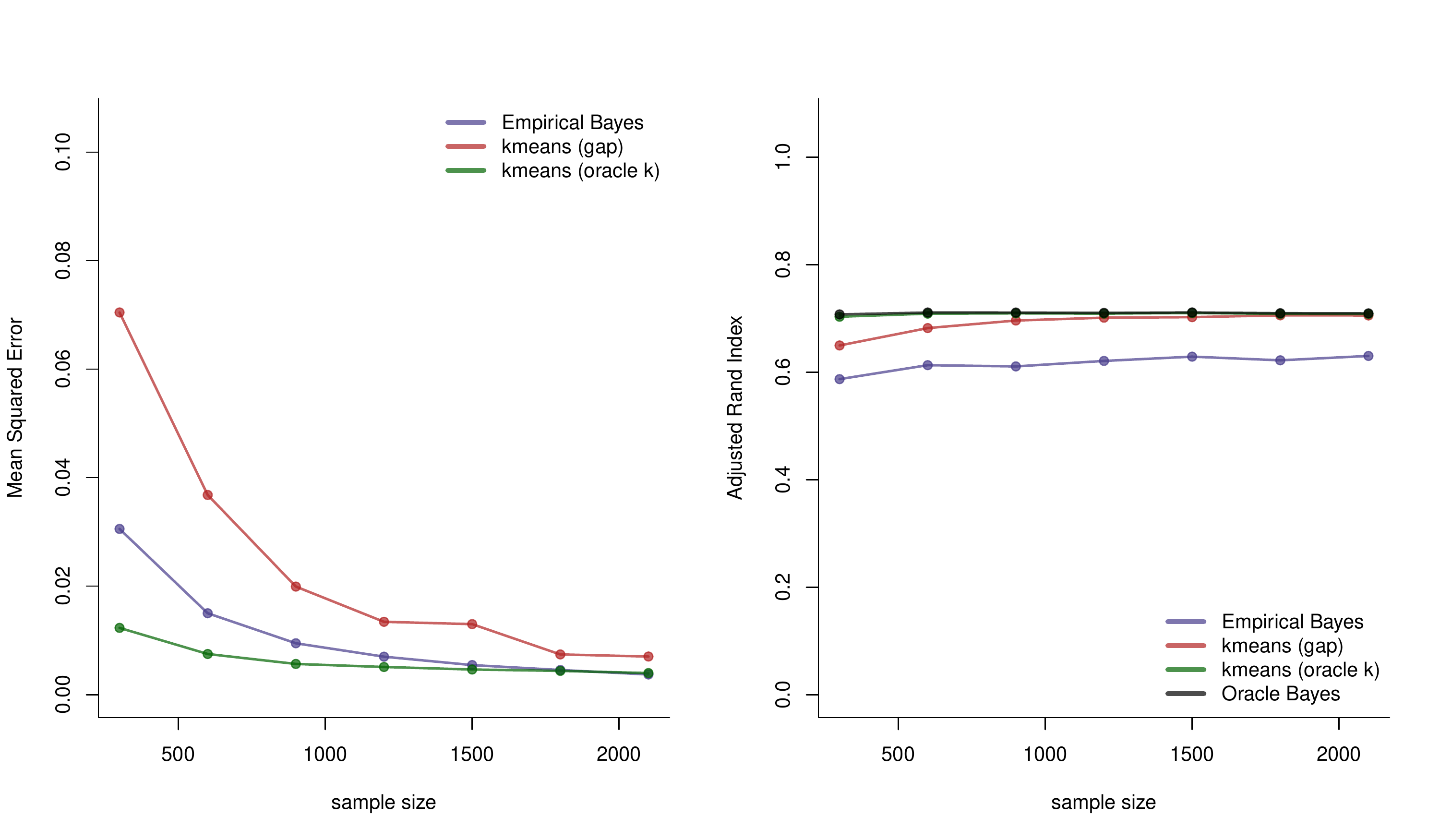}
        \caption{Setting 1. Two equally sized clusters centered at $(0,0)$ and $(2,2)$. For clarification, in the ARI plot the red and green curves coincide.}
    \end{subfigure}%
    ~ 
    \centering
    \begin{subfigure}[t]{0.49\textwidth}
        \centering
        \includegraphics[width = 0.99\textwidth]{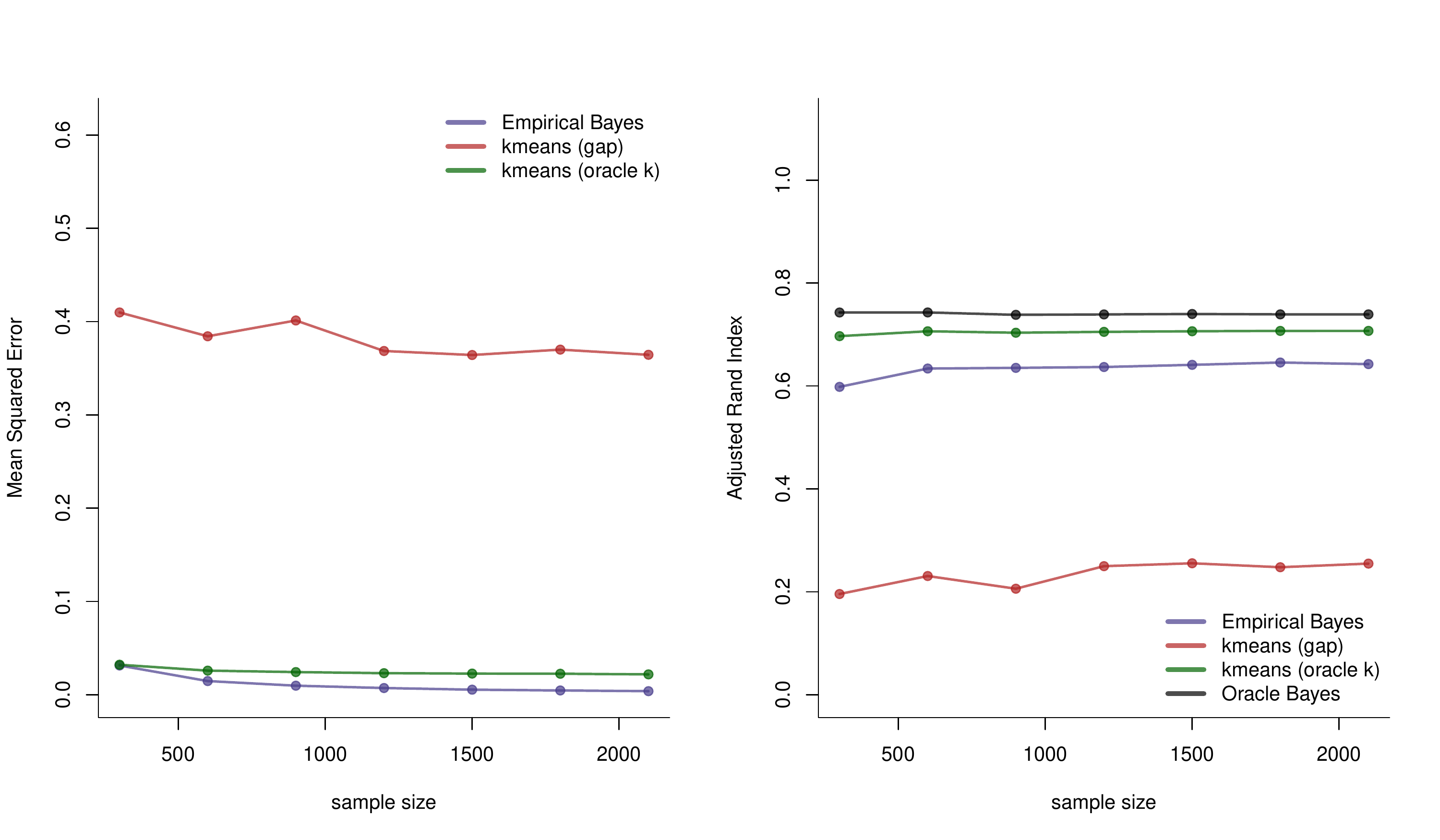}
        \caption{Setting 2. Two clusters centered at $(0,0)$ and $(2,2)$ with cluster proportions $1/4$ and $3/4$.For clarification, in the ARI plot the red and green curves coincide.}
    \end{subfigure}%
    ~ 
    \vspace{5mm}
    \\
    ~
    \centering
    \begin{subfigure}[t]{0.49\textwidth}
        \centering
        \includegraphics[width = 0.99\textwidth]{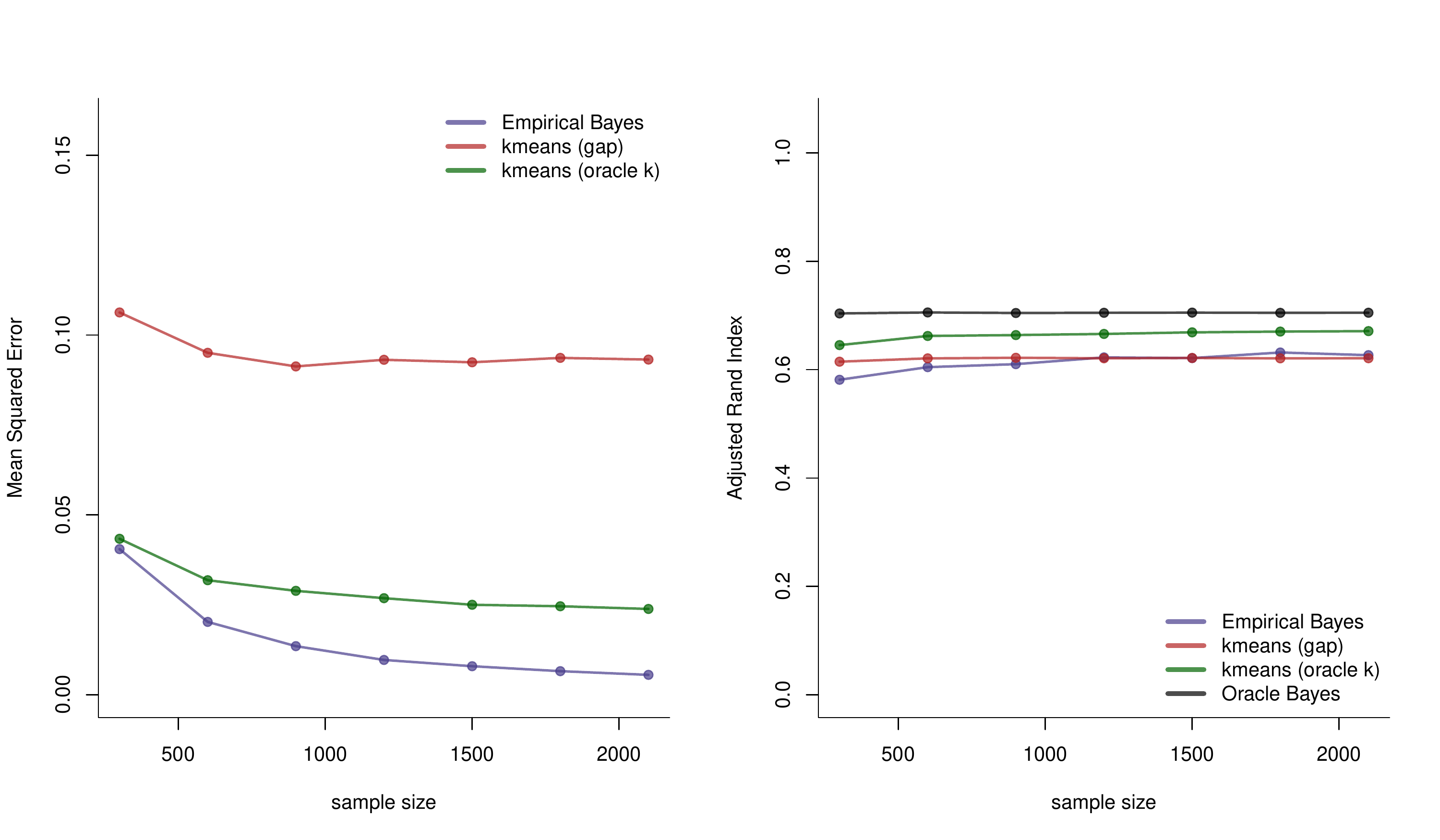}
        \caption{Setting 3. Three clusters centered at $(0,0), (0,2), (2,-2)$ with cluster proportions $1/4, 1/4, 1/2$ respectively.}
    \end{subfigure}%
    ~ 
    \centering
    \begin{subfigure}[t]{0.49\textwidth}
        \centering
        \includegraphics[width = 0.99\textwidth]{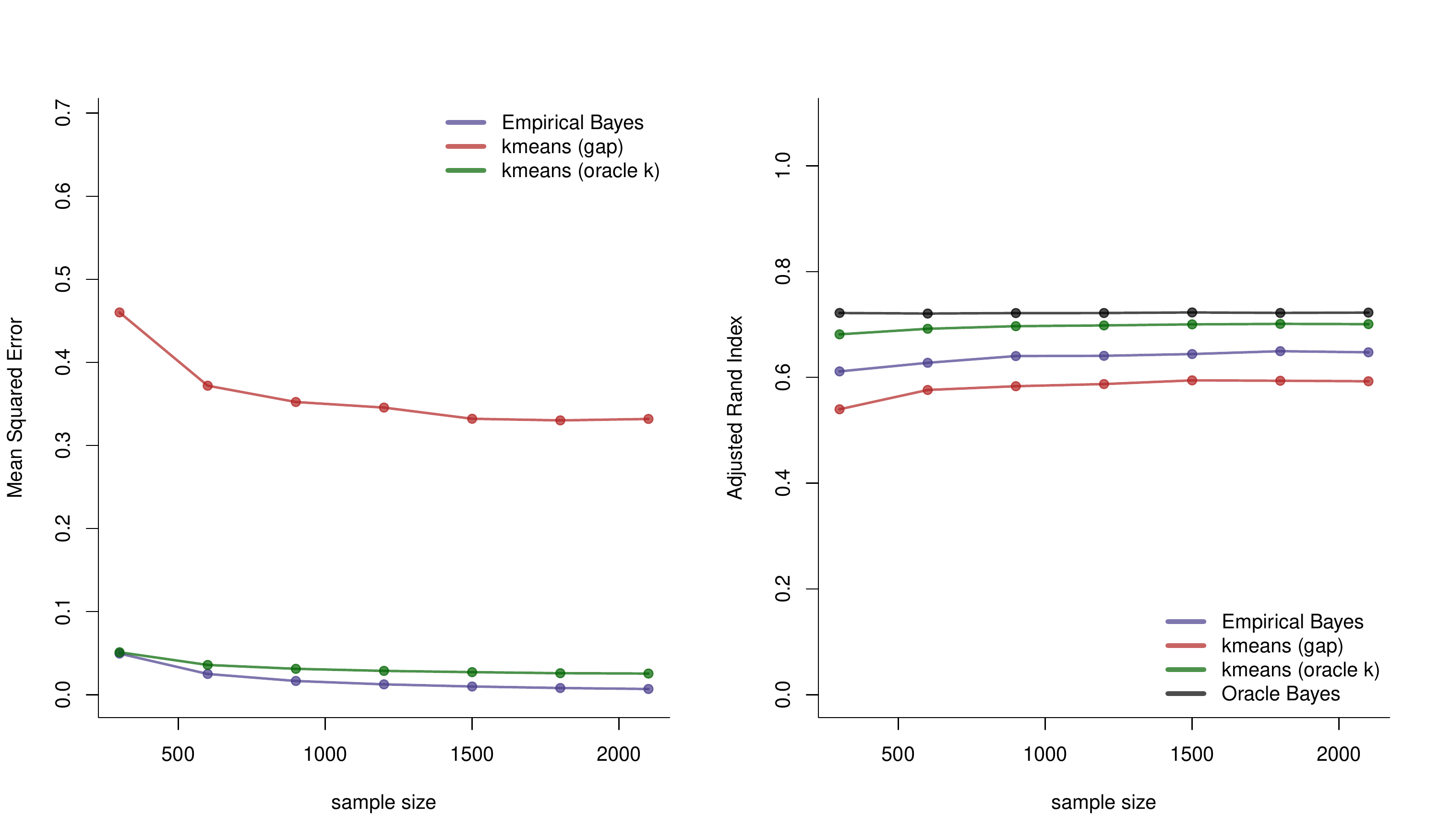}
        \caption{Setting 4. Four cluster centers centered at $(0,0), (0,3), (3,0), (3,3)$ with cluster proportions drawn from Dirichlet distribution with parameters $(1, 1, 1, 1)$}
    \end{subfigure}%
    ~ 
    \caption{Empirical performance of methods in the denoising problem
      in four different clustering settings. A method with lower MSE
      is preferred over one with higher MSE. In contrast, a method
      with higher ARI is preferred over one with lower ARI. The lines
      show mean of the metric in question over $1000$ replicates.} 
    \label{fig:denoising_sim}
\end{figure}
From these MSE plots, it is clear that the Empirical Bayes estimates
based on the NPMLE are more accurate than \texttt{kmeans-gap}. In
fact, with the exception of the  first setting, the Empirical Bayes
estimates are even more accurate than \texttt{kmeans-Oracle}. This is
probably because of the non-convexity of $k$-means.

In addition to estimating the means $\theta_1, \dots, \theta_n$ by
$\hat{\theta}_1, \dots, \hat{\theta}_n$, the Empirical Bayes method
also produces clusterings which can be compared to the true clusters
(as well as to the clusterings produced by \texttt{kmeans-Oracle} and
\texttt{kmeans-gap}) by the Adjusted Rand Index (ARI)
\cite{rand1971objective}. Before showing these 
results, let us first explain how we are producing clusters based on
the Empirical Bayes method. Our exemplar-based algorithm for computing
the NPMLE 
based on $X_1, \dots, X_n$  produces an estimate $\hat{f}_n$ 
which can be written as  
\begin{equation*}
  \hat{f}_n(x) := \sum_{j=1}^m \hat{p}_j \phi_d(x - \hat{a}_j) \qt{for
  $x \in \R^d$}
\end{equation*}
for some $m \leq n$. From this estimate, we cluster the observation
$X_i$ to the $r^{th}$ cluster (for $r = 1, \dots, m$) provided 
\begin{equation*}
  r = \argmax_{1 \leq j \leq m} \frac{\hat{p}_j \phi_d(X_i - \hat
    a_j)}{\sum_{l = 1}^m \hat{p}_l \phi_d(X_i - \hat{a}_l)}. 
\end{equation*}
Note that the number of clusters $(=m)$ produced by this method will
be different (and usually larger) than the true number of clusters but
the ARI is applicable for comparing clusterings with different numbers of
clusters. For comparison purposes, we also cluster using the true density
where $\hat{p}_j$  and $\hat{a}_j$ in the method desribed above are
replaced by $p_j$ and $a_j$ respectively with $f^*(x) := \sum_j p_j
\phi_d(x - a_j)$. We shall refer to this as the clustering based on
the Oracle Bayes estimate.  

For each of the these four clusterings (Empirical Bayes based
clustering, \texttt{kmeans-gap}, \texttt{kmeans-Oracle} and Oracle
Bayes based clustering), we plotted the ARIs as a function of sample
size in Figure \ref{fig:denoising_sim} (see the second plot in each 
pair of plots for the different settings). Higher ARIs are preferred
to lower values. Here the Oracle Bayes estimate is the best; the
\texttt{kmeans-Oracle} method is superior to the Empirical Bayes
estimate as well as \texttt{kmeans-gap}. The comparison between the
Empirical Bayes and the \texttt{kmeans-gap} estimates in terms of ARI 
can be summarized as follows. In the first  setting, the performance
of \texttt{kmeans-gap} is very good and is indistinguishable from
\texttt{kmeans-Oracle}. In more complicated settings with more than 
two clusters and/or with imbalanced cluster proportions, a
distinction between the two methods becomes apparent. In the second
and fourth settings, the Empirical Bayes method outperforms
\texttt{kmeans-gap}.  In the third setting, the performances of the
two methods start to coincide for larger sample sizes.

\bibliographystyle{chicago}
\bibliography{../../AG}

\end{document}